\pgfplotsset{compat=1.15}
\newcommand{\ab}{\allowbreak}
\title[{Third order cumulants of products}] {Third order cumulants of products}
\author[arizmendi]{Octavio Arizmendi} \address{Department
  of Probability and Statistics, Centro de Investigaci\'on en Matem\'aticas, Calle Jalisco S/N. Guanajuato, Mexico}
\email{octavius@cimat.mx} 
\author[munoz]{Daniel Munoz George}
\address{Department
  of Mathematics and Statistics, The University of Hong Kong, Pok Fu Lam, Hong Kong}
\email{dmunozgeorge@gmail.com, 18dmg1@queensu.ca} 
\author[Sigarreta]{Sayle Sigarreta} \address{Department
  of Mathematics, Benem\'erita universidad aut\'onoma de puebla,
  Ciudad Universitaria, 72592 Puebla, Mexico}
\email{sayle.sigarretari@alumno.buap.mx}
\thanks{AMS Subject Classification: 60B20, 46L54, 15B52}
\newtheorem{theorem}{Theorem}[section]
\newtheorem{proposition}[theorem]{Proposition}
\newtheorem{corollary}[theorem]{Corollary}
\newtheorem{lemma}[theorem]{Lemma}
\theoremstyle{definition}
\newtheorem{definition}[theorem]{Definition}
\newtheorem{notation}[theorem]{Notation}
\newtheorem{example}[theorem]{Example}
\newtheorem{remark}[theorem]{Remark}
\newtheorem{remark and notation}[theorem]{Remark and notation}
\newcommand{\V}{\mathcal{V}}
\newcommand{\U}{\mathcal{U}}
\newcommand{\PS}{\mathcal{PS}}
\newcommand{\NC}{\mathcal{NC}}
\newcommand{\Pn}{\mathcal{P}}
\newcommand{\PSone}{\mathcal{PS}_{NC}^{(1)}(m_1,m_2,m_3)}
\newcommand{\PStwo}{\mathcal{PS}_{NC}^{(2)}(m_1,m_2,m_3)}
\newcommand{\PSthree}{\mathcal{PS}_{NC}^{(3)}(m_1,m_2,m_3)}
\begin{document}\thispagestyle{empty}

\begin{abstract}
We provide a formula for the third order free cumulants of products as entries. We apply this formula to find the third order free cumulants of various Random Matrix Ensambles including product of Ginibre Matrices and Wishart matrices, both  in the Gaussian case.
\end{abstract}

\maketitle

\section{Introduction}
The present paper is set within the framework of higher-order freeness, a generalization of Voiculescu's Free Probability Theory \cite{VDN} that arises from the study of fluctuations in large-dimensional random matrices. Introduced in \cite{CMSS}, higher-order freeness extends the concept of second-order freeness by analyzing the asymptotic behavior of classical cumulants of traces of random matrix models as the matrix dimension tends to infinity. A key feature of this approach is that, under certain independence conditions, given the distribution of variables \(X_1, X_2, \dots, X_n\), one can compute the distribution and fluctuations of polynomials in these variables. This is particularly relevant in fundamental cases such as sums or products of random variables.  

The theory originated with the work of Mingo and Nica \cite{MN}, who described second-order fluctuations of random matrices, such as Wishart and GUE matrices, in terms of annular partitions. Later, in a foundational paper, Mingo and Speicher \cite{MS} introduced second-order freeness, providing an algebraic framework analogous to that of non-commutative probability spaces. To extend this theory to higher-order fluctuations, Collins, Mingo, Speicher, and Śniady developed the notion of higher-order freeness in \cite{CMSS}, introducing higher-order cumulants as fundamental tools to characterize these fluctuations. They also established, in \cite{CMSS}, functional relationships between the generating functions of second-order cumulants and moments, as well as the precise connection of the theory with unitarily invariant matrices.  

Recent developments in Topological Recursion have sparked renewed interest in higher-order freeness \cite{BCGLS, BCG21, BDKS23, DF, Hock23, Lionni22}. In particular, Borot et al. \cite{BCGLS} derived functional equations linking higher-order cumulants and higher-order moments. Additional derivations can be found in the works of Hock \cite{Hock23} and Lionni \cite{Lionni22}.  

In the context of free independence, free cumulants are multilinear objects that provide a straightforward characterization of this notion, often referred to as the vanishing of mixed cumulants \cite[Lecture 11]{NS}. This result states that whenever variables are freely independent (i.e. first-order free), their mixed cumulants vanish.

A fundamental question in this setting is how to express the cumulants of products in terms of the cumulants of its individual variables. This provides a crucial tool for computing the moments of new examples from known ones. The first result in this direction was obtained for first-order free cumulants by Krawczyk and Speicher \cite{KS}. In the classical probability setting, a similar result was previously derived by Leonov and Shiryaev \cite{LS}, who described the behavior of classical cumulants under products of independent random variables.  

Building on these ideas, Mingo, Speicher, and Tan \cite{MST} established the second-order analog, showing how to compute second-order cumulants of products in terms of those of the individual variables. In this work, we extend these results to third-order freeness, providing an analog of the results of Krawczyk and Speicher \cite{KS} and Mingo, Speicher, and Tan \cite{MST} for third-order cumulants.

Let us present our main results more precisely. For the sake of clarity, we will remind their corresponding first and second order version. The first one, in \cite[Theorem 2.2]{KS}, shows that if $(\mathcal{A},\varphi)$ is a non-commutative probability space and $a_1,\dots,a_n \in\mathcal{A}$ is a collection of variables, then

\begin{equation}\label{Equation: First order cumulants of products}
\kappa_p(A_1,\dots,A_p)=\sum_{\pi \in \NC(n)}\kappa_n(a_1,\dots,a_n),
\end{equation}
where the summation is over $\pi$ such that $\pi \vee \gamma =1_n$, 
$$A_i= \prod_{j=n_1+\cdots + n_{i-1}+1}^{n_1+\cdots +n_i} a_j,$$ 
and $\gamma=(1,\dots,n_1)(n_1+1,\dots,n_1+n_2)\cdots (n_1+\cdots +n_{p-1}+1,\dots, n_1+\cdots +n_p)$ with $n=\sum_{i=1}^p n_i$. In \cite{MST} is it shown that the condition $\pi \vee \gamma =1_n$ is equivalent to the condition $\pi^{-1}\gamma_{n}$ separates the points of $\{n_1,n_1+n_2,\dots,n_1+\cdots+n_p\}$, with $\gamma_n=(1,\dots,n)$. Here by separates it means that no cycle of $\pi^{-1}\gamma_{n}$ contains two or more elements of $\{n_1,n_1+n_2,\dots,n_1+\cdots+n_p\}$. In \cite[Theorem 3]{MST} they provide the analogous result to Equation \ref{Equation: First order cumulants of products} for second order cumulants,

\begin{equation}\label{Equation: Second order cumulants of products}
\kappa_{r,s}(A_1,\dots,A_r,A_{r+1},\dots,A_{r+s})=\sum_{(\V,\pi) \in PS_{NC}(p,q)}\kappa_{\V,\pi}(a_1,\dots,a_{p+q}),
\end{equation}
where the summation is over $(\V,\pi)$ such that $\pi^{-1}\gamma_{p,q}$ separates the points of $\{n_1,\dots,n_1+\cdots+n_{r+s}\}$. In this case, we are given a second order probability space $(\mathcal{A},\varphi_1,\varphi_2)$ and $\kappa_{r,s}$ are their corresponding second order cumulants. The permutation $\gamma_{p,q}=(1,\dots,p)(p+1,\dots,p+q)\in S_{p+q}$ with $p=n_1+\cdots +n_r$ and $q=n_{r+1}+\cdots +n_{r+s}$. \\

The main theorem of this paper is the analogous result for third order cumulants. Let $(\mathcal{A},\varphi,\varphi_2,\varphi_3)$ be a third order non-commutative probability space and elements $a_1,\dots a_{n_1+\dots +n_{r+s+t}} \in \mathcal{A}$ where $n_1,\dots, n_{r+s+t}$ are given positive integers. For $1\leq i\leq r+s+t$, let
$$A_i= \prod_{j=n_1+\cdots + n_{i-1}+1}^{n_1+\cdots +n_i} a_j.$$ Let us remind that $\varphi_{(\V,\pi)}$ are determined by the knowledge of all

 $\varphi_{(1,\gamma_{m_1,\dots,m_r})}[a_1,\dots,a_m] \vcentcolon=$
   $$\varphi_r(a_1\dots a_{m_1};a_{m_1+1}\cdots a_{m_1+m_2};\cdots ; a_{m_1+\cdots,m_{r-1}+1}\cdots a_{m}),$$and the free cumulants $\kappa_{(\V,\pi)}$ are also determined by the values of\\

$\kappa_{(1,\gamma_{m_1,\dots,m_r})}[a_1,\dots,a_m] \vcentcolon =$ 
$$\kappa_{m_1,\dots,m_r}(a_1,\dots, a_{m_1};a_{m_1+1},\dots, a_{m_1+m_2};\dots ; a_{m_1+\cdots,m_{r-1}+1},\dots, a_{m}),$$
 since both are multiplicative functions.

\begin{theorem}[Third order cumulants with products as arguments]\label{Main theorem cumulants with products as entries}
\begin{equation}\label{me}
\kappa_{r,s,t}(A_1,\dots,A_{r+s+t}) = \sum_{(\V,\pi)\in \PS_{NC}(p,q,l)}\kappa_{(\V,\pi)}(a_1,\dots,a_{p+q+l})
\end{equation}
where the summation is over those $(\V,\pi)\in \PS_{NC}(p,q,l)$ such that $\pi^{-1}\gamma_{p,q,l}$ separates the points of $N \vcentcolon= \{n_1,n_1+n_2,\dots,n_1+\cdots +n_{r+s+t}\}$ and $p=n_1+\cdots+n_r$, $q=n_{r+1}+\cdots +n_{r+s}$ and $l=n_{r+s+1}+\cdots+n_{r+s+t}$.
\end{theorem}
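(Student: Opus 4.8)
The plan is to obtain \eqref{me} as an instance of Möbius inversion in the poset of non-crossing partitioned permutations, mirroring the strategy used by Krawczyk--Speicher in the first order case and by Mingo--Speicher--Tan in the second order case. Since both $\varphi_{(\V,\pi)}$ and $\kappa_{(\V,\pi)}$ are multiplicative functions on partitioned permutations and are linked by the moment--cumulant relation $\varphi = \kappa * \zeta$, with inverse $\kappa = \varphi * \mu$ where $\mu$ denotes the associated Möbius function, the third order cumulant $\kappa_{r,s,t}(A_1,\dots,A_{r+s+t})$ is precisely the value of the cumulant functional at the partitioned permutation $(1_{r+s+t},\gamma_{r,s,t})$, with $\gamma_{r,s,t}=(1,\dots,r)(r+1,\dots,r+s)(r+s+1,\dots,r+s+t)$, evaluated on the products. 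The task is therefore to rewrite this single value in terms of the data of the $a_j$.

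First I would record the elementary but crucial \emph{fattening} identity: for any non-crossing partitioned permutation $(\U,\rho)$ built on the index set $\{1,\dots,r+s+t\}$ of the products, the moment $\varphi_{(\U,\rho)}(A_1,\dots,A_{r+s+t})$ equals a moment $\varphi_{(\widehat\U,\widehat\rho)}(a_1,\dots,a_{p+q+l})$ of the individual variables, where $(\widehat\U,\widehat\rho)$ is obtained by expanding each point into its block of consecutive factors according to $\gamma_{p,q,l}$. This provides an order-preserving embedding of the poset on $\{1,\dots,r+s+t\}$ onto the interval lying above $\gamma_{p,q,l}$ inside $\PS_{NC}(p,q,l)$.

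Next I would substitute this identity into $\kappa = \varphi * \mu$ and expand the resulting $a_j$-moments once more via the third order moment--cumulant formula, producing a double sum over pairs of partitioned permutations. Reorganising the sum by grouping each inner term $\kappa_{(\V,\pi)}(a_1,\dots,a_{p+q+l})$ according to its join $(\V,\pi)\vee\gamma_{p,q,l}$, the Möbius coefficients telescope over each interval of the poset, leaving the coefficient of $\kappa_{(\V,\pi)}$ equal to the indicator that $(\V,\pi)\vee\gamma_{p,q,l}$ is the top element lying over $(1_{r+s+t},\gamma_{r,s,t})$. The final step is to translate this maximality condition into the stated geometric criterion, namely that $\pi^{-1}\gamma_{p,q,l}$ separates the points of $N$, which is the third order analogue of the reformulation established in \cite{MST}.

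I expect the genus and connectedness bookkeeping peculiar to third order to be the main obstacle. Unlike the first and second order cases, the partition component $\V$ must encode that a single block spans all three ``faces'' of the underlying surface, and one must verify that both the fattening map and the join operation preserve planarity (the $NC$ constraint) and the correct defect, so that the Möbius function of the relevant interval factorizes and the telescoping is clean. Checking that the separation condition on $\pi^{-1}\gamma_{p,q,l}$ is \emph{exactly} equivalent to the join with $\gamma_{p,q,l}$ reaching the fully connected top element, rather than some intermediate configuration that merges only two of the three groups, will be the delicate point, and it is here that the third order argument genuinely departs from its second order predecessor.
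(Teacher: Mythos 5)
Your outline correctly identifies the main ingredients (the fattening map $(\U,\rho)\mapsto(\widehat\U,\widehat\rho)$, the moment--cumulant relations, and the separation condition on $\pi^{-1}\gamma_{p,q,l}$ as the target characterization), but the central step is asserted rather than proved, and it is precisely the step that does not go through by routine Möbius inversion. In the first order case the telescoping works because $\{\sigma\in \NC(n):\pi\le\widehat\sigma\}$ is canonically a lattice interval $[\pi\vee\gamma,1_n]$ and $\sum_{\sigma\in[a,b]}\mu(\sigma,b)=\delta_{a,b}$. For partitioned permutations the convolution $\varphi=\kappa*\zeta$ is \emph{not} the incidence algebra of a poset: the ``divisors'' of $(1_{r+s+t},\gamma_{r,s,t})$ are the factorizations $(\U,\sigma)(\U',\sigma')=(1,\gamma_{r,s,t})$ subject to the length-additivity constraint $|(\U,\sigma)|+|(\U',\sigma')|=|(1,\gamma_{r,s,t})|$, and $\mu$ is defined only as the convolution inverse of $\zeta$, not as the M\"obius function of an order. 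Consequently the claim that ``the M\"obius coefficients telescope over each interval, leaving the indicator that $(\V,\pi)\vee\gamma_{p,q,l}$ is the top element'' has no supporting mechanism; the join $(\V,\pi)\vee\gamma_{p,q,l}$ is not even canonically an element of $\PS_{NC}(p,q,l)$ with the right defect, and the genus bookkeeping you flag as ``the main obstacle'' is exactly where the argument breaks. You would need, at minimum, a classification of which $(\U,\sigma)$ dominate a given $(\V,\pi)$ after fattening together with an evaluation of the resulting signed sum --- and that is essentially the entire content of the theorem, not a corollary of abstract inversion.

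The paper takes a different and more hands-on route: it proves the base case $r=s=t=1$ directly by comparing the two expansions of $\varphi_3(A_1,A_2,A_3)$ and invoking the known first and second order product formulas, and then inducts on $(r,s,t)$. The induction step isolates $\kappa_{(1,\gamma_{r,s,t})}(\vec A)$ from the two expressions for $\varphi_3$ of the products, re-expands every other $\kappa_{(\V,\pi)}(\vec A)$ via the Krawczyk--Speicher and Mingo--Speicher--Tan formulas on each cycle/block (Lemmas \ref{Lemma: Relation A and a, version 1}--\ref{Main theorem: Inductive step}), and then cancels the result against the full sum over $\PS_{NC}(p,q,l)$ by means of explicit bijections (Lemmas \ref{Proposition: Induction version 1}--\ref{Proposition: Induction version 7}) between classes of $(\U,\sigma)$ with prescribed behaviour of $\sigma^{-1}\gamma_{p,q,l}$ on $N$ and the connectivity pattern $\mathbf{P}_\sigma$. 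This machinery --- the relations $\le$ and $\lesssim^{(j)}$, the restriction results of Section \ref{Section: Topology of noncrossing permutations}, and the five ``Facts'' in the appendix --- is what replaces the telescoping you hope for. If you want to pursue your route you would have to prove the analogous cancellation identities for the $\PS_{NC}$ convolution from scratch, which to my knowledge has not been done even at second order.
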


Using our main result, Theorem \ref{Main theorem cumulants with products as entries}, we compute the third order fluctuation cumulants for various new examples.  

First, we consider \( s^2 \), where \( s \) is a third order semicircular operator. This corresponds to the square of a Gaussian Unitary Ensemble.  We find the rather surprising fact that third order cumulants are all $0$.

Next, we examine the product \( c a c^* \), where \( a \) is an operator that is third order free with \( c \), and \( c \) is a third order circular operator. This case corresponds to Wishart matrices with Gaussian entries and a given covariance matrix \cite{W,LW}.  

We then compute the third order cumulants of \( aa^* \), where \( a \) is a third order \( R \)-diagonal operator. Additionally, we prove that \( R \)-diagonality is preserved under multiplication by a free element.  

Finally, we calculate the third order fluctuation  cumulants and fluctuation moments for the product of $k$ third order free circular operators, \( c_1 c_2 \dots c_k \). This example is particularly relevant in random matrix theory, as it corresponds to the product of independent Ginibre matrices. The second order case was recently derived by Dartois and Forrester \cite{DF} for \( k=2 \) and by Arizmendi and Mingo \cite{AM} for the general case.  

This paper is organized as follows: In Section \ref{Section: Preliminaries of noncrossing objects}, we introduce the necessary preliminaries on non-crossing partitions, non-crossing permutations, and non-crossing partitioned permutations.  Section \ref{Section: Notation} establishes the notation that will be used throughout the paper.  In Section \ref{Section: Partial order on Sn}, we define a partial order on \( S_n \) and present additional relations that will play a key role in our proofs.  Section \ref{Section: Topology of noncrossing permutations} explores properties of non-crossing permutations, extending known results for non-crossing partitions on \( [n] \) and non-crossing permutations on an \( (m,n) \)-annulus to non-crossing permutations on an \( (m_1,\dots,m_r) \)-annulus. Section \ref{Section: Preliminary results}, provide preliminary results that will be essential for proving our main theorem.  Section \ref{Section: Main theorem proof} contains the proof of our main result. In Section \ref{Section: Application of the theorem}, we apply the main theorem to compute the third-order cumulants of various examples. Lastly, in  Appendix \ref{append}, the main combinatorial  lemmas used in the proof of the main theorem are posed and proved, for a fluent reading.

\section{Preliminaries on noncrossing partitions, permutations and partitioned permutations}\label{Section: Preliminaries of noncrossing objects}

The main objects of this paper will be the set of non-crossing partitions, permutations and partitioned permutations, each of which, for the sake of clarity, will be exhaustively explained in this section.

\subsection{The set of partitions and non-crossing permutations}

\begin{notation}
Let $M \subset \mathbb{Z}$, a partition of $M$ is a collection of sets $B_i$ whose disjoint union is $M$, we call to the sets $B_i$ \textit{the blocks of the partition}. We will denote to the set of partitions on 
$M$ by $\Pn(M)$. When $M = [n] \vcentcolon = \{1,\dots,n\}$ we recover the set of partitions on $n$ elements which we simply denote $\Pn(n)$. We can put a partial order on $\Pn(M)$, $\leq$, given by $\pi \leq \sigma$ if every block of $\pi$ is contained in a block of $\sigma$.  The suppremum of two partitions $\pi$ and $\sigma$ will be denoted by $\pi \vee \sigma$. The largest element of $\Pn(M)$, denoted $1_M$, is the partition consisting of a single block.
\end{notation}

\begin{notation}
Let $M \subset \mathbb{Z}$, a permutation on $M$ is a bijective function from $M$ to $M$, the set of permutations on $M$ will be denoted by $S_M$. For a permutation $\pi \in S_M$ we let $\#(\pi)$ be the number of cycles in the cycle decomposition of $\pi$. We put the metric on $S_M$ given by $|\pi|=|M|-\#(\pi)$. Sometimes we may regard a permutation $\pi$ as a partition $0_{\pi}$ by considering the cycles of $\pi$ as the blocks of $0_{\pi}$.  
\end{notation}

\begin{definition}{Non-crossing permutation.}\label{Notation: Definition of non-crossing permutation}
Let $M\subset \mathbb{Z}$ and let $\gamma\in S_M$ be a fixed permutation. We say that the permutation $\pi \in S_M$ is non-crossing with respect to $\gamma$ if the following satisfy
\begin{enumerate}
    \item $\pi \vee \gamma \vcentcolon = 0_{\pi} \vee 0_{\gamma} = 1_M$, and,
    \item $\#(\pi)+\#(\pi^{-1}\gamma)+\#(\gamma)=|M|+2$.
\end{enumerate}
The set of non-crossing permutations with respect to $\gamma$ will be denoted $S_{NC}(\gamma)$. When $M=[n]$ and $\gamma=(1,\dots,n_1)(n_1+1,\dots,n_1+n_2)\cdots (n_1+\cdots +n_{r-1}+1,\dots,n)$ for some $n_1,\dots,n_r \geq 1$ with $n=\sum n_i$ we rather use the notation $S_{NC}(n_1,\dots,n_r)$ to refer $S_{NC}(\gamma)$.
\end{definition}

\begin{remark}\label{Remark: The case when Gamma has one cycle}
When $\#(\gamma)=1$ the first condition in Definition \ref{Notation: Definition of non-crossing permutation} is automatically satisfied and then we are reduced to verify,
$$\#(\pi)+\#(\pi^{-1}\gamma)=|M|+1.$$
In this case we adopt the notation $\NC(M)$ instead of $S_{NC}(|M|)$. If $M=[n]$ we simply write $\NC(n)$.
\end{remark}

As pointed out in Remark \ref{Remark: The case when Gamma has one cycle}, when $M=[n]$ and $\gamma=(1,\dots,n)$ we are back in the set of non-crossing partitions, usually denoted $\NC(n)$ see \cite[Lecure 9]{NS}. Let us elaborate a bit more on this direction. For a partition $\pi \in \Pn(n)$ we say that $\pi$ has a crossing if there are $i<j<k<l$ such that $i,k$ are in the same block of $\pi$ and $j,l$ are in the same block of $\pi$ distinct from the one containing $i$ and $k$. A partition is said to be non-crossing if it has no crossings (see Figure \ref{f1}). Each partition can be seen as a permutation by considering the block of the partition as the cycle of the permutation, however this depends on the choice of the cyclic order. Bianne \cite{B}, showed that only the permutation whose cyclic order is increasing turns out to be non-crossing with respect to $\gamma$, in the sense of Definition \ref{Notation: Definition of non-crossing permutation}. This is why, when $\gamma=(1,\dots,n)\in S_n$ we do not distinguish in between non-crossing permutations with respect to $\gamma$, and non-crossing partitions on $[n]$, and both sets are usually denoted $\NC(n)$.

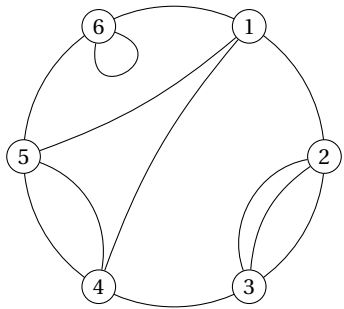
\begin{figure}[ht]
\begin{center}
\begin{tikzpicture}
\begin{scriptsize}
  \begin{scope}[shift={(0,0)}]
    \draw[] (0,0) circle (2);
    \foreach \i in {1,...,6} {
      \node[fill=white, draw, circle, inner sep=2pt] (n\i) at ({360/6 * (2-\i )}:2) {\i};
    }

     \path[-] (n1) edge[bend left=-10] (n4);
  \path[-] (n5) edge[bend left=40] (n4);
  \path[-] (n5) edge[bend left=-10] (n1);
  \path[-] (n3) edge[bend left=50] (n2);
  \path[-] (n3) edge[bend left=25] (n2);

   \path (n6) edge [out=260,in=340,looseness=8] node[above]{}(n6);
  \end{scope}
\end{scriptsize}

\end{tikzpicture}
\caption{An example of a non-crossing partition in $\NC(6)$.}
\label{f1}
\end{center}
\end{figure}

\begin{definition}\label{dm1}
Let $M \subset \mathbb{Z}$ and $M^\prime \subset M$. Let $\sigma \in S_M$. We let $\sigma|_{M^\prime}$ be the permutation in $S_{M^\prime}$ defined as,
$$\sigma|_{M^\prime}(m)=\sigma^{m^{min}}(m),$$
where $m^{min} = min\{k\geq 1: \sigma^k(m)\in M^\prime\}.$ It is well known that $\sigma^{|M|!}=id$ and therefore such a minimum exist. Moreover if $m_1,m_2\in M^\prime$ are such that $\sigma|_{M^\prime}(m_1)=\sigma|_{M^\prime}(m_2)$ then $\sigma^{m_1^{min}}(m_1)=\sigma^{m_2^{min}}(m_2)$. If $m_1^{min}=m_2^{min}$ then $m_1=m_2$, if $m_1^{min}<m_2^{min}$ then we apply the inverse function, $\sigma^{-1}$, $m_1^{min}$ times to get $m_1=\sigma^{m_2^{min}-m_1^{min}}(m_2)$, however this means that $\sigma^{m_2^{min}-m_1^{min}}(m_2) \in M^\prime$ with $0<m_2^{min}-m_1^{min}<m_2^{min}$ which is not possible, so it must be $m_1^{min}=m_2^{min}$ and $m_1=m_2$. The latter proves that $\sigma|_{M^\prime}$ is well defined.
\end{definition}

\begin{example}
    In order to illustrate how the above definition works,  
 let us take $\sigma= (1,4,5)(2,3)(6) \in S_M$ with $M=\{1,2,3,4,5,6\}$ which, in fact, is derived from the partition depicted in Figure \ref{f1}. Thus, if $M^\prime=\{1,3,6\}$ then $\sigma|_{M^\prime}=(1)(3)(6)=0_{M^\prime}$ and if $M^\prime=\{1,2,4\}$ then $\sigma|_{M^\prime}=(1,4)(2)$. Therefore, to construct $\sigma|_{M^\prime}$ it is only necessary to remove, maintaining the order, from each cycle of $\sigma$ the elements that are not in $M^\prime$.

\end{example}

\begin{remark}
    Under the notation of Definition \ref{dm1}, $\gamma_{m_1,\dots,m_l}\pi^{-1}$  separates the points of $M \subset [m_1+\dots+m_l]$ iff $\gamma_{m_1,\dots,m_l}\pi^{-1}|_{M}=id_{M}$, where $id_{M}$ is the identity permutation. Also, at this point, it's worth noting that, the condition $\pi^{-1}\gamma_{r_1,\dots,r_m}$ separates the points of $N \vcentcolon= \{n_1,n_1+n_2,\dots,n_1+\cdots +n_{r_1+\dots+r_m}\}$ is equivalent to the condition that $\gamma_{r_1,\dots,r_m}\pi^{-1}$ separates the points of
$O\vcentcolon=\{1, n_1+1, n_1+n_2+1, \ldots,$
$n_1+\cdots+n_{r_1+\dots+r_{m-1}}+1\}$ since $\pi^{-1}\gamma_{r_1,\dots,r_m}(i)=j$ iff $\gamma_{r_1,\dots,r_m}\pi^{-1}(\gamma_{r_1,\dots,r_m}(i))=\gamma_{r_1,\dots,r_m}(j)$. By the way, the last relation also implies that, $\#\pi^{-1}\gamma_{r_1,\dots,r_m}=\#\gamma_{r_1,\dots,r_m}\pi^{-1}$.
\end{remark}

\subsection{Non-crossing partitioned permutations.}\hfill \\
Recall that the higher order free cumulants are defined by the equations \cite[Definition 7.4]{CMSS},

$$\varphi_{\V,\pi}[a_1,\dots,a_m] = \sum_{(\U,\pi)\in \PS_{NC}(\V,\pi)} \kappa_{(\U,\pi)}[a_1,\dots,a_m],$$
where $\PS_{NC}(\V,\pi)$ denotes the set of $(\V,\pi)$ non-crossing partitioned permutations defined as in \cite{CMSS}. Throughout this work we will be interested in the case $(\V,\pi)=(1,\gamma_{m_1,m_2,m_3})$. Where $\gamma_{m_1,\dots,m_r}$ denotes the permutation of $S_{m}$ with cycle decomposition,
$$(1,\dots,m_1)(m_1+1,\dots m_1+m_2)\cdots (m_1+\cdots +m_{r-1}+1,\dots ,m),$$
and $m=\sum m_i$.

We have the following result \cite[Lemma 3.8]{MM} to classify $\PS_{NC}(1_m,\allowbreak \gamma_{m_1,m_2,m_3})$ which we simply denote $\PS_{NC}(m_1,m_2,m_3)$.

\begin{lemma}\label{Solutions to third order partitioned permutations}
Let $m = m_1 + m_2 + m_3$, $\gamma = \gamma_{m_1, m_2, m_3}$
and $(\mathcal{V}, \pi)$ be a partitioned permutation such that $\mathcal{V}
\vee \gamma = 1$ and
\[
|(\mathcal{V}, \pi)| + |(0_{\pi^{-1}\gamma}, \pi^{-1} \gamma)| =
|(1, \gamma)|.
\]
Then, either
\begin{enumerate}

\item
$\mathcal{V} = 0_\pi$ and $\pi \in
  S_{NC}(m_1, m_2, m_3)$;

\item
$\pi =\pi_1 \times \pi_2 \in S_{NC}(m_{i_1}, m_{i_2}) \times
  \NC(m_{i_3})$ for some permutation $(i_1, i_2, i_3)$ of $\{1,
  2, 3\}$, $\#(\mathcal{V}) = \#(\pi) - 1$, and $\mathcal{V}$ joins a cycle of
  $\pi_1$ with a cycle of $\pi_2$;

\item
$\pi = \pi_1 \times \pi_2 \times \pi_3 \in \NC(m_1) \times
  \NC(m_2) \times \NC(m_3)$, $\#(\mathcal{V}) = \#(\pi) - 2$ and,
  $\mathcal{V}$ joins a cycle of $\pi_{i_1}$ with a cycle of
  $\pi_{i_2}$ in one block and joins a cycle of $\pi_{i_2}$
  with a cycle of $\pi_{i_3}$ into another block of $\mathcal{V}$,
  with $(i_1, i_2, i_3)$ some permutation of $\{1, 2, 3\}$;

\item
$\pi = \pi_1 \times \pi_2 \times \pi_3 \in \NC(m_1) \times
  \NC(m_2) \times \NC(m_3)$, $\#(\mathcal{V}) = \#(\pi) - 2$ and,
  $\mathcal{V}$ joins a cycle of $\pi_{1}$, a cycle of $\pi_{2}$ and
  a cycle of $\pi_3$ into a single block of $\mathcal{V}$.
\end{enumerate}
In all above cases all blocks of $\mathcal{V}$ are cycles of $\pi$, except possibly by the ones which are unions of cycles of $\pi$, we refer to these cycles as the marked blocks of $\pi$.
\end{lemma}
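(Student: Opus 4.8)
The plan is to reduce the two hypotheses to a single integer equation, recognize that equation as the equality case of a universal inequality, and then read the four cases off the rigidity that equality imposes.

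\emph{Reduction.} I set $\epsilon := \#(\pi)-\#(\V) \ge 0$ (nonnegative because $0_\pi \le \V$) and the geodesic defect $\delta := m + \#(\gamma) - \#(\pi) - \#(\pi^{-1}\gamma)$. Using the length of a partitioned permutation $|(\V,\pi)| = |\pi| + 2(\#(\pi)-\#(\V))$, together with $|(0_{\pi^{-1}\gamma},\pi^{-1}\gamma)| = |\pi^{-1}\gamma| = m-\#(\pi^{-1}\gamma)$ and $|(1,\gamma)| = (m-\#(\gamma))+2(\#(\gamma)-1) = m+1$, I expand the hypothesis $|(\V,\pi)| + |(0_{\pi^{-1}\gamma},\pi^{-1}\gamma)| = |(1,\gamma)|$ and simplify it to the equivalent form $2\epsilon + \delta = 4$. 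Everything after this point is the analysis of this one equation under the side condition $\V \vee \gamma = 1_m$.

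\emph{A universal lower bound.} The heart of the argument is to show that $2\epsilon + \delta \ge 4$ holds for \emph{every} such partitioned permutation, and to describe when equality occurs. Write $c := \#(\pi \vee \gamma)$. Each block $B$ of $\pi \vee \gamma$ is invariant under $\pi$ and $\gamma$ and is a union of cycles of $\gamma$; since the cycles of $\gamma = \gamma_{m_1,m_2,m_3}$ are the three intervals, $B$ is a union of intervals and $g_B := \#(\gamma|_B) \in \{1,2,3\}$ with $\sum_B g_B = 3$. On $B$ one has $\pi|_B \vee \gamma|_B = 1_B$, so the factorization $\pi|_B\cdot(\pi|_B)^{-1}\gamma|_B = \gamma|_B$ is transitive, and the genus inequality for transitive factorizations (the Euler characteristic of the associated map is at most $2$) gives $\delta_B := |B| + \#(\gamma|_B) - \#(\pi|_B) - \#((\pi|_B)^{-1}\gamma|_B) = 2(g_B-1+h_B)$ for an integer genus $h_B \ge 0$. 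In particular $\delta_B \ge 2(g_B-1)$, with equality exactly when $\pi|_B \in S_{NC}(\gamma|_B)$, since $h_B=0$ is precisely condition (2) of Definition \ref{Notation: Definition of non-crossing permutation}. Summing, $\delta = \sum_B \delta_B \ge 2(3-c)$. Independently, since $\V \vee \gamma = \V \vee (\pi\vee\gamma) = 1_m$, the blocks of $\V$ must connect the $c$ blocks of $\pi\vee\gamma$; counting the cycles of $\pi$ inside each connecting block of $\V$ gives $\epsilon \ge c-1$ (a connected hypergraph on $c$ vertices has $\sum(t_j-1)\ge c-1$). Adding the two bounds yields $2\epsilon+\delta \ge 2(c-1)+2(3-c)=4$.

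\emph{Equality and enumeration.} As the hypothesis forces $2\epsilon+\delta=4$, both bounds are equalities: every $\delta_B = 2(g_B-1)$, so each $\pi|_B$ is non-crossing with respect to $\gamma|_B$; and $\epsilon = c-1$, so $\V$ connects the $c$ join-blocks by a minimal hypertree of merges with no redundancy. Consequently each nontrivial block of $\V$ is a union of cycles of $\pi$ that takes exactly one cycle from each join-block it meets, while all other blocks of $\V$ are single cycles of $\pi$ — this is the final ``marked blocks'' assertion. It then remains to split on $c \in \{1,2,3\}$, forced by $g_B \ge 1$ and $\sum g_B = 3$. For $c=1$ we get $\epsilon=0$, $\V=0_\pi$, $\pi\in S_{NC}(m_1,m_2,m_3)$, which is Case (1). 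For $c=2$ the $g$-values are $\{1,2\}$, so $\pi = \pi_1\times\pi_2 \in S_{NC}(m_{i_1},m_{i_2})\times\NC(m_{i_3})$ and the single merge joins a cycle of $\pi_1$ with a cycle of $\pi_2$, which is Case (2). For $c=3$ each block is one interval, $\pi = \pi_1\times\pi_2\times\pi_3 \in \NC(m_1)\times\NC(m_2)\times\NC(m_3)$, and the two merges form a spanning tree on three vertices, which is either a path of two binary merges (Case (3)) or a single ternary merge joining one cycle from each block (Case (4)).

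The step I expect to be the main obstacle is the per-block genus identity $\delta_B = 2(g_B-1+h_B)$ paired with the connectivity bound $\epsilon \ge c-1$ and the precise description of its equality case; once those two inequalities are secured, $2\epsilon+\delta=4$ collapses the classification to the elementary split on $c$. To identify the genus-zero restrictions with the disc and annular non-crossing sets cleanly, I would lean on the structure theory of non-crossing permutations on an $(m_1,\dots,m_r)$-annulus developed in Section \ref{Section: Topology of noncrossing permutations}.
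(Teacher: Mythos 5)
A preliminary caveat: this paper does not prove Lemma \ref{Solutions to third order partitioned permutations} at all — it is quoted verbatim from \cite[Lemma 3.8]{MM} — so there is no in-paper proof to compare against. Judged on its own merits, your proof is correct, and it is organized differently from the metric-style arguments on partitioned permutations used in \cite{MM} (and, at second order, in \cite{CMSS}): you localize the length hypothesis into two independent lower bounds and read the classification off the equality case. The reduction is right: with $|(\V,\pi)|=(m-\#(\pi))+2\epsilon$ and $|(1,\gamma)|=m+1$, the hypothesis is exactly $2\epsilon+\delta=4$. Both bounds are valid: each block $B$ of $\pi\vee\gamma$ is invariant under $\pi$, $\gamma$, and hence $\pi^{-1}\gamma$, so the defects add over blocks, and \cite[Equation 2.9]{MN} with $\#(\pi|_B\vee\gamma|_B)=1$ gives $\delta_B\ge 2(\#(\gamma|_B)-1)$, whence $\delta\ge 2(3-c)$; and since every cycle of $\pi$ lies in a single block of $\pi\vee\gamma$, the identity $\V\vee\gamma=\V\vee(\pi\vee\gamma)=1_m$ together with the connected-hypergraph count gives $\epsilon\ge c-1$. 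Your equality analysis extracts the needed termwise rigidity correctly: $t_j=v_j$ for every block of $\V$ (each merging block takes exactly one cycle from each join-block it meets, which is the closing ``marked blocks'' assertion), and the hypertree enumeration on $c\le 3$ vertices — where for $c=3$ two binary merges on the same pair of vertices would leave the third vertex disconnected, so the only options are a two-edge path or a single ternary merge — yields precisely cases (1)--(4). Two small economies are worth noting: you do not need the full integer-genus identity $\delta_B=2(g_B-1+h_B)$, only the inequality and its equality case; and that equality case ($h_B=0$ with transitive join) is verbatim Definition \ref{Notation: Definition of non-crossing permutation}, so your closing appeal to the structure theory of Section \ref{Section: Topology of noncrossing permutations} is unnecessary — that section concerns restrictions of non-crossing permutations to subsets, which never arise here. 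What your route buys in exchange: since $|(1,\gamma)|=m+\#(\gamma)-2$ in general, the same pinching with $2\epsilon+\delta=2(r-1)$ against $\delta\ge 2(r-c)$ and $\epsilon\ge c-1$ proves the analogous classification over any number $r$ of circles, whereas the statement imported from \cite{MM} is specific to three.
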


Lemma \ref{Solutions to third order partitioned permutations} describes all possible non-crossing partitioned permutations in three circles. For the sake of clarity let us label these sets. The first is $S_{NC}(m_1, m_2,m_3)$ under the abuse of notation that $\pi$ is identified with the pair $(0_{\pi},\pi)$. Next, there is the set
\begin{multline*}
\{ (\V, \pi) \mid \pi=\pi_1\times \pi_2 \in S_{NC}(m_{i_1}, m_{i_2}) \times
\NC(m_{i_3}), |\V| = |\pi| + 1\\ \textrm{\ and\ } \V \textrm{ joins a cycle of }\pi_1 \textrm{ with a cycle of } \pi_2 \\
    \textrm{with } (i_1,i_2,i_3) \textrm{ a permutation of } \{1,2,3\}\},
\end{multline*}
which we denote by $\PSone$. Next, we have
\begin{multline*}
\{ (\V, \pi) \mid \pi \in \NC(m_1) \times \NC(m_2) \times
\NC(m_3), |\V| = |\pi| + 2, \V \vee \gamma = 1
\\ \textrm{\ and 2 blocks of\ }\V \textrm{\ each contain
  two cycles of\ }\pi\},
\end{multline*}
which we denote by $\PStwo$. Finally we
have
\begin{multline*}
\{ (\V, \pi) \mid \pi \in \NC(m_1) \times \NC(m_2) \times
\NC(m_3), |\V| = |\pi| + 2, \V \vee \gamma = 1
\\ \textrm{\ and 1 block of\ }\V \textrm{\ contain three
  cycles of\ }\pi\},
\end{multline*}
which we denote by $\PSthree$

\begin{example}
    Let $\pi=(1,2,12,9,8)(3,4)(5,10,11)(6)(7)(13,15)(14)$ and $V=\{\{1,2,12,9,8\},\{3,4\},\{5,\\10,11,13,15\},\{6\},\{7\},\{14\}\}$. Then, for $(r,s,t)=(8,4,3)$, $(\V,\pi)\in PS_{NC}^{(1)}(8,4,3)$  since $\pi =\pi_1 \times \pi_2 \in S_{NC}(8, 4) \times
  \NC(3)$, $\#(\mathcal{V}) = \#(\pi) - 1$, and $\mathcal{V}$ joins a cycle of
  $\pi_1$ with a cycle of $\pi_2$, this is
indicated by the red dotted line in the diagram shown in Figure \ref{f2}. 
\end{example}

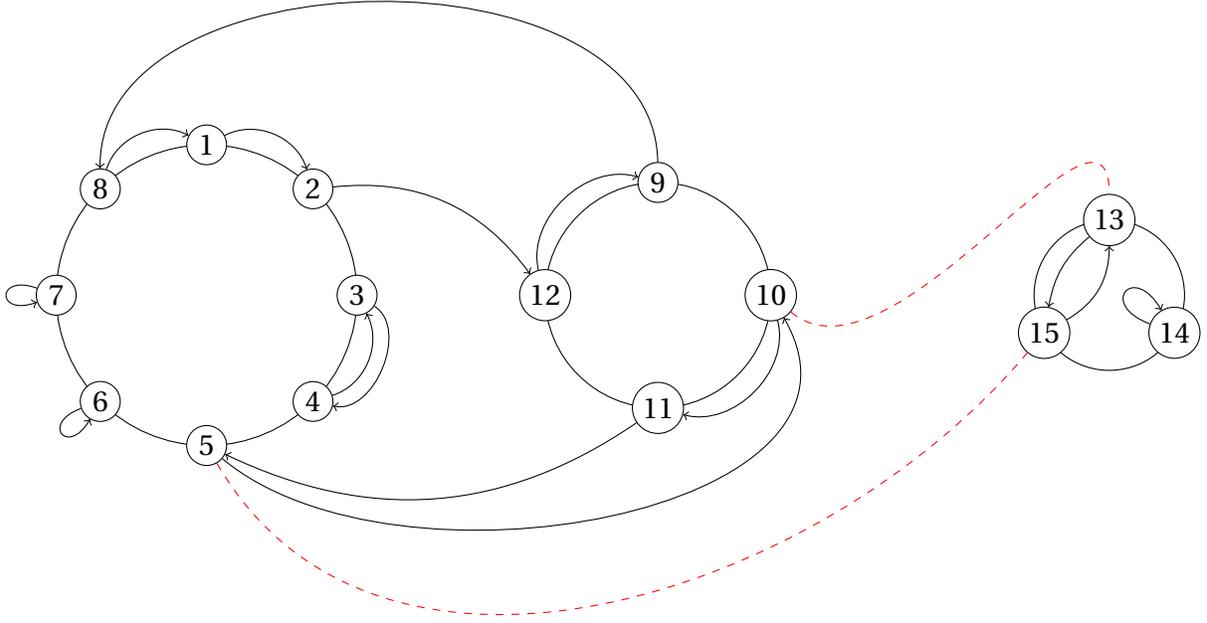
\begin{figure}[ht]
\begin{center}

\begin{tikzpicture}

  \begin{scope}[shift={(0,0)}]
    \draw[] (0,0) circle (2);
    \foreach \i in {1,...,8} {
      \node[fill=white, draw, circle, inner sep=2pt] (n\i) at ({360/8 * (3-\i )}:2) {\i};
    }
  \end{scope}

  \begin{scope}[shift={(6,0)}]
    \draw[] (0,0) circle (1.5);
    \foreach \i in {9,10,11,12} {
      \node[fill=white, draw, circle, inner sep=2pt] (n\i) at ({360/4 * (10-\i )}:1.5) {\i};
    }
  \end{scope}

  \begin{scope}[shift={(12,0)}]
    \draw[] (0,0) circle (1);
    \foreach \i/\label in {90/13, 210/15, 330/14} {
      \node[fill=white, draw, circle, inner sep=2pt] (n\i) at (\i:1) {\label};
    }
  \end{scope}
  \path[->] (n1) edge[bend left=50] (n2);
  \path[->] (n8) edge[bend left=50] (n1);
  \path[->] (n9) edge[bend left=-90] (n8);
  \path[->] (n2) edge[bend left=30] (n12);
   \path[->] (n12) edge[bend left=60] (n9);
   \path[->] (n3) edge[bend left=80] (n4);
     \path[->] (n4) edge[bend left=-50] (n3);
      \path[->] (n11) edge[bend left=30] (n5);
      \path[->] (n10) edge[bend left=60] (n11);
     \draw[->] (n5) to [out=320, in=300] (n10);
       \path[->] (n90) edge[bend left=-20] (n210);
       \path[->] (n210) edge[bend left=-30] (n90);
       \draw[color=red, dashed] (n10) to  [out=320, in=90] (n90);
        \draw[color=red, dashed] (n5) to  [out=300, in=230] (n210);

\path[->]  (n7) edge [out=160,in=200,looseness=8] node[above]{}(n7);

\path[->]  (n6) edge [out=200,in=240,looseness=8] node[above]{}(n6);
\path[->]  (n330) edge [out=160,in=120,looseness=8] node[above]{}(n330);


\end{tikzpicture}

\caption{$(\V,\pi)\in PS_{NC}^{(1)}(8,4,3)$.}
\label{f2}
\end{center}
\end{figure}

\section{Notation}\label{Section: Notation}

Let us set up the notation that we will use through  the paper. Most of the time we will adhere to notation used in \cite{MST} to make things more consistent. We let $r,s,t \in \mathbb{N}$ and let $n_1,\dots,n_{r+s+t} \in \mathbb{N}$ be a collection of integers. Let $N \vcentcolon =\{n_1,n_1+n_2,\dots,n_1+n_2+ \dots +n_{r+s+t}\}$ and let $p=n_1+\cdots +n_r$, $q=n_{r+1}+\cdots +n_{r+s}$ and $l=n_{r+s+1}+\cdots +n_{r+s+t}$. 

We let $\gamma \in S_n$ to be the permutation with cycle decomposition,
$$(1,\dots,n_1)\cdots (n_1+\cdots +n_{r+s+t-1}+1,\dots,n_{1}+\cdots+n_{r+s+t}),$$
where $n=p+q+l=\sum n_i$. We also denote by $T_i$ to the $i^{th}$ cycle of $\gamma$, i.e,
$$T_i=(n_1+\cdots +n_{i-1}+1,\dots,n_1+\cdots +n_{i}).$$

For a permutation $\pi \in S_{r+s+t}$ we let $\pi_{\vec{n}}$ be the permutation in $S_{n}$ given by,
$$\pi_{\vec{n}}(i)= \left\{ \begin{array}{lcc} \gamma(i) & if & i\notin N \\ \\ n_1+\cdots+n_{\pi(j)-1}+1 & if & i\in N,i=n_1+\cdots+n_j \text{ for some }j\in [r+s+t] \end{array} \right.$$

We let $\gamma_{r,s,t} \in S_{r+s+t}$ and $\gamma_{p,q,l} \in S_n$ be the permutations given by,
$$\gamma_{r,s,t}=(1,\dots,r)(r+1,\dots,r+s)(r+s+1,\dots,r+s+t),$$
$$\gamma_{p,q,l}=(1,\dots,p)(p+1,\dots,p+q)(p+q+1,\dots,p+q+l).$$

\begin{remark}\label{Remark: How Pin works}
Observe that $\pi_{\vec{n}}$ is a permutation whose cycle decomposition is given by the cycle decomposition of $\gamma$ after joining $T_i$ to $T_{\pi(i)}$. In other words, $\pi_{\vec{n}}$ takes the last element of $T_i$ to the first element of $T_{\pi(i)}$. This can be rephrased as saying that $\pi_{\vec{n}}^{-1}$ sends the first element of $T_i$ to the last element of  $T_{\pi^{-1}(i)}$, i.e. $\pi_{\vec{n}}^{-1}(\gamma(n_1+\cdots +n_i))=n_1+\cdots+n_{\pi^{-1}(i)}$.
\end{remark}

\begin{remark and notation}\label{Remark: Decomposition of Pin}
Observe that the cycle decomposition of $\pi_{\vec{n}}$ is determined by the cycle decomposition of $\pi$. If $\pi\in S_{r+s+t}$ has cycle decomposition $C_1\cdots C_v$, then $\pi_{\vec{n}}$ has cycle decomposition $\tilde{C_1}\cdots \tilde{C_v}$ where $\tilde{C_i}$ corresponds to the cycle of $\pi_{\vec{n}}$ obtained from merging the cycles $T_{j}$ for $j \in C_i$, that is, if the cycle of $\pi$ is $(i_1,\dots,i_w)$ the corresponding cycle of $\pi_{\vec{n}}$ is $T_{i_1}\cup T_{i_2}\cup\cdots \cup T_{i_w}$ where the union of two cycles simply means merging the cycles. 
It is clear that each cycle of $\gamma$ is contained in a cycle of $\pi_{\vec{n}}$. For a cycle, $\tilde{C_i}$, of $\pi_{\vec{n}}$ we denote by $\gamma_i$ to the restriction of $\gamma$ to $\tilde{C_i}$.
\end{remark and notation}

For a partitioned permutation, 
$$(\V,\pi)\in PS_{NC}^{(1)}(r,s,t)\cup PS_{NC}^{(2)}(r,s,t)\cup PS_{NC}^{(3)}(r,s,t),$$ 
we let $(\V_{\vec{n}},\pi_{\vec{n}})$ be the partitioned permutation in $S_n$ given as follows,
\begin{enumerate}
    \item $\pi_{\vec{n}}$ is defined as before.
    \item If $C_1,\dots, C_w$ are cycles of $\pi$ such that $0_{C_1\cup \cdots\cup C_w}$ is a block of $\V$ then we let $0_{\tilde{C_1}\cup \cdots \cup \tilde{C_w}}$ to be a block of $\V_{\vec{n}}$, 
\end{enumerate}
where $\tilde{C_i}$ is the cycle of $\pi_{\vec{n}}$ as in Remark \ref{Remark: Decomposition of Pin}.

\begin{example}
    Let $r=3$, $s=t=2$, $n_1=n_6=2$, $n_2=n_7=1$, $n_3=4$, $n_4=5$, $n_5=3$, $\pi=(1,2,3)(4,5)(6)(7)$ and $\V=\{\{1,2,3,6\},\{4,5,7\}\}$. Then $\pi_{\vec{18}}=(1,2,3,4,5,6,7)(8,9,10,11,12,13,14,15)\\
    (16,17)(18)$ and $\V_{\vec{18}}=\{\{1,2,3,4,5,6,7,16,17\},\{8,9,10,11,12,13,14,15,18\}\}$ (see Figure \ref{f3}). In this example, $(\V,\pi) \in PS_{NC}^{(2)}(3,2,2)$ while $(V_{\vec{18}},\pi_{\vec{18}}) \in PS_{NC}^{(2)}(7,8,3)$.
\end{example}

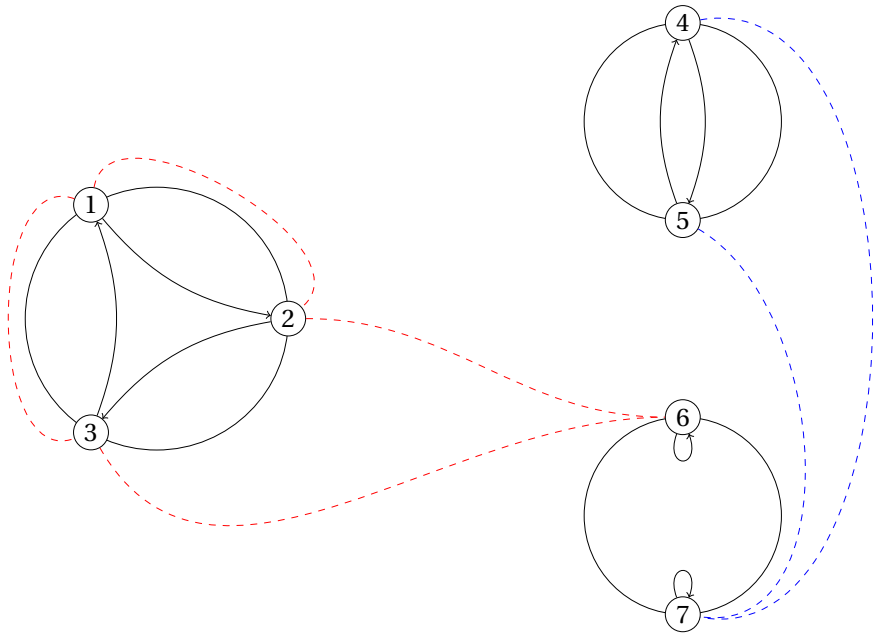
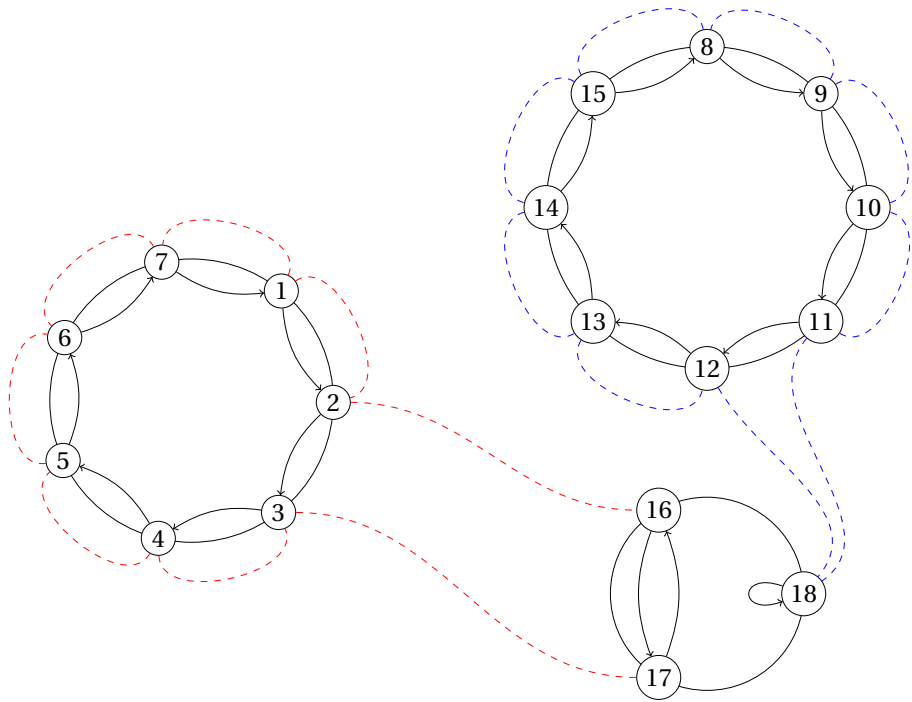
\begin{figure}[ht]
    \centering

    \begin{subfigure}{0.77\textwidth}
        \centering
        \resizebox{\textwidth}{!}{
            \begin{tikzpicture}
                
  \begin{scope}[shift={(0,0)}]
    \draw (0,0) circle (2);
    \foreach \i in {1,2,3} {
      \node[fill=white, draw, circle, inner sep=2pt] (n\i) at ({360/3 * (5-\i )}:2) {\i};
    }
  \end{scope}

   \path[->] (n3) edge[bend left=-20] (n1);
    \path[->] (n1) edge[bend left=-20] (n2);
     \path[->] (n2) edge[bend left=-20] (n3);

  \begin{scope}[shift={(8,3)}]
    \draw (0,0) circle (1.5);
    \foreach \i in {4,5} {
      \node[fill=white, draw, circle, inner sep=2pt] (n\i) at ({90* (-2*\i+9) }:1.5) {\i};
    }
  \end{scope}

    \path[->] (n4) edge[bend left=20] (n5);
     \path[->] (n5) edge[bend left=20] (n4);

  \begin{scope}[shift={(8,-3)}]
    \draw (0,0) circle (1.5);
    \foreach \i in {6,7} {
      \node[fill=white, draw, circle, inner sep=2pt] (n\i) at ({90 * (-2*(\i-2)+9 )}:1.5) {\i};
    }
  \end{scope}

     \path[->]  (n6) edge [out=250,in=290,looseness=8] node[above]{}(n6);
\path[->]  (n7) edge [out=110,in=70,looseness=8] node[above]{}(n7);

    \path[-,color=red, dashed] (n1) edge[bend left=110] (n2);
    
    \path[-,color=red, dashed] (n1) edge[bend left=-110] (n3);
 
\draw[color=red,dashed] (n2) to [out=0, in=180] (n6);
\draw[color=red,dashed] (n3) to [out=300, in=180] (n6);

\draw[color=blue,dashed] (n4) to [out=10, in=350] (n7);
\draw[color=blue,dashed] (n5) to [out=330, in=352] (n7);

            \end{tikzpicture}
        }
        \caption{$(\V,\pi).$}
       
    \end{subfigure}

    \vspace{0.5cm} 

    \begin{subfigure}{0.77\textwidth}
        \centering
        \resizebox{\textwidth}{!}{
            \begin{tikzpicture}
  \begin{scope}[shift={(0,0)}]
    \draw (0,0) circle (2.2);
    \foreach \i in {1,...,7} {
      \node[fill=white, draw, circle, inner sep=2pt] (n\i) at ({360/7 * (5-\i )+205}:2.2) {\i};
    }
  \end{scope}

    \path[->] (n1) edge[bend left=-20] (n2);
     \path[->] (n2) edge[bend left=-20] (n3);
  \path[->] (n3) edge[bend left=-20] (n4);
     \path[->] (n4) edge[bend left=-20] (n5);
       
    \path[->] (n5) edge[bend left=-20] (n6);
     \path[->] (n6) edge[bend left=-20] (n7);
  \path[->] (n7) edge[bend left=-20] (n1);

  \begin{scope}[shift={(8,3)}]
    \draw (0,0) circle (2.5);
    \foreach \i in {8,...,15} {
      \node[fill=white, draw, circle, inner sep=2pt] (n\i) at ({360/8 * (10-\i )}:2.5) {\i};}
  \end{scope}

  \path[->] (n8) edge[bend left=-20] (n9);
     \path[->] (n9) edge[bend left=-20] (n10);
  \path[->] (n10) edge[bend left=-20] (n11);
     \path[->] (n11) edge[bend left=-20] (n12);
       
    \path[->] (n12) edge[bend left=-20] (n13);
     \path[->] (n13) edge[bend left=-20] (n14);
  \path[->] (n14) edge[bend left=-20] (n15);
    \path[->] (n15) edge[bend left=-20] (n8);

      \begin{scope}[shift={(8,-3)}]
    \draw (0,0) circle (1.5);
    \foreach \i in {16,17,18} {
      \node[fill=white, draw, circle, inner sep=2pt] (n\i) at ({360/3 * ((\i-15) )}:1.5) {\i};
    }
  \end{scope}

   \path[->] (n16) edge[bend left=-20] (n17);
    \path[->] (n17) edge[bend left=-20] (n16);
     \path[->]  (n18) edge [out=160,in=200,looseness=8] node[above]{}(n18);

 \path[-,color=red, dashed] (n1) edge[bend left=100] (n2);
  \path[-,color=red, dashed] (n1) edge[bend left=-100] (n7);
   \path[-,color=red, dashed] (n7) edge[bend left=-100] (n6);
    \path[-,color=red, dashed] (n5) edge[bend left=100] (n6);
     \path[-,color=red, dashed] (n4) edge[bend left=100] (n5);
       
        \path[-,color=red, dashed] (n4) edge[bend left=-100] (n3);
\draw[color=red,dashed] (n2) to [out=0, in=180] (n16);
\draw[color=red,dashed] (n3) to [out=0, in=180] (n17);

 \path[-,color=blue, dashed] (n11) edge[bend left=-100] (n10);
  \path[-,color=blue, dashed] (n9) edge[bend left=100] (n10);
   \path[-,color=blue, dashed] (n9) edge[bend left=-100] (n8);
   \path[-,color=blue, dashed] (n8) edge[bend left=-100] (n15);
  \path[-,color=blue, dashed] (n15) edge[bend left=-100] (n14);
   \path[-,color=blue, dashed] (n14) edge[bend left=-100] (n13);
      \path[-,color=blue, dashed] (n13) edge[bend left=-100] (n12);

      \draw[color=blue,dashed] (n12) to [out=300, in=50] (n18);
\draw[color=blue,dashed] (n11) to [out=230, in=35] (n18);
            \end{tikzpicture}
        }
        \caption{($\V_{\vec{n}},\pi_{\vec{n}}$).}
        \label{fig:second}
    \end{subfigure}

    \caption{Permutations in solid lines and partitions in dashed lines.}
    \label{f3}
\end{figure}

\section{Partial order and other relations on $S_n$}\label{Section: Partial order on Sn}

We can extend the relation, $\leq$, given in \cite[Definition 17]{MST} to the set of permutations.
\begin{definition}\label{Definition: Partial order on Sn}
Let $\pi,\sigma \in S_{n}$. Suppose that each cycle of $\pi$ is contained in some cycle of $\sigma$ and for each cycle $C$ of $\sigma$ the enclosed cycles of $\pi$ form a non-crossing partition of $C$, that is, if $\pi_C$ is $\pi$ restricted to $C$ then $\pi_C \in \NC(C)$. Then we write $\pi \leq \sigma$.
\end{definition}

The next proposition shows that the relation, $\leq$ given in definition \ref{Definition: Partial order on Sn}, extends to $S_{n}$ the usual partial order on $\NC(n)$ given by inclusion of blocks.

\begin{proposition}
The relation, $\leq$, in $S_n$ given in definition \ref{Definition: Partial order on Sn} is a partial order in $S_n$.
\end{proposition}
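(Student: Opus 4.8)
The plan is to verify the three defining properties of a partial order for the relation $\leq$ of Definition~\ref{Definition: Partial order on Sn}: reflexivity, antisymmetry, and transitivity. Throughout I would use the metric $|\pi| = |M| - \#(\pi)$ on $S_M$ together with its triangle inequality, and the characterization of $\NC(C) = S_{NC}(\gamma_C)$ for a single cycle $\gamma_C$ recorded in Remark~\ref{Remark: The case when Gamma has one cycle}, namely that $\alpha \in \NC(C)$ iff $\#(\alpha) + \#(\alpha^{-1}\gamma_C) = |C| + 1$, equivalently the geodesic identity $|\alpha| + |\alpha^{-1}\gamma_C| = |\gamma_C|$.

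Reflexivity is immediate: for any cycle $C$ of $\pi$ the restriction $\pi|_C$ is the full cycle on $C$, and $\#(\pi|_C) + \#((\pi|_C)^{-1}\pi|_C) = 1 + |C| = |C| + 1$, so $\pi|_C \in \NC(C)$ and $\pi \leq \pi$. For antisymmetry I would assume $\pi \leq \sigma$ and $\sigma \leq \pi$. The cycle-containment clauses give $0_\pi \leq 0_\sigma$ and $0_\sigma \leq 0_\pi$ as partitions, hence $0_\pi = 0_\sigma$, so $\pi$ and $\sigma$ have exactly the same cycles as sets. Fixing such a common cycle $C$, the restriction $\pi|_C$ is the full cycle on $C$, and $\pi \leq \sigma$ forces $\pi|_C \in S_{NC}(\sigma|_C)$. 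A full cycle $\alpha$ on $C$ satisfies $\#(\alpha) + \#(\alpha^{-1}\sigma|_C) = |C|+1$ only if $\#(\alpha^{-1}\sigma|_C) = |C|$, i.e. $\alpha^{-1}\sigma|_C = \mathrm{id}$, whence $\pi|_C = \sigma|_C$; since this holds on every cycle, $\pi = \sigma$.

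The main work, and the main obstacle, is transitivity: assuming $\pi \leq \sigma$ and $\sigma \leq \tau$, I would show $\pi \leq \tau$. Cycle-containment is transitive, so it remains to prove that for each cycle $D$ of $\tau$ the restriction $\pi|_D$ is non-crossing with respect to $\gamma_D := \tau|_D$. Write $\alpha := \pi|_D$ and $\beta := \sigma|_D$, and note that the cycles of $\beta$ are exactly the cycles $C_1,\dots,C_k$ of $\sigma$ contained in $D$. Both $\alpha$ and $\beta$ leave each $C_j$ invariant, so lengths add over these blocks. Using $\pi \leq \sigma$, which states $\alpha|_{C_j} = \pi|_{C_j} \in S_{NC}(\beta|_{C_j})$, I get $|\alpha|_{C_j}| + |(\alpha|_{C_j})^{-1}\beta|_{C_j}| = |C_j| - 1$ for each $j$; summing yields the geodesic identity $|\alpha| + |\alpha^{-1}\beta| = |D| - k = |\beta|$. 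Separately, $\sigma \leq \tau$ restricted to $D$ says precisely $\beta \in \NC(D) = S_{NC}(\gamma_D)$, i.e. $|\beta| + |\beta^{-1}\gamma_D| = |\gamma_D|$.

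It then remains to chain the two geodesics. By the triangle inequality for the metric on $S_D$,
\[
|\alpha| + |\alpha^{-1}\gamma_D| \;\leq\; |\alpha| + |\alpha^{-1}\beta| + |\beta^{-1}\gamma_D| \;=\; |\beta| + |\beta^{-1}\gamma_D| \;=\; |\gamma_D|,
\]
while the reverse bound $|\gamma_D| \leq |\alpha| + |\alpha^{-1}\gamma_D|$ is again the triangle inequality. Hence $|\alpha| + |\alpha^{-1}\gamma_D| = |\gamma_D|$, that is $\#(\alpha) + \#(\alpha^{-1}\gamma_D) = |D| + 1$; since $\gamma_D$ is a single cycle, $\alpha \vee \gamma_D = 1_D$ holds automatically by Remark~\ref{Remark: The case when Gamma has one cycle}, so $\alpha = \pi|_D \in S_{NC}(\gamma_D) = \NC(D)$, completing the proof of $\pi \leq \tau$. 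The only delicate points are the additivity of the length function over the common invariant blocks $C_j$ and careful bookkeeping of which ground permutation the non-crossing condition is measured against at each level; once these are settled, the triangle-inequality chaining of geodesics does the rest.
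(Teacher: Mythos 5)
Your proof is correct, and while it follows the same skeleton as the paper's (verify the three axioms, reduce to single cycles of the larger permutation, and use the cycle-counting characterization of $\NC$), it replaces the two external ingredients the paper relies on with self-contained counting arguments. For antisymmetry the paper invokes Biane's result that a permutation non-crossing with respect to a cycle must respect its cyclic order; you instead observe that a full cycle $\alpha$ on $C$ with $\#(\alpha)+\#(\alpha^{-1}\sigma|_C)=|C|+1$ forces $\alpha^{-1}\sigma|_C=\mathrm{id}$, which is shorter and needs nothing beyond the definition. For transitivity the paper appeals to Proposition \ref{Proposition: Smaller permutation of a non-crossing one is still non-crossing if connectivy is preserved}, whose proof runs through the refined inequality \cite[Equation 2.9]{MN}; you instead chain two geodesic identities, $|\alpha|+|\alpha^{-1}\beta|=|\beta|$ (obtained by summing the non-crossing identity over the common invariant blocks $C_j$, exactly as in Lemma \ref{Lemma: Less or Equal sufficient and necessary condition}) and $|\beta|+|\beta^{-1}\gamma_D|=|\gamma_D|$, using only the triangle inequality for the Cayley metric, and then note that the connectivity clause is automatic because $\gamma_D$ is a single cycle. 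What your route buys is independence from Biane's theorem and from the $\NC$-closure proposition (so the argument could be placed before Section \ref{Section: Topology of noncrossing permutations}); what the paper's route buys is brevity, since the heavy lifting is delegated to results it needs anyway. Both the additivity of $|\cdot|$ over the $\sigma$-invariant blocks and the identification $(\alpha^{-1}\beta)|_{C_j}=(\alpha|_{C_j})^{-1}(\beta|_{C_j})$, which you flag as the delicate points, do hold here because $D$ is a union of cycles of both $\pi$ and $\sigma$.
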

\begin{proof}
First we show $\pi \leq \pi$ for any $\pi\in S_n$. This comes from the fact that any permutation consisting of a single cycle is always non-crossing with respect to itself. Let $\pi,\sigma \in S_n$ be such that $\pi\leq \sigma$ and $\sigma \leq \pi$. Then both have the same cycle type and each cycle $C$ of $\pi$ is also a cycle of $\sigma$ except possibly by the order of its elements. So we are reduced to prove that the order of its elements must be the same in both cycles. Let $C$ and $C^\prime$ be the cycles of $\pi$ and $\sigma$ consisting of the same elements. We know $C^\prime \in \NC(C)$, Biane showed in \cite{B} that $C^\prime$ must respect the cyclic order of $C$ and therefore $C^\prime = C$. Finally let $\sigma \leq \pi$ and $\pi \leq \rho$. It is clear that each cycle of $\sigma$ is contained in a cycle of $\rho$. Let $C$ be cycle of $\rho$ and let $\sigma|_C$ and $\pi|_C$ be $\sigma$ and $\pi$ restricted to $C$ respectively. By hypothesis each cycle of $\sigma|_C$ is contained in a cycle of $\pi|_C$ and for each cycle of $\pi|_C$ the enclosed cycles of $\sigma|_C$ form a non-crossing permutation, moreover $\pi|_C \in \NC(C)$. By Proposition \ref{Proposition: Smaller permutation of a non-crossing one is still non-crossing if connectivy is preserved} we have $\sigma|_C \in \NC(C)$ as required.
\end{proof}

\begin{lemma}\label{Lemma: Less or Equal sufficient and necessary condition}
Let $\pi,\sigma\in S_{n}$. Then $\pi\leq \sigma$ if and only if $|\pi|+|\pi^{-1}\sigma|=|\sigma|$.
\end{lemma}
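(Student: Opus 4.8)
The plan is to read the equation through the word-length metric and then reduce to the case where $\sigma$ is a single cycle, where the statement collapses to the defining relation of $\NC$ recorded in Remark \ref{Remark: The case when Gamma has one cycle}. The starting point is the triangle inequality for $|\cdot|$: writing $|\alpha| = n - \#(\alpha)$, one always has $|\sigma| \le |\pi| + |\pi^{-1}\sigma|$, because concatenating a minimal transposition word for $\pi$ with one for $\pi^{-1}\sigma$ expresses $\sigma$ as a product of $|\pi| + |\pi^{-1}\sigma|$ transpositions. Thus the asserted equality says exactly that $\pi$ lies on a geodesic from the identity to $\sigma$, and the content of the lemma is that these geodesic elements are precisely the ones below $\sigma$ in the order of Definition \ref{Definition: Partial order on Sn}.

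For the forward implication, suppose $\pi \le \sigma$. Then each cycle of $\pi$ lies inside a cycle of $\sigma$, so $\sigma$, $\pi$, and hence $\pi^{-1}\sigma$ all preserve every block $C$ of $0_\sigma$ setwise and therefore factor as direct products over these blocks. On a fixed block $C$ one has $\sigma|_C = \gamma_C$, the full cycle on $C$, and $(\pi^{-1}\sigma)|_C = (\pi|_C)^{-1}\gamma_C$ (both $\pi$ and $\sigma$ fix $C$ setwise, so restriction is a homomorphism there). Since $\pi|_C \in \NC(C)$, Remark \ref{Remark: The case when Gamma has one cycle} gives $\#(\pi|_C) + \#\big((\pi|_C)^{-1}\gamma_C\big) = |C| + 1$, i.e. $|\pi|_C| + |(\pi|_C)^{-1}\gamma_C| = |\gamma_C|$. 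Summing these identities over all blocks $C$ of $\sigma$, and using that $\#$ and $|\cdot|$ are additive over a direct-product decomposition, yields $|\pi| + |\pi^{-1}\sigma| = |\sigma|$.

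For the reverse implication, assume the equality. The main obstacle is to show \emph{first} that each cycle of $\pi$ is contained in a cycle of $\sigma$, i.e. $0_\pi \le 0_\sigma$; once this is known the problem again splits over the blocks of $\sigma$, and the single-cycle case is exactly Biane's theorem \cite{B}, which identifies the non-crossing (genus-zero) factorizations $\pi|_C \in \NC(C)$ with those realizing the block-wise equality. To obtain $0_\pi \le 0_\sigma$ I would invoke the genus/Euler relation for the factorization $\sigma = \pi\cdot(\pi^{-1}\sigma)$: there is an integer $g \ge 0$ with $\#(\pi) + \#(\pi^{-1}\sigma) + \#(\sigma) = n + 2\,\#\big(0_\pi \vee 0_{\pi^{-1}\sigma}\big) - 2g$. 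One always has $0_\sigma \le 0_\pi \vee 0_{\pi^{-1}\sigma}$, since $\sigma(i) = \pi\big((\pi^{-1}\sigma)(i)\big)$ links $i$ to $\sigma(i)$ inside one block of the join, whence $\#\big(0_\pi \vee 0_{\pi^{-1}\sigma}\big) \le \#(\sigma)$. Substituting the hypothesis $\#(\pi) + \#(\pi^{-1}\sigma) = n + \#(\sigma)$ into the genus relation gives $\#\big(0_\pi \vee 0_{\pi^{-1}\sigma}\big) = \#(\sigma) + g$, which together with the preceding inequality forces $g = 0$ and $0_\pi \vee 0_{\pi^{-1}\sigma} = 0_\sigma$. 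In particular $0_\pi \le 0_\sigma$, and the vanishing genus $g = 0$ is exactly the planarity making each $\pi|_C$ non-crossing, so $\pi \le \sigma$.

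I expect the delicate point to be this reverse direction, specifically extracting the cycle-containment $0_\pi \le 0_\sigma$ from a purely numerical equality; everything else is bookkeeping once all three permutations are known to respect the block decomposition of $\sigma$ and the single-cycle statement is Biane's identification of $\NC(C)$. If one prefers to avoid the genus formula, an alternative is induction on $|\sigma|$: peel off a transposition $\tau$ from a geodesic word for $\sigma$, compare $|\pi^{-1}\sigma|$ with $|\pi^{-1}\sigma\tau|$ to apply the inductive hypothesis, and use Proposition \ref{Proposition: Smaller permutation of a non-crossing one is still non-crossing if connectivy is preserved} to keep non-crossingness stable under restriction.
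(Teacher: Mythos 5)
Your proof is correct. The forward implication is exactly the paper's argument: restrict to the cycles of $\sigma$, apply the single-cycle relation $\#(\pi|_C)+\#\bigl((\pi|_C)^{-1}\gamma_C\bigr)=|C|+1$ of Remark \ref{Remark: The case when Gamma has one cycle}, and sum over cycles. Where you diverge is the converse: the paper disposes of it in one line by citing \cite[Lemma 8]{MST}, whereas you supply a self-contained proof via the Euler--genus identity $\#(\pi)+\#(\pi^{-1}\sigma)+\#(\sigma)=n+2\#\bigl(0_\pi\vee 0_{\pi^{-1}\sigma}\bigr)-2g$ with $g\ge 0$. Your deduction is sound: combining the hypothesis with $0_\sigma\le 0_\pi\vee 0_{\pi^{-1}\sigma}$ forces $g=0$ and $0_\pi\vee 0_{\pi^{-1}\sigma}=0_\sigma$, which gives both the cycle containment $0_\pi\le 0_\sigma$ and, via planarity of each connected component (equivalently, equality in \cite[Equation 2.9]{MN} blockwise), the non-crossing condition $\pi|_C\in\NC(C)$. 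This is essentially the standard proof of the cited MST lemma, built from the same Euler-characteristic inequality the paper invokes repeatedly elsewhere (e.g.\ in Proposition \ref{Proposition: Smaller permutation of a non-crossing one is still non-crossing if connectivy is preserved}), so what your route buys is self-containedness at the cost of importing the genus formula as a black box; the paper's route buys brevity by outsourcing exactly the step you identify as delicate. One small point worth making explicit in your write-up: from $0_\pi\vee 0_{\pi^{-1}\sigma}=0_\sigma$ you also get $0_{\pi^{-1}\sigma}\le 0_\sigma$, which is what lets all three permutations factor over the blocks of $\sigma$ before you apply the blockwise planarity.
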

\begin{proof}
Suppose $\pi\leq \sigma$, let $C_1\cdots C_w$ be the cycle decomposition of $\sigma$ and let $\pi_i$ be the restriction of $\pi$ to each $C_i$. Then,
$$\#(\pi_i)+\#(\pi_i^{-1}C_i)=|C_i|+1.$$
Summing over $i$ gives,
$$\#(\pi)+\#(\pi^{-1}\sigma)=p+q+l+\#(\sigma),$$
hence $|\pi|+|\pi^{-1}\sigma|=|\sigma|$. The converse follows directly from \cite[Lemma 8]{MST}.
\end{proof}

\begin{lemma}\label{Lemma: Less or Equal transitivity}
Let $\pi,\sigma\in S_{NC}(p,q,l)$ with $\pi\leq\sigma^{-1}\gamma_{p,q,l}$, then $\sigma \leq \gamma_{p,q,l}\pi^{-1}$.
\end{lemma}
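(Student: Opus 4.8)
The plan is to reduce everything to the length characterization of the order, Lemma~\ref{Lemma: Less or Equal sufficient and necessary condition}, and then verify a short numerical identity. Writing $\gamma:=\gamma_{p,q,l}$ and $n:=p+q+l$, that lemma turns the hypothesis $\pi\le\sigma^{-1}\gamma$ into
\[
|\pi|+|\pi^{-1}\sigma^{-1}\gamma|=|\sigma^{-1}\gamma|,
\]
and turns the desired conclusion $\sigma\le\gamma\pi^{-1}$ into the target identity $|\sigma|+|\sigma^{-1}\gamma\pi^{-1}|=|\gamma\pi^{-1}|$. So it suffices to deduce the second length equation from the first.

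The two tools I would use are, first, that the length is conjugation invariant (it depends only on the cycle type), so that $|xy|=|yx|$ for all $x,y\in S_n$; in particular $|\pi^{-1}\sigma^{-1}\gamma|=|\sigma^{-1}\gamma\pi^{-1}|$ (taking $x=\pi^{-1}$, $y=\sigma^{-1}\gamma$) and $|\pi^{-1}\gamma|=|\gamma\pi^{-1}|$ (taking $x=\pi^{-1}$, $y=\gamma$). Second, I would use that membership in $S_{NC}(p,q,l)$ pins down a geodesic sum: by condition~(2) of Definition~\ref{Notation: Definition of non-crossing permutation}, every $\rho\in S_{NC}(p,q,l)$ satisfies $\#(\rho)+\#(\rho^{-1}\gamma)=n+2-\#(\gamma)$, hence, since $\#(\gamma)=3$,
\[
|\rho|+|\rho^{-1}\gamma|=n-2+\#(\gamma)=n+1,
\]
a constant independent of $\rho$. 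Applying this to $\pi$ and to $\sigma$ gives $|\pi|+|\pi^{-1}\gamma|=|\sigma|+|\sigma^{-1}\gamma|=n+1$.

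Combining these, the argument becomes pure bookkeeping. The rewritten hypothesis reads $|\pi|+|\sigma^{-1}\gamma\pi^{-1}|=|\sigma^{-1}\gamma|$, i.e. $|\sigma^{-1}\gamma\pi^{-1}|=|\sigma^{-1}\gamma|-|\pi|$. Then
\[
|\sigma|+|\sigma^{-1}\gamma\pi^{-1}|=|\sigma|+|\sigma^{-1}\gamma|-|\pi|=(n+1)-|\pi|=|\pi^{-1}\gamma|=|\gamma\pi^{-1}|,
\]
which is exactly the target identity; by Lemma~\ref{Lemma: Less or Equal sufficient and necessary condition} this yields $\sigma\le\gamma\pi^{-1}$, as desired.

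The only point that needs care — and the place where a naive approach would stumble — is that $\sigma^{-1}\gamma$ itself need not lie in $S_{NC}(p,q,l)$, so one cannot apply the constant geodesic-sum identity to it. The argument is arranged precisely so that this identity is invoked only for $\pi$ and $\sigma$, while the cross term $|\sigma^{-1}\gamma\pi^{-1}|$ is controlled entirely through the conjugation invariance $|xy|=|yx|$ together with the hypothesis. Checking that these pieces fit so that the constant $n+1$ cancels correctly is the crux; everything else is formal.
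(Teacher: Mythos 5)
Your proposal is correct and follows essentially the same route as the paper's own proof: both rewrite the hypothesis via Lemma \ref{Lemma: Less or Equal sufficient and necessary condition}, use conjugation invariance of $|\cdot|$ to identify $|\pi^{-1}\sigma^{-1}\gamma|$ with $|\sigma^{-1}\gamma\pi^{-1}|$, and then cancel the constant $|\gamma|+4=n+1$ coming from the geodesic condition for $\pi,\sigma\in S_{NC}(p,q,l)$. Your closing remark correctly identifies the one subtlety, namely that the geodesic identity is applied only to $\pi$ and $\sigma$ and never to $\sigma^{-1}\gamma$.
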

\begin{proof}
\begin{eqnarray*}
|\sigma|+|\sigma^{-1}\gamma_{p,q,l}\pi^{-1}| &=& |\sigma|+|\pi^{-1}\sigma^{-1}\gamma_{p,q,l}| \phantom{aaa}(|\cdot|\text{{ is invariant under conjugation}}) \\
&=& |\sigma|+|\sigma^{-1}\gamma_{p,q,l}|-|\pi| \phantom{aa}(\text{{by hypothesis }}\text{{and Lemma \ref{Lemma: Less or Equal sufficient and necessary condition}}}) \\
&=& |\gamma_{p,q,l}|+4-(|\gamma_{p,q,l}|+4-|\pi^{-1}\gamma_{p,q,l}|) \\
&=& |\pi^{-1}\gamma_{p,q,l}|=|\gamma_{p,q,l}\pi^{-1}|.
\end{eqnarray*}
where in third line we use that $\pi,\sigma\in S_{NC}(p,q,l)$. Thanks to Lemma \ref{Lemma: Less or Equal sufficient and necessary condition} this concludes the proof.
\end{proof}

Let us define another relation $\lesssim^{(r)}$ on $S_n$ which might not necessarily be a partial order however it will show up later during the proof.

\begin{definition}\label{Definition: Another partial order}
For $\pi,\sigma\in S_n$ we say that $\pi \lesssim \sigma$ if for each block, $B$, of $\pi\vee\sigma$, $\pi|_B \in S_{NC}(\sigma|_B)$. If $\pi\lesssim \sigma$ and $\#(\pi\vee \sigma)=\#(\sigma)-r$ we write $\pi \lesssim^{(r)} \sigma$.
\end{definition}

\begin{remark}
In Definition $\ref{Definition: Another partial order}$ when $r=0$ we recover the partial order $\leq$ defined in \ref{Definition: Partial order on Sn}.
\end{remark}

With the relation, $\lesssim^{(r)}$, let us give very more general versions of Lemmas \ref{Lemma: Less or Equal sufficient and necessary condition} and \ref{Lemma: Less or Equal transitivity}.

\begin{proposition}\label{Proposition: Sufficient and neccesary conditions general version}
Let $\pi,\sigma\in S_n$ with $\pi\lesssim \sigma$, then,
$$|\sigma|+2(\#(\sigma)-\#(\pi\vee\sigma))=|\pi|+|\pi^{-1}\sigma|.$$
\end{proposition}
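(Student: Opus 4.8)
The plan is to reduce the identity to a block-by-block computation over the join $\pi\vee\sigma$, exactly in the spirit of the proof of Lemma \ref{Lemma: Less or Equal sufficient and necessary condition}, the only difference being that now a block of $\pi\vee\sigma$ need not consist of a single cycle of $\sigma$. I would write $B_1,\dots,B_k$ for the blocks of $\pi\vee\sigma$, so that $k=\#(\pi\vee\sigma)$. Since $\pi\vee\sigma$ is the join of $0_\pi$ and $0_\sigma$ in the partition lattice, each $B_j$ is simultaneously a union of cycles of $\pi$ and a union of cycles of $\sigma$; in particular both $\pi$ and $\sigma$ stabilize $B_j$ setwise, and hence so does $\pi^{-1}\sigma$.

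First I would record the consequence of this stabilization for the restrictions of Definition \ref{dm1}. If $\tau\in\{\pi,\sigma,\pi^{-1}\sigma\}$ stabilizes $B_j$, then for $m\in B_j$ one has $\tau(m)\in B_j$, so the minimum $m^{min}$ of Definition \ref{dm1} equals $1$ and $\tau|_{B_j}$ coincides with the ordinary group-theoretic restriction of $\tau$ to the invariant set $B_j$. Consequently $(\pi|_{B_j})^{-1}\sigma|_{B_j}=(\pi^{-1}\sigma)|_{B_j}$, and because the $B_j$ partition $[n]$ and each is invariant under $\pi$, $\sigma$ and $\pi^{-1}\sigma$, every cycle of each of these three permutations lies inside exactly one $B_j$. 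This yields the additivity relations $\sum_j\#(\pi|_{B_j})=\#(\pi)$, $\sum_j\#(\sigma|_{B_j})=\#(\sigma)$ and $\sum_j\#\bigl((\pi^{-1}\sigma)|_{B_j}\bigr)=\#(\pi^{-1}\sigma)$, together with $\sum_j|B_j|=n$.

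Next I would invoke the hypothesis $\pi\lesssim\sigma$, which by Definition \ref{Definition: Another partial order} means $\pi|_{B_j}\in S_{NC}(\sigma|_{B_j})$ for every $j$. Condition (2) of Definition \ref{Notation: Definition of non-crossing permutation}, applied on the ground set $B_j$, gives
$$\#(\pi|_{B_j})+\#\bigl((\pi|_{B_j})^{-1}\sigma|_{B_j}\bigr)+\#(\sigma|_{B_j})=|B_j|+2.$$
Summing over $j=1,\dots,k$ and using the additivity relations from the previous step produces
$$\#(\pi)+\#(\pi^{-1}\sigma)+\#(\sigma)=n+2\,\#(\pi\vee\sigma).$$
Finally I would translate this cycle-count identity into the length notation via $\#(\tau)=n-|\tau|$ for each $\tau\in S_n$ and rearrange, using $\#(\sigma)=n-|\sigma|$, to reach $|\sigma|+2(\#(\sigma)-\#(\pi\vee\sigma))=|\pi|+|\pi^{-1}\sigma|$, as claimed.

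The step I expect to be the main obstacle — really the only place where care is needed — is the identification $(\pi|_{B_j})^{-1}\sigma|_{B_j}=(\pi^{-1}\sigma)|_{B_j}$ together with the clean splitting $\sum_j\#\bigl((\pi^{-1}\sigma)|_{B_j}\bigr)=\#(\pi^{-1}\sigma)$. Both rest entirely on the observation that a block of $\pi\vee\sigma$ is stabilized by $\pi$, $\sigma$ and $\pi^{-1}\sigma$, so that the abstract restriction of Definition \ref{dm1} degenerates to the honest restriction to an invariant subset; once this is in place the remainder is bookkeeping identical to that of Lemma \ref{Lemma: Less or Equal sufficient and necessary condition}. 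Observe also that the case $r=0$, i.e. $\#(\pi\vee\sigma)=\#(\sigma)$, recovers exactly that lemma, which is the consistency check one expects.
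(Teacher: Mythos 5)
Your proof is correct and follows essentially the same route as the paper's: decompose over the blocks of $\pi\vee\sigma$, apply condition (2) of Definition \ref{Notation: Definition of non-crossing permutation} on each block via the hypothesis $\pi\lesssim\sigma$, sum, and translate cycle counts into lengths. The only difference is that you spell out the (correct, and silently assumed in the paper) fact that each block is invariant under $\pi$, $\sigma$ and $\pi^{-1}\sigma$, so the restrictions of Definition \ref{dm1} are honest restrictions and the cycle counts add up.
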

\begin{proof}
Let $B_1,\dots,B_w$ be the blocks of $\pi\vee\sigma$. At each block we have,
$$\#(\pi|_{B_i})+\#(\pi|_{B_i}^{-1}\sigma|_{B_i})+\#(\sigma|_{B_i})=|B_i|+2,$$
summing over yields,
$$\#(\pi)+\#(\pi^{-1}\sigma)+\#(\sigma)=n+2\#(\pi\vee\sigma),$$
which is equivalent to the desired expression.
\end{proof}

It is fundamental to note that in Lemma \ref{Lemma: Less or Equal transitivity} the condition that makes possible to reverse the inequality is the fact that both $\pi,\sigma \in S_{NC}(p,q,l)$. This make us to conjecture that if both connect the same circles in the sense that $\pi\vee\gamma_{p,q,l}=\sigma\vee\gamma_{p,q,l}$ and both $\pi,\sigma\lesssim \gamma_{p,q,l}$ then the same result should still holds. This motivates the following generalized version to Lemma \ref{Lemma: Less or Equal transitivity}.

\begin{lemma}\label{Lemma: Less or Equal transitivity general version when having the same type}
Let $\gamma =\gamma_{m_1,\dots,m_r} \in S_m$ be the permutation with cycle decomposition,
$$(1,\dots,m_1)\cdots (m_1+\cdots+m_{r-1}+1,\dots,m).$$
Let $\pi,\sigma \in S_m$ be such that $\pi,\sigma\lesssim \gamma$ and $\pi\vee\gamma=\sigma\vee\gamma$. If $\pi\leq \sigma^{-1}\gamma$, then, $\sigma\leq \gamma\pi^{-1}.$
\end{lemma}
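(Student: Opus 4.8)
The plan is to convert everything into an identity among the lengths $|\cdot|$ via Lemma \ref{Lemma: Less or Equal sufficient and necessary condition}, and then run the same manipulation that proves Lemma \ref{Lemma: Less or Equal transitivity}, with the explicit constant $4$ (which there came for free from membership in $S_{NC}(p,q,l)$) replaced by a common ``defect'' quantity supplied by Proposition \ref{Proposition: Sufficient and neccesary conditions general version}. Concretely, the target $\sigma \leq \gamma\pi^{-1}$ is, by Lemma \ref{Lemma: Less or Equal sufficient and necessary condition}, equivalent to the numerical identity $|\sigma| + |\sigma^{-1}\gamma\pi^{-1}| = |\gamma\pi^{-1}|$, so it suffices to establish this.

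The first and only genuinely new step is to exploit the two structural hypotheses $\pi,\sigma \lesssim \gamma$ together with $\pi\vee\gamma = \sigma\vee\gamma$. Applying Proposition \ref{Proposition: Sufficient and neccesary conditions general version} with the larger permutation taken to be $\gamma$ yields
\[
|\pi| + |\pi^{-1}\gamma| = |\gamma| + 2(\#(\gamma) - \#(\pi\vee\gamma)), \qquad |\sigma| + |\sigma^{-1}\gamma| = |\gamma| + 2(\#(\gamma) - \#(\sigma\vee\gamma)).
\]
Since $\pi\vee\gamma = \sigma\vee\gamma$ the two right-hand sides are equal; denote their common value by $D$. Thus $\pi$ and $\sigma$ share the same defect, $|\pi|+|\pi^{-1}\gamma| = |\sigma|+|\sigma^{-1}\gamma| = D$. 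This is precisely the generalization of the relation $|\cdot| + |\cdot^{-1}\gamma_{p,q,l}| = |\gamma_{p,q,l}| + 4$ that was automatic in the $S_{NC}(p,q,l)$ setting of Lemma \ref{Lemma: Less or Equal transitivity}.

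With this in hand I would reproduce the earlier computation. Using invariance of $|\cdot|$ under conjugation, $\sigma^{-1}\gamma\pi^{-1}$ is conjugate (by $\pi$) to $\pi^{-1}\sigma^{-1}\gamma$, so $|\sigma^{-1}\gamma\pi^{-1}| = |\pi^{-1}\sigma^{-1}\gamma|$, and likewise $|\gamma\pi^{-1}| = |\pi^{-1}\gamma|$. Feeding in the main hypothesis $\pi \leq \sigma^{-1}\gamma$, which by Lemma \ref{Lemma: Less or Equal sufficient and necessary condition} reads $|\pi| + |\pi^{-1}\sigma^{-1}\gamma| = |\sigma^{-1}\gamma|$, and then applying the common defect identity, I compute
\[
|\sigma| + |\sigma^{-1}\gamma\pi^{-1}| = |\sigma| + |\sigma^{-1}\gamma| - |\pi| = D - |\pi| = |\pi^{-1}\gamma| = |\gamma\pi^{-1}|,
\]
where the third equality uses $|\pi| = D - |\pi^{-1}\gamma|$ (the defect identity for $\pi$) and the last uses conjugation invariance. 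By Lemma \ref{Lemma: Less or Equal sufficient and necessary condition} this gives $\sigma \leq \gamma\pi^{-1}$.

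I do not expect a serious obstacle here: the argument is essentially the same chain of equalities as in Lemma \ref{Lemma: Less or Equal transitivity}, and the single point requiring care is that the two defects actually coincide. This is exactly where the hypothesis $\pi\vee\gamma = \sigma\vee\gamma$ is indispensable; without it one would be left with two a priori distinct constants $2(\#(\gamma)-\#(\pi\vee\gamma))$ and $2(\#(\gamma)-\#(\sigma\vee\gamma))$ and the cancellation in the final display would fail. The only verification to make is that Proposition \ref{Proposition: Sufficient and neccesary conditions general version} applies, which it does since $\pi\lesssim\gamma$ and $\sigma\lesssim\gamma$ are precisely its hypothesis by Definition \ref{Definition: Another partial order}.
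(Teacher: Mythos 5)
Your proposal is correct and follows essentially the same route as the paper's own proof: both invoke Proposition \ref{Proposition: Sufficient and neccesary conditions general version} together with $\pi\vee\gamma=\sigma\vee\gamma$ to equate $|\pi|+|\pi^{-1}\gamma|$ with $|\sigma|+|\sigma^{-1}\gamma|$, and then rerun the chain of equalities from Lemma \ref{Lemma: Less or Equal transitivity} using conjugation invariance and Lemma \ref{Lemma: Less or Equal sufficient and necessary condition}. Your explicit naming of the common defect $D$ is only a cosmetic difference.
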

\begin{proof}
By Proposition \ref{Proposition: Sufficient and neccesary conditions general version} we have,
$$|\pi|+|\pi^{-1}\gamma|=|\gamma|+2(\#(\gamma)-\#(\pi\vee\gamma))=|\gamma|+2(\#(\gamma)-\#(\sigma\vee\gamma))=|\sigma|+|\sigma^{-1}\gamma|.$$
From this point on the proof follows exactly as in Lemma \ref{Lemma: Less or Equal transitivity},
\begin{eqnarray*}
|\sigma|+|\sigma^{-1}\gamma\pi^{-1}| &=& |\sigma|+|\pi^{-1}\sigma^{-1}\gamma| \\
&=& |\sigma|+|\sigma^{-1}\gamma|-|\pi| \\
&=& |\pi|+|\pi^{-1}\gamma|-|\pi|=|\pi^{-1}\gamma|=|\gamma\pi^{-1}|.
\end{eqnarray*}
\end{proof}

\begin{remark}
One might be tempted to think that Lemma \ref{Lemma: Less or Equal transitivity general version when having the same type} still holds as long as $\#(\pi\vee\gamma)=\#(\sigma\vee\gamma)$ rather than the stronger condition $\pi\vee\gamma=\sigma\vee\gamma$. The answer is that the hypothesis $\pi\leq \sigma^{-1}\gamma$ only makes sense as long as $\pi\vee\gamma\leq \sigma\vee\gamma$. So the conditions $\#(\pi\vee\gamma)=\#(\sigma\vee\gamma)$ and $\pi\vee\gamma=\sigma\vee\gamma$ are actually equivalent under this setting. One may think further in cases where both $\pi,\sigma \lesssim \gamma$ but $\pi\vee\gamma \leq \sigma\vee\gamma$ in which the existence of such a $\pi$ that satisfies $\pi \leq \sigma^{-1}\gamma$ still makes sense. For the rest of this section let us address this question.
\end{remark}

\begin{lemma}\label{Lemma: Less or equal transitivity general version 1}
Let $\gamma =\gamma_{m_1,\dots,m_r} \in S_m$ be the permutation with cycle decomposition,
$$(1,\dots,m_1)\cdots (m_1+\cdots+m_{r-1}+1,\dots,m).$$
Let $\sigma \in S_{NC}(m_1,\dots,m_r)$ and $\pi \in S_m$ be such that $\pi \leq \sigma^{-1}\gamma$ and $\pi\lesssim \gamma$, then
$$|\sigma|+|\sigma^{-1}\gamma\pi^{-1}|=|\gamma\pi^{-1}|+2(\#(\pi\vee\gamma-1)),$$
and
$$|\gamma\pi^{-1}|+2(\#(\gamma\pi^{-1})-\#(\sigma\vee\gamma\pi^{-1}))\leq |\sigma|+|\sigma^{-1}\gamma\pi^{-1}|,$$
with equality if and only if,
$$\sigma \in \prod_{B\text{ block of }\sigma\vee\gamma\pi^{-1}}S_{NC}((\gamma\pi^{-1})|_B).$$
\end{lemma}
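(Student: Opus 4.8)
The plan is to treat the two displayed relations separately, abbreviating $\tau := \gamma\pi^{-1}$ and using throughout that $|\cdot|$ is invariant under conjugation, so that $|\sigma^{-1}\gamma\pi^{-1}| = |\pi^{-1}\sigma^{-1}\gamma|$ and $|\gamma\pi^{-1}| = |\pi^{-1}\gamma|$. The first identity I would obtain by assembling three length relations and cancelling algebraically. Since $\sigma \in S_{NC}(m_1,\dots,m_r)$ we have $\sigma\vee\gamma = 1_m$ and $\sigma \lesssim \gamma$ with $\#(\sigma\vee\gamma)=1$, so Proposition \ref{Proposition: Sufficient and neccesary conditions general version} applied to $\sigma\lesssim\gamma$ gives
\[
|\sigma| + |\sigma^{-1}\gamma| = |\gamma| + 2(\#(\gamma)-1).
\]
The hypothesis $\pi\leq\sigma^{-1}\gamma$ together with Lemma \ref{Lemma: Less or Equal sufficient and necessary condition} gives $|\pi^{-1}\sigma^{-1}\gamma| = |\sigma^{-1}\gamma| - |\pi|$, while $\pi\lesssim\gamma$ and Proposition \ref{Proposition: Sufficient and neccesary conditions general version} give $|\pi^{-1}\gamma| = |\gamma| + 2(\#(\gamma)-\#(\pi\vee\gamma)) - |\pi|$.

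Substituting the first of these into $|\sigma| + |\sigma^{-1}\gamma\pi^{-1}| = |\sigma| + |\pi^{-1}\sigma^{-1}\gamma|$ and applying the displayed identity above yields $|\sigma| + |\sigma^{-1}\gamma\pi^{-1}| = |\gamma| + 2(\#(\gamma)-1) - |\pi|$; substituting the second into $|\gamma\pi^{-1}| + 2(\#(\pi\vee\gamma)-1) = |\pi^{-1}\gamma| + 2(\#(\pi\vee\gamma)-1)$ gives exactly the same expression, after the $\pm 2\#(\pi\vee\gamma)$ terms cancel. This proves the first equality; it is routine once the three input relations are in place.

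For the inequality and its equality case I would argue block by block over the partition $\sigma\vee\gamma\pi^{-1}$. Each block $B$ of this join is a union of cycles of both $\sigma$ and $\gamma\pi^{-1}$, so both permutations — and hence $\sigma^{-1}\gamma\pi^{-1}$ — preserve $B$; consequently $(\sigma^{-1}\gamma\pi^{-1})|_B = (\sigma|_B)^{-1}(\gamma\pi^{-1})|_B$ and the cycle counts of all three permutations decompose as sums over the blocks. On each block $\sigma|_B\vee(\gamma\pi^{-1})|_B = 1_B$ by maximality, and I would invoke the genus (Euler characteristic) inequality
\[
\#(\sigma|_B) + \#\big((\sigma|_B)^{-1}(\gamma\pi^{-1})|_B\big) + \#\big((\gamma\pi^{-1})|_B\big) \le |B| + 2,
\]
with equality precisely when $\sigma|_B$ is non-crossing with respect to $(\gamma\pi^{-1})|_B$, i.e. $\sigma|_B\in S_{NC}((\gamma\pi^{-1})|_B)$ in the sense of Definition \ref{Notation: Definition of non-crossing permutation}. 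Summing over the blocks of $\sigma\vee\gamma\pi^{-1}$ and rewriting $\#(\cdot) = m - |\cdot|$ turns this into the claimed inequality, and equality holds iff every block is genus zero, which is exactly the product condition $\sigma \in \prod_{B}S_{NC}((\gamma\pi^{-1})|_B)$.

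The algebra for the first identity is the routine part; the main obstacle lies in the block-by-block step, and the care there is twofold. First, one must verify that $\sigma^{-1}\gamma\pi^{-1}$ genuinely restricts to $(\sigma|_B)^{-1}(\gamma\pi^{-1})|_B$ on each block, so that the cycle counts of \emph{all three} permutations decompose consistently over $\sigma\vee\gamma\pi^{-1}$; this rests on the fact that $\sigma$ and $\gamma\pi^{-1}$ each preserve the blocks of their join. Second, one needs the sharp genus inequality together with its equality characterization as the single-block engine — this is the classical Euler-characteristic bound underlying the whole non-crossing theory, the same fact providing the converse half of Lemma \ref{Lemma: Less or Equal sufficient and necessary condition} cited from \cite[Lemma 8]{MST}, which I would state for one transitive pair $(\sigma|_B,(\gamma\pi^{-1})|_B)$ and then apply on each block.
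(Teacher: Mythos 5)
Your proposal is correct and follows essentially the same route as the paper: the first identity is obtained by combining the same three length relations (the non-crossing identity for $\sigma$, Lemma \ref{Lemma: Less or Equal sufficient and necessary condition} applied to $\pi\leq\sigma^{-1}\gamma$, and Proposition \ref{Proposition: Sufficient and neccesary conditions general version} applied to $\pi\lesssim\gamma$), and the inequality with its equality case is derived exactly as in the paper by applying the genus bound of \cite[Equation 2.9]{MN} block by block over $\sigma\vee\gamma\pi^{-1}$ and summing. Your additional remark that the cycle counts of all three permutations decompose over the blocks of the join is a point the paper leaves implicit, but it is correct and does not change the argument.
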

\begin{proof}
By Proposition \ref{Proposition: Sufficient and neccesary conditions general version},
\begin{equation}\label{aux1}
|\gamma|+2(\#(\gamma)-\#(\pi\vee\gamma))=|\pi|+|\pi^{-1}\gamma|.
\end{equation}
On the other hand, by Lemma \ref{Lemma: Less or Equal sufficient and necessary condition} $|\pi|+|\pi^{-1}\sigma^{-1}\gamma|=|\sigma^{-1}\gamma|$ as $\pi\leq \sigma^{-1}\gamma$. Thus
\begin{eqnarray*}
|\sigma|+|\sigma^{-1}\gamma\pi^{-1}| &=& |\sigma|+|\pi^{-1}\sigma^{-1}\gamma| \\
&=& |\sigma|+|\sigma^{-1}\gamma|-|\pi| \\
&=& |\gamma|+2(\#(\gamma)-1)-(|\gamma|+2(\#(\gamma)-\#(\pi\vee\gamma))-|\pi^{-1}\gamma|) \\
&=& |\pi^{-1}\gamma|+2(\#(\pi\vee\gamma)-1),
\end{eqnarray*}
where in third equality we use that $\sigma\in S_{NC}(\gamma)$ and Equation \ref{aux1}, this proves the first part. For the second part let $B_1,\dots,B_w$ be the blocks of $\sigma\vee\gamma\pi^{-1}$. \cite[Equation 2.9]{MN} says that for each block,
$$\#(\sigma|_{B_i})+\#(\sigma|_{B_i}^{-1}(\gamma\pi^{-1})|_{B_i})+\#((\gamma\pi^{-1})|_{B_i}) \leq |B_i|+2,$$
with equality if and only if $\sigma|_{B_i} \in S_{NC}((\gamma\pi^{-1})|_{B_i})$. Summing over $i$ yields,
$$\#(\sigma)+\#(\sigma^{-1}\gamma\pi^{-1})+\#(\gamma\pi^{-1})\leq m+2\#(\sigma\vee\gamma\pi^{-1}),$$
with equality if and only if $\sigma \in \prod_{B\text{ block of }\sigma\vee\gamma\pi^{-1}}S_{NC}((\gamma\pi^{-1})|_B).$ In terms of the length this is,
$$|\gamma\pi^{-1}|+2(\#(\gamma\pi^{-1})-\#(\sigma\vee\gamma\pi^{-1}))\leq |\sigma|+|\sigma^{-1}\gamma\pi^{-1}|.$$
\end{proof}

\begin{remark}
In Proposition \ref{Proposition: Sufficient and neccesary conditions general version} if $\pi \leq \sigma$ then we recover the result of Lemma \ref{Lemma: Less or Equal sufficient and necessary condition}. Likewise, in Lemma \ref{Lemma: Less or equal transitivity general version 1} when $\pi \vee\gamma =1_m$, i.e. it is non-crossing then we get $|\sigma|+|\sigma^{-1}\gamma\pi^{-1}|=|\gamma\pi^{-1}|$ which recovers the result of Lemma \ref{Lemma: Less or Equal transitivity}.
\end{remark}

The next result is a modified version to Lemma \ref{Lemma: Less or Equal transitivity} where $\pi$ doesn't necessarily meets all three circles, i.e. $\pi \vee\gamma_{p,q,l} =1_{p+q+l}$ is not necessarily satisfied, but it is still non-crossing at each block of $\pi \vee\gamma_{p,q,l}$. This will address the case $\pi,\sigma\lesssim \gamma_{p,q,l}$ but while $\sigma\vee\gamma_{p,q,l} =1$, $\pi\vee\gamma_{p,q,l}<1$. In other words, $\pi\vee\gamma_{p,q,l}<\sigma\vee\gamma_{p,q,l}$. Where by strictly less we mean less or equal but not the same partition.

\begin{corollary}\label{Corollary: Less or equal transivitiy general version}
Let $\pi\in S_{p+q+l}$ and $\sigma \in S_{NC}(p,q,l)$ with $\pi \leq \sigma^{-1}\gamma_{p,q,l}$. The following are satisfied
\begin{enumerate}
    \item If $\pi \in \NC(p)\times S_{NC}(q,l)$ then $\sigma \lesssim^{(1)} \gamma_{p,q,l}\pi^{-1}$.
    \item If $\pi \in \NC(p)\times \NC(q)\times \NC(l)$ then $\sigma \lesssim^{(2)} \gamma_{p,q,l}\pi^{-1}$.
\end{enumerate}
\end{corollary}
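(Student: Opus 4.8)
The plan is to apply Lemma \ref{Lemma: Less or equal transitivity general version 1} with $\gamma = \gamma_{p,q,l}$ and $m = p+q+l$, and then to upgrade the inequality it produces into an equality by an independent count of $\#(\sigma\vee\gamma_{p,q,l}\pi^{-1})$. First I would check that the hypotheses of that lemma hold: $\sigma \in S_{NC}(p,q,l)$ and $\pi \le \sigma^{-1}\gamma_{p,q,l}$ are given, so it remains to verify $\pi \lesssim \gamma_{p,q,l}$ and to compute $\#(\pi\vee\gamma_{p,q,l})$. In case (1), $\pi \in \NC(p)\times S_{NC}(q,l)$ forces the blocks of $\pi\vee\gamma_{p,q,l}$ to be exactly $B_1 = \{1,\dots,p\}$ and $B_2 = \{p+1,\dots,p+q+l\}$, and on each block $\pi$ is non-crossing with respect to $\gamma_{p,q,l}|_{B_i}$; hence $\pi\lesssim\gamma_{p,q,l}$ with $\#(\pi\vee\gamma_{p,q,l}) = 2$. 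In case (2), $\pi\in\NC(p)\times\NC(q)\times\NC(l)$ gives the three circles as blocks, so $\#(\pi\vee\gamma_{p,q,l}) = 3$. Writing $\tau = \gamma_{p,q,l}\pi^{-1}$ and $r = \#(\pi\vee\gamma_{p,q,l}) - 1$ (so $r=1$ in case (1) and $r=2$ in case (2)), Lemma \ref{Lemma: Less or equal transitivity general version 1} yields $|\sigma| + |\sigma^{-1}\tau| = |\tau| + 2r$, and combining this with the accompanying inequality $|\tau| + 2(\#(\tau) - \#(\sigma\vee\tau)) \le |\sigma|+|\sigma^{-1}\tau|$ gives the lower bound $\#(\sigma\vee\tau) \ge \#(\tau) - r$.

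The heart of the argument is the matching upper bound $\#(\sigma\vee\tau) \le \#(\tau) - r$. Here I would exploit that $\tau = \gamma_{p,q,l}\pi^{-1}$ preserves each block $B_i$ of $\pi\vee\gamma_{p,q,l}$, since both $\gamma_{p,q,l}$ and $\pi$ do; thus every cycle of $\tau$ lies inside a single region $B_i$, and $\sigma\vee\tau$ can merge cycles of $\tau$ across regions only through cycles of $\sigma$. On the other hand, $\sigma\in S_{NC}(p,q,l)$ gives $\sigma\vee\gamma_{p,q,l} = 1$, and since $\gamma_{p,q,l}$ also respects the regions, the region graph whose vertices are $B_1,\dots,B_{r+1}$ and whose edges record which pairs of regions are met by a common cycle of $\sigma$ must be connected. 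Choosing a spanning tree of this graph (with $r$ edges) and adding its edges one at a time, each edge joins a cycle of $\tau$ in one region to a cycle of $\tau$ in a previously disconnected region, hence produces a genuine merge in $\sigma\vee\tau$; after all $r$ edges the block count has dropped by at least $r$, giving $\#(\sigma\vee\tau) \le \#(\tau) - r$.

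Combining the two bounds yields $\#(\sigma\vee\tau) = \#(\tau) - r$, which forces equality in the inequality of Lemma \ref{Lemma: Less or equal transitivity general version 1}. By the equality clause of that lemma, $\sigma \in \prod_{B}S_{NC}(\tau|_B)$ over the blocks $B$ of $\sigma\vee\tau$, that is $\sigma\lesssim\tau$ in the sense of Definition \ref{Definition: Another partial order}; together with the count $\#(\sigma\vee\tau) = \#(\tau) - r$ this is precisely $\sigma\lesssim^{(r)}\gamma_{p,q,l}\pi^{-1}$, namely $\lesssim^{(1)}$ in case (1) and $\lesssim^{(2)}$ in case (2).

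I expect the main obstacle to be the upper bound step, and in particular justifying that the $r$ spanning-tree edges give $r$ \emph{independent} merges: one must argue that at each stage the two cycles of $\tau$ being joined lie in distinct blocks of the partition built so far, which holds because the blocks constructed up to that point consist of cycles of $\tau$ lying within a single connected component of regions. The bookkeeping that identifies the blocks of $\pi\vee\gamma_{p,q,l}$ in the two cases, and that checks $\pi\lesssim\gamma_{p,q,l}$, is routine but must be carried out to license the application of Lemma \ref{Lemma: Less or equal transitivity general version 1}.
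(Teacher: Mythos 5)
Your proposal is correct and follows essentially the same route as the paper: apply Lemma \ref{Lemma: Less or equal transitivity general version 1} with $\#(\pi\vee\gamma_{p,q,l})=2$ or $3$, establish the bound $\#(\gamma_{p,q,l}\pi^{-1})-\#(\sigma\vee\gamma_{p,q,l}\pi^{-1})\geq r$ from $\sigma\vee\gamma_{p,q,l}=1$, and force equality throughout to invoke the equality clause of that lemma. The only difference is that your spanning-tree argument for the connectivity bound is a more carefully justified version of what the paper asserts directly (one through-cycle of $\sigma$ must meet at least two cycles of $\gamma\pi^{-1}$, ``similarly'' for the three-block case), so if anything your write-up fills in a detail the paper leaves terse.
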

\begin{proof}
We prove $(1)$ first. Let $\gamma=\gamma_{p,q,l}$. We have $\gamma\pi^{-1} \in \NC(p)\times S_{NC}(q,l)$ since $\pi \in \NC(p)\times S_{NC}(q,l)$. By hypothesis $\sigma \vee\gamma =1_{p,q,l}$ and therefore there exist a cycle of $\sigma$ that must intersect $[p]$ and $[p+1,p+q+l]$, this cycle must meet more than one cycle of $\gamma\pi^{-1}$, thus, $\#(\gamma\pi^{-1})-\#(\sigma\vee\gamma\pi^{-1}) \geq 1$. Lemma \ref{Lemma: Less or equal transitivity general version 1} says,
\begin{multline*}
|\gamma\pi^{-1}|+2\leq |\gamma\pi^{-1}|+2(\#(\gamma\pi^{-1})-\#(\sigma\vee\gamma\pi^{-1})) \\
\leq |\sigma|+|\sigma^{-1}\gamma\pi^{-1}|=|\gamma\pi^{-1}|+2(\#(\pi\vee\sigma)-1)=|\gamma\pi^{-1}|+2,
\end{multline*}
so all must be equality, which means $\#(\gamma\pi^{-1})-\#(\sigma\vee\gamma\pi^{-1}) = 1$ and for each block, $C$, of $\sigma\vee\gamma\pi^{-1}$, $\sigma|_C\in S_{NC}((\gamma\pi^{-1})|_C)$, i.e. $\sigma \lesssim^{(1)} \gamma\pi^{-1}$. For $(2)$ we proceed similarly, $\#(\gamma\pi^{-1})-\#(\sigma\vee\gamma\pi^{-1}) \geq 2$, thus,
\begin{multline*}
|\gamma\pi^{-1}|+4\leq |\gamma\pi^{-1}|+2(\#(\gamma\pi^{-1})-\#(\sigma\vee\gamma\pi^{-1})) \\
\leq |\sigma|+|\sigma^{-1}\gamma\pi^{-1}|=|\gamma\pi^{-1}|+2(\#(\pi\vee\sigma)-1)=|\gamma\pi^{-1}|+4,
\end{multline*}
so all must be equality and we conclude as before.
\end{proof}

\section{Topology of non-crossing permutations}\label{Section: Topology of noncrossing permutations}

This section aims to prove some combinatorial results of non-crossing permutations that generalize some of the results in \cite{MST}, specifically \cite[Lemma 5]{MST}. In this lemma the authors proved that if we restrict a non-crossing permutation in the $(m,n)$-annulus to a subset $M$ of $[m+n]$, then it is still either non-crossing on the corresponding annulus restricted to $M$ or it is the product of two non-crossing partitions, one on each circle restricted to $M$. The tools that they use to prove it rely on the topological properties of non-crossing annular permutations obtained in \cite{MN}. Although we should expect a similar pattern on three circles and even in an arbitrary number of circles, we do not have such a tools for more than a two circles annulus. So this section is devoted to fill those gaps. At the end of the section we will prove (Lemma \ref{Lemma: Restriction to non-crossing is still non-crossing if connectivity is preserved generalized}) that the pattern we expected is indeed true for an arbitrary number of circles even though for this paper we will only need these results for the three circles case.

\begin{notation}
For this section we let $m_1,\dots,m_r \in \mathbb{Z}$ and $m=\sum m_i$. We let $\gamma_{m_1,\dots,m_r} \in S_m$ be the permutation with cycle decomposition,
$$(1,\dots,m_1)\cdots (m_1+\cdots+m_{r-1}+1,\dots,m).$$
We label the cycle $(m_1+\dots +m_{i-1}+1,\dots,m_1+\cdots +m_i)$ of $\gamma_{m_1,\dots,m_r}$ as $[m_i]$.
\end{notation}

\begin{proposition}\label{Proposition: Smaller permutation of a non-crossing one is still non-crossing if connectivy is preserved}
Let $\pi \in S_{NC}(m_1,\dots,m_r)$ and let $\sigma\in S_m$ be such that $\sigma \leq \pi$. If $\sigma \vee \gamma_{m_1,\dots,m_r} =1_m$ then $\sigma \in S_{NC}(m_1,\dots,m_r)$.
\end{proposition}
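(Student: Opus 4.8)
The plan is to verify the two defining conditions of $S_{NC}(m_1,\dots,m_r)$ (Definition \ref{Notation: Definition of non-crossing permutation}) for $\sigma$ separately. Condition (1), namely $\sigma \vee \gamma_{m_1,\dots,m_r} = 1_m$, is given by hypothesis, so the entire work lies in establishing the genus-zero condition (2), $\#(\sigma)+\#(\sigma^{-1}\gamma)+\#(\gamma)=m+2$ (writing $\gamma=\gamma_{m_1,\dots,m_r}$). I would phrase everything in terms of the length $|\cdot|$, where this condition reads $|\sigma|+|\sigma^{-1}\gamma|+|\gamma|=2m-2$, and prove it by sandwiching $|\sigma|+|\sigma^{-1}\gamma|$ between matching upper and lower bounds.

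For the upper bound I would exploit $\sigma\leq\pi$. By Lemma \ref{Lemma: Less or Equal sufficient and necessary condition} this means $|\sigma|+|\sigma^{-1}\pi|=|\pi|$. Since $\sigma^{-1}\gamma=(\sigma^{-1}\pi)(\pi^{-1}\gamma)$, the triangle inequality for the metric $|\cdot|$ gives $|\sigma^{-1}\gamma|\leq|\sigma^{-1}\pi|+|\pi^{-1}\gamma|$; adding $|\sigma|$ and using the previous identity yields
$$|\sigma|+|\sigma^{-1}\gamma| \;\leq\; |\sigma|+|\sigma^{-1}\pi|+|\pi^{-1}\gamma| \;=\; |\pi|+|\pi^{-1}\gamma|.$$
Because $\pi\in S_{NC}(\gamma)$, condition (2) for $\pi$ reads $|\pi|+|\pi^{-1}\gamma|+|\gamma|=2m-2$, so the right-hand side equals $2m-2-|\gamma|$.

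For the lower bound I would invoke the annular genus inequality \cite[Equation 2.9]{MN}: for permutations $\alpha,\beta$ on a set $B$ with $\alpha\vee\beta=1_B$ one has $\#(\alpha)+\#(\alpha^{-1}\beta)+\#(\beta)\leq|B|+2$, with equality exactly when $\alpha$ is non-crossing with respect to $\beta$. Here the hypothesis $\sigma\vee\gamma=1_m$ is precisely what guarantees that $[m]$ is a single block, so the inequality applies globally and gives $\#(\sigma)+\#(\sigma^{-1}\gamma)+\#(\gamma)\leq m+2$, that is, $|\sigma|+|\sigma^{-1}\gamma|\geq 2m-2-|\gamma|$. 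Combining the two bounds forces equality throughout, and the equality case of \cite[Equation 2.9]{MN} delivers $\sigma\in S_{NC}(\gamma)$ directly.

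The argument is short, and the only point that requires care is recognizing why the hypothesis $\sigma\vee\gamma=1_m$ cannot be dropped: it is exactly the input that licenses the single-block application of \cite[Equation 2.9]{MN} for the lower bound, since a refinement of $\pi$ could otherwise disconnect the circles and break genus zero. The triangle-inequality step and the invariance of $|\cdot|$ under inversion and conjugation are routine, so I expect no serious obstacle beyond the bookkeeping of these length identities.
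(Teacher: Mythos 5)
Your proposal is correct and is essentially the paper's own argument: the paper also sandwiches $\#(\sigma)+\#(\sigma^{-1}\gamma)+\#(\gamma)$ between the genus bound $m+2\#(\sigma\vee\gamma)=m+2$ from \cite[Equation 2.9]{MN} (this is your lower bound on $|\sigma|+|\sigma^{-1}\gamma|$) and the estimate obtained from $\sigma\le\pi$ together with $\pi\in S_{NC}(\gamma)$ (the paper derives the triangle inequality $|\sigma^{-1}\gamma|\le|\sigma^{-1}\pi|+|\pi^{-1}\gamma|$ by applying \cite[Equation 2.9]{MN} to the triple $\pi^{-1}\gamma,\sigma^{-1}\pi,\sigma^{-1}\gamma$, and re-proves the identity $|\sigma|+|\sigma^{-1}\pi|=|\pi|$ inline rather than citing Lemma \ref{Lemma: Less or Equal sufficient and necessary condition}). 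The differences are purely cosmetic.
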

\begin{proof}
Set $\gamma=\gamma_{m_1,\dots,m_r}. $Recall \cite[Equation 2.9]{MN},
\begin{eqnarray*}
\#(\pi^{-1}\gamma)+\#(\sigma^{-1}\pi)+\#(\sigma^{-1}\gamma) \leq m+2\#(\sigma^{-1}\pi \vee \sigma^{-1}\gamma) \\ \leq n+2\#(\sigma^{-1}\gamma).
\end{eqnarray*}
On the other hand, for each cycle, $C_i$ of $\pi$ we have $\#(\sigma|_{C_i})+\#(\sigma|_{C_i}^{-1}C_i)=|C_i|+1$, summing over $i$ gives $\#(\sigma)+\#(\sigma^{-1}\pi)=n+\#(\pi)$. Hence
\begin{eqnarray*}
\#(\sigma^{-1}\gamma) & \geq & \pi^{-1}\gamma + \#(\sigma^{-1}\gamma) - n \\
& = & (n+2-\#(\gamma)-\#(\pi))+(n+\#(\pi)-\#(\sigma))-n \\
& = & n+2-\#(\gamma)-\#(\sigma).
\end{eqnarray*}
Thus, $\#(\sigma)+\#(\sigma^{-1}\gamma)+\#(\gamma) \geq n+2$, but \cite[Equation 2.9]{MN} says $\#(\sigma)+\#(\sigma^{-1}\gamma)+\#(\gamma) \leq n+2\#(\sigma \vee \gamma)=n+2$ so it must be equality.
\end{proof}

\begin{lemma}\label{Lemma: Removing a point is still non-crossing}
Let $M\subset \mathbb{Z}$ and let $\pi,\gamma \in S_M$ be such that $\pi\in S_{NC}(\gamma)$. Let $p\in M$ be such that $\pi(p)=p$ and we denote by $M^c$ to $M\setminus \{p\}$. Then $\pi|_{M^c}\in S_{NC}(\gamma|_{M^c})$.
\end{lemma}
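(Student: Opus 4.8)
The plan is to verify directly the two defining conditions of Definition \ref{Notation: Definition of non-crossing permutation} for the pair $(\pi|_{M^c},\gamma|_{M^c})$, rather than routing through the inequality \cite[Equation 2.9]{MN}. Write $\pi'=\pi|_{M^c}$ and $\gamma'=\gamma|_{M^c}$. The case $|M|=1$ is trivial ($M^c=\emptyset$), so assume $|M|\geq 2$; then the hypotheses $\pi\vee\gamma=1_M$ and $\pi(p)=p$ force $\gamma(p)\neq p$, since otherwise $p$ would be isolated for both $\pi$ and $\gamma$ and $\{p\}$ would be a block of $\pi\vee\gamma$ separate from the rest. Put $a=\gamma^{-1}(p)$ and $b=\gamma(p)$, so $a,b\in M^c$ and $\gamma'(a)=b$.

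The crucial step is the identity
$$(\pi')^{-1}\gamma'=(\pi^{-1}\gamma)|_{M^c}.$$
Since $p$ is a fixed point of $\pi$, and hence of $\pi^{-1}$, the restriction distributes across the product at the point $p$. Concretely, for $x\in M^c$ with $x\neq a$ one checks $\gamma(x)\neq p$ and $\pi^{-1}\gamma(x)\neq p$, whence $(\pi')^{-1}\gamma'(x)=\pi^{-1}\gamma(x)=(\pi^{-1}\gamma)|_{M^c}(x)$; while for $x=a$ one traces $\pi^{-1}\gamma\colon a\mapsto p\mapsto\pi^{-1}(b)$, so that the restriction skips $p$ and yields $(\pi^{-1}\gamma)|_{M^c}(a)=\pi^{-1}(b)$, matching $(\pi')^{-1}\gamma'(a)=(\pi')^{-1}(b)=\pi^{-1}(b)$. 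I expect this identity, together with the careful bookkeeping around $p$, to be the main obstacle; the remainder is cycle counting.

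With the identity established, I would count cycles using the elementary fact (illustrated by the Example after Definition \ref{dm1}) that deleting one point from a permutation leaves the number of cycles unchanged when the point lies in a cycle of length at least two, and decreases it by one when the point is a fixed point. Since $(p)$ is a singleton cycle of $\pi$ we get $\#(\pi')=\#(\pi)-1$; since $\gamma(p)\neq p$ we get $\#(\gamma')=\#(\gamma)$; and since $\pi^{-1}\gamma(p)=\pi^{-1}(b)\neq p$ the point $p$ lies in a cycle of $\pi^{-1}\gamma$ of length at least two, so $\#((\pi')^{-1}\gamma')=\#((\pi^{-1}\gamma)|_{M^c})=\#(\pi^{-1}\gamma)$. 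Adding these and invoking the non-crossing equation for $\pi$ gives
$$\#(\pi')+\#((\pi')^{-1}\gamma')+\#(\gamma')=\big(\#(\pi)+\#(\pi^{-1}\gamma)+\#(\gamma)\big)-1=(|M|+2)-1=|M^c|+2,$$
which is the second condition.

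It remains to verify connectivity, $\pi'\vee\gamma'=1_{M^c}$. Viewing $\pi\vee\gamma$ as the connected components of the graph on $M$ with edges $\{x,\pi(x)\}$ and $\{x,\gamma(x)\}$, the vertex $p$ is adjacent only to its $\gamma$-neighbors $a$ and $b$, because $\pi$ fixes it. Passing from $\gamma$ to $\gamma'$ replaces the two edges $\{a,p\}$ and $\{p,b\}$ by the single edge $\{a,b\}$ when $a\neq b$, and removes a pendant vertex when $a=b$; in either case deleting $p$ cannot disconnect the remaining graph, so it stays connected and $\pi'\vee\gamma'=1_{M^c}$. Together with the counting equation, this yields $\pi|_{M^c}\in S_{NC}(\gamma|_{M^c})$ by Definition \ref{Notation: Definition of non-crossing permutation}.
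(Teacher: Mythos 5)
Your proof is correct and follows essentially the same route as the paper: both verify the two conditions of Definition \ref{Notation: Definition of non-crossing permutation} directly, with the cycle count handled by observing that deleting $p$ drops $\#(\pi)$ by one while leaving $\#(\gamma)$ and $\#(\pi^{-1}\gamma)$ unchanged (the paper encodes this via the auxiliary permutation $\tilde\gamma=\gamma(p,\gamma^{-1}(p))$ rather than your identity $(\pi')^{-1}\gamma'=(\pi^{-1}\gamma)|_{M^c}$, but the arithmetic is identical). You are in fact somewhat more careful than the paper, which asserts connectivity of $\pi|_{M^c}\vee\gamma|_{M^c}$ without argument and leaves the fact that $\gamma(p)\neq p$ implicit.
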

\begin{proof}
It is clear that $\pi|_{M^c} \vee \gamma|_{M^c} = 1_{M^c}$. Let $\tilde{\gamma}$ be the permutation on $S_M$ whose cycle decomposition is the same as $\gamma|_{M^c}$ and $(p)$ is a singleton. Observe that the cycle decomposition of $\pi|_{M^c}$ is the same as $\pi$ except by the singleton $(p)$. Then
$$\#(\pi|_{M^c})+\#(\pi|_{M^c}^{-1}\gamma|_{M^c}^{-1})+\#(\gamma|_{M^c})=\#(\pi)-1+\#(\pi^{-1}\tilde{\gamma}^{-1})-1+\#(\gamma|_{M^c}).$$
Observe that $\gamma(p,\gamma^{-1}(p))=\tilde{\gamma}$, thus $\#(\pi^{-1}\tilde{\gamma})=\#(\pi^{-1}\gamma(p,\gamma^{-1}(p)))=\#(\pi^{-1}\gamma)+1$, hence
\begin{eqnarray*}
\#(\pi|_{M^c})+\#(\pi|_{M^c}^{-1}\gamma|_{M^c}^{-1})+\#(\gamma|_{M^c}) &=& \#(\pi)-1+\#(\pi^{-1}\gamma)+\#(\gamma|_{M^c}) \\
& = & \#(\pi)+\#(\pi^{-1}\gamma)+\#(\gamma)-1 \\
& = & |M|+2-1 = |M^c|+2.
\end{eqnarray*}
\end{proof}

\begin{lemma}\label{Lemma: Restriction to non-crossing is still non-crossing if connectivity is preserved}
Let $\sigma \in S_m$ be such that $\sigma\in S_{NC}(m_1,\dots,m_r)$ and let $M\subset [m]$ be such that $M \cap [m_i] \neq \emptyset$ for any $1\leq i\leq r$. If $\sigma|_M \vee \gamma|_M =1_M$ then $\sigma|_M \in S_{NC}(\gamma|_M)$ with $\gamma = \gamma_{m_1,\dots,m_r}$ defined as before.
\end{lemma}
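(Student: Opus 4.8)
The plan is to build a permutation on all of $[m]$ that already encodes the restriction $\sigma|_M$, show it is non-crossing with respect to $\gamma=\gamma_{m_1,\dots,m_r}$, and then peel off the extra points one at a time using the fixed-point removal of Lemma~\ref{Lemma: Removing a point is still non-crossing}. Concretely, write $M^c=[m]\setminus M$ and define $\hat\sigma\in S_m$ to be the permutation that agrees with $\sigma|_M$ on $M$ (in the sense of Definition~\ref{dm1}) and fixes every point of $M^c$; that is, $\hat\sigma=\sigma|_M\sqcup\bigsqcup_{p\in M^c}(p)$. The whole argument then reduces to the two results just established in this section.

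First I would check that $\hat\sigma\leq\sigma$ in the order of Definition~\ref{Definition: Partial order on Sn}. Each cycle of $\hat\sigma$ lies inside a single cycle of $\sigma$: a non-singleton cycle of $\hat\sigma$ arises from one cycle $C$ of $\sigma$ by deleting the entries in $M^c$ while keeping the cyclic order, and each added singleton $(p)$ sits in the $\sigma$-cycle containing $p$. Fixing a cycle $C$ of $\sigma$, the cycles of $\hat\sigma$ enclosed in $C$ are the single cycle $C\cap M$ (in the order induced by $C$) together with the singletons $\{p\}$ for $p\in C\cap M^c$; this partition of $C$ has only one non-singleton block and hence is non-crossing, and by Biane's correspondence \cite{B} the corresponding permutation is exactly $\hat\sigma|_C$, so $\hat\sigma|_C\in\NC(C)$. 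Thus $\hat\sigma\leq\sigma$. Next I would verify $\hat\sigma\vee\gamma=1_m$ so that Proposition~\ref{Proposition: Smaller permutation of a non-crossing one is still non-crossing if connectivy is preserved} applies: the points of $M$ all lie in one block of $\hat\sigma\vee\gamma$, since two points in a common cycle of $\sigma|_M$ lie in a common cycle of $\hat\sigma$, two points in a common cycle of $\gamma|_M$ lie in a common cycle of $\gamma$, and $\sigma|_M\vee\gamma|_M=1_M$ by hypothesis; moreover every $p\in M^c$ lies on some circle $[m_i]$, which by assumption also contains a point of $M$, so $p$ is joined to that point through the cycle of $\gamma$. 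Hence all of $[m]$ forms a single block, and since $\hat\sigma\leq\sigma\in S_{NC}(m_1,\dots,m_r)$, Proposition~\ref{Proposition: Smaller permutation of a non-crossing one is still non-crossing if connectivy is preserved} yields $\hat\sigma\in S_{NC}(m_1,\dots,m_r)$.

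Finally I would remove the points of $M^c$ one at a time. Each such point is a fixed point of $\hat\sigma$ and remains fixed under the successive restrictions, so Lemma~\ref{Lemma: Removing a point is still non-crossing} applies at every step; here I would note that each circle retains at least one point of $M$ throughout, so the $\gamma$-cycle of the point being deleted always has length at least two and connectivity is never destroyed. After deleting all of $M^c$ we are left with $\hat\sigma|_M=\sigma|_M$ and $\gamma|_M$, and the lemma gives $\sigma|_M\in S_{NC}(\gamma|_M)$, as required.

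The step demanding the most care is not any one computation but the overall design: the restriction operation of Definition~\ref{dm1} does \emph{not} commute with products, so one cannot simply compare $(\sigma|_M)^{-1}\gamma|_M$ with $(\sigma^{-1}\gamma)|_M$ and bound the cycle counts directly, which is exactly the obstruction to a naive geodesic computation. Routing everything through the fixed-point removal lemma, which alters only one permutation at a time, is precisely what avoids this difficulty; the remaining points to get right are the verifications that $\hat\sigma\leq\sigma$ and that connectivity is preserved at each deletion, both of which follow from the hypothesis that $M$ meets every circle.
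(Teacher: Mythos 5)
Your proposal is correct and follows essentially the same route as the paper: both construct the auxiliary permutation on all of $[m]$ that acts as $\sigma|_M$ on $M$ and fixes $M^c$, verify it lies below $\sigma$ and still connects all circles so that Proposition \ref{Proposition: Smaller permutation of a non-crossing one is still non-crossing if connectivy is preserved} applies, and then delete the fixed points one at a time via Lemma \ref{Lemma: Removing a point is still non-crossing}. The only cosmetic difference is that the paper verifies $\tau|_C\in\NC(C)$ by an explicit cycle count of $\tau|_C^{-1}C$, whereas you argue it via the single non-singleton block and Biane's correspondence; both are valid.
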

\begin{proof}
Let $\tau \in S_n$ be given by $\tau(m)=m$ if $m \notin M$ and $\tau(m)=\sigma|_M(m)$ if $m\in M$. We will prove that $\tau\in S_{NC}(m_1,\dots,m_r)$. By definition any cycle of $\tau$ is contained in a cycle of $\sigma$ and $\tau \vee \gamma=1_m$ as $\gamma|_M \vee \gamma|_M =1_M$. So by Proposition \ref{Proposition: Smaller permutation of a non-crossing one is still non-crossing if connectivy is preserved} it suffices to show that for any cycle, $C$, of $\sigma$ the restriction of $\tau$ to $C$ denoted by $\tau|_C$ satisfies $\tau|_C \in \NC(C)$. Let $C=(a_1,a_2,\dots, a_w)$, if $C$ contains no elements of $M$ then the result is clear, so we may assume that $a_{i_1},\dots, a_{i_v} \in M$ with $i_1 < i_2 <\cdots < i_v$ and $a_{i_1}=a_1$. For $j \in C$, $\tau|_C^{-1}C(j)=C(j)$ whenever $C(j)\notin M$ and if $C(j)\in M$, say $C(j)=a_{i_r}$, then $\tau|_C^{-1}C(j)=a_{i_{r-1}}$. Therefore, the cycles of $\tau|_C^{-1}C$ are precisely $(a_{i_1},\dots,a_{i_2-1}), (a_{i_2},\dots,a_{i_3-1}),\dots, (a_{i_r},\dots,a_{w})$. Hence,
$$\#(\tau|_C)+\#(\tau|_C^{-1}C)=|C|-r+1+r=|C|+1,$$
as required, thus $\tau \in S_{NC}(m_1,\dots,m_r)$. We conclude by applying Lemma \ref{Lemma: Removing a point is still non-crossing} to $\tau$ for any singleton of $\tau$ to conclude that $\sigma|_M \in S_{NC}(\gamma|_M)$.
\end{proof}

Our next goal is to prove a more general result to Lemma \ref{Lemma: Restriction to non-crossing is still non-crossing if connectivity is preserved} where the condition $\sigma|_M \vee \gamma|_M =1_M$ doesn't necessarily satisfies. We will do this by induction over the number of blocks of $\sigma|_{M}\vee \gamma|_M$, so before proving it we need the following result which corresponds to the case where we have two blocks.

\begin{proposition}\label{Proposition: Restriction to non-crossing is still non-crossing if connectivity is preserved with two cycles}
Let $M\subset \mathbb{Z}$ and let $\gamma \in S_M$ be a permutation with cycle decomposition $\gamma= C_1\cdots C_n$. Let $\sigma \in S_M$ be such that $\sigma \in S_{NC}(\gamma)$. Let $M^{\prime} \subset M$ be such that $M^{\prime}\cap C_i \neq \emptyset$ for any $1\leq i\leq n$. If $\sigma|_{M^{\prime}} \vee \gamma|_{M^{\prime}}$ has two blocks, $A$ and $B$, then, $\sigma|_{M^{\prime}} \in S_{NC}(\gamma|_A)\times S_{NC}(\gamma|_B)$.
\end{proposition}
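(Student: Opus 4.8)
The plan is to exploit the product structure forced by the two blocks and then reduce everything to a single-point deletion. Since each block of $\sigma|_{M'}\vee\gamma|_{M'}$ is a union of cycles of both $\sigma|_{M'}$ and $\gamma|_{M'}$, both of these permutations split as products along $A$ and $B$; writing $\sigma|_A$ for the restriction to the block $A$ (which equals $(\sigma|_{M'})|_A$ by transitivity of Definition \ref{dm1}), the claim is equivalent to $\sigma|_A\in S_{NC}(\gamma|_A)$ together with $\sigma|_B\in S_{NC}(\gamma|_B)$. By \cite[Equation 2.9]{MN} applied to each of the two blocks, $\#(\sigma|_{M'})+\#((\sigma|_{M'})^{-1}\gamma|_{M'})+\#(\gamma|_{M'})\le |M'|+4$, with equality exactly when both restrictions are non-crossing. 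Hence it suffices to prove the reverse inequality, i.e. that deleting the points of $M\setminus M'$ keeps each resulting piece of genus $0$.

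First I would reduce to deleting a single point. For $M'\subseteq X\subseteq M$ put $b(X)=\#(\sigma|_X\vee\gamma|_X)$. Because a restriction only bridges over a deleted point inside the cycles already present and never creates an adjacency between previously unconnected elements, $b$ is antitone: $X\subseteq Y$ forces $b(X)\ge b(Y)$. As $b(M)=1$ and $b(M')=2$, along any chain $M=X_0\supset X_1\supset\cdots\supset X_k=M'$ obtained by deleting one point at a time, $b$ is nondecreasing from $1$ to $2$; it therefore takes only the values $1$ and $2$ and increases exactly once, say from $X_{j-1}$ (with $b=1$) to $X_j=X_{j-1}\setminus\{p\}$ (with $b=2$). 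Since $X_{j-1}\supseteq M'$ meets every cycle of $\gamma$ and $\sigma|_{X_{j-1}}\vee\gamma|_{X_{j-1}}=1$, Lemma \ref{Lemma: Restriction to non-crossing is still non-crossing if connectivity is preserved} gives $\sigma|_{X_{j-1}}\in S_{NC}(\gamma|_{X_{j-1}})$. Once the single-point case is established it yields $\sigma|_{X_j}\in S_{NC}(\gamma|_{A'})\times S_{NC}(\gamma|_{B'})$ for the two blocks $A',B'$ of $X_j$; and since $b\equiv 2$ from $X_j$ down to $M'$, no block splits further and each cycle of $\gamma$ remains on the same side while still meeting $M'$ by hypothesis, so Lemma \ref{Lemma: Restriction to non-crossing is still non-crossing if connectivity is preserved} applied inside each of $A',B'$ descends the conclusion to $A$ and $B$.

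It then remains to treat a connected $\sigma\in S_{NC}(\gamma)$ from which deleting one point $p$ produces exactly two blocks. Here I would compute the effect of the deletion on the defect permutation: checking the relevant cases of "which image lands on $p$'' shows that $(\sigma|_{M\setminus p})^{-1}\gamma|_{M\setminus p}$ is obtained from $(\sigma^{-1}\gamma)|_{M\setminus p}$ by post-composition with the single transposition exchanging $\sigma^{-1}(p)$ and $\sigma^{-1}\gamma(p)$. Consequently the count $\#(\sigma)+\#(\sigma^{-1}\gamma)+\#(\gamma)$ changes by exactly $\pm1$ on removing $p$; combined with $|M\setminus p|=|M|-1$ and $b=2$, the "$+1$'' (cycle-splitting) case produces genus $0$, while the "$-1$'' (cycle-merging) case would force connectivity to be preserved, i.e. a single block. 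Since we are assuming two blocks, we must be in the "$+1$'' case, the genus-$0$ equality holds, and the desired product decomposition follows.

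The hard part is exactly this last dichotomy: showing that a single deletion cannot raise the genus, equivalently that the split/merge behaviour of the transposition on the cycles of $(\sigma^{-1}\gamma)|_{M\setminus p}$ is matched precisely by the disconnection/connection of $\sigma|_{M\setminus p}\vee\gamma|_{M\setminus p}$. This is the genuinely planar input, and I would prove it by analysing the (at most four) neighbours $\sigma^{\pm 1}(p),\gamma^{\pm 1}(p)$ of $p$ and tracking how the two bridging operations recombine the components of the adjacency graph left after deleting $p$, using the genus-$0$ hypothesis on $M$ to forbid more than two surviving components. The degenerate situations in which $p$ is fixed by $\sigma$, by $\gamma$, or by $\sigma^{-1}\gamma$ I would absorb into the same bookkeeping, noting that the case of a $\sigma$-fixed point is already handled by Lemma \ref{Lemma: Removing a point is still non-crossing}.
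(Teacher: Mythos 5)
Your outer scaffolding is sound: the monotonicity of $X\mapsto\#(\sigma|_X\vee\gamma|_X)$ under deletion is correct, the reduction along a chain of single-point deletions to the step where the join first splits is legitimate, and the appeal to \cite[Equation 2.9]{MN} on each block correctly identifies what must be shown, namely $\#(\sigma|_{M'})+\#(\sigma|_{M'}^{-1}\gamma|_{M'})+\#(\gamma|_{M'})\geq|M'|+4$. But the entire content of the proposition is concentrated in the step you defer: that deleting a single point which disconnects the join changes the triple cycle-count by exactly the right amount, i.e.\ cannot raise the genus of either resulting piece. You state this as a dichotomy ("$+1$ splits, $-1$ would force a single block"), acknowledge it is "the hard part", and then only describe how you \emph{would} prove it by analysing the neighbours $\sigma^{\pm1}(p),\gamma^{\pm1}(p)$. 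That analysis is never carried out, and it is not routine: the relation between the cycle structure of $\sigma^{-1}\gamma$ and the block structure of $\sigma\vee\gamma$ is an inequality, not an identity, so knowing that a transposition splits or merges a cycle of the Kreweras-type permutation does not by itself tell you whether the join disconnects. Moreover your claimed formula, that $(\sigma|_{M\setminus p})^{-1}\gamma|_{M\setminus p}$ differs from $(\sigma^{-1}\gamma)|_{M\setminus p}$ by a single transposition, already needs justification: writing $\hat\sigma=\sigma\,(\sigma^{-1}(p),p)$ and $\hat\gamma=\gamma\,(\gamma^{-1}(p),p)$ one gets $\hat\sigma^{-1}\hat\gamma=(\sigma^{-1}(p),p)\,\sigma^{-1}\gamma\,(\gamma^{-1}(p),p)$, a product with \emph{two} transpositions whose net effect on the cycle count is $0$ or $\pm2$ depending on which points share cycles, and controlling that is precisely the planarity input you have not supplied.

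The paper avoids this difficulty by running the argument in the opposite direction. Since $\sigma\in S_{NC}(\gamma)$, it has a through-cycle $D$ meeting both $\tilde A$ and $\tilde B$ (the full $\gamma$-supports of the two blocks); one adds back one or two points of $D$ to $M'$ to form $M''$ with $\sigma|_{M''}\vee\gamma|_{M''}=1_{M''}$, applies Lemma \ref{Lemma: Restriction to non-crossing is still non-crossing if connectivity is preserved} to get $\sigma|_{M''}\in S_{NC}(\gamma|_{M''})$, and then performs the transposition bookkeeping going from $M''$ down to $M'$. The point of this direction is that the two-block hypothesis guarantees the transposed elements ($a$, $b_s$, $d_l$ in the paper's notation) lie in \emph{distinct} cycles of $\hat\sigma|_{M'}^{-1}\hat\gamma|_{M'}$, because $\hat\sigma|_{M'}$ and $\hat\gamma|_{M'}$ act disjointly on $A$ and $B$; hence each transposition multiplication provably decreases the cycle count by one, and the count $|M'|+4$ follows by a short computation rather than a case analysis. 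If you want to salvage your deletion-based route, you would need to prove your dichotomy in full, which amounts to reproving this disjointness argument in a harder setting; otherwise the cleanest fix is to replace your final step with the paper's point-addition argument, distinguishing (as the paper does) the two cases according to whether the through-cycle $D$ already meets $M'$ on one side or on neither.
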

\begin{proof}
Let $\tilde{C}_i^{\prime}=C_i \cap M^{\prime}$ and let $C_i^{\prime}=\gamma|_{\tilde{C}_i^{\prime}}$. The blocks of $\sigma|_{M^{\prime}} \vee \gamma|_{M^{\prime}}$ are union of cycles of $\gamma|_{M^{\prime}}$, so suppose with out loss of generality that $A=\tilde{C}_1^{\prime}\cup \cdots \cup \tilde{C}_p^{\prime}$ and $B=\tilde{C}_{p+1}^{\prime}\cup \cdots \cup \tilde{C}_n^{\prime}$ for some $1<p<n$. Let $\tilde{A}$ be the set consisting of all elements in $C_1\cup \cdots \cup C_p$ and $\tilde{B}$ the one consisting of all elements in $C_{p+1}\cup \cdots \cup C_n$. Since $\sigma \in S_{NC}(\gamma)$ it must have a cycle that intersects $\tilde{A}$ and $\tilde{B}$, we call this cycle $D$. We must be in one of the following scenarios, either exactly one of $A \cap D$ and $B \cap D$ is non-empty or both are empty, the case where both are non-empty is not possible as that would mean that $\sigma|_{M^{\prime}}$ has a cycle that intersects $A$ and $B$ which is not possible. Assume then, we are in the former case, say $A \cap D \neq \emptyset$ and $B \cap D = \emptyset$. Let $a \in \tilde{B} \cap D$ so that $a \notin M^{\prime}$ and let $M^{\prime\prime}= M^{\prime}\cup \{a\}$. Thus the cycle of $\sigma|_{M^{\prime\prime}}$ that contains $a$ is a cycle that intersects $A$ and $B\cup\{a\}$ and therefore $\sigma|_{M^{\prime\prime}} \vee \gamma|_{M^{\prime\prime}}=1_{M^{\prime\prime}}$, hence by Lemma \ref{Lemma: Restriction to non-crossing is still non-crossing if connectivity is preserved}, $\sigma|_{M^{\prime\prime}}\in S_{NC}(\gamma|_{M^{\prime\prime}})$. Let $U$ be the cycle of $\sigma|_{M^{\prime\prime}}$ that contains $a$, we write this cycle as $(a,b_1,\dots,b_s)$ whit $b_i \in A$. Since $(a,b_1,\dots,b_s)(a,b_s)=(a)(b_1,\dots,b_s)$, $\sigma|_{M^{\prime\prime}}(a,b_s)$ has exactly the same cycles as $\sigma|_{M^{\prime}}$ and the extra cycle that only contains $a$. In the same way, any cycle of $\gamma|_{M^{\prime}}$ is a cycle of $\gamma|_{M^{\prime\prime}}$ except by the cycle of $\gamma|_{M^{\prime\prime}}$ that contains $a$ which we can write as $(a,d_1,\dots,d_l)$ so that $(d_1,\dots,d_l) \subset B$ is a cycle of $\gamma|_{M^{\prime}}$. As $(a,d_1,\dots,d_l)(a,d_l)=(a)(d_1,\dots,d_l)$, the cycles of $\gamma|_{M^{\prime\prime}}(a,d_l)$ are the same cycles of $\gamma|_{M^{\prime}}$ and the extra cycle that consist only of $a$. We let $\hat{\sigma}|_{M^{\prime}}$ and $\hat{\gamma}|_{M^{\prime}}$ be the permutations whose cycles are the cycles of $\sigma|_{M^{\prime}}$ and $\gamma|_{M^{\prime}}$ and the extra cycle that only contains $a$. Thus, $\sigma|_{M^{\prime\prime}}(a,b_s)=\hat{\sigma}|_{M^{\prime}}$ and $\gamma|_{M^{\prime\prime}}(a,d_l)=\hat{\gamma}|_{M^{\prime}}$. On the ohter hand, by \cite[Equation 2.9]{MN},
$$\#(\sigma|_A) +\#(\sigma|_A^{-1}\gamma|_A) + \#(\gamma|_A) \leq |A|+2,$$
and
$$\#(\sigma|_B) +\#(\sigma|_B^{-1}\gamma|_B) + \#(\gamma|_B) \leq |B|+2.$$
Summing both expression yields
$$\#(\sigma|_{M^{\prime}}) +\#(\sigma|_{M^{\prime}}^{-1}\gamma|_{M^{\prime}}) + \#(\gamma|_{M^{\prime}}) \leq |M^{\prime}|+4,$$
with equality only if $\sigma|_A \in S_{NC}(\gamma|_A)$ and $\sigma|_B \in S_{NC}(\gamma|_B)$, so we are reduce to prove last inequality must be equality. The permutations $\hat{\sigma}|_{M^{\prime}}^{-1}$ and $\hat{\gamma}|_{M^{\prime}}$ acts disjointly in the sets $A$ and $B$, thus in $\hat{\sigma}|_{M^{\prime}}^{-1}\hat{\gamma}|_{M^{\prime}}$, $a,b_s$ and $d_l$ are all in distinct cycles because $b_s \in A$, $d_l\in B$ and $a$ is a singleton which by construction is not in $M^{\prime}=A\cup B$. Thus
\begin{eqnarray*}
\#(\sigma|_{M^{\prime\prime}}^{-1}\gamma|_{M^{\prime\prime}}) &=& \#((a,b_s)\hat{\sigma}|_{M^{\prime}}^{-1}\hat{\gamma}|_{M^{\prime}}(a,d_l)) \\
&=& \#(\hat{\sigma}|_{M^{\prime}}^{-1}\hat{\gamma}|_{M^{\prime}})-2 \\
&=& \#(\sigma|_{M^{\prime}}^{-1}\gamma|_{M^{\prime}})+1-2.
\end{eqnarray*}
Therefore
\begin{eqnarray*}
&& \#(\sigma|_{M^{\prime}}) +\#(\sigma|_{M^{\prime}}^{-1}\gamma|_{M^{\prime}}) + \# (\gamma|_{M^{\prime}}) \\
&=& \#(\sigma|_{M^{\prime\prime}})+\#(\sigma|_{M^{\prime\prime}}^{-1}\gamma|_{M^{\prime\prime}})+1+\#(\gamma|_{M^{\prime\prime}}) \\
&=& |M^{\prime\prime}|+3 = |M^{\prime}|+4,    
\end{eqnarray*}
where in second equality we use that $\sigma|_{M^{\prime\prime}}\in S_{NC}(\gamma|_{M^{\prime\prime}})$ and then
$\#(\sigma|_{M^{\prime\prime}}) +\#(\sigma|_{M^{\prime\prime}}^{-1}\gamma|_{M^{\prime\prime}}) + \#(\gamma|_{M^{\prime\prime}}) = |M^{\prime\prime}|+2.$

For the case when $A\cap D=B\cap D=\emptyset$ we take $a\in \tilde{B}\cap D$ and $b\in \tilde{A}\cap D$ and we let $M^{\prime\prime}=M^{\prime}\cup\{a,b\}$. In this case $\sigma|_{M^{\prime\prime}}$ has exactly the same cycles as $\sigma|_{M^{\prime}}$ and the extra cycle $(a,b)$, we let $\hat{\sigma}|_{M^{\prime}}$ be the permutation with the same cycles as $\sigma|_{M^{\prime}}$ and the extra cycles $(a)$ and $(b)$, so that $\sigma|_{M^{\prime\prime}}=\hat{\sigma}|_{M^{\prime}}(a,b)$. Similarly, $\gamma|_{M^{\prime\prime}}$ has the same cycles as $\gamma|_{M^{\prime}}$ except by the cycles that contains $a$ and $b$ which can be writen as $(a,b_1,\dots,b_s)$ and $(b,d_1,\dots,d_l)$ so that $(b_1,\dots,b_s) \subset B$ and $(d_!,\dots,d_l) \subset A$ are cycles of $\gamma|_{M^{\prime}}$. We let $\hat{\gamma}|_{M^{\prime}}$ be the permutation with the same cycles as $\gamma|_{M^{\prime}}$ and the extra cycles $(a)$ and $(b)$. Thus, $\gamma|_{M^{\prime\prime}}(a,b_s)(b,d_l)=\hat{\gamma}|_{M^{\prime}}$, equivalently, $\gamma|_{M^{\prime\prime}}=\hat{\gamma}|_{M^{\prime}}(b,d_l)(a,b_s)$. Therefore
$$\sigma|_{M^{\prime\prime}}^{-1}\gamma|_{M^{\prime\prime}}=(a,b)\hat{\sigma}|_{M^{\prime}}^{-1}\hat{\gamma}|_{M^{\prime}}(b,d_l)(a,b_s).$$
In the permutation $\hat{\sigma}|_{M^{\prime}}^{-1}\hat{\gamma}|_{M^{\prime}}$, $(a)$ and $(b)$ are cycles and $b_s \in B$ and $d_l\in A$ are in distinct cycles, therefore
$$\#(\hat{\sigma}|_{M^{\prime}}^{-1}\hat{\gamma}|_{M^{\prime}})-3=\#((a,b)\hat{\sigma}|_{M^{\prime}}^{-1}\hat{\gamma}|_{M^{\prime}}(a,b_s)(b,d_l))=\#(\sigma|_{M^{\prime\prime}}^{-1}\gamma|_{M^{\prime\prime}}).$$
Therefore,
\begin{eqnarray*}
&& \#(\sigma|_{M^{\prime}}) +\#(\sigma|_{M^{\prime}}^{-1}\gamma|_{M^{\prime}}) + \# (\gamma|_{M^{\prime}}) \\
&=& \#(\sigma|_{M^{\prime\prime}})-1+\#(\hat{\sigma}|_{M^{\prime}}^{-1}\hat{\gamma}|_{M^{\prime}})-2+\#(\gamma|_{M^{\prime\prime}}) \\
&=&\#(\sigma|_{M^{\prime\prime}})+\#(\sigma|_{M^{\prime\prime}}^{-1}\gamma|_{M^{\prime\prime}})+\#(\gamma|_{M^{\prime\prime}}) \\
&=& |M^{\prime\prime}|+2=|M^{\prime}|+4,
\end{eqnarray*}
where in second equlity we use again that $\sigma|_{M^{\prime\prime}} \in S_{NC}(\gamma|_{M^\prime\prime})$, this concludes the proof.
\end{proof}

\begin{lemma}\label{Lemma: Restriction to non-crossing is still non-crossing if connectivity is preserved generalized}
Let $\sigma \in S_m$ be such that $\sigma\in S_{NC}(m_1,\dots,m_r)$ and let $M\subset [m]$ be such that $M \cap [m_i] \neq \emptyset$ for any $1\leq i\leq r$. Then
$$\sigma|_M \in \prod_{B \text{ block of }\sigma|_M \vee \gamma|_M} S_{NC}(\gamma|_B),$$
with $\gamma = \gamma_{m_1,\dots,m_r}$ defined as before.
\end{lemma}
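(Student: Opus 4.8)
The plan is to reduce the statement to a single global cycle-count identity and then prove that identity by induction on the number of blocks of $\sigma|_M\vee\gamma|_M$, using the two-case mechanism of Proposition \ref{Proposition: Restriction to non-crossing is still non-crossing if connectivity is preserved with two cycles}. Write $\gamma=\gamma_{m_1,\dots,m_r}$, set $k=\#(\sigma|_M\vee\gamma|_M)$, and let $B_1,\dots,B_k$ be the blocks of $\sigma|_M\vee\gamma|_M$. Each $B_j$ is simultaneously a union of cycles of $\sigma|_M$ and of $\gamma|_M$, so the restriction of $\sigma|_M$ to $B_j$ is just $\sigma|_{B_j}$, and the assertion is precisely that $\sigma|_{B_j}\in S_{NC}(\gamma|_{B_j})$ for every $j$. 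Since $\sigma|_{B_j}\vee\gamma|_{B_j}=1_{B_j}$ automatically, \cite[Equation 2.9]{MN} applied on each $B_j$ gives
\begin{equation*}
\#(\sigma|_{B_j})+\#(\sigma|_{B_j}^{-1}\gamma|_{B_j})+\#(\gamma|_{B_j})\leq |B_j|+2,
\end{equation*}
with equality if and only if $\sigma|_{B_j}\in S_{NC}(\gamma|_{B_j})$. Summing over $j$, the lemma becomes equivalent to the single identity
\begin{equation*}
\#(\sigma|_M)+\#(\sigma|_M^{-1}\gamma|_M)+\#(\gamma|_M)=|M|+2k .
\end{equation*}

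I would prove this identity by induction on $k$, the base case $k=1$ being exactly Lemma \ref{Lemma: Restriction to non-crossing is still non-crossing if connectivity is preserved}. For the step I attach to each block $B_j$ its \emph{super-block} $\hat B_j\subseteq[m]$, namely the union of those cycles of $\gamma$ whose intersection with $M$ lies in $B_j$; since $M$ meets every cycle $[m_i]$ of $\gamma$, the sets $\hat B_1,\dots,\hat B_k$ partition $[m]$ into unions of $\gamma$-cycles. The structural observation that drives the argument is that the restriction to $M$ of a single cycle of $\sigma$ is again a single cycle of $\sigma|_M$ (consecutive points of a $\sigma$-cycle are joined by first return under $\sigma|_M$, Definition \ref{dm1}); consequently, for any cycle $D$ of $\sigma$ the set $D\cap M$ lies in one block $B_{j(D)}$.

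Now suppose $k\geq 2$. Since $\sigma\vee\gamma=1_m$, some cycle $D$ of $\sigma$ must meet two different super-blocks: otherwise every cycle of $\sigma$, like every cycle of $\gamma$, would lie in a single super-block, forcing $\sigma\vee\gamma$ to be $\leq$ the super-block partition and contradicting $\sigma\vee\gamma=1_m$. Exactly as in Proposition \ref{Proposition: Restriction to non-crossing is still non-crossing if connectivity is preserved with two cycles} there are two cases. If $D\cap M\neq\emptyset$, then $D\cap M\subseteq\hat B_{j(D)}$, so $D$ carries a point $a\notin M$ lying in a super-block $\hat B_\beta$ other than $\hat B_{j(D)}$; set $M''=M\cup\{a\}$. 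If $D\cap M=\emptyset$, choose $a,b\in D$ in two different super-blocks and set $M''=M\cup\{a,b\}$. In either case the new point(s) attach through $\gamma|_{M''}$ to the block(s) of their own super-block(s) and through $\sigma|_{M''}$ to $B_{j(D)}$ (respectively to each other); because $D\cap M$ sits in the single block $B_{j(D)}$, the effect on the join is to merge \emph{exactly two} blocks. Hence $\sigma|_{M''}\vee\gamma|_{M''}$ has $k-1$ blocks, and $M''$ still meets every $[m_i]$, so the induction hypothesis applies to $M''$.

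It then remains to transfer the identity from $M''$ back to $M$, and I expect this bookkeeping to be the main obstacle, even though it is a verbatim repetition of the cycle count in Proposition \ref{Proposition: Restriction to non-crossing is still non-crossing if connectivity is preserved with two cycles}. Writing $\sigma|_{M''}$ and $\gamma|_{M''}$ as the restrictions $\hat\sigma|_M,\hat\gamma|_M$ with the new points added as fixed points, corrected by transpositions, and using that the marked points lie in distinct cycles of $\hat\sigma|_M^{-1}\hat\gamma|_M$ (because $B_{j(D)}$ and $\hat B_\beta$ are distinct blocks), one finds in both cases that $\#(\sigma|_{M''}^{-1}\gamma|_{M''})=\#(\sigma|_M^{-1}\gamma|_M)-1$, while the insertions change $\#(\sigma|\cdot)$, $\#(\gamma|\cdot)$ and $|\cdot|$ by the evident small amounts. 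Substituting these relations together with $\#(\sigma|_{M''}\vee\gamma|_{M''})=k-1$ into the induction hypothesis $\#(\sigma|_{M''})+\#(\sigma|_{M''}^{-1}\gamma|_{M''})+\#(\gamma|_{M''})=|M''|+2(k-1)$ yields exactly $\#(\sigma|_M)+\#(\sigma|_M^{-1}\gamma|_M)+\#(\gamma|_M)=|M|+2k$, completing the induction. The delicate part is keeping track of the cycles of $\sigma^{-1}\gamma$ under adding points; what keeps it manageable is the single observation that each $\sigma$-cycle restricts to one $\sigma|_M$-cycle, which is precisely what guarantees we always merge two blocks at a time rather than three or more.
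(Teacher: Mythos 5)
Your proposal is correct and follows essentially the same route as the paper: induction on the number of blocks of $\sigma|_M\vee\gamma|_M$, with the inductive step carried out by adjoining one or two points of a $\sigma$-cycle that connects two of the ``super-blocks'' $\hat B_j$ so as to merge exactly two blocks of the join. The only difference is organizational -- you recast the blockwise non-crossing condition as the single identity $\#(\sigma|_M)+\#(\sigma|_M^{-1}\gamma|_M)+\#(\gamma|_M)=|M|+2k$ and inline the transposition bookkeeping, whereas the paper applies the induction hypothesis to $M''$ and then invokes Proposition \ref{Proposition: Restriction to non-crossing is still non-crossing if connectivity is preserved with two cycles} to split the merged block; the underlying computation is identical.
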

\begin{proof}
We prove this by induction over $\#(\sigma|_M \vee \gamma|_M)$. The base case was done in Lemma \ref{Lemma: Restriction to non-crossing is still non-crossing if connectivity is preserved}. So let us assume this is true for $\#(\sigma|_M \vee \gamma|_M)=n$ with $1<n<r$ and we aim to prove it for $\#(\sigma|_M \vee \gamma|_M)=n+1$. Let $B_1,\dots,B_{n+1}$ be the blocks of $\#(\sigma|_M \vee \gamma|_M)$. Each block of $\#(\sigma|_M \vee \gamma|_M)$ is a union of cycles of $\gamma|_M$, and each cycle of $\gamma|_M$ corresponds to the restriction of a cycle of $\gamma$ to $M$, so we can write each $B_i$ as, $\cup_{j} [M_{k^{(i)}_j}]|_M $ with $(k^{(i)}_j)_j \subset [r]$. We let $\hat{B}_i = \cup_{j} [M_{k^{(i)}_j}]$. It is clear that the disjoint union of $\hat{B}_i$ equals $[m]$. Since $\sigma \in S_{NC}(m_1,\dots,m_r)$ there must have a through cycle that intersects $\hat{B}_1$ and $\hat{B}_j$ for some $1<j<n+1$, assume without loss of generality this cycle, $D$, intersects $\hat{B}_1$ and $\hat{B}_2$. Similarly to proposition \ref{Proposition: Restriction to non-crossing is still non-crossing if connectivity is preserved with two cycles} we have either $D \cap B_1$ and $D \cap B_2$ are both empty or exactly one is non-empty. When both are empty we take $a\in D\cap \hat{B}_1$ and $b \in D\cap \hat{B}_2$ so that $a,b \notin M$ as we let $M^{\prime} = M\cup \{a,b\}$. Now $\sigma|_{M^{\prime}}$ has a block that intersects both $B_1\cup \{a\}$ and $B_2 \cup \{b\}$ which means that $\#(\sigma|_{M^{\prime}}\vee \gamma|_{M^{\prime}})$ has $n$ blocks which are $B_1\cup \{a\} \cup B_2 \cup \{b\}$ and $B_3,\dots,B_{n+1}$. By induction hypothesis $\sigma|_{B_j}\in S_{NC}(\gamma|_{B_j})$ for $j=3,\dots,n+1$ and $\sigma|_{B_1\cup B_2\cup \{a,b\}} \in S_{NC}(\gamma|_{B_1\cup B_2\cup \{a,b\}})$. Now we use proposition \ref{Proposition: Restriction to non-crossing is still non-crossing if connectivity is preserved with two cycles} over $\sigma|_{B_1\cup B_2\cup \{a,b\}}$ to get $\sigma|_{B_1\cup B_2} \in S_{NC}(\gamma|_{B_1})\times S_{NC}(\gamma|_{B_2})$. The case when exactly one of $D \cap B_1$ and $D \cap B_2$ is non-empty proceeds similarly. Assume $D \cap B_1 \neq \emptyset$ and $D\cap B_2 = \emptyset$. Let $a\in D\cap \hat{B}_2$ so that $a\notin M$ and let $M^{\prime}=M\cup\{a\}$. Again, $\sigma$ now has a block that intersects $B_1$ and $B_{2}\cup \{a\}$, hence $\#(\sigma|_{M^{\prime}}\vee \gamma|_{M^{\prime}})$ has $n$ blocks which are $B_1 \cup B_2 \cup \{a\}$ and $B_3,\dots,B_{n+1}$. By induction each $\sigma|_{B_j} \in S_{NC}(\gamma|_{B_j})$ for $j\geq 3$ and $\sigma|_{B_1\cup B_2\cup \{a\}}\in S_{NC}(\gamma|_{B_1\cup B_2\cup \{a\}})$. By proposition \ref{Proposition: Restriction to non-crossing is still non-crossing if connectivity is preserved with two cycles} we get that $\sigma|_{B_1\cup B_2} \in S_{NC}(\gamma|_{B_1})\times S_{NC}(\gamma|_{B_2})$.
\end{proof}

\section{Preliminary results}\label{Section: Preliminary results}

We are ready to give some preliminary results for our case of interest, the $(p,q,l)$-annulus. These results will be necessary to prove our main theorem. 

\begin{lemma}\label{Lemma: Some properties of pi_n}
Let $\pi \in S_{r+s+t}$ and $\pi_{\vec{n}}$ be defined as before. Let $\psi :[r+s+t]\rightarrow [p+q+l]$ be given by $\psi(i)=n_1+\cdots+n_i$. The following are satisfied
\begin{enumerate}
    \item $\psi\pi^{-1}\gamma_{r,s,t}=\pi_{\vec{n}}^{-1}\gamma_{p,q,l}\psi$.
    \item $\#(\pi)=\#(\pi_{\vec{n}})$ and $\#(\pi^{-1}\gamma_{r,s,t})+(p+q+l)=\#(\pi_{\vec{n}}^{-1}\gamma_{p,q,l})+(r+s+t)$.
    \item $\pi_{\vec{n}} \in S_{NC}(p,q,l)$ provided $\pi \in S_{NC}(r,s,t)$.
\end{enumerate}
\end{lemma}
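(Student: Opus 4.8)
The plan is to prove the three assertions in order, bootstrapping each from the previous one. Throughout I would write $\phi(j) \vcentcolon= \gamma(\psi(j)) = n_1+\cdots+n_{j-1}+1$ for the \emph{first} element of the cycle $T_j$, so that $\psi(j)$ is its last element. For item (1) I would record two intertwining relations and compose them. The first is purely about the two ``global'' permutations: a short case analysis according to whether $i\in\{r,r+s,r+s+t\}$ (i.e.\ whether $i$ is the last index of its $\gamma_{r,s,t}$-block) shows $\gamma_{p,q,l}\psi = \phi\,\gamma_{r,s,t}$. The point is that $\psi$ carries last elements of $\gamma_{r,s,t}$-blocks to last elements of the corresponding $\gamma_{p,q,l}$-blocks, so the cyclic ``$+1$'' and the wrap-around both land on the first element of the successor cycle $T_{\gamma_{r,s,t}(i)}$. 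The second relation is exactly the content of Remark \ref{Remark: How Pin works}, namely $\pi_{\vec{n}}^{-1}(\phi(j)) = \psi(\pi^{-1}(j))$, i.e.\ $\pi_{\vec{n}}^{-1}\phi = \psi\pi^{-1}$. Composing, $\pi_{\vec{n}}^{-1}\gamma_{p,q,l}\psi = \pi_{\vec{n}}^{-1}\phi\,\gamma_{r,s,t} = \psi\,\pi^{-1}\gamma_{r,s,t}$, which is the claim.

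For item (2), the equality $\#(\pi)=\#(\pi_{\vec{n}})$ is immediate from Remark and Notation \ref{Remark: Decomposition of Pin}, which exhibits a bijection $C_i\mapsto\tilde{C}_i$ between the cycles of $\pi$ and those of $\pi_{\vec{n}}$. For the second equality set $\rho\vcentcolon=\pi_{\vec{n}}^{-1}\gamma_{p,q,l}$. Item (1), together with the injectivity of $\psi$ and the fact that its image is $N$, shows $\rho(N)=N$ and that $\rho|_N=\psi\,(\pi^{-1}\gamma_{r,s,t})\,\psi^{-1}$ is conjugate to $\pi^{-1}\gamma_{r,s,t}$, so $\#(\rho|_N)=\#(\pi^{-1}\gamma_{r,s,t})$. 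The remaining ingredient, which I expect to be the most delicate computation, is to check that $\rho$ fixes every point of $[p+q+l]\setminus N$: for $x\notin N$ one has $x\notin\{p,p+q,p+q+l\}$, whence $\gamma_{p,q,l}(x)=x+1$, and moreover $x+1$ is not the first element of any $T_k$ (that would force $x\in N$), so the first clause in the definition of $\pi_{\vec{n}}$ applies and $\pi_{\vec{n}}^{-1}(x+1)=\gamma^{-1}(x+1)=x$. Hence $\rho$ consists of the $\#(\pi^{-1}\gamma_{r,s,t})$ cycles of $\rho|_N$ together with $(p+q+l)-(r+s+t)$ fixed points, so $\#(\rho)=\#(\pi^{-1}\gamma_{r,s,t})+(p+q+l)-(r+s+t)$, which rearranges into the stated identity.

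Finally, for item (3) I would verify the two defining conditions of Definition \ref{Notation: Definition of non-crossing permutation} for $\pi_{\vec{n}}$ relative to $\gamma_{p,q,l}$. The cardinality condition $\#(\pi_{\vec{n}})+\#(\pi_{\vec{n}}^{-1}\gamma_{p,q,l})+\#(\gamma_{p,q,l})=(p+q+l)+2$ is a direct substitution: using item (2) and $\#(\gamma_{r,s,t})=\#(\gamma_{p,q,l})=3$, the left side equals $\#(\pi)+\#(\pi^{-1}\gamma_{r,s,t})+3+(p+q+l)-(r+s+t)$, and since $\pi\in S_{NC}(r,s,t)$ the bracketed part is $(r+s+t)+2$. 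For the connectivity condition $\pi_{\vec{n}}\vee\gamma_{p,q,l}=1_{p+q+l}$ I would argue at the level of the blocks $T_i$: each $T_i$ lies in a single cycle of $\pi_{\vec{n}}$, the cycles of $\gamma_{p,q,l}$ are exactly the unions $\cup_{i\in G_a}T_i$ over the three $\gamma_{r,s,t}$-blocks $G_1,G_2,G_3$, and the cycles of $\pi_{\vec{n}}$ are unions $\cup_{j\in C}T_j$ over cycles $C$ of $\pi$. Thus $T_i$ and $T_j$ lie in a common block of $\pi_{\vec{n}}\vee\gamma_{p,q,l}$ whenever $i,j$ lie in a common cycle of $\pi$ or a common cycle of $\gamma_{r,s,t}$; taking transitive closure, all the $T_i$ get merged precisely because $\pi\vee\gamma_{r,s,t}=1_{r+s+t}$, and therefore $\pi_{\vec{n}}\vee\gamma_{p,q,l}=1_{p+q+l}$.

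The main obstacle throughout is the bookkeeping surrounding the two-clause definition of $\pi_{\vec{n}}$ (hence of $\pi_{\vec{n}}^{-1}$) and how it interacts with $\gamma_{p,q,l}$ at the block boundaries $\{p,p+q,p+q+l\}$ and the cycle-starts $\{1,p+1,p+q+1\}$; this is exactly where the case distinctions in items (1) and (2) concentrate. Once the intertwining $\gamma_{p,q,l}\psi=\phi\gamma_{r,s,t}$, the fixed-point behaviour of $\rho$ on $[p+q+l]\setminus N$, and the conjugacy on $N$ are established, items (1)--(3) assemble with only routine substitutions.
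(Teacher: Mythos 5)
Your proposal is correct and follows essentially the same route as the paper: item (1) via the intertwining of $\psi$ with the cycle structure and Remark \ref{Remark: How Pin works}, item (2) by splitting $[p+q+l]$ into $N$ (where $\pi_{\vec{n}}^{-1}\gamma_{p,q,l}$ is conjugate to $\pi^{-1}\gamma_{r,s,t}$ via $\psi$) and its complement (where it acts as the identity), and item (3) by combining the cycle count with the connectivity inherited from $\pi\vee\gamma_{r,s,t}=1$. Your write-up is somewhat more explicit than the paper's (notably the fixed-point check on $[p+q+l]\setminus N$ and the transitive-closure argument for $\pi_{\vec{n}}\vee\gamma_{p,q,l}=1$, which the paper treats as clear), but the underlying argument is the same.
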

\begin{proof}
Observe that, $\pi_{\vec{n}}^{-1}\gamma_{p,q,l}\psi(i)=\pi_{\vec{n}}^{-1}\gamma_{p,q,l}(n_1+\cdots+n_i)$, however $\gamma_{p,q,l}$ send the last element of $T_i$ to the first element of $T_{\gamma_{r,s,t}(i)}$ which is $\gamma(n_1+\cdots+n_{\gamma_{r,s,t}(i)})$, therefore, $\pi_{\vec{n}}^{-1}(\gamma_{p,q,l}(n_1+\cdots+n_i))=\pi_{\vec{n}}^{-1}(\gamma(n_1+\cdots+n_{\gamma_{r,s,t}(i)}))=n_1+\cdots+n_{\pi^{-1}(\gamma_{r,s,t}(i))}$ where the last equality follows from Remark \ref{Remark: How Pin works}, hence $(1)$.

To prove $(2)$ observe that $\#(\pi)=\#(\pi_{\vec{n}})$ is clear by definition of $\pi_{\vec{n}}$. On the other hand, observe that if $\pi^{-1}\gamma_{r,s,t}(i)=j$ then by $(1)$, $\pi_{\vec{n}}^{-1}\gamma_{p,q,l}(n_1+\cdots+n_i)=n_1+\cdots+n_j$. The latter means that if $(i_1,\dots,i_s)$ is a cycle of $\pi^{-1}\gamma_{r,s,t}$ then $(n_1+\cdots+n_{i_1},\dots,n_1+\cdots+n_{i_s})$ is a cycle f $\pi_{\vec{n}}^{-1}\gamma_{p,q,l}$. Thus, $\pi^{-1}\gamma_{r,s,t}$ and $\pi_{\vec{n}}^{-1}\gamma_{p,q,l}$ restricted to $N$ have the same number of cycles. Moreover, if $i\notin N$ then $\pi_{\vec{n}}^{-1}\gamma_{p,q,l}(i)=i$ which means that $\pi_{\vec{n}}^{-1}\gamma_{p,q,l}$ restricted to $N^c =[n]\setminus N$ has as many cycles as $|N^c|=(p+q+l)-(r+s+t)$, this proves $(2)$.

Finally, it is clear that $\pi_{\vec{n}}\vee \gamma_{p,q,l}=1$ as $\pi\vee \gamma_{r,s,t}=1$, and
\begin{multline*}
\#(\pi_{\vec{n}})+\#(\pi_{\vec{n}}^{-1}\gamma_{p,q,l})=\#(\pi)+\#(\pi^{-1}\gamma_{r,s,t})-(r+s+t)+(p+q+l)
=p+q+l-1.
\end{multline*}
\end{proof}

\begin{lemma}\label{Lemma: Pin and Vn are non crossing}
Let $(\V,\pi) \in PS_{NC}^{(1)}(r,s,t)\cup PS_{NC}^{(2)}(r,s,t)\cup PS_{NC}^{(3)}(r,s,t)$ and $(\V_{\vec{n}},\pi_{\vec{n}})$ be defined as before. If $(\V,\pi)\in PS_{NC}^{(j)}(r,s,t)$ then, $$(\V_{\vec{n}},\pi_{\vec{n}}) \in PS_{NC}^{(j)}(p,q,l),$$ 
for $j=1,2,3$.
\end{lemma}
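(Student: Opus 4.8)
The plan is to verify, one at a time, each defining condition of $PS_{NC}^{(j)}(p,q,l)$ for the pair $(\V_{\vec{n}},\pi_{\vec{n}})$, handling $j=1,2,3$ in parallel since they differ only in the combinatorics of the marked blocks. Throughout I would lean on two structural facts already in hand: Lemma \ref{Lemma: Some properties of pi_n}, which gives $\#(\pi)=\#(\pi_{\vec{n}})$ together with the inflation of non-crossing permutations, and the very construction of $\V_{\vec{n}}$, which sets up a canonical bijection between the blocks of $\V$ and those of $\V_{\vec{n}}$, sending a block $0_{C_1\cup\cdots\cup C_w}$ to $0_{\tilde{C}_1\cup\cdots\cup\tilde{C}_w}$.

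First I would treat the permutation $\pi_{\vec{n}}$. In each case $\pi$ factors as a direct product over the circles: a two-circle factor in $S_{NC}$ together with a one-circle $\NC$ factor when $j=1$, and three one-circle $\NC$ factors when $j=2,3$. Since the inflation of a point never leaves the inflated circle to which it belongs, and the joinings $T_i\to T_{\pi(i)}$ respect this block structure, $\pi_{\vec{n}}$ splits as the corresponding product of the inflated factors. Applying the inflation property of Lemma \ref{Lemma: Some properties of pi_n}(3) (whose argument is insensitive to the number of circles, so that it applies verbatim on a one- or two-circle sub-annulus) to each factor then gives $\pi_{\vec{n}}\in S_{NC}(p_{i_1},p_{i_2})\times\NC(p_{i_3})$ for $j=1$ and $\pi_{\vec{n}}\in\NC(p)\times\NC(q)\times\NC(l)$ for $j=2,3$, where $(p_1,p_2,p_3)=(p,q,l)$.

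Next I would settle the length and marked-block conditions, which are immediate from the bijection. Because $\#(\V_{\vec{n}})=\#(\V)$ (the block bijection) and $\#(\pi_{\vec{n}})=\#(\pi)$ (Lemma \ref{Lemma: Some properties of pi_n}(2)), and since the length of a permutation or a partition is its ground-set size minus its number of cycles or blocks, one obtains $|\V_{\vec{n}}|-|\pi_{\vec{n}}|=\#(\pi)-\#(\V)=|\V|-|\pi|$, which equals $1$ for $j=1$ and $2$ for $j=2,3$, exactly as required. Moreover a block of $\V$ containing $w$ cycles of $\pi$ maps to a block of $\V_{\vec{n}}$ containing precisely the $w$ cycles $\tilde{C}_1,\dots,\tilde{C}_w$ of $\pi_{\vec{n}}$, so the three conditions ``$\V$ joins a cycle of $\pi_1$ with a cycle of $\pi_2$'', ``two blocks each contain two cycles'', and ``one block contains three cycles'' transfer directly.

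The step I expect to require the most care is the connectivity condition $\V_{\vec{n}}\vee\gamma_{p,q,l}=1$ needed for $j=2,3$. The clean way to see it is to note that each inflated cycle $\tilde{C}$ lies entirely inside a single inflated circle, namely the inflation of the circle of $\gamma_{r,s,t}$ containing $C$; this uses that for $j=2,3$ the permutation $\pi$ has no through-cycles, so every cycle sits on one circle. Consequently a block of $\V_{\vec{n}}$ meets the $k$-th cycle of $\gamma_{p,q,l}$ exactly when the corresponding block of $\V$ meets the $k$-th cycle of $\gamma_{r,s,t}$, so the bipartite incidence graph between blocks and circles is identical for $(\V,\gamma_{r,s,t})$ and $(\V_{\vec{n}},\gamma_{p,q,l})$; hence one is connected iff the other is, and $\V\vee\gamma_{r,s,t}=1$ forces $\V_{\vec{n}}\vee\gamma_{p,q,l}=1$. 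For $j=1$ no separate check is needed, since the required connectivity is already encoded in the factorization: the inflated two-circle factor $(\pi_1)_{\vec{n}}\in S_{NC}(p_{i_1},p_{i_2})$ supplies through-cycles joining the inflated circles $i_1$ and $i_2$, and the marked block reaches into circle $i_3$.
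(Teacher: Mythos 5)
Your proof is correct. It rests on the same decomposition as the paper's -- write $\pi$ as a direct product over the circles and show that each inflated factor is non-crossing on its own sub-annulus -- but the mechanism for that key step differs. The paper applies the inequality \cite[Equation 2.9]{MN} to each inflated factor separately, sums the resulting bounds, and then uses the counting identities of Lemma \ref{Lemma: Some properties of pi_n} together with $\pi\in \NC(r)\times S_{NC}(s,t)$ to show the total is already extremal, forcing equality (hence non-crossingness) factor by factor. You instead observe that the counting identities of Lemma \ref{Lemma: Some properties of pi_n} localize, so the one- and two-circle analogues of part (3) yield the non-crossingness of each inflated factor directly as an equality, with nothing to force. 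That route is slightly more economical, at the small cost of having to justify that parts (1)--(2) of that lemma are insensitive to the number of cycles of $\gamma$ (they are: the argument only uses that $\pi_{\vec{n}}$ joins $T_i$ to $T_{\pi(i)}$ and that $\pi_{\vec{n}}^{-1}\gamma$ fixes the complement of $N$). You also make explicit what the paper compresses into ``essentially the same proof follows for $j=2$ and $3$'': the length condition via $\#(\V_{\vec{n}})=\#(\V)$ and $\#(\pi_{\vec{n}})=\#(\pi)$, the transfer of the marked-block structure under the block bijection, and the connectivity $\V_{\vec{n}}\vee\gamma_{p,q,l}=1_n$ for $j=2,3$ via the preserved block--circle incidence graph. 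No gaps.
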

\begin{proof}
We prove it for $j=1$ and essentially the same proof follows for $j=2$ and $3$. Let $C_1\cdots C_wC^{\prime}C^{\prime}$ be the cycle decomposition of $\pi$ with $C^{\prime}$ and $C^{\prime\prime}$ being the marked cycles of $\pi$, that is the blocks of $\U$ are $0_{C_1},\dots,0_{C_w},0_{C^{\prime}\cup C^{\prime\prime}}$. We assume with out loss of generality that $\pi\in \NC(r)\times S_{NC}(s,t)$ so that $C^{\prime}\subset [r]$ and $C^{\prime\prime}\subset [r+1,r+s+t]$. Let $\tilde{C}_1\cdots \tilde{C}_w\tilde{C}^{\prime}\tilde{C}^{\prime\prime}$ being the cycle decomposition of $\pi_{\vec{n}}$ as in Notation \ref{Remark: Decomposition of Pin}. We firstly prove that $\pi_{\vec{n}}\in \NC(p)\times S_{NC}(q,l)$. We write $\pi = \pi_1\times \pi_2$ with $\pi_1\in \NC(r)$ and $\pi_2\in S_{NC}(s,t)$. Let $\pi_{\vec{1}}^{(1)}$ and $\pi_{\vec{n}}^{(2)}$ being defined as follows: whenever $C$ is a cycle of $\pi_1$ then we let $\tilde{C}$ being a cycle of $\pi_{\vec{1}}^{(1)}$ and similarly if $\tilde{C}$ is a cycle of $\pi_2$ we let $\tilde{C}$ being a cycle of $\pi_{\vec{n}}^{(2)}$. Thus $\pi_{\vec{n}}= \pi_{\vec{n}}^{(1)}\times \pi_{\vec{n}}^{(2)}$. Moreover $\pi_{\vec{n}}^{(1)} \in S_p$ while $\pi_{\vec{n}}^{(2)}\in S_{[p+1,p+q+l]}$. By \cite[Equation 2.9]{MN},
$$\#(\pi_{\vec{n}}^{(1)})+\#({\pi_{\vec{n}}^{(1)}}^{-1}\gamma_{p,q,l}|_{[p]})+\#(\gamma_{p,q,l}|_{[p]})\leq p+2,$$
with equality if and only if $\pi_{\vec{n}}^{(1)}\in \NC(p)$. On the other hand, there is a cycle, $C$, of $\pi_2$ that meets $[r+1,r+s]$ and $[r+s+1,r+s+t]$ and then $\tilde{C}$ is a cycle of $\pi_{\vec{n}}^{(2)}$ that meets $[p+1,p+q]$ and $[p+q+1,p+q+l]$, thus $\pi_{\vec{n}}^{(2)}\vee \gamma_{p,q,l}|_{[p+1,p+q+l]} =1_{[p+1,p+q+l]}$. By \cite[Equation 2.9]{MN},
$$\#(\pi_{\vec{n}}^{(2)})+\#({\pi_{\vec{n}}^{(2)}}^{-1}\gamma_{p,q,l}|_{[p+1,p+q+l]})+\#(\gamma_{p,q,l}|_{[p+1,p+q+l]})\leq q+l+2,$$
with equality if and only if $\pi_{\vec{n}}^{(2)}\in S_{NC}(q,l)$.
Summing the inequalities up yields
\begin{multline*}
\#(\pi_{\vec{n}}^{(1)})+\#({\pi_{\vec{n}}^{(1)}}^{-1}\gamma_{p,q,l}|_{[p]})+\#(\gamma_{p,q,l}|_{[p]}) \\
+ \#(\pi_{\vec{n}}^{(2)})+\#({\pi_{\vec{n}}^{(2)}}^{-1}\gamma_{p,q,l}|_{[p+1,p+q+l]})+\#(\gamma_{p,q,l}|_{[p+1,p+q+l]}) \\
\leq p+q+l+4.
\end{multline*}
However the left hand side of last inequality simplifies to
$$\#(\pi_{\vec{n}})+\#(\pi_{\vec{n}}^{-1}\gamma_{p,q,l})+\#(\gamma_{p,q,l}) \leq p+q+l+4.$$
Lemma \ref{Lemma: Some properties of pi_n} shows that
\begin{multline*}
\#(\pi)+\#(\pi^{-1}\gamma_{r,s,t})+\#(\gamma_{r,s,t})
=\#(\pi_{\vec{n}})+\#(\pi_{\vec{n}}^{-1}\gamma_{p,q,l})+\#(\gamma_{p,q,l})+(r+s+t)-(p+q+l),
\end{multline*}
and since $\pi\in \NC(r)\times S_{NC}(s,t)$ then $\#(\pi)+\#(\pi^{-1}\gamma_{r,s,t})+\#(\gamma_{r,s,t})=r+s+t+4$, thus
$$\#(\pi_{\vec{n}})+\#(\pi_{\vec{n}}^{-1}\gamma_{p,q,l})+\#(\gamma_{p,q,l})=p+q+l+4,$$
which means the last inequality must actually be equality, hence $\pi_{\vec{n}}\in \NC(p)\times S_{NC}(q,l)$. To finish our proof it is enough to note that by definition the blocks of $\V_{\vec{n}}$ are $0_{\tilde{C}_1},\dots,0_{\tilde{C}_w},0_{\tilde{C}^{\prime}\cup \tilde{C}^{\prime\prime}}$, with $\tilde{C}^{\prime}\subset [p]$ and $\tilde{C}^{\prime\prime}\subset [p+1,p+q+l]$.
\end{proof}

The following is the analogous to \cite[Proposition 24]{MST} in the three circles case.

\begin{proposition}\label{Proposition: Suficiente conditions version 1}
Let $\pi \in S_{NC}(r,s,t)$ and $\pi_{\vec{n}}\in S_{NC}(p,q,l)$. If $\sigma\in S_{n}$ is such that satisfies all followings
\begin{enumerate}
    \item $\sigma \leq \pi_{\vec{n}}$, and,
    \item $\sigma_i \vee \gamma_i =\tilde{C_i}$ or equivalently $\sigma_i^{-1}\tilde{C_i}$ separates the points of $N \cap \tilde{C_i}$ for each cycle $\tilde{C_i}$ of $\pi_{\vec{n}}$, with $\sigma_i$ being the restriction of $\sigma$ to $\tilde{C_i}$.
\end{enumerate}
Then $\sigma\in S_{NC}(p,q,l)$ and $\sigma^{-1}\pi_{\vec{n}}$ separates the points of $N$.
\end{proposition}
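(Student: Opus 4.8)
The statement bundles two conclusions: that $\sigma\in S_{NC}(p,q,l)$ and that $\sigma^{-1}\pi_{\vec{n}}$ separates the points of $N$. The plan is to dispose of the separation claim first, since it follows almost directly from the block structure forced by $\sigma\leq\pi_{\vec{n}}$, and then to verify the two defining requirements of membership in $S_{NC}(p,q,l)$ separately: the connectivity $\sigma\vee\gamma_{p,q,l}=1_n$ and the geodesic cycle count $\#(\sigma)+\#(\sigma^{-1}\gamma_{p,q,l})+\#(\gamma_{p,q,l})=n+2$.

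For the separation claim I would argue as follows. Because $\sigma\leq\pi_{\vec{n}}$, every cycle of $\sigma$ lies inside a cycle $\tilde{C}_i$ of $\pi_{\vec{n}}$, so $\sigma=\prod_i\sigma_i$; since the $\tilde{C}_i$ are disjoint and $\pi_{\vec{n}}$ acts on $\tilde{C}_i$ exactly as the full cycle $\tilde{C}_i$, one gets $\sigma^{-1}\pi_{\vec{n}}=\prod_i \sigma_i^{-1}\tilde{C}_i$, a product over disjoint sets. Consequently every cycle of $\sigma^{-1}\pi_{\vec{n}}$ is contained in a single $\tilde{C}_i$, and hypothesis (2), in the form that $\sigma_i^{-1}\tilde{C}_i$ separates $N\cap\tilde{C}_i$, says that no such cycle meets $N\cap\tilde{C}_i$ in two points. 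As $N=\bigsqcup_i (N\cap\tilde{C}_i)$, this yields that $\sigma^{-1}\pi_{\vec{n}}$ separates $N$.

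For connectivity I would first show $\sigma\vee\gamma=0_{\pi_{\vec{n}}}$, where $\gamma$ is the fine permutation with cycles $T_1,\dots,T_{r+s+t}$. By Remark~\ref{Remark: Decomposition of Pin} each $\tilde{C}_i$ is a union of cycles $T_j$, so both $\sigma$ (via $\sigma\leq\pi_{\vec{n}}$) and $\gamma$ leave every $\tilde{C}_i$ invariant; restricting the join to one block gives $(\sigma\vee\gamma)|_{\tilde{C}_i}=\sigma_i\vee\gamma_i=\tilde{C}_i$ by hypothesis (2). Hence the blocks of $\sigma\vee\gamma$ are precisely the $\tilde{C}_i$, i.e.\ $\sigma\vee\gamma=0_{\pi_{\vec{n}}}$. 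Since every $T_j$ sits inside one cycle of $\gamma_{p,q,l}$ we have $\gamma\leq\gamma_{p,q,l}$ as partitions, so $\sigma\vee\gamma_{p,q,l}=(\sigma\vee\gamma)\vee\gamma_{p,q,l}=\pi_{\vec{n}}\vee\gamma_{p,q,l}=1_n$, the last equality because $\pi_{\vec{n}}\in S_{NC}(p,q,l)$.

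The cycle count I would obtain by a squeezing argument. From $\sigma\vee\gamma_{p,q,l}=1_n$ and \cite[Equation 2.9]{MN} we get $\#(\sigma)+\#(\sigma^{-1}\gamma_{p,q,l})+\#(\gamma_{p,q,l})\leq n+2$, i.e.\ $|\sigma|+|\sigma^{-1}\gamma_{p,q,l}|\geq |\gamma_{p,q,l}|+4$ (using $\#(\gamma_{p,q,l})=3$). For the reverse inequality, write $\sigma^{-1}\gamma_{p,q,l}=(\sigma^{-1}\pi_{\vec{n}})(\pi_{\vec{n}}^{-1}\gamma_{p,q,l})$ and apply the triangle inequality for $|\cdot|$, together with $|\sigma|+|\sigma^{-1}\pi_{\vec{n}}|=|\pi_{\vec{n}}|$ (Lemma~\ref{Lemma: Less or Equal sufficient and necessary condition}, since $\sigma\leq\pi_{\vec{n}}$) and $|\pi_{\vec{n}}|+|\pi_{\vec{n}}^{-1}\gamma_{p,q,l}|=|\gamma_{p,q,l}|+4$ (since $\pi_{\vec{n}}\in S_{NC}(p,q,l)$), to obtain $|\sigma|+|\sigma^{-1}\gamma_{p,q,l}|\leq |\gamma_{p,q,l}|+4$. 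Equality then forces $\#(\sigma)+\#(\sigma^{-1}\gamma_{p,q,l})+\#(\gamma_{p,q,l})=n+2$, which with the connectivity already established gives $\sigma\in S_{NC}(p,q,l)$. I expect the only delicate step to be the bookkeeping that establishes $\sigma\vee\gamma=0_{\pi_{\vec{n}}}$, in particular the verification that the join restricts blockwise to $\sigma_i\vee\gamma_i$; once connectivity is in hand, the cycle count is a short triangle-inequality squeeze.
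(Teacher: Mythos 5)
Your proof is correct, and the separation step coincides with the paper's (which disposes of it in one sentence by the same blockwise decomposition of $\sigma^{-1}\pi_{\vec{n}}$). Where you diverge is in establishing $\sigma\in S_{NC}(p,q,l)$. For connectivity, the paper works at the level of elements: it invokes the identity $\sigma^{-1}\gamma_{p,q,l}|_N=\sigma^{-1}\pi_{\vec{n}}|_N\,\pi_{\vec{n}}^{-1}\gamma_{p,q,l}|_N=\pi_{\vec{n}}^{-1}\gamma_{p,q,l}|_N$ from \cite[Lemma 6]{MST}, combines it with Lemma \ref{Lemma: Some properties of pi_n}, and uses the existence of $a\in[r]$, $b\in[r+1,r+s+t]$ with $\pi(a)=b$ (from $\pi\in S_{NC}(r,s,t)$) to exhibit explicit cycles of $\sigma$ joining the three intervals; you instead argue lattice-theoretically that hypothesis (2) forces $\sigma\vee\gamma=0_{\pi_{\vec{n}}}$ and then absorb $\gamma$ into $\gamma_{p,q,l}$ to get $\sigma\vee\gamma_{p,q,l}=\pi_{\vec{n}}\vee\gamma_{p,q,l}=1_n$. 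Your route is shorter and arguably cleaner here, though the paper's element-tracing template is deliberately set up so it can be reused almost verbatim in Proposition \ref{Proposition: Suficiente conditions version 2} and the later lemmas, where the purely lattice-theoretic shortcut is no longer available. For the cycle count, the paper simply cites Proposition \ref{Proposition: Smaller permutation of a non-crossing one is still non-crossing if connectivy is preserved}, whereas you re-derive that statement via the triangle-inequality squeeze $|\sigma|+|\sigma^{-1}\gamma_{p,q,l}|\leq|\pi_{\vec{n}}|+|\pi_{\vec{n}}^{-1}\gamma_{p,q,l}|=|\gamma_{p,q,l}|+4$ against the bound of \cite[Equation 2.9]{MN}; this is essentially the proof of that proposition specialized to your situation, so it is correct but duplicates work the paper has already packaged.
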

\begin{proof}
Firstly, it is clear that $\sigma^{-1}\pi_{\vec{n}}$ separates $N$ as it does separate $N\cap B$ for every cycle, $B$, of $\pi_{\vec{n}}$. So it remains to verify $\sigma\in S_{NC}(p,q,l)$ which by Proposition \ref{Proposition: Smaller permutation of a non-crossing one is still non-crossing if connectivy is preserved} it is enough to verify $\sigma \vee \gamma_{p,q,l}=1_n$. By \cite[Lemma 6]{MST},
$$\sigma^{-1}\gamma_{p,q,l}|_N = \sigma^{-1}\pi_{\vec{n}}|_N \pi_{\vec{n}}^{-1}\gamma_{p,q,l}|_N = \pi_{\vec{n}}^{-1}\gamma_{p,q,l}|_N.$$
Lemma \ref{Lemma: Some properties of pi_n} says
$$\pi_{\vec{n}}^{-1}\gamma_{p,q,l}(n_1+\cdots+n_i)=n_1+\cdots+n_{\pi^{-1}\gamma_{r,s,t}(i)},$$
moreover, $\pi\in S_{NC}(r,s,t)$, so there exist $a \in [r]$ and $b\in [r+1,r+s+t]$ such that $\pi(a)=b$. Either $b\in [r+1,r+s]$ or $b\in [r+s+1,r+s+t]$, assume with out loss of generality we are in the former case. Then
$$n_1+\cdots+n_a = n_1+\cdots+ n_{\pi^{-1}\gamma_{r,s,t}\gamma^{-1}_{r,s,t}(b)}=\pi_{\vec{n}}^{-1}\gamma_{p,q,l}(n_1+\cdots+n_{\gamma_{r,s,t}^{-1}(b)}).$$
Hence,
$$\sigma^{-1}\gamma_{p,q,l}|_N(n_1+\cdots+n_{\gamma_{r,s,t}^{-1}(b)})=n_1+\cdots+n_a.$$
The latter means that $\sigma^{-1}\gamma_{p,q,l}$ has a cycle that contains $n_1+\cdots+n_a \in [p]$ and $n_1+\cdots+n_{\gamma_{r,s,t}^{-1}(b)}\in [p+1,p+q]$. Thus there exist $\hat{a}\in [p]$ and $\hat{b}\in [p+1,p+q]$ with $\sigma^{-1}\gamma_{p,q,l}(\hat{a})=\hat{b}$ or equivalently $\sigma(\hat{b})=\gamma_{p,q,l}(\hat{a})$, this proves that $\sigma$ has a cycle that meets $[p]$ and $[p+1,p+q]$. Similarly, since $\pi\in S_{NC}(r,s,t)$ there must exist $c\in [r+s+1,r+s+t]$ and $d\in [r+s]$ with $\pi(c)=d$, we proceed as before to show that $\sigma$ must have a cycle that meets $[p+q+1,p+q+l]$ and either $[p]$ or $[p+1,p+q]$ depending whether $d\in [r]$ or $d\in [r+1,r+s]$. This proves $\sigma\vee\gamma_{p,q,l}=1_n$.
\end{proof}

\begin{proposition}\label{Proposition: Suficiente conditions version 2}
Let $\pi \in \NC(r)\times S_{NC}(s,t)$ and $\pi_{\vec{n}}\in \NC(p)\times S_{NC}(q,l)$. If $\sigma\in S_{n}$ is such that satisfies all followings
\begin{enumerate}
    \item $\sigma \lesssim^{(1)} \pi_{\vec{n}}$, and,
    \item The block of $\sigma\vee\pi_{\vec{n}}$ which is the union of two cycles of $\pi_{\vec{n}}$ is such that one of these cycles is contained in $[p]$ and the other in $[p+1,p+q+l]$, and,
    \item For each block, $B$, of $\sigma\vee\pi_{\vec{n}}$, $\sigma|_B^{-1}\pi_{\vec{n}}|_B$ separates the points of $N \cap B$.
\end{enumerate}
Then $\sigma\in S_{NC}(p,q,l)$ and $\sigma^{-1}\pi_{\vec{n}}$ separates the points of $N$.
\end{proposition}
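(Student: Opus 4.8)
The plan is to follow the template of Proposition~\ref{Proposition: Suficiente conditions version 1}, but to account for the fact that here $\sigma\lesssim^{(1)}\pi_{\vec{n}}$ instead of $\sigma\leq\pi_{\vec{n}}$, and that $\pi_{\vec{n}}\in\NC(p)\times S_{NC}(q,l)$ is itself \emph{not} connected across the three circles. I would split the argument into three steps: first that $\sigma^{-1}\pi_{\vec{n}}$ separates $N$, then that $\sigma\vee\gamma_{p,q,l}=1_n$, and finally a cycle count forcing $\sigma\in S_{NC}(p,q,l)$. For the first step, note that every block $B$ of $\sigma\vee\pi_{\vec{n}}$ is a union of cycles of both $\sigma$ and $\pi_{\vec{n}}$, so $\sigma^{-1}\pi_{\vec{n}}$ preserves $B$ and its restriction there equals $\sigma|_B^{-1}\pi_{\vec{n}}|_B$; by hypothesis~(3) this restriction separates $N\cap B$, and since the blocks partition $[n]$ no cycle of $\sigma^{-1}\pi_{\vec{n}}$ can contain two points of $N$. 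Hence $\sigma^{-1}\pi_{\vec{n}}|_N=\mathrm{id}_N$, and \cite[Lemma 6]{MST} gives $\sigma^{-1}\gamma_{p,q,l}|_N=\pi_{\vec{n}}^{-1}\gamma_{p,q,l}|_N$.

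For the connectivity $\sigma\vee\gamma_{p,q,l}=1_n$ I would exhibit the two circle-joinings separately. The link between $[p]$ and $[p+1,p+q+l]$ is furnished by the marked block: by~(1) the block $B_0$ of $\sigma\vee\pi_{\vec{n}}$ that unites two cycles of $\pi_{\vec{n}}$ satisfies $\sigma|_{B_0}\in S_{NC}(\pi_{\vec{n}}|_{B_0})$, so $\sigma|_{B_0}\vee\pi_{\vec{n}}|_{B_0}=1_{B_0}$, i.e.\ $\sigma$ has a through-cycle joining the two marked cycles, which by~(2) lie in $[p]$ and in $[p+1,p+q+l]$ respectively. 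The link between $[p+1,p+q]$ and $[p+q+1,p+q+l]$ comes from the annular factor exactly as in Proposition~\ref{Proposition: Suficiente conditions version 1}: since $\pi\in\NC(r)\times S_{NC}(s,t)$, its $S_{NC}(s,t)$-part connects the second and third circles, so there are $c\in[r+s+1,r+s+t]$ and $d\in[r+1,r+s]$ with $\pi(c)=d$; feeding this through Lemma~\ref{Lemma: Some properties of pi_n} and the identity $\sigma^{-1}\gamma_{p,q,l}|_N=\pi_{\vec{n}}^{-1}\gamma_{p,q,l}|_N$ shows that $\sigma^{-1}\gamma_{p,q,l}$ has a cycle meeting $[p+1,p+q]$ and $[p+q+1,p+q+l]$, whence (locating a transition along that cycle, as in Proposition~\ref{Proposition: Suficiente conditions version 1}) $\sigma$ itself joins these two circles. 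Together the two joinings place all three circles in a single block of $\sigma\vee\gamma_{p,q,l}$, regardless of which circle(s) of $[p+1,p+q+l]$ the marked cycle $\tilde{C}''$ happens to meet.

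It remains to upgrade connectivity to membership in $S_{NC}(p,q,l)$, and this is where the relaxation to $\lesssim^{(1)}$ must be paid for by a counting argument rather than by a direct appeal to Proposition~\ref{Proposition: Smaller permutation of a non-crossing one is still non-crossing if connectivy is preserved}. Applying Proposition~\ref{Proposition: Sufficient and neccesary conditions general version} to $\sigma\lesssim^{(1)}\pi_{\vec{n}}$ yields $|\sigma|+|\sigma^{-1}\pi_{\vec{n}}|=|\pi_{\vec{n}}|+2$, while the factorisation $\pi_{\vec{n}}\in\NC(p)\times S_{NC}(q,l)$ gives $|\pi_{\vec{n}}|+|\pi_{\vec{n}}^{-1}\gamma_{p,q,l}|=n-1$ (the $\NC(p)$ factor contributes $p-1$ and the $S_{NC}(q,l)$ factor contributes $q+l$). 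Combining these with the triangle inequality $|\sigma^{-1}\gamma_{p,q,l}|\leq|\sigma^{-1}\pi_{\vec{n}}|+|\pi_{\vec{n}}^{-1}\gamma_{p,q,l}|$ bounds $|\sigma|+|\sigma^{-1}\gamma_{p,q,l}|\leq n+1$. On the other hand, $\sigma\vee\gamma_{p,q,l}=1_n$ together with \cite[Equation 2.9]{MN} forces $|\sigma|+|\sigma^{-1}\gamma_{p,q,l}|\geq n+1$, so both are equalities; this is precisely the non-crossing condition $\#(\sigma)+\#(\sigma^{-1}\gamma_{p,q,l})+\#(\gamma_{p,q,l})=n+2$, giving $\sigma\in S_{NC}(p,q,l)$. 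I expect the main obstacle to be the connectivity step: unlike in Proposition~\ref{Proposition: Suficiente conditions version 1}, the connection is supplied from two genuinely different sources — the marked through-block and the annular through-cycle — and one must verify that these jointly exhaust all three circles before the clean counting in the last step can be invoked.
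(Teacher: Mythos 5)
Your argument is correct and follows essentially the same route as the paper's proof: the separation of $N$ is read off blockwise, the connectivity $\sigma\vee\gamma_{p,q,l}=1_n$ is obtained from exactly the same two sources (the marked block of $\sigma\vee\pi_{\vec{n}}$ joining $[p]$ to $[p+1,p+q+l]$, and the annular factor of $\pi$ transported through Lemma \ref{Lemma: Some properties of pi_n} to join the last two circles), and the final cycle count is the paper's computation rephrased via the triangle inequality for the Cayley length, which is equivalent to its use of \cite[Equation 2.9]{MN}. The only (harmless) differences are the order of the two connectivity steps and that you spell out the separation step the paper dismisses as clear.
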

\begin{proof}
The condition $\sigma^{-1}\pi_{\vec{n}}$ separates $N$ is clearly satisfied so it remains to verify $\sigma\in S_{NC}(p,q,l)$. We first verify $\sigma\vee\gamma_{p,q,l}=1_{n}$. By \cite[Lemma 6]{MST},
$$\sigma^{-1}\gamma_{p,q,l}|_N = \sigma^{-1}\pi_{\vec{n}}|_N \pi_{\vec{n}}^{-1}\gamma_{p,q,l}|_N = \pi_{\vec{n}}^{-1}\gamma_{p,q,l}|_N.$$
Lemma \ref{Lemma: Some properties of pi_n} says
$$\pi_{\vec{n}}^{-1}\gamma_{p,q,l}(n_1+\cdots+n_i)=n_1+\cdots+n_{\pi^{-1}\gamma_{r,s,t}(i)},$$
moreover, $\pi\in \NC(r)\times S_{NC}(s,t)$, so there exist $a,b$ such that $a\in [r+1,r+s]$, $b\in [r+s+1,r+s+t]$ and $\pi(a)=b$, thus
$$n_1+\cdots+n_a = n_1+\cdots+ n_{\pi^{-1}\gamma_{r,s,t}\gamma^{-1}_{r,s,t}(b)}=\pi_{\vec{n}}^{-1}\gamma_{p,q,l}(n_1+\cdots+n_{\gamma_{r,s,t}^{-1}(b)}).$$
Hence,
$$\sigma^{-1}\gamma_{p,q,l}|_N(n_1+\cdots+n_{\gamma_{r,s,t}^{-1}(b)})=n_1+\cdots+n_a.$$
The latter means that $\sigma^{-1}\gamma_{p,q,l}$ has a cycle that contains $n_1+\cdots+n_a \in [p+1,p+q]$ and $n_1+\cdots+n_{\gamma_{r,s,t}^{-1}(b)} \in [p+q+1,p+q+l]$, thus there exist $\hat{a},\hat{b}$ such that $\hat{a}\in [p+1,p+q]$ and $\hat{b}\in [p+q+1,p+q+l]$ with $\sigma^{-1}\gamma_{p,q,l}(\hat{a})=\hat{b}$, or equivalently, $\sigma(\hat{b})=\gamma_{p,q,l}(\hat{a})$, this proves that $\sigma$ has a cycle that meets $[p+1,p+q]$ and $[p+q+1,p+q+l]$. On the other hand, let $B_0$ be the block of $\sigma\vee\pi_{\vec{n}}$ which is the union of two cycles of $\pi_{\vec{n}}$, and let us denote these cycles as $\tilde{C}^{\prime}$ and $\tilde{C}^{\prime\prime}$. By hypothesis, $\sigma\in S_{NC}(\tilde{C}^{\prime},\tilde{C}^{\prime\prime})$ with $\tilde{C}^{\prime}\subset [p]$ and $\tilde{C}^{\prime\prime}\subset [p+1,p+q+l]$, hence, $\sigma$ must have a cycle that meets $[p]$ and $[p+1,p+q+l]$, this proves $\sigma\vee\gamma_{p,q,l}=1_n$. We are reduce to prove $\#(\sigma) +\#(\sigma^{-1}\gamma_{p,q,l})+\#(\gamma_{p,q,l})=n+2$ which is equivalent to show $\#(\sigma) +\#(\sigma^{-1}\gamma_{p,q,l})+\#(\gamma_{p,q,l}) \geq n+2$ as the reverse inequality is always true (\cite[Equation 2.9]{MN}). By Proposition \ref{Proposition: Sufficient and neccesary conditions general version},
$$\#(\sigma)+\#(\sigma^{-1}\pi_{\vec{n}})+\#(\pi_{\vec{n}})= n+2\#(\sigma\vee\pi_{\vec{n}}).$$
By \cite[Equation 2.9]{MN},
\begin{eqnarray*}
\#(\pi_{\vec{n}}^{-1}\gamma_{p,q,l})+\#(\sigma^{-1}\pi_{\vec{n}})+\#(\sigma^{-1}\gamma_{p,q,l}) &\leq & n+2\#(\sigma^{-1}\pi_{\vec{n}}\vee \sigma^{-1}\gamma_{p,q,l}) \\ & \leq & n+2\#(\sigma^{-1}\gamma_{p,q,l}),
\end{eqnarray*}
Therefore,
\begin{eqnarray*}
\#(\sigma^{-1}\gamma_{p,q,l}) &\geq&  \#(\pi_{\vec{n}}^{-1}\gamma_{p,q,l})+\#(\sigma^{-1}\pi_{\vec{n}})-n \\
&=& (n+1-\#(\pi_{\vec{n}}))+(2\#(\sigma\vee\pi_{\vec{n}})-\#(\pi_{\vec{n}})-\#(\sigma)) \\
&=& n+1-\#(\pi_{\vec{n}})+\#(\pi_{\vec{n}})-2-\#(\sigma),
\end{eqnarray*}
where in second line we use that $\pi_{\vec{n}}\in \NC(p)\times S_{NC}(q,l)$. Thus, $\#(\sigma)+\#(\sigma^{-1}\gamma_{p,q,l})+\#(\gamma_{p,q,l})\geq n+2$.
\end{proof}

\begin{lemma}\label{Lemma: Relation A and a, version 1}
Let $\pi\in S_{NC}(r,s,t)$, then
$$\kappa_{\pi}(A_1,\dots,A_{r+s+t})=\sum_{\substack{\sigma\in S_{NC}(p,q,l) \\ \sigma \leq \pi_{\vec{n}} \\ \sigma^{-1}\pi_{\vec{n}}\text{ separates }N}} \kappa_{\sigma}(\vec{a}).$$
\end{lemma}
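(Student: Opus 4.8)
The plan is to reduce the statement to the first–order Krawczyk--Speicher formula (Equation~\ref{Equation: First order cumulants of products}) applied separately on each cycle of $\pi$, and then to reassemble the resulting local non-crossing permutations into a single global one by invoking Proposition~\ref{Proposition: Suficiente conditions version 1}. The starting point is multiplicativity. Since $\pi\in S_{NC}(r,s,t)$ is identified with the partitioned permutation $(0_\pi,\pi)$, every block of $0_\pi$ is a single cycle of $\pi$, so $\kappa_\pi$ factors as a product of first-order free cumulants, one for each cycle $C_i=(i_1,\dots,i_w)$ of $\pi$, with arguments the $A_{i_1},\dots,A_{i_w}$ taken in the cyclic order prescribed by $C_i$.

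First I would fix such a cycle $C_i$ and let $\tilde C_i$ be the corresponding cycle of $\pi_{\vec n}$, which by Remark~\ref{Remark: Decomposition of Pin} is the merge $T_{i_1}\cup\cdots\cup T_{i_w}$; its restriction $\gamma_i\vcentcolon=\gamma|_{\tilde C_i}$ has exactly the cycles $T_{i_1},\dots,T_{i_w}$. Because $A_{i_k}=\prod_{j\in T_{i_k}}a_j$ and the elements of $\tilde C_i$ are laid out precisely in the cyclic order given by $C_i$, Equation~\ref{Equation: First order cumulants of products} expands the single factor $\kappa_{|C_i|}(A_{i_1},\dots,A_{i_w})$ as $\sum_{\rho_i}\kappa_{\rho_i}(a_j:j\in\tilde C_i)$, where $\rho_i$ ranges over $\NC(\tilde C_i)$ with $\rho_i\vee\gamma_i=1_{\tilde C_i}$, equivalently over those $\rho_i$ for which $\rho_i^{-1}\tilde C_i$ separates the points of $N\cap\tilde C_i$.

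Next I would reassemble the pieces. A choice of one $\rho_i\in\NC(\tilde C_i)$ for each cycle of $\pi$ is exactly the same datum as a single permutation $\sigma=\prod_i\rho_i$ on $[n]$ whose cycles inside each $\tilde C_i$ are non-crossing; by Definition~\ref{Definition: Partial order on Sn} this is precisely the condition $\sigma\le\pi_{\vec n}$, and the correspondence $(\rho_i)_i\leftrightarrow\sigma$ is a bijection with $\rho_i=\sigma|_{\tilde C_i}$. Multiplicativity of $\kappa$ then turns the product of the local cumulants into the single cumulant $\kappa_\sigma(\vec a)$. Moreover, since $\sigma\le\pi_{\vec n}$ the permutation $\sigma^{-1}\pi_{\vec n}$ restricts to $\sigma_i^{-1}\tilde C_i$ on each $\tilde C_i$, so the collection of local separation conditions is exactly the global statement that $\sigma^{-1}\pi_{\vec n}$ separates $N$. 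Thus the sum I have produced is indexed by $\{\sigma:\sigma\le\pi_{\vec n},\ \sigma_i\vee\gamma_i=\tilde C_i\text{ for all }i\}$, equivalently those $\sigma\le\pi_{\vec n}$ with $\sigma^{-1}\pi_{\vec n}$ separating $N$.

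It remains to match this index set with the one in the statement, i.e.\ to insert the requirement $\sigma\in S_{NC}(p,q,l)$. Here Proposition~\ref{Proposition: Suficiente conditions version 1} does the decisive work: it shows that any $\sigma\le\pi_{\vec n}$ satisfying the per-cycle conditions $\sigma_i\vee\gamma_i=\tilde C_i$ automatically lies in $S_{NC}(p,q,l)$, so the added constraint is vacuous and may be inserted without changing the sum; conversely, any $\sigma\in S_{NC}(p,q,l)$ with $\sigma\le\pi_{\vec n}$ and $\sigma^{-1}\pi_{\vec n}$ separating $N$ plainly meets the per-cycle conditions, so no terms are lost. I expect the main obstacle to be the careful bookkeeping in the second step --- verifying that the cyclic orders induced by $\pi_{\vec n}$ align so that the local application of Equation~\ref{Equation: First order cumulants of products} produces exactly $\NC(\tilde C_i)$ with the stated connectivity, and that the passage from tuples $(\rho_i)_i$ to a single $\sigma$ is a genuine bijection onto $\{\sigma\le\pi_{\vec n}\}$. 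Once this combinatorial alignment is in place, Proposition~\ref{Proposition: Suficiente conditions version 1} supplies the only substantial topological input and the identity follows.
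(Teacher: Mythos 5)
Your proposal is correct and follows essentially the same route as the paper's proof: factor $\kappa_\pi$ over the cycles of $\pi$ by multiplicativity, expand each factor via the Krawczyk--Speicher formula on $\NC(\tilde C_i)$ with the separation condition on $N\cap\tilde C_i$, reassemble the tuples $(\sigma_i)_i$ into a single $\sigma\le\pi_{\vec n}$, and invoke Proposition \ref{Proposition: Suficiente conditions version 1} to conclude $\sigma\in S_{NC}(p,q,l)$. No gaps.
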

\begin{proof}
Let $C_1\cdots C_w$ be the cycle decomposition of $\pi$. If $C_i=\{j_1,\dots,j_v\}$, we denote by, $\kappa_{|C_i|}(\vec{A})$ to $\kappa_{|C_i|}(A_{j_1},\dots,A_{j_v})$. Then
$$\kappa_{\pi}(A_1,\dots,A_{r+s+t})=\kappa_{|C_1|}(\vec{A})\cdots \kappa_{|C_w|}(\vec{A}).$$
For each $1\leq i\leq w$ by \cite[Theorem 2.2]{KS} we have
$$\kappa_{|C_i|}(\vec{A})=\sum_{\substack{\sigma_i \in \NC(\tilde{C_i}) \\ \sigma_i \vee \gamma_i=\tilde{C_i}}}\kappa_{\sigma}(\vec{a})=\sum_{\substack{\sigma_i \in \NC(\tilde{C_i}) \\ \sigma_i^{-1}\tilde{C_i}\text{ separates }N\cap\tilde{C_i}}}\kappa_{\sigma_i}(\vec{a}),$$
where in above expression $\kappa_{\sigma_i}(\vec{a})$ means evaluating the cumulant $\kappa_{\sigma_i}$ only on the set of indices corresponding to $\tilde{C}_i$. Then
$$\kappa_{\pi}(\vec{A})=\sum_{\substack{\sigma_1 \in \NC(\tilde{C_1}) \\ \sigma_1^{-1}\tilde{C_1}\text{ separates }N\cap\tilde{C_1}}}\cdots \sum_{\substack{\sigma_w \in \NC(\tilde{C_w}) \\ \sigma_w^{-1}\tilde{C_w}\text{ separates }N\cap\tilde{C_w}}}\kappa_{\sigma_1}(\vec{a})\cdots \kappa_{\sigma_w}(\vec{a}).$$
We let $\sigma=\sigma_1\times \cdots \times \sigma_w$ then by Proposition \ref{Proposition: Suficiente conditions version 1} we have $\sigma\in S_{NC}(p,q,l)$, $\sigma\leq \pi_{\vec{n}}$ and $\sigma^{-1}\pi_{\vec{n}}$ separates the points of $N$. Conversely if $\sigma\in S_{NC}(p,q,l)$ is such that $\sigma\leq \pi_{\vec{n}}$ and $\sigma^{-1}\pi_{\vec{n}}$ separates the points of $N$ then each $\sigma_i = \sigma|_{\tilde{C_i}}$ satisfies $\sigma_i\in \NC(\tilde{C_i})$ and $\sigma_i^{-1}\tilde{C_i}$ separates the points of $N\cap\tilde{C_i}$. 
\end{proof}

\begin{lemma}\label{Lemma: Relation A and a, version 2}
Let $(\V,\pi) \in \PS_{NC}^{(1)}(r,s,t)$, then
$$\kappa_{(\V,\pi)}(A_1,\dots,A_{r+s+t})=\sum_{\substack{\sigma\in S_{NC}(p,q,l) \\ \sigma \lesssim^{(1)} \pi_{\vec{n}} \\ \sigma^{-1}\pi_{\vec{n}}\text{ separates }N \\ \sigma \vee \pi_{\vec{n}}=\V_{\vec{n}}}} \kappa_{\sigma}(\vec{a}) + \sum_{\substack{(\U,\sigma)\in \PS_{NC}^{(1)}(p,q,l) \\ \sigma \leq \pi_{\vec{n}} \\ \sigma^{-1}\pi_{\vec{n}}\text{ separates }N \\ \U\vee\pi_{\vec{n}}=\V_{\vec{n}}}} \kappa_{(\U,\sigma)}(\vec{a}).$$
\end{lemma}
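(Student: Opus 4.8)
The plan is to follow the template of Lemma \ref{Lemma: Relation A and a, version 1}, replacing the single appeal to the first--order product formula by a combination of the first-- and second--order formulas dictated by the block structure of $(\V,\pi)$. Since $\kappa_{(\V,\pi)}$ is multiplicative over the blocks of $\V$, I would first factor it. Writing, without loss of generality by the symmetry of the three circles, $\pi=\pi_1\times\pi_2\in\NC(r)\times S_{NC}(s,t)$ with un--marked cycles $C_1,\dots,C_w$ and marked block $C'\cup C''$, where $C'$ is a cycle of $\pi_1$ and $C''$ a through--cycle of $\pi_2$, one has
\[
\kappa_{(\V,\pi)}(\vec A)=\Big(\prod_{i=1}^{w}\kappa_{|C_i|}(\vec A)\Big)\,\kappa_{|C'|,|C''|}(\vec A;\vec A),
\]
where the two argument groups of the second--order factor are indexed by $C'$ and $C''$. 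To each first--order factor I would apply \cite[Theorem 2.2]{KS} exactly as in Lemma \ref{Lemma: Relation A and a, version 1}, obtaining sums over $\sigma_i\in\NC(\tilde C_i)$ with $\sigma_i^{-1}\tilde C_i$ separating $N\cap\tilde C_i$; to the second--order factor I would apply the Mingo--Speicher--Tan formula of Equation \ref{Equation: Second order cumulants of products}, expanding over second--order non--crossing partitioned permutations $(\W,\tau)$ on the index set $\tilde C'\cup\tilde C''$, whose two circles are the merged cycles $\tilde C'$ and $\tilde C''$ of $\pi_{\vec n}$, subject to $\tau^{-1}(\tilde C'\cdot\tilde C'')$ separating $N\cap(\tilde C'\cup\tilde C'')$.

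The key structural input is that the Mingo--Speicher--Tan sum splits, by the two--circle classification of non--crossing partitioned permutations, into exactly two families: the \emph{connected} terms, where $\W=0_\tau$ and $\tau\in S_{NC}(\tilde C',\tilde C'')$ joins the two circles, and the \emph{split} terms, where $\tau=\tau_1\times\tau_2$ and $\W$ joins a cycle of $\tau_1$ to a cycle of $\tau_2$. These two families produce the two summations in the statement. Setting $\sigma=\sigma_1\times\cdots\times\sigma_w\times\tau$, in the connected case $\sigma$ connects $\tilde C'$ and $\tilde C''$ while being below $\pi_{\vec n}$ on every other cycle, so $\#(\sigma\vee\pi_{\vec n})=\#(\pi_{\vec n})-1$ and $\sigma\vee\pi_{\vec n}=\V_{\vec n}$; since in this labeling $\tilde C'\subset[p]$ and $\tilde C''\subset[p+1,p+q+l]$, Proposition \ref{Proposition: Suficiente conditions version 2} applies and yields $\sigma\in S_{NC}(p,q,l)$ together with $\sigma^{-1}\pi_{\vec n}$ separating $N$, that is, a term of the first sum.

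In the split case I would take $\sigma=\sigma_1\times\cdots\times\sigma_w\times\tau_1\times\tau_2$ and let $\U$ carry the cycles of $\sigma$ as singleton blocks together with the marked block inherited from $\W$; then $\sigma\le\pi_{\vec n}$ and $\U\vee\pi_{\vec n}=\V_{\vec n}$. To establish membership in $\PS_{NC}^{(1)}(p,q,l)$ I would apply Proposition \ref{Proposition: Suficiente conditions version 1} (and its two--circle analogue \cite[Proposition 24]{MST}) to the annular sub--pieces separately: the cycles meeting circles two and three assemble, through the $\pi_2$--part, into the $S_{NC}(q,l)$ factor, while the circle--one cycles give the $\NC(p)$ factor, so that $\sigma\in\NC(p)\times S_{NC}(q,l)$; since $\tau_1\subset\tilde C'\subset[p]$ and $\tau_2\subset\tilde C''\subset[p+1,p+q+l]$, the marked block of $\U$ joins a cycle of the $\NC(p)$ factor with a cycle of the $S_{NC}(q,l)$ factor, as required for a term of the second sum. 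Finally, the correspondence is a bijection: any $\sigma$ (resp. $(\U,\sigma)$) on the right--hand side restricts, on each cycle $\tilde C_i$ of $\pi_{\vec n}$ and on the marked pair $\tilde C'\cup\tilde C''$, to a unique admissible tuple $(\sigma_i)$ and a unique $(\W,\tau)$, reversing the construction; this is checked exactly as the converse direction of Lemma \ref{Lemma: Relation A and a, version 1}.

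The main obstacle I anticipate is the split case: reconciling the two--circle annular geometry that the Mingo--Speicher--Tan formula imposes on $\tilde C'\cup\tilde C''$ with the ambient three--circle $(p,q,l)$ geometry. Concretely, $\tilde C''$ is a single circle from the viewpoint of the second--order cumulant but a through--cycle spanning circles two and three in the $(p,q,l)$ annulus, so one must verify that a partition non--crossing on the $\tilde C''$--disk reassembles with the remaining cycles into a genuine element of $S_{NC}(q,l)$ and that the marked block crosses the circles in the manner demanded by the definition of $\PS_{NC}^{(1)}(p,q,l)$. Checking this compatibility, together with the global separation of $N$, is precisely where Propositions \ref{Proposition: Suficiente conditions version 1} and \ref{Proposition: Suficiente conditions version 2} do the essential work.
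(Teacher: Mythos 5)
Your proposal is correct and follows essentially the same route as the paper: the same factorization of $\kappa_{(\V,\pi)}$ over the blocks of $\V$, the same split of the Mingo--Speicher--Tan expansion of the second-order factor into connected and split families, and the same use of Proposition \ref{Proposition: Suficiente conditions version 2} for the connected case. The only cosmetic difference is that for the split case the paper invokes Proposition \ref{Proposition: Smaller permutation of a non-crossing one is still non-crossing if connectivy is preserved} directly to place $\sigma$ in $\NC(p)\times S_{NC}(q,l)$, whereas you route through Proposition \ref{Proposition: Suficiente conditions version 1} and its two-circle analogue; these accomplish the same thing.
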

\begin{proof}
Let us assume $\pi \in \NC(r)\times S_{NC}(s,t)$ and let $C_1\cdots C_wC^{\prime}C^{\prime\prime}$ be the cycle decomposition of $\pi$ so that $0_{C_i}$ are all blocks of $\V$ for $1\leq i\leq w$ and $0_{C^{\prime}\cup C^{\prime\prime}}$ is the block of $\V$ which is the union of two cycles of $\pi$; $C^{\prime}\subset [r]$ and $C^{\prime\prime}\subset [r+1,r+s+t]$. Thus
$$\kappa_{(\V,\pi)}(\vec{A})=\kappa_{|C_1|}(\vec{A})\cdots \kappa_{|C_w|}(\vec{A})\kappa_{|C^{\prime}|,|C^{\prime\prime}|}(\vec{A}).$$
By \cite[Theorem 2.2]{KS},
$$\kappa_{|C_i|}(\vec{A})=\sum_{\substack{\sigma_i \in \NC(\hat{C}_i) \\ \sigma_i^{-1} \hat{C}_i \text{ separates }N\cap \tilde{C}_i}}\kappa_{\sigma_i}(\vec{a}).$$
Similarly, by \cite[Theorem 3]{MST},
$$\kappa_{|C^{\prime}|,|C^{\prime\prime}|}(\vec{A}) = \sum_{\substack{(\V_0,\sigma_0)\in PS_{NC}(\tilde{C}^{\prime},\tilde{C}^{\prime\prime}) \\ \sigma_0^{-1}\tilde{C}^{\prime}\tilde{C}^{\prime\prime}\text{ separates }N\cap\tilde{C}^{\prime}\cap \tilde{C}^{\prime\prime}}}\kappa_{(\V_0,\sigma_0)}(\vec{a}).$$
Remind that the set $PS_{NC}(\tilde{C}^{\prime},\tilde{C}^{\prime\prime})$ can be written as the union of two sets \cite[Proposition 5.11]{CMSS}, these are $S_{NC}(\tilde{C}^{\prime},\tilde{C}^{\prime\prime})$ and $PS_{NC}^{\prime}(\tilde{C}^{\prime},\tilde{C}^{\prime\prime})$, where the last set consists of those $(\U,\pi)$ such that $\pi=\pi_1\times\pi_2\in\NC(\tilde{C}^{\prime})\times \NC(\tilde{C}^{\prime\prime})$ and any block of $\U$ is a cycle of $\pi$ except by one block which is the union of two cycles of $\pi$ one from each $\pi_1$ and $\pi_2$. Thus
$$\kappa_{(\V,\pi)}(\vec{A}) = \mathcal{P}\mathcal{A}+\mathcal{P}\mathcal{B},$$
with,
$$\mathcal{P}=\sum_{\substack{\sigma_1 \in \NC(\hat{C}_1) \\ \sigma_1^{-1} \hat{C}_1 \text{ separates }N\cap \tilde{C}_1}}\kappa_{\sigma_1}(\vec{a}) \cdots \sum_{\substack{\sigma_w \in \NC(\hat{C}_w) \\ \sigma_w^{-1} \hat{C}_w \text{ separates }N\cap \tilde{C}_w}}\kappa_{\sigma_w}(\vec{a}),$$
$$\mathcal{A}=\sum_{\substack{\sigma_0 \in S_{NC}(\tilde{C}^{\prime},\tilde{C}^{\prime\prime}) \\ \sigma_0^{-1}\tilde{C}^{\prime}\tilde{C}^{\prime\prime}\text{ separates }N\cap\tilde{C}^{\prime}\cap \tilde{C}^{\prime\prime}}}\kappa_{\sigma_0}(\vec{a}),$$
and,
$$\mathcal{B}=\sum_{\substack{(\V_0,\sigma_0)\in PS_{NC}^{\prime}(\tilde{C}^{\prime},\tilde{C}^{\prime\prime}) \\ \sigma_0^{-1}\tilde{C}^{\prime}\tilde{C}^{\prime\prime}\text{ separates }N\cap\tilde{C}^{\prime}\cap \tilde{C}^{\prime\prime}}}\kappa_{(\V_0,\sigma_0)}(\vec{a}).$$
In $\mathcal{P}\mathcal{A}$, we let $\sigma=\sigma_0\times \cdots\times \sigma_w$, then $\sigma^{-1}\pi_{\vec{n}}$ separates $N$ and $\sigma \vee\pi_{\vec{n}}=\V_{\vec{n}}$. At each block, $B$, of $\V_{\vec{n}}$, we have $\sigma|_{B}\in S_{NC}(\pi_{\vec{n}}|_B)$, i.e. $\sigma \lesssim^{(1)}\pi_{\vec{n}}.$ Finally by Proposition \ref{Proposition: Suficiente conditions version 2} we have $\sigma\in S_{NC}(p,q,l)$. Conversely, if $\sigma\in S_{NC}(p,q,l)$ is such that $\sigma \lesssim^{(1)} \pi_{\vec{n}}$, $\sigma^{-1}\pi_{\vec{n}}$ separates $N$ and $\sigma\vee\pi_{\vec{n}}=\V_{\vec{n}}$ then at each block, $B$, of $\V_{\vec{n}}$, we have $\sigma|_{B}\in S_{NC}(\pi_{\vec{n}}|_B)$ and $\sigma|_B^{-1}\pi_{\vec{n}}|_B$ separates $N\cap B$, thus
$$\mathcal{P}\mathcal{A}=\sum_{\substack{\sigma\in S_{NC}(p,q,l) \\ \sigma \lesssim^{(1)} \pi_{\vec{n}} \\ \sigma^{-1}\pi_{\vec{n}}\text{ separates }N \\ \sigma \vee \pi_{\vec{n}}=\V_{\vec{n}}}} \kappa_{\sigma}(\vec{a}).$$
Similarly, in the second sum, we let $\sigma=\sigma_0\times \cdots \times \sigma_w$, and $\U$ to be the partition of $[n]$ such that each cycle of $\pi_i$ is a block of $\U$ for $1\leq i\leq w$ and the blocks of $\U$ restricted to $\tilde{C}^{\prime}\cup \tilde{C}^{\prime\prime}$ are precisely the blocks of $\V_0$. In this case, $\sigma\leq \pi_{\vec{n}}$ and $\pi_{\vec{n}} \in \NC(p)\times S_{NC}(q,l)$ thus by Proposition \ref{Proposition: Smaller permutation of a non-crossing one is still non-crossing if connectivy is preserved} $\sigma\in \NC(p)\times S_{NC}(q,l)$. Each block of $\U$ is a cycle of $\sigma$ except by one which is the union of two cycles of $\sigma$, one contained in $\tilde{C}^{\prime} \subset [p]$ and the other contained in $\tilde{C}^{\prime\prime}\subset [p+1,p+q+l]$, thus $(\U,\sigma)\in PS_{NC}^{(1)}(p,q,l)$. Furthermore, $\U\vee\pi_{\vec{n}}=\V_{\vec{n}}$. Conversely if $(\U,\sigma)\in PS_{NC}^{(1)}(p,q,l)$ satisfies all mentioned before then for each cycle, $B$, of $\pi_{\vec{n}}$, we have $\sigma|_{B}\in \NC(B)$. There are exactly two cycles of $\sigma$ that are in the same block of $\U$, these cycles must lie inside distinct cycles of $\pi_{\vec{n}}$ as otherwise any block of $\U$ is contained in a cycle of $\pi_{\vec{n}}$ which contradicts $\U \vee \pi_{\vec{n}}=\V_{\vec{n}}$. Let $\tilde{C}^{\prime}$ and $\tilde{C}^{\prime\prime}$ be these cycles of $\pi_{\vec{n}}$. For any other cycle of $\pi_{\vec{n}}$, $B$, we have that any cycle of $\sigma|_B$ is also a block of $\U$ and $\sigma|_B^{-1}B$ separates $N\cap B$. Only within these cycles there are exactly two cycles of $\sigma|_{\tilde{C}^{\prime}\cup \tilde{C}^{\prime\prime}}$ which are joined into the same block of $\U$, that is, if $(\U_0,\sigma_0)$ is the restriction of $(\U,\sigma)$ to $\tilde{C}^{\prime}\cup \tilde{C}^{\prime\prime}$ then $(\U_0,\sigma_0)\in PS_{NC}^{\prime}(\tilde{C}^{\prime}, \tilde{C}^{\prime\prime})$ and clearly $\sigma_0^{-1}\tilde{C}^{\prime}\tilde{C}^{\prime\prime}$ separates $N\cap \tilde{C}^{\prime}\cap \tilde{C}^{\prime\prime}$, thus
$$\mathcal{P}\mathcal{B}=\sum_{\substack{(\U,\sigma)\in \PS_{NC}^{(1)}(p,q,l) \\ \sigma \leq \pi_{\vec{n}} \\ \sigma^{-1}\pi_{\vec{n}}\text{ separates }N \\ \U\leq \V_{\vec{n}}}} \kappa_{(\U,\sigma)}(\vec{a}).$$
\end{proof}

\begin{lemma}\label{Lemma: Relation A and a, version 3}
Let $(\V,\pi) \in \PS_{NC}^{(2)}(r,s,t)$, then
\begin{multline*}
\kappa_{(\V,\pi)}(A_1,\dots,A_{r+s+t})= \\
\sum_{\substack{\sigma\in S_{NC}(p,q,l) \\ \sigma \lesssim^{(2)} \pi_{\vec{n}} \\ \sigma^{-1}\pi_{\vec{n}}\text{ separates }N \\ \sigma \vee \pi_{\vec{n}}=\V_{\vec{n}}}} \kappa_{\sigma}(\vec{a}) + \sum_{\substack{(\U,\sigma)\in \PS_{NC}^{(1)}(p,q,l) \\ \sigma \lesssim^{(1)} \pi_{\vec{n}} \\ \sigma^{-1}\pi_{\vec{n}}\text{ separates }N \\ \U\vee \pi_{\vec{n}}=\V_{\vec{n}}}} \kappa_{(\U,\sigma)}(\vec{a})
+ \sum_{\substack{(\U,\sigma)\in \PS_{NC}^{(2)}(p,q,l) \\ \sigma \leq \pi_{\vec{n}} \\ \sigma^{-1}\pi_{\vec{n}}\text{ separates }N \\ \U\vee \pi_{\vec{n}}=\V_{\vec{n}}}} \kappa_{(\U,\sigma)}(\vec{a}).
\end{multline*}
\end{lemma}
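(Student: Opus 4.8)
The plan is to follow the architecture of the proof of Lemma~\ref{Lemma: Relation A and a, version 2} essentially verbatim, the only structural novelty being that an element of $\PS_{NC}^{(2)}(r,s,t)$ carries \emph{two} marked blocks rather than one. By Lemma~\ref{Solutions to third order partitioned permutations}(3), after relabelling the circles I may assume $\pi=\pi_1\times\pi_2\times\pi_3\in\NC(r)\times\NC(s)\times\NC(t)$, that circle $2$ is the middle circle of the connecting path, and that the cycle decomposition of $\pi$ is $C_1\cdots C_w\,C'_1C'_2\,C''_1C''_2$, where $0_{C_1},\dots,0_{C_w}$ are the unmarked blocks of $\V$, the first marked block is $0_{C'_1\cup C'_2}$ with $C'_1\subset[r]$ and $C'_2\subset[r+1,r+s]$, and the second is $0_{C''_1\cup C''_2}$ with $C''_1\subset[r+1,r+s]$ a cycle of $\pi_2$ distinct from $C'_2$ and $C''_2\subset[r+s+1,r+s+t]$. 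Since $\kappa_{(\V,\pi)}$ is multiplicative over the blocks of $\V$, this yields
\begin{equation*}
\kappa_{(\V,\pi)}(\vec{A})=\Big(\prod_{i=1}^{w}\kappa_{|C_i|}(\vec{A})\Big)\,\kappa_{|C'_1|,|C'_2|}(\vec{A})\,\kappa_{|C''_1|,|C''_2|}(\vec{A}).
\end{equation*}

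Next I would expand the first-order factors by \cite[Theorem 2.2]{KS}, exactly as in Lemma~\ref{Lemma: Relation A and a, version 1}, and each of the two second-order factors by \cite[Theorem 3]{MST}. Using the splitting $PS_{NC}=S_{NC}\sqcup PS'_{NC}$ of \cite[Proposition 5.11]{CMSS} on each pair, I write $\kappa_{|C'_1|,|C'_2|}(\vec{A})=\mathcal{A}'+\mathcal{B}'$ and $\kappa_{|C''_1|,|C''_2|}(\vec{A})=\mathcal{A}''+\mathcal{B}''$, where the $\mathcal{A}$-sums range over through-permutations in $S_{NC}(\tilde{C}'_1,\tilde{C}'_2)$ (resp. $S_{NC}(\tilde{C}''_1,\tilde{C}''_2)$) and the $\mathcal{B}$-sums over genuine second-order pairs in $PS'_{NC}$, all subject to the appropriate separation condition on the relevant restriction of $N$. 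Writing $\mathcal{P}$ for the product of the first-order expansions, distributing gives
\begin{equation*}
\kappa_{(\V,\pi)}(\vec{A})=\mathcal{P}\mathcal{A}'\mathcal{A}''+\big(\mathcal{P}\mathcal{A}'\mathcal{B}''+\mathcal{P}\mathcal{B}'\mathcal{A}''\big)+\mathcal{P}\mathcal{B}'\mathcal{B}'',
\end{equation*}
and the three groups are to be identified, in order, with the three sums on the right-hand side of the statement.

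For the identification I would, in each group, assemble the local data into $\sigma=\sigma_1\times\cdots\times\sigma_w\times\sigma'_0\times\sigma''_0$, and---whenever a $\mathcal{B}$-factor is present---a partition $\U$ whose blocks are the cycles of $\sigma$ together with the marked blocks supplied by the $\mathcal{B}$-factors. In $\mathcal{P}\mathcal{A}'\mathcal{A}''$ both through-permutations fuse their two cycles of $\pi_{\vec{n}}$, so $\sigma\vee\pi_{\vec{n}}=\V_{\vec{n}}$ has two marked blocks and $\sigma\lesssim^{(2)}\pi_{\vec{n}}$, with no extra block created; in the two mixed groups exactly one fusion is produced by a through-permutation, the other region contributing a $\U$-block, giving $(\U,\sigma)\in\PS_{NC}^{(1)}(p,q,l)$ with $\sigma\lesssim^{(1)}\pi_{\vec{n}}$ and $\U\vee\pi_{\vec{n}}=\V_{\vec{n}}$; in $\mathcal{P}\mathcal{B}'\mathcal{B}''$ neither region fuses cycles, so $\sigma\leq\pi_{\vec{n}}$ and $(\U,\sigma)\in\PS_{NC}^{(2)}(p,q,l)$ with $\U\vee\pi_{\vec{n}}=\V_{\vec{n}}$. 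In every group the separation of $N$ by $\sigma^{-1}\pi_{\vec{n}}$ is inherited blockwise from the local separation conditions, and the converse inclusions---that every admissible $\sigma$ or $(\U,\sigma)$ in the target sets arises this way---follow by restricting to the cycles of $\pi_{\vec{n}}$ precisely as in Lemma~\ref{Lemma: Relation A and a, version 2}.

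The main obstacle is the global membership statement in each group. For $\mathcal{P}\mathcal{A}'\mathcal{A}''$ I must show $\sigma\in S_{NC}(p,q,l)$ for a doubly-through $\sigma$, which is covered neither by Proposition~\ref{Proposition: Suficiente conditions version 1} nor by Proposition~\ref{Proposition: Suficiente conditions version 2}; I therefore expect to prove a ``version~3'' sufficient-conditions proposition for $\pi_{\vec{n}}\in\NC(p)\times\NC(q)\times\NC(l)$ and $\sigma\lesssim^{(2)}\pi_{\vec{n}}$ whose two fused blocks join $[p]$ to $[p+1,p+q]$ and $[p+1,p+q]$ to $[p+q+1,p+q+l]$. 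Its proof should follow the template of Proposition~\ref{Proposition: Suficiente conditions version 2}: the connectivity computation of Lemma~\ref{Lemma: Some properties of pi_n} together with $\pi\in S_{NC}(r,s,t)$ forces $\sigma\vee\gamma_{p,q,l}=1_n$, and then Proposition~\ref{Proposition: Sufficient and neccesary conditions general version} with $\#(\sigma\vee\pi_{\vec{n}})=\#(\pi_{\vec{n}})-2$, combined with \cite[Equation 2.9]{MN}, yields $\#(\sigma)+\#(\sigma^{-1}\gamma_{p,q,l})+\#(\gamma_{p,q,l})\geq n+2$; the reverse inequality is automatic, so equality holds and $\sigma\in S_{NC}(p,q,l)$. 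For the mixed groups the analogous point is that $\sigma$ is through on only two of the three circles, so one must instead conclude $\sigma\in S_{NC}(p,q)\times\NC(l)$, the circle-$3$ factor being handled by Proposition~\ref{Proposition: Smaller permutation of a non-crossing one is still non-crossing if connectivy is preserved}, while in $\mathcal{P}\mathcal{B}'\mathcal{B}''$ the bound $\sigma\leq\pi_{\vec{n}}$ places $\sigma$ in $\NC(p)\times\NC(q)\times\NC(l)$ directly by that same proposition. Verifying these memberships and checking $\U\vee\pi_{\vec{n}}=\V_{\vec{n}}$ with the marked cycles landing on the correct circles is the crux; once it is in place the $2\times2$ expansion groups cleanly into the asserted three sums.
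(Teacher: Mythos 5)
Your proposal is correct and follows essentially the same route as the paper: the same multiplicative decomposition over the blocks of $\V$, the same expansion of the two second-order factors via \cite[Theorem 3]{MST} and the splitting $PS_{NC}=S_{NC}\sqcup PS'_{NC}$, the same $2\times 2$ regrouping into three sums, and the same cycle-counting argument (via Proposition~\ref{Proposition: Sufficient and neccesary conditions general version} and \cite[Equation 2.9]{MN}) to establish $\sigma\in S_{NC}(p,q,l)$ in the doubly-through case. The only difference is organizational: you package that last membership argument as a standalone ``version 3'' sufficient-conditions proposition, whereas the paper carries out the identical computation inline within the proof.
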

\begin{proof}
Let $C_1\cdots C_wA_1A_2B_1B_2$ be the cycle decomposition of $\pi$ such that $0_{C_i}$, $0_{A_1\cup B_1}$ and $0_{A_2\cup B_2}$ are the blocks of $\V$. Let us assume without loss of generality $A_1,A_2 \subset [r]$, $B_1 \subset [r+1,r+s]$ and $B_2 \subset [r+s+1,r+s+t]$. We let $\tilde{C}_1\cdots\tilde{C}_w\tilde{A}_1\tilde{A}_2\tilde{B}_1\tilde{B}_2$ be the cycle decomposition of $\pi_{\vec{n}}$. Thus
$$\kappa_{(\V,\pi)}(\vec{A})=\kappa_{|C_1|}(\vec{A})\cdots \kappa_{|C_w|}(\vec{A})\kappa_{|A_1|,|B_1|}(\vec{A})\kappa_{|A_2|,|B_2|}(\vec{A}).$$
By \cite[Theorem 2.2]{KS}, \cite[Theorem 3]{MST}, 
$$\kappa_{|C_i|}(\vec{A})=\sum_{\substack{\sigma_i \in \NC(\hat{C}_i) \\ \sigma_i^{-1} \hat{C}_i \text{ separates }N\cap \tilde{C}_i}}\kappa_{\sigma_i}(\vec{a}).$$
By \cite[Theorem 3]{MST},
$$\kappa_{|A_i|,|B_i|}(\vec{A}) = \sum_{\substack{(\V_{0,i},\sigma_{0,i})\in PS_{NC}(\tilde{A}_i,\tilde{B}_i) \\ \sigma_{0,i}^{-1}\tilde{A}_i\tilde{B}_i\text{ separates }N\cap\tilde{A}_i\cap \tilde{B}_i}}\kappa_{(\V_{0,i},\sigma_{0,i})}(\vec{a}),$$
for $i=1,2$. We proceed as in Lemma \ref{Lemma: Relation A and a, version 2}, writing each sum above as a double sum so that,
$$\kappa_{(\V,\pi)}(\vec{A})=\mathcal{P}\mathcal{A}+\mathcal{P}\mathcal{B}+\mathcal{P}\mathcal{C}+\mathcal{P}\mathcal{D},$$
with $\mathcal{P}$ as in the proof of Lemma \ref{Lemma: Relation A and a, version 2} and,
$$\mathcal{A}=\sum_{\substack{\sigma_{0,1} \in S_{NC}(\tilde{A}_1,\tilde{B}_1) \\ \sigma_{0,1}^{-1}\tilde{A}_1\tilde{B}_1\text{ separates }N\cap\tilde{A}_1\cap \tilde{B}_1}}\kappa_{\sigma_{0,1}}(\vec{a})\sum_{\substack{\sigma_{0,2} \in S_{NC}(\tilde{A}_2,\tilde{B}_2) \\ \sigma_{0,2}^{-1}\tilde{A}_2\tilde{B}_2\text{ separates }N\cap\tilde{A}_2\cap \tilde{B}_2}}\kappa_{\sigma_{0,2}}(\vec{a}),$$
$$\mathcal{B}=\sum_{\substack{\sigma_{0,1} \in S_{NC}(\tilde{A}_1,\tilde{B}_1) \\ \sigma_{0,1}^{-1}\tilde{A}_1\tilde{B}_1\text{ separates }N\cap\tilde{A}_1\cap \tilde{B}_1}}\kappa_{\sigma_{0,1}}(\vec{a})\sum_{\substack{(\V_{0,2},\sigma_{0,2}) \in PS_{NC}^{\prime}(\tilde{A}_2,\tilde{B}_2) \\ \sigma_{0,2}^{-1}\tilde{A}_2\tilde{B}_2\text{ separates }N\cap\tilde{A}_2\cap \tilde{B}_2}}\kappa_{(\V_{0,2},\sigma_{0,2})}(\vec{a}),$$
$$\mathcal{C}=\sum_{\substack{(\V_{0,1},\sigma_{0,1}) \in PS_{NC}^{\prime}(\tilde{A}_1,\tilde{B}_1) \\ \sigma_{0,1}^{-1}\tilde{A}_1\tilde{B}_1\text{ separates }N\cap\tilde{A}_1\cap \tilde{B}_1}}\kappa_{(\V_{0,1},\sigma_{0,1})}(\vec{a})\sum_{\substack{\sigma_{0,2} \in S_{NC}(\tilde{A}_2,\tilde{B}_2) \\ \sigma_{0,2}^{-1}\tilde{A}_2\tilde{B}_2\text{ separates }N\cap\tilde{A}_2\cap \tilde{B}_2}}\kappa_{\sigma_{0,2}}(\vec{a}),$$
and,
$$\mathcal{D}=\sum_{\substack{(\V_{0,1},\sigma_{0,1}) \in PS_{NC}^{\prime}(\tilde{A}_1,\tilde{B}_1) \\ \sigma_{0,1}^{-1}\tilde{A}_1\tilde{B}_1\text{ separates }N\cap\tilde{A}_1\cap \tilde{B}_1}}\kappa_{(\V_{0,1},\sigma_{0,1})}(\vec{a})\sum_{\substack{(\V_{0,2},\sigma_{0,2}) \in PS_{NC}^{\prime}(\tilde{A}_2,\tilde{B}_2) \\ \sigma_{0,2}^{-1}\tilde{A}_2\tilde{B}_2\text{ separates }N\cap\tilde{A}_2\cap \tilde{B}_2}}\kappa_{(\V_{0,2},\sigma_{0,2})}(\vec{a}).$$
In the first sum, $\mathcal{P}\mathcal{A}$, we let $\sigma=\sigma_{0,1}\times\sigma_{0,2}\sigma_1\times\cdots\sigma_w$. Thus $\sigma\lesssim^{(2)} \pi_{\vec{n}}$, $\sigma^{-1}\pi_{\vec{n}}$ separates $N$ and $\sigma\vee\pi_{\vec{n}}=\V_{\vec{n}}$. It remains to prove $\sigma\in S_{NC}(p,q,l)$. Since $\sigma_{0,1}$ has a cycle that meets $\hat{A}_1 \subset [p]$ and $\hat{B}_1 \subset [p+1,p+q]$, while $\sigma_{0,2}$ has a cycle that meets $\hat{A}_2 \subset [p]$ and $\hat{B}_2 \subset [p+q+1,p+q+l]$, then $\sigma \vee\gamma_{p,q,l}=1_n$. So we are reduce to verify $\#(\sigma)+\#(\sigma^{-1}\gamma_{p,q,l})+\#(\gamma_{p,q,l})\geq n+2$. By \cite[Equation 2.9]{MN},
$$\#(\pi_{\vec{n}}^{-1}\gamma_{p,q,l})+\#(\sigma^{-1}\pi_{\vec{n}})+\#(\sigma^{-1}\gamma_{p,q,l}) \leq n+2\#(\sigma^{-1}\gamma_{p,q,l}),$$
thus
\begin{eqnarray*}
\#(\sigma^{-1}\gamma_{p,q,l}) & \geq & \#(\pi_{\vec{n}}^{-1}\gamma_{p,q,l})+\#(\sigma^{-1}\pi_{\vec{n}})-n \\
&=& (n+3-\#(\pi_{\vec{n}}))+(2\#(\sigma\vee\pi_{\vec{n}})-\#(\pi_{\vec{n}})-\#(\sigma)) \\
&=& n+3-\#(\pi_{\vec{n}})+\#(\pi_{\vec{n}})-4-\#(\sigma),
\end{eqnarray*}
where in second equality we use that $\pi_{\vec{n}}\in \NC(p)\times\NC(q)\times\NC(l)$ and Proposition \ref{Proposition: Sufficient and neccesary conditions general version} with $\sigma\lesssim \pi_{\vec{n}}$. We conclude, $\#(\sigma)+\#(\sigma^{-1}\gamma_{p,q,l})+\#(\gamma_{p,q,l})\geq n+2$. Conversely, if $\sigma\in S_{NC}(p,q,l)$ is such that $\sigma \lesssim^{(2)} \pi_{\vec{n}}$, $\sigma^{-1}\pi_{\vec{n}}$ separates $N$ and $\sigma\vee\pi_{\vec{n}}=\V_{\vec{n}}$, there must be $4$ cycles of $\pi_{\vec{n}}$, $A_1,A_2,B_1,B_2$ such that any other cycle of $\pi_{\vec{n}}$ is a block of $\V_{\vec{n}}$ and $0_{A_1\cup B_1}$ and $0_{A_2\cup B_2}$ are both blocks of $\V_{\vec{n}}$. The condition $\sigma\vee\pi_{\vec{n}}=\V_{\vec{n}}$ means that for any cycle of $\pi_{\vec{n}}$, $B$, distinct of $A_1,B_1,A_2,B_2$, the restriction $\sigma|_B \in \NC(B)$, and for these 4 cycles we have $\sigma|_{A_1\cup B_1}\in S_{NC}(A_1.B_1)$ and $\sigma|_{A_2,B_2}\in S_{NC}(A_2,B_2)$. Moreover as $\sigma^{-1}\pi_{\vec{n}}$ separates $N$ then $\sigma|_{D}^{-1}\pi_{\vec{n}}|_D$ separates $N\cap D$ for any $D$ block of $\V_{\vec{n}}$. Therefore
$$\mathcal{P}\mathcal{A}=\sum_{\substack{\sigma\in S_{NC}(p,q,l) \\ \sigma \lesssim^{(2)} \pi_{\vec{n}} \\ \sigma^{-1}\pi_{\vec{n}}\text{ separates }N \\ \sigma \vee \pi_{\vec{n}}=\V_{\vec{n}}}} \kappa_{\sigma}(\vec{a}).$$
To finish the proof we claim that,
\begin{equation}\label{aux2}
\mathcal{P}\mathcal{B}+\mathcal{P}\mathcal{C}=\sum_{\substack{(\U,\sigma)\in \PS_{NC}^{(1)}(p,q,l) \\ \sigma \lesssim^{(1)} \pi_{\vec{n}} \\ \sigma^{-1}\pi_{\vec{n}}\text{ separates }N \\ \U\vee \pi_{\vec{n}}=\V_{\vec{n}}}} \kappa_{(\U,\sigma)}(\vec{a}),
\end{equation}
and
\begin{equation}\label{aux3}
\mathcal{P}\mathcal{D}=\sum_{\substack{(\U,\sigma)\in \PS_{NC}^{(2)}(p,q,l) \\ \sigma \leq \pi_{\vec{n}} \\ \sigma^{-1}\pi_{\vec{n}}\text{ separates }N \\ \U\vee \pi_{\vec{n}}=\V_{\vec{n}}}} \kappa_{(\U,\sigma)}(\vec{a}).
\end{equation}
Let us prove first Equation \ref{aux2}. In the sum $\mathcal{P}\mathcal{B}$ we let $\sigma=\sigma_{0,1}\times\sigma_{0,2}\sigma_1\times\cdots\times\sigma_w$ and $\U$ be the partition whose blocks are all cycles of $\sigma$ except by one block which is the union of two cycles of $\sigma_{0,2}$ which is the block of $\V_{0,2}$ that is the union of two cycles of $\sigma_{0,2}$. We have $\sigma\lesssim^{(1)}\pi_{\vec{n}}$, $\sigma^{-1}\pi_{\vec{n}}$ separates $N$ and $\U\vee\pi_{\vec{n}}=\V_{\vec{n}}$. Now we verify $(\U,\sigma)\in PS_{NC}^{(1)}(p,q,l)$. Observe that any block of $\sigma\vee\pi_{\vec{n}}$ is a cycle of $\pi_{\vec{n}}$ except the block $0_{\hat{A}_1\cup\hat{B}_1}$ which is the union of two cycles of $\pi_{\vec{n}}$. So we let $\sigma^{(1)}=\sigma|_{[p+q]}$ and $\sigma^{(2)}=\sigma|_{[p+q+1,p+q+l]}$ and similarly $\pi^{(1)}=\pi_{\vec{n}}|_{[p+q]}$ and $\pi^{(2)}=\pi_{\vec{n}}|_{[p+q+1,p+q+l]}$. In the set $[p+q+1,p+q+l]$ we have that $\sigma^{(2)}\leq \pi^{(2)}$ and since $\pi^{(2)}\in \NC(l)$ then so is $\sigma^{(2)}$. In the set $[p+q]$ we have $\sigma^{(1)}\lesssim^{(1)}\pi^{(1)}$ so proceeding as in Proposition \ref{Proposition: Suficiente conditions version 2} we get $\sigma^{(1)}\in S_{NC}(p,q)$. We have proved $\sigma\in S_{NC}(p,q)\times\NC(l)$. We conclude by nothing that the block of $\V$ which is union of two cycles of $\sigma$ is such that one of the cycles lies in $\hat{A}_2 \subset [p]$ while the other lies in $\hat{B}_2 \subset [p+q+1,p+q+l]$. We proceed analogously for $\mathcal{P}\mathcal{C}$, we let $(\U,\sigma)$ to be the same corresponding way as for $\mathcal{P}\mathcal{B}$, thus $(\U,\sigma)\in PS_{NC}^{(1)}(p,q,l)$, is such that $\sigma \lesssim^{(1)}\pi_{\vec{n}}$, $\sigma^{-1}\pi_{\vec{n}}$ separates $N$ and $\U\vee\pi_{\vec{n}}=\V_{\vec{n}}$. Conversely if we consider $(\U,\sigma)\in PS_{NC}^{(1)}(p,q,l)$ that satisfies all mentioned before, then there are two cycles of $\pi_{\vec{n}}$, say $A_1,B_1$, such that any block of $\sigma\vee\pi_{\vec{n}}$ is a cycle of $\pi_{\vec{n}}$ except by one block which is $0_{A_1\cup B_1}$. Moreover, for any cycle $D$ of $\pi_{\vec{n}}$, distinct of $A_1,B_1$, we have $\sigma|_D\in \NC(D)$ and $\sigma|_{A_1\cup B_1}\in S_{NC}(A_1,B_1)$. Suppose $\sigma\in S_{NC}(p,q)\times \NC(l)$, thus we are forced to either $A_1\subset [p]$ and $B_1\subset [p+1,p+q]$ or the other way around, otherwise there would be no cycle of $\sigma$ that meets $[p]$ and $[p+1,p+q]$. Suppose we are in the former case. On the other hand, there exist a block of $\U$ which is the union of two cycles of $\sigma$, say $a,b$. One of the cycles, say $a$ lies in $[p+q]$ while the other, $b$, lies in $[p+q+1,p+q+l]$. The cycle $b$ of $\sigma$ must be in a block of $\sigma\vee\pi_{\vec{n}}$ which must necessarily be a cycle of $\pi_{\vec{n}}$, we may call this cycle $B_2$. The cycle $a$ of $\sigma$ must be in a block of $\sigma\vee\pi_{\vec{n}}$, if this block is $0_{A_1\cup B_1}$ then $0_{A_1\cup B_1\cup B_2}$ is a block of $\U\vee\pi_{\vec{n}}$ and any other block of $\U\vee\pi_{\vec{n}}$ is a cycle of $\pi_{\vec{n}}$, this is impossible as $\U\vee\pi_{\vec{n}}=\V_{\vec{n}}\vee\pi_{\vec{n}}$ which has exactly two blocks, each one being the union of two cycles of $\pi_{\vec{n}}$ as $(\V_{\vec{n}},\pi_{\vec{n}})\in PS_{NC}^{(2)}(p,q,l)$. Hence, there must be another cycle, $A_2$ of $\pi_{\vec{n}}$ such that $a\subset A_2$. Suppose $A_2 \subset [p]$. We have that any block of $\U\vee \pi_{\vec{n}}$ is a cycle of $\pi_{\vec{n}}$ except by the blocks $0_{A_1 \cup B_1}$ and $0_{A_2\cup B_2}$ each one being the union of two cycles of $\pi_{\vec{n}}$. Any block of $\U$ is a cycle of $\sigma$ except $0_{a\cup b}$ which is the union of two cycles of $\sigma$, with $a \subset A_2$ and $b\in B_2$. It remains to let $\sigma_{0,1}=\sigma|_{A_1\cup B_1}$, $(\V_{0,2}\sigma_{0,2})$ to be $(\U,\sigma)$ restricted to $A_2\cup B_2$ and $\sigma_i=\sigma|_{C_i}$ for any other cycle $C_i$ of $\pi_{\vec{n}}$ to write $\kappa_{(\U,\sigma)}$ as in the sum $\mathcal{P}\mathcal{B}$, of course the elements that looks as in the sum $\mathcal{P}\mathcal{C}$ are also obtained when the assumptions that we made are distinct. This proves Equation \ref{aux2}. To prove Equation \ref{aux3}, in the sum $\mathcal{P}\mathcal{D}$ we let $\sigma$ to be as before and $\U$ to be the partition where any block is a cycle of $\sigma$ except by two blocks, each one being the union of two cycles of $\sigma$ and which are given precisely by the blocks of $\V_{0,1}$ and $\V_{0,2}$ that are the union of two cycles of $\sigma_{0,1}$ and $\sigma_{0,2}$ respectively. Now we have $\sigma \leq \pi_{\vec{n}}$ and then $\sigma\in \NC(p)\times\NC(q)\times\NC(l)$. One of the blocks of $\U$ joins one cycle of $\sigma$ in $\hat{A}_1 \subset [p]$ and a cycle of $\sigma$ in $\hat{B}_1 \subset [p+1,p+q]$. The other block of $\U$ joins one cycle of $\sigma$ in $\hat{A}_2\subset [p]$ and one cycle of $\sigma$ in $\hat{B}_2 \subset [p+q+1,p+q+l]$, thus $(\U,\pi)\in PS_{NC}^{(2)}(p,q,l)$. Moreover the conditions $\sigma^{-1}\pi_{\vec{n}}$ separates $N$ and $\U\vee\pi_{\vec{n}}=\V_{\vec{n}}$ are clearly satisfied. Conversely Let $(\U,\pi)\in PS_{NC}^{(2)}(p,q,l)$ that satisfies all mentioned before. Since $(\V_{\vec{n}},\pi_{\vec{n}})\in PS_{NC}^{(2)}(p,q,l)$, there exist 4 cycles of $\pi_{\vec{n}}$, which we may call $A_1,B_1,A_2,B_2$ such that any block of $\V_{\vec{n}}$ is a cycle of $\pi_{\vec{n}}$ except by the blocks $0_{A_1\cup B_1}$ and $0_{A_2\cup B_2}$, suppose $A_1,A_2 \subset [p]$, $B_1\subset [p+1,p+q]$ and $B_2 \subset [p+q+1,p+q+l]$. Each block of $\U$ is a cycle of $\sigma$ except by two blocks which is each the union of cycles of $\sigma$, let $a,b,c,d$ be these cycles of $\sigma$ so that $0_{a\cup b}$ and $0_{c\cup d}$ are blocks of $\U$. Since $\sigma\leq \pi_{\vec{n}}$ and $\U\vee\pi_{\vec{n}}=\V_{\vec{n}}$ then each one of $a,b,c,d$ must lie in one of $A_1,B_1,A_2,B_2$, suppose $a\in A_1$, $b\in B_2$, $c\in A_2$, $d\in B_2$. It remains to define $(\V_{0,1},\sigma_{0,1})$ to be $(\U,\sigma)$ restricted to $A_1\cup B_1$, $(\V_{0,2},\sigma_{0,2})$ to be $(\U,\sigma)$ restricted to $A_2\cup B_2$ and $\sigma_i$ to be $\sigma|_{C_i}$ for any other cycle, $C_i$ of $\pi_{\vec{n}}$ distinct from $A_1,B_1,A_2,B_2$, to write $\kappa_{(\U,\sigma)}$ as in sum $\mathcal{P}\mathcal{D}$, this proves Equation \ref{aux3}. 
\end{proof}

\begin{lemma}\label{Main theorem: Inductive step}
If Equation \ref{me} is satisfied for any $r^{\prime}\leq r$, $s^{\prime}\leq s$ and $t^{\prime}\leq t$ with $(r^{\prime},s^{\prime},t^{\prime})\neq (r,s,t)$, then for any $(\V,\pi) \in \PS_{NC}^{(3)}(r,s,t)\setminus \{(1_{r+s+t},\gamma_{r,s,t})\}$,
\begin{multline*}
\kappa_{(\V,\pi)}(A_1,\dots,A_{r+s+t})= \\
\sum_{\substack{\sigma\in S_{NC}(p,q,l) \\ \sigma \lesssim^{(2)} \pi_{\vec{n}} \\ \sigma^{-1}\pi_{\vec{n}}\text{ separates }N \\ \sigma \vee \pi_{\vec{n}}=\V_{\vec{n}}}} \kappa_{\sigma}(\vec{a}) + \sum_{\substack{(\U,\sigma)\in \PS_{NC}^{(1)}(p,q,l) \\ \sigma \lesssim^{(1)} \pi_{\vec{n}} \\ \sigma^{-1}\pi_{\vec{n}}\text{ separates }N \\ \U\vee \pi_{\vec{n}}=\V_{\vec{n}}}} \kappa_{(\U,\sigma)}(\vec{a}) \\
+ \sum_{\substack{(\U,\sigma)\in \PS_{NC}^{(2)}(p,q,l) \\ \sigma \leq \pi_{\vec{n}} \\ \sigma^{-1}\pi_{\vec{n}}\text{ separates }N \\ \U\vee \pi_{\vec{n}}=\V_{\vec{n}}}} \kappa_{(\U,\sigma)}(\vec{a}) + \sum_{\substack{(\U,\sigma)\in \PS_{NC}^{(3)}(p,q,l) \\ \sigma \leq \pi_{\vec{n}} \\ \sigma^{-1}\pi_{\vec{n}}\text{ separates }N \\ \U\vee \pi_{\vec{n}}=\V_{\vec{n}}}} \kappa_{(\U,\sigma)}(\vec{a}).
\end{multline*}
\end{lemma}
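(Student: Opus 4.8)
The plan is to mimic the proofs of Lemmas \ref{Lemma: Relation A and a, version 2} and \ref{Lemma: Relation A and a, version 3}, the only essential new ingredient being that the ``top'' factor is now a genuine third order cumulant, which we expand by the induction hypothesis rather than by \cite[Theorem 3]{MST}. Since $(\V,\pi)\in \PS_{NC}^{(3)}(r,s,t)$ we may write $\pi=\pi_1\times\pi_2\times\pi_3\in \NC(r)\times\NC(s)\times\NC(t)$ and let $C_1\cdots C_w C^\prime C^{\prime\prime}C^{\prime\prime\prime}$ be its cycle decomposition, where $0_{C_1},\dots,0_{C_w}$ together with the single marked block $0_{C^\prime\cup C^{\prime\prime}\cup C^{\prime\prime\prime}}$ are the blocks of $\V$, with $C^\prime\subset[r]$, $C^{\prime\prime}\subset[r+1,r+s]$ and $C^{\prime\prime\prime}\subset[r+s+1,r+s+t]$. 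By multiplicativity,
$$\kappa_{(\V,\pi)}(\vec A)=\kappa_{|C_1|}(\vec A)\cdots\kappa_{|C_w|}(\vec A)\,\kappa_{|C^\prime|,|C^{\prime\prime}|,|C^{\prime\prime\prime}|}(\vec A).$$
First I would observe that $(|C^\prime|,|C^{\prime\prime}|,|C^{\prime\prime\prime}|)=(r,s,t)$ forces $C^\prime\cup C^{\prime\prime}\cup C^{\prime\prime\prime}=[r+s+t]$, hence $w=0$ and each of $\pi_1,\pi_2,\pi_3$ a single full cycle, i.e. $(\V,\pi)=(1_{r+s+t},\gamma_{r,s,t})$, which is precisely the excluded case. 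Thus $(|C^\prime|,|C^{\prime\prime}|,|C^{\prime\prime\prime}|)\neq(r,s,t)$ and the induction hypothesis, Equation \ref{me}, is available for the last factor.

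Next I would expand every factor. Each first order factor is expanded by \cite[Theorem 2.2]{KS}, giving, as in the earlier lemmas, the product
$$\mathcal P=\prod_{i=1}^{w}\ \sum_{\substack{\sigma_i\in\NC(\tilde{C}_i)\\ \sigma_i^{-1}\tilde{C}_i\text{ separates }N\cap\tilde{C}_i}}\kappa_{\sigma_i}(\vec a),$$
while the third order factor is expanded by the induction hypothesis into a sum over partitioned permutations $(\V_0,\sigma_0)$ of the sub-annulus whose three circles are the merged cycles $\tilde{C}^{\prime},\tilde{C}^{\prime\prime},\tilde{C}^{\prime\prime\prime}$ of $\pi_{\vec n}$ (these play the role of $\gamma_{p,q,l}$ for the sub-problem, since $\pi_{\vec n}$ restricted to $\tilde{C}^{\prime}\cup\tilde{C}^{\prime\prime}\cup\tilde{C}^{\prime\prime\prime}$ has exactly these three cycles), subject to $\sigma_0^{-1}\tilde{C}^{\prime}\tilde{C}^{\prime\prime}\tilde{C}^{\prime\prime\prime}$ separating $N\cap(\tilde{C}^{\prime}\cup\tilde{C}^{\prime\prime}\cup\tilde{C}^{\prime\prime\prime})$. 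By Lemma \ref{Solutions to third order partitioned permutations} this index set splits into the four families $S_{NC}$, $\PS_{NC}^{(1)}$, $\PS_{NC}^{(2)}$, $\PS_{NC}^{(3)}$ of the sub-annulus, producing four summands $\mathcal P\mathcal A,\ \mathcal P\mathcal B,\ \mathcal P\mathcal C,\ \mathcal P\mathcal D$ which I claim equal, respectively, the four sums in the statement.

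For the identification I would set $\sigma=\sigma_0\times\sigma_1\times\cdots\times\sigma_w$ and define $\U$ block-by-block from the blocks of $\V_0$ and the cycles of the $\sigma_i$, exactly as in Lemma \ref{Lemma: Relation A and a, version 3}. In the first family $\sigma_0$ is a through-permutation joining all three circles, so $\sigma\vee\pi_{\vec n}=\V_{\vec n}$ with $\#(\pi_{\vec n})-\#(\sigma\vee\pi_{\vec n})=2$, i.e. $\sigma\lesssim^{(2)}\pi_{\vec n}$; in the second family $\sigma_0$ joins two circles and $(\U,\sigma)\in\PS_{NC}^{(1)}(p,q,l)$ with $\sigma\lesssim^{(1)}\pi_{\vec n}$; in the third and fourth families $\sigma_0\in\NC\times\NC\times\NC$, so $\sigma\leq\pi_{\vec n}$ and $(\U,\sigma)$ lands in $\PS_{NC}^{(2)}(p,q,l)$, resp. $\PS_{NC}^{(3)}(p,q,l)$. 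In every case $\sigma^{-1}\pi_{\vec n}$ separates $N$ (on each $\tilde{C}_i$ by \cite{KS}, on the marked part by the induction hypothesis) and $\U\vee\pi_{\vec n}=\V_{\vec n}$. The converse, that every tuple appearing in the four target sums decomposes this way, I would argue as in Lemmas \ref{Lemma: Relation A and a, version 2} and \ref{Lemma: Relation A and a, version 3}, by restricting $(\U,\sigma)$ to each cycle of $\pi_{\vec n}$ and to the union of the marked cycles, and reading off the type of the restricted partitioned permutation.

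The main obstacle is the membership $\sigma\in S_{NC}(p,q,l)$ in the through-permutation families (the first and the fourth), where $\sigma_0$ meets all three circles. Propositions \ref{Proposition: Suficiente conditions version 1} and \ref{Proposition: Suficiente conditions version 2} only cover the one- and two-circle connections, so I expect to need a third order counterpart, proved by the same counting argument: combining \cite[Equation 2.9]{MN} over the blocks of $\sigma\vee\pi_{\vec n}$ with Proposition \ref{Proposition: Sufficient and neccesary conditions general version} and the identity $\#(\pi_{\vec n})-\#(\sigma\vee\pi_{\vec n})=2$ to force $\#(\sigma)+\#(\sigma^{-1}\gamma_{p,q,l})+\#(\gamma_{p,q,l})\geq n+2$, while $\sigma\vee\gamma_{p,q,l}=1_n$ follows from Lemma \ref{Lemma: Some properties of pi_n} together with the fact that $\sigma_0$ is a through-permutation of all three circles. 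Checking that $\U$ has the correct type against $\V_{\vec n}$ (one marked block over three cycles for $\PS_{NC}^{(3)}$, two marked blocks for $\PS_{NC}^{(2)}$, and so on) is the remaining bookkeeping.
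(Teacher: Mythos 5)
Your proposal is correct and follows essentially the same route as the paper: the same cycle decomposition with one marked triple of cycles, expansion of the first order factors by Krawczyk--Speicher and of the marked third order factor by the induction hypothesis, the split into four families via Lemma \ref{Solutions to third order partitioned permutations}, and the same counting argument (combining \cite[Equation 2.9]{MN} with Proposition \ref{Proposition: Sufficient and neccesary conditions general version} and $\#(\pi_{\vec{n}})-\#(\sigma\vee\pi_{\vec{n}})=2$) to establish $\sigma\in S_{NC}(p,q,l)$ in the annular family $\mathcal{P}\mathcal{A}$. The only slip is that you flag the fourth family as also requiring that counting argument, whereas there $\sigma_0\in\NC(\tilde{A})\times\NC(\tilde{B})\times\NC(\tilde{C})$, so $\sigma\leq\pi_{\vec{n}}$ gives $\sigma\in\NC(p)\times\NC(q)\times\NC(l)$ immediately and the three-circle connectivity is carried by the marked block of $\U$, exactly as in the paper's treatment of $\mathcal{P}\mathcal{D}$.
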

\begin{proof}
Let $C_1\cdots C_wABC$ be the cycle decomposition of $\pi$ so that the blocks of $\V$ are all cycles of $\pi$ corresponding to $0_{C_1},\dots,0_{C_w}$ except by one cycle which is the union of three cycles of $\pi$ corresponding to $0_{A\cup B\cup C}$. Assume that $A\subset [r]$, $B\subset [r+1,r+s]$ and $C\subset [r+s+1,r+s+t]$. We let $\tilde{C}_1\cdots \tilde{C}_w\tilde{A}\tilde{B}\tilde{C}$ be the corresponding cycle decomposition of $\pi_{\vec{n}}$. Thus
$$\kappa_{(\V,\pi)}(\vec{A})=\kappa_{|C_1|}(\vec{A})\cdots \kappa_{|C_w|}(\vec{A})\kappa_{|A|,|B|,|C|}(\vec{A}).$$
By \cite[Theorem 2.2]{KS},
$$\kappa_{|C_i|}(\vec{A})=\sum_{\substack{\sigma_i \in \NC(\hat{C}_i) \\ \sigma_i^{-1} \hat{C}_i \text{ separates }N\cap \tilde{C}_i}}\kappa_{\sigma_i}(\vec{a}).$$
By hypothesis since $(|A|,|B|,|C|)\neq (r,s,t)$,
$$\kappa_{|A|,|B|,|C|}(\vec{A})=\sum_{\substack{(\V_0,\sigma_0)\in PS_{NC}(\tilde{A},\tilde{B},\tilde{C}) \\ \sigma_0^{-1}\tilde{A}\tilde{B}\tilde{C}\text{ separates }N\cap\tilde{A}\cap\tilde{B}\cap\tilde{C}}}\kappa_{(\V_0,\sigma_0)}(\vec{a}).$$
We write the set $PS_{NC}(\tilde{A},\tilde{B},\tilde{C})$ as the disjoint union of the 4 sets, $S_{NC}(\tilde{A},\tilde{B},\tilde{C})$, $PS_{NC}^{(1)}(\tilde{A},\tilde{B},\tilde{C})$, $PS_{NC}^{(2)}(\tilde{A},\tilde{B},\tilde{C})$ and $PS_{NC}^{(3)}(\tilde{A},\tilde{B},\tilde{C})$ as in Lemma \ref{Solutions to third order partitioned permutations}, so that,
\begin{multline*}
\kappa_{|A|,|B|,|C|}(\vec{A})=\sum_{\substack{\sigma_0\in S_{NC}(\tilde{A},\tilde{B},\tilde{C}) \\ \sigma_0^{-1}\tilde{A}\tilde{B}\tilde{C}\text{ separates }N\cap\tilde{A}\cap\tilde{B}\cap\tilde{C}}}\kappa_{\sigma_0}(\vec{a})+\sum_{\substack{(\V_0,\sigma_0)\in PS_{NC}^{(1)}(\tilde{A},\tilde{B},\tilde{C}) \\ \sigma_0^{-1}\tilde{A}\tilde{B}\tilde{C}\text{ separates }N\cap\tilde{A}\cap\tilde{B}\cap\tilde{C}}}\kappa_{(\V_0,\sigma_0)}(\vec{a}) \\
+\sum_{\substack{(\V_0,\sigma_0)\in PS_{NC}^{(2)}(\tilde{A},\tilde{B},\tilde{C}) \\ \sigma_0^{-1}\tilde{A}\tilde{B}\tilde{C}\text{ separates }N\cap\tilde{A}\cap\tilde{B}\cap\tilde{C}}}\kappa_{(\V_0,\sigma_0)}(\vec{a})+\sum_{\substack{(\V_0,\sigma_0)\in PS_{NC}^{(3)}(\tilde{A},\tilde{B},\tilde{C}) \\ \sigma_0^{-1}\tilde{A}\tilde{B}\tilde{C}\text{ separates }N\cap\tilde{A}\cap\tilde{B}\cap\tilde{C}}}\kappa_{(\V_0,\sigma_0)}(\vec{a}).
\end{multline*}
Thus
$$\kappa_{(\V,\pi)}(\vec{A})=\mathcal{P}\mathcal{A}+\mathcal{P}\mathcal{B}+\mathcal{P}\mathcal{C}+\mathcal{P}\mathcal{D},$$
with $\mathcal{P}$ as in the proof of Lemma \ref{Lemma: Relation A and a, version 2} and,
$$\mathcal{A}=\sum_{\substack{\sigma_0\in S_{NC}(\tilde{A},\tilde{B},\tilde{C}) \\ \sigma_0^{-1}\tilde{A}\tilde{B}\tilde{C}\text{ separates }N\cap\tilde{A}\cap\tilde{B}\cap\tilde{C}}}\kappa_{\sigma_0}(\vec{a}),$$
$$\mathcal{B}=\sum_{\substack{(\V_0,\sigma_0)\in PS_{NC}^{(1)}(\tilde{A},\tilde{B},\tilde{C}) \\ \sigma_0^{-1}\tilde{A}\tilde{B}\tilde{C}\text{ separates }N\cap\tilde{A}\cap\tilde{B}\cap\tilde{C}}}\kappa_{(\V_0,\sigma_0)}(\vec{a}),$$
$$\mathcal{C}=\sum_{\substack{(\V_0,\sigma_0)\in PS_{NC}^{(2)}(\tilde{A},\tilde{B},\tilde{C}) \\ \sigma_0^{-1}\tilde{A}\tilde{B}\tilde{C}\text{ separates }N\cap\tilde{A}\cap\tilde{B}\cap\tilde{C}}}\kappa_{(\V_0,\sigma_0)}(\vec{a}),$$
and,
$$\mathcal{D}=\sum_{\substack{(\V_0,\sigma_0)\in PS_{NC}^{(3)}(\tilde{A},\tilde{B},\tilde{C}) \\ \sigma_0^{-1}\tilde{A}\tilde{B}\tilde{C}\text{ separates }N\cap\tilde{A}\cap\tilde{B}\cap\tilde{C}}}\kappa_{(\V_0,\sigma_0)}(\vec{a}).$$
Now we will proceed pretty much as in Lemmas \ref{Lemma: Relation A and a, version 2} and \ref{Lemma: Relation A and a, version 3}. In $\mathcal{P}\mathcal{A}$ we let $\sigma=\sigma_0\times\cdots \times\sigma_w$, so $\sigma^{-1}\pi_{\vec{n}}$ separates $N$ and $\sigma\vee\pi_{\vec{n}}=\V_{\vec{n}}$. At each block $\tilde{C}_i$ of $\pi_{\vec{n}}$ we have $\sigma|_{C_i}=\sigma_i \in \NC(\tilde{C}_i)$ while for the block $0_{\tilde{A}\cup\tilde{B}\cup\tilde{C}}$ of $\V_{\vec{n}}$ we have, $\sigma|_{\tilde{A}\cup\tilde{B}\cup\tilde{C}}=\sigma_0\in S_{NC}(\tilde{A},\tilde{B},\tilde{C})$, thus $\sigma \lesssim^{(2)} \pi_{\vec{n}}$. It remains to prove $\sigma\in S_{NC}(p,q,l)$. The condition $\sigma\vee\gamma_{p,q,l}=1_{n}$ is satisfied since $\sigma_0$ meets all three cycles $\tilde{A}\subset [p]$, $\tilde{B}\subset [p_1,p+q]$ and $\tilde{C}\subset [p+q+1,p+q+l]$. We are thus reduced to verify $\#(\sigma)+\#(\sigma^{-1}\gamma_{p,q,l})+\#(\gamma_{p,q,l})\geq n+2.$ By \cite[Equation 2.9]{MN},
$$\#(\pi_{\vec{n}}^{-1}\gamma_{p,q,l})+\#(\sigma^{-1}\pi_{\vec{n}})+\#(\sigma^{-1}\gamma_{p,q,l}) \leq n+2\#(\sigma^{-1}\gamma_{p,q,l}),$$
thus,
\begin{eqnarray*}
\#(\sigma^{-1}\gamma_{p,q,l}) & \geq & \#(\pi_{\vec{n}}^{-1}\gamma_{p,q,l})+\#(\sigma^{-1}\pi_{\vec{n}})-n \\
&=& (n+3-\#(\pi_{\vec{n}}))+(2\#(\sigma\vee\pi_{\vec{n}})-\#(\pi_{\vec{n}})-\#(\sigma)) \\
&=& n+3-\#(\pi_{\vec{n}})+\#(\pi_{\vec{n}})-4-\#(\sigma),
\end{eqnarray*}
where in second equality we use that $\pi_{\vec{n}}\in \NC(p)\times\NC(q)\times\NC(l)$ and Proposition \ref{Proposition: Sufficient and neccesary conditions general version} with $\sigma\lesssim \pi_{\vec{n}}$. We conclude, $\#(\sigma)+\#(\sigma^{-1}\gamma_{p,q,l})+\#(\gamma_{p,q,l})\geq n+2$. Conversely, let $\sigma\in S_{NC}(p,q,l)$ be such that $\sigma \lesssim^{(2)} \pi_{\vec{n}}$, $\sigma^{-1}\pi_{\vec{n}}$ separates $N$ and $\sigma\vee\pi_{\vec{n}}=\V_{\vec{n}}$. There must be $3$ cycles of $\pi_{\vec{n}}$, $A,B,C$ such that any other cycle of $\pi_{\vec{n}}$ is a block of $\V_{\vec{n}}$ and $0_{A\cup B\cup C}$ is a block of $\V_{\vec{n}}$. The conditions $\sigma\vee\pi_{\vec{n}}=\V_{\vec{n}}$ and $\sigma\lesssim^{(2)}\pi_{\vec{n}}$ means that for any cycle $D$ of $\pi_{\vec{n}}$ distinct of $A,B,C$ we have $\sigma|_D\in \NC(D)$, while for $A,B,C$ we have $\sigma|_{A\cup B\cup C}\in S_{NC}(A,B,C)$. Moreover as $\sigma^{-1}\pi_{\vec{n}}$ separates $N$ then $\sigma|_{D}^{-1}\pi_{\vec{n}}|_D$ separates $N\cap D$ for any $D$ block of $\V_{\vec{n}}$. Therefore
$$\mathcal{P}\mathcal{A}=\sum_{\substack{\sigma\in S_{NC}(p,q,l) \\ \sigma \lesssim^{(2)} \pi_{\vec{n}} \\ \sigma^{-1}\pi_{\vec{n}}\text{ separates }N \\ \sigma \vee \pi_{\vec{n}}=\V_{\vec{n}}}} \kappa_{\sigma}(\vec{a}).$$
To finish the proof our aim is to show,
$$\mathcal{P}\mathcal{B}=\sum_{\substack{(\U,\sigma)\in \PS_{NC}^{(1)}(p,q,l) \\ \sigma \lesssim^{(1)} \pi_{\vec{n}} \\ \sigma^{-1}\pi_{\vec{n}}\text{ separates }N \\ \U\vee \pi_{\vec{n}}=\V_{\vec{n}}}} \kappa_{(\U,\sigma)}(\vec{a}),$$
$$\mathcal{P}\mathcal{C}=\sum_{\substack{(\U,\sigma)\in \PS_{NC}^{(2)}(p,q,l) \\ \sigma \leq \pi_{\vec{n}} \\ \sigma^{-1}\pi_{\vec{n}}\text{ separates }N \\ \U\vee \pi_{\vec{n}}=\V_{\vec{n}}}} \kappa_{(\U,\sigma)}(\vec{a}),$$
and,
$$\mathcal{P}\mathcal{D}=\sum_{\substack{(\U,\sigma)\in \PS_{NC}^{(3)}(p,q,l) \\ \sigma \leq \pi_{\vec{n}} \\ \sigma^{-1}\pi_{\vec{n}}\text{ separates }N \\ \U\vee \pi_{\vec{n}}=\V_{\vec{n}}}} \kappa_{(\U,\sigma)}(\vec{a}).$$
Let us prove the last equality which illustrates the best how the proof proceeds and similar proofs follow for the other two equalities. In $\mathcal{P}\mathcal{D}$ we let $\sigma=\sigma_0\times \cdots \times\sigma_w$ and $\U$ be the partition whose blocks are all cycles of $\sigma$ except by one block which is the union of the three cycles of $\sigma_0$ that form a block of $\V_0$. We have $\sigma \leq \pi_{\vec{n}}$, $\sigma^{-1}\pi_{\vec{n}}$ separates $N$ and $\U \vee \pi_{\vec{n}}=\V_{\vec{n}}$. Now we verify $(\U,\sigma)\in PS_{NC}^{(3)}(p,q,l)$. Since $\sigma\leq\pi_{\vec{n}}$ and $\pi \in \NC(p)\times\NC(q)\times\NC(l)$ then $\sigma\in \NC(p)\times\NC(q)\times\NC(l)$, we conclude by observing that the unique block of $\U$ which is the union of three cycles of $\sigma$ is such that it joins one cycle in $\tilde{A}\subset [p]$, another cycle in $\tilde{B}\subset [p+1,p+q]$ and one more cycle in $\tilde{C}\subset [p+q+1,p+q+l]$. Conversely, let $(\U,\sigma)\in PS_{NC}^{(3)}(p,q,l)$ be such that $\sigma\leq \pi_{\vec{n}}$, $\sigma^{-1}\pi_{\vec{n}}$ separates $N$ and $\U\vee\pi_{\vec{n}}=\V_{\vec{n}}$. There are three cycles of $\pi_{\vec{n}}$, $A\subset [p]$, $B\subset [p+1,p+q]$ and $C\subset [p+q+1,p+q+l]$ such that $0_{A\cup B\cup C}$ is a block of $\V_{\vec{n}}$ and any other cycle of $\pi_{\vec{n}}$ is a block of $\V_{\vec{n}}$. Similarly, since $(\U,\sigma)\in PS_{NC}^{(3)}(p,q,l)$ there are three cycles of $\sigma$, $a\subset [p]$, $b\subset [p+1,p+q]$ and $c\subset[p+q+1,p+q+l]$ such that $0_{a\cup b\cup c}$ is a block of $\U$ and any other cycle of $\sigma$ is a block of $\U$. Since $\sigma\leq \pi_{\vec{n}}$ and $\U \vee \pi_{\vec{n}}=\V_{\vec{n}}$ then each of $a,b,c$ must lie in one of $A,B,C$, suppose $a\subset A$, $b\subset B$ and $c\subset C$. We finish by letting $(\V_0,\sigma_0)$ to be $(\U,\sigma)$ restricted to $A\cup B\cup C$ and $\sigma_i$ to be $\sigma|_{C_i}$ for any other cycle $C_{i}$ of $\pi_{\vec{n}}$ distinct from $A,B,C$. In this way we can write $\kappa_{(\U,\sigma)}$ as in summation $\mathcal{P}\mathcal{D}$ which proves the equality.
\end{proof}

\section{Proof of main theorem}\label{Section: Main theorem proof}

We will prove Theorem \ref{Main theorem cumulants with products as entries} by induction on $(r,s,t)$. So let us start with the case $r=s=t=1$.

\begin{lemma}
\begin{multline*}
\kappa_{1,1,1}(a_1\cdots a_p,a_{p+1}\cdots a_{p+q},a_{p+q+1}\cdots a_{p+q+l}) =  \sum_{(\V,\pi)\in \PS_{NC}(p,q,l)}\kappa_{(\V,\pi)}(a_1,\dots,a_{p+q+l})
\end{multline*}
where the summation is over those $(\V,\pi)\in \PS_{NC}(p,q,l)$ such that $\pi^{-1}\gamma_{p,q,l}$ separates the points of $N=\{p,p+q,p+q+l\}$.
\end{lemma}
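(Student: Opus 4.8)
The plan is to read the identity off the third order moment--cumulant relation \cite[Definition 7.4]{CMSS}, which at the top level reads
\[
\varphi_{(1_3,\gamma_{1,1,1})}(A_1,A_2,A_3)=\sum_{(\V,\pi)\in\PS_{NC}(1,1,1)}\kappa_{(\V,\pi)}(A_1,A_2,A_3).
\]
Since $\gamma_{1,1,1}=(1)(2)(3)$ the left side is $\varphi_3(A_1;A_2;A_3)$, while the summand indexed by $(1_3,\gamma_{1,1,1})$ is exactly $\kappa_{1,1,1}(A_1,A_2,A_3)$. The first step is to specialise Lemma \ref{Solutions to third order partitioned permutations} to $m_1=m_2=m_3=1$: then $S_{NC}(1,1,1)=\{(1,2,3),(1,3,2)\}$ consists of the two $3$-cycles, $\PS_{NC}^{(1)}(1,1,1)$ consists of the three elements $(1_3,(i,j)(k))$, while $\PS_{NC}^{(2)}(1,1,1)=\emptyset$ and $\PS_{NC}^{(3)}(1,1,1)=\{(1_3,\gamma_{1,1,1})\}$, three singleton cycles being too few to realise the two--block or doubleton patterns. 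Solving the relation for the top term leaves
\[
\kappa_{1,1,1}(A_1,A_2,A_3)=\varphi_{(1_3,\gamma_{1,1,1})}(\vec A)-\sum_{\pi\in S_{NC}(1,1,1)}\kappa_{\pi}(\vec A)-\sum_{(\V,\pi)\in\PS_{NC}^{(1)}(1,1,1)}\kappa_{(\V,\pi)}(\vec A).
\]

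Next I would expand the three pieces into cumulants of the $a_j$. Because a third order moment of products is a third order moment of the entries, $\varphi_{(1_3,\gamma_{1,1,1})}(\vec A)=\varphi_{(1_n,\gamma_{p,q,l})}(\vec a)$, and a second application of the moment--cumulant relation turns it into the \emph{unrestricted} sum $\sum_{(\U,\sigma)\in\PS_{NC}(p,q,l)}\kappa_{(\U,\sigma)}(\vec a)$. The two subtracted sums I would expand by Lemma \ref{Lemma: Relation A and a, version 1} (on each $3$-cycle) and Lemma \ref{Lemma: Relation A and a, version 2} (on each element of $\PS_{NC}^{(1)}(1,1,1)$); these rewrite them as sums of $\kappa_{(\U,\sigma)}(\vec a)$ with $(\U,\sigma)\in\PS_{NC}(p,q,l)$ of types $S_{NC}$ and $\PS^{(1)}_{NC}$, subject to $\sigma\le\pi_{\vec n}$ or $\sigma\lesssim^{(1)}\pi_{\vec n}$, $\sigma^{-1}\pi_{\vec n}$ separates $N$, and $\U\vee\pi_{\vec n}=\V_{\vec n}$. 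It is worth stressing that Lemmas \ref{Lemma: Relation A and a, version 3} and \ref{Main theorem: Inductive step}---and with them the inductive hypothesis---play no role here, precisely because $\PS_{NC}^{(2)}(1,1,1)$ and $\PS_{NC}^{(3)}(1,1,1)\setminus\{(1_3,\gamma_{1,1,1})\}$ are empty; this is what makes $(1,1,1)$ an honest base case.

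What remains, and what I expect to be the crux, is a single combinatorial identity: the $(\U,\sigma)$ produced by the two subtracted expansions are exactly those $(\U,\sigma)\in\PS_{NC}(p,q,l)$ for which $\sigma^{-1}\gamma_{p,q,l}$ \emph{fails} to separate $N$, each appearing once, so that cancelling them against the unrestricted moment sum leaves precisely the separating partitioned permutations---the right-hand side of the statement. The organising device is the restriction $\rho:=\sigma^{-1}\gamma_{p,q,l}|_N$, a permutation of the three-point set $N$; by the remark following Definition \ref{dm1}, $(\U,\sigma)$ is non-separating iff $\rho\ne\mathrm{id}$. Using the factorisation $\sigma^{-1}\gamma_{p,q,l}|_N=\sigma^{-1}\pi_{\vec n}|_N\,\pi_{\vec n}^{-1}\gamma_{p,q,l}|_N$ of \cite[Lemma 6]{MST} together with $\pi_{\vec n}^{-1}\gamma_{p,q,l}(n_1+\cdots+n_i)=n_1+\cdots+n_{\pi^{-1}(i)}$ from Lemma \ref{Lemma: Some properties of pi_n} (recall $\gamma_{1,1,1}=\mathrm{id}$), one checks that each term of the expansions has $\rho=\pi_{\vec n}^{-1}\gamma_{p,q,l}|_N\ne\mathrm{id}$, and conversely that a non-separating $(\U,\sigma)$ forces $\pi$ to be the unique circle permutation with $\pi^{-1}=\rho$; the five non-identity permutations of $N$ (three transpositions, two $3$-cycles) then match the five lower-order indices (three in $\PS_{NC}^{(1)}$, two in $S_{NC}(1,1,1)$). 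The delicate point is to verify that this assignment is a bijection---that every non-separating $(\U,\sigma)$ is produced once and only once across the five expansions, with the $S_{NC}$-type terms split according to whether $\rho$ is a $3$-cycle (from Lemma \ref{Lemma: Relation A and a, version 1}) or a transposition (from the first sum of Lemma \ref{Lemma: Relation A and a, version 2}), and with $\sigma\vee\pi_{\vec n}=\V_{\vec n}$ pinning down the block structure within each index. Establishing this correspondence cleanly is the main obstacle; the separating $(\U,\sigma)$, which by the same factorisation have $\rho=\mathrm{id}$, never appear in any expansion and so survive as the asserted right-hand side.
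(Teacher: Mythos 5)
Your strategy is sound and at the top level coincides with the paper's: both proofs write $\varphi_3(A_1,A_2,A_3)$ in two ways, solve for $\kappa_{1,1,1}$, and cancel the non-separating terms of the unrestricted sum against the five lower-order cumulants of the $A_i$. Your enumeration of $\PS_{NC}(1,1,1)$ is correct, and matching the five subtracted terms with the five non-identity permutations of the three-point set $N$ is exactly the bookkeeping the paper performs (there recorded by the partition $\Gamma_{\pi}^{\gamma_{p,q,l}}$). Where you genuinely diverge is in how the subtracted terms are expanded: the paper does it by hand, writing each $\kappa_{1,2}$ as a sum over the two-ring set $\PS_{NC}(p,q+l)$ via \cite[Theorem 3]{MST} and transporting it into $\PS_{NC}(p,q,l)$ through the cycle identities such as $\gamma_{p,q+l}=\gamma_{p,q,l}(p+q,p+q+l)$, with explicit relabelings $b_i$ for the permuted-argument cumulants, and similarly for the two $\kappa_3$ terms via $\NC(p+q+l)$. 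You instead invoke Lemmas \ref{Lemma: Relation A and a, version 1} and \ref{Lemma: Relation A and a, version 2} specialized to $r=s=t=1$; this is legitimate (their proofs use only the first- and second-order product formulas, no induction, so the base case stays honest) and is more uniform with the induction step.

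Two caveats, one of which is a real omission. First, the cancellation you describe silently needs the fact that every $(\U,\sigma)$ of type $\PStwo$ or $\PSthree$ (with $(m_1,m_2,m_3)=(p,q,l)$) \emph{automatically} satisfies the separation condition: there $\sigma$, hence $\sigma^{-1}\gamma_{p,q,l}$, preserves each circle, and $N$ has one point per circle, so no cycle can join two points of $N$. Without this observation those types would survive uncancelled and your claimed bijection would not account for the full right-hand side; the paper states the analogous point explicitly. Second, the bijection you defer as ``the main obstacle'' is genuinely most of the work: given a non-separating $(\U,\sigma)$ one must not only reconstruct $\pi$ from $\rho=\sigma^{-1}\gamma_{p,q,l}|_N$ and check it lies in the right class, but also prove that $\sigma\le\pi_{\vec n}$ (resp.\ $\sigma\lesssim^{(1)}\pi_{\vec n}$) then holds, which requires Lemma \ref{Lemma: Less or Equal transitivity} and Corollary \ref{Corollary: Less or equal transivitiy general version} rather than just the factorization $\sigma^{-1}\gamma_{p,q,l}|_N=\sigma^{-1}\pi_{\vec n}|_N\,\pi_{\vec n}^{-1}\gamma_{p,q,l}|_N$, and that $\U\vee\pi_{\vec n}$ determines $\V$ uniquely. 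These verifications are precisely Lemmas \ref{Proposition: Induction version 1}, \ref{Proposition: Induction version 2} and \ref{Proposition: Induction version 4} specialized to $r=s=t=1$, so your plan does close; but as written it is an outline whose decisive step is named rather than carried out.
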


\begin{proof}
Through all this proof, for two permutations $\pi,\gamma \in S_{p+q+l}$, we denote by $\Gamma_{\pi}^{\gamma}$ to the partition $0_{\pi^{-1}\gamma}$ restricted to $N$. We have two expressions for $\varphi_3(A_1,A_2,A_3)$, these are,
\begin{multline*}
\kappa_3(A_1,A_2,A_3)+\kappa_3(A_1,A_3,A_2)+\kappa_{1,2}(A_1,A_2,A_3) \\ +\kappa_{1,2}(A_2,A_1,A_3)+\kappa_{1,2}(A_3,A_1,A_2)+\kappa_{1,1,1}(A_1,A_2,A_3),
\end{multline*}
and
$$\sum_{\pi \in S_{NC}(p,q,l)}\kappa_{\pi}(\vec{a})+\sum_{(\V,\pi)\in \PS_{NC}^{(1)}(p,q,l)}\kappa_{(\V,\pi)}(\vec{a})+\sum_{(\V,\pi)\in \PS_{NC}^{\prime}(p,q,l)}\kappa_{(\V,\pi)}(\vec{a}),$$
with $\PS_{NC}^{\prime}(p,q,l)=\PS_{NC}^{(2)}(p,q,l)\cup \PS_{NC}^{(3)}(p,q,l)$. Thus
\begin{multline}\label{Kappa111}
\kappa_{1,1,1}(A_1,A_2,A_3) = \\
\sum_{\pi \in S_{NC}(p,q,l)}\kappa_{\pi}(\vec{a})+\sum_{(\V,\pi)\in \PS_{NC}^{(1)}(p,q,l)}\kappa_{(\V,\pi)}(\vec{a})+\sum_{(\V,\pi)\in \PS_{NC}^{\prime}(p,q,l)}\kappa_{(\V,\pi)}(\vec{a}) \\
-\kappa_3(A_1,A_2,A_3)-\kappa_3(A_1,A_3,A_2)-\kappa_{1,2}(A_1,A_2,A_3) \\ -\kappa_{1,2}(A_2,A_1,A_3)-\kappa_{1,2}(A_3,A_1,A_2).
\end{multline}
By \cite[Theorem 3]{MST} we have,
$$\kappa_{1,2}(A_1,A_2,A_3)=\sum_{(\V,\pi)\in \PS_{NC}(p,q+l)}\kappa_{(\V,\pi)}(\vec{a}),$$
where the sum is over $(\V,\pi)$ such that $\pi^{-1}\gamma_{p,q+l}$ separates the points of $N$. Recall that $\PS_{NC}(p,q+l)$ is the union of the two sets $S_{NC}(p,q+l)$ and $\PS_{NC}^\prime(p,q+l)$ where $(\V,\pi)\in \PS_{NC}^\prime(p,q+l)$ is such that $\pi=\pi_1\times\pi_2\in \NC(p)\times \NC(q+l)$ and any cycle of $\pi$ is a block of $\V$ except one block which is the union of two cycles of $\pi$, one from each $\pi_i$. Let $\pi\in S_{NC}(p,q+l)$ be such that $\pi^{-1}\gamma_{p,q+l}$ separates the points of $N$. It is clear that $\pi$ must connect $[p]$ to at least one of $[p+1,p+q]$ or $[p+q+1,p+q+l]$. Suppose $\pi$ only connects $[p]$ and $[p+1,p+q]$ then $\Gamma_{\pi}^{\gamma_{p,q,l}}$ has the singleton $\{p+q+l\}$ and therefore $\{p+q,p+q+l\}$ is contained in a block of $\Gamma_{\pi}^{\gamma_{p,q+l}}$ since $\pi^{-1}\gamma_{p,q+l}=\pi^{-1}\gamma_{p,q,l}(p+q,p+q+l)$. The latter is a contradiction and therefore it must be $\pi \vee \gamma_{p,q,l}=1$. Moreover,
$$\#(\pi)+\#(\pi^{-1}\gamma_{p,q,l})=\#(\pi)+\#(\pi^{-1}\gamma_{p,q+l})-1=n-1,$$
hence $\pi\in S_{NC}(p,q,l)$. On the other hand, let $(\V,\pi) \in \PS_{NC}^\prime(p,q+l)$ with $\pi= \pi_1\times\pi_2\in \NC(p)\times\NC(q+l)$. It must be that $\pi_2$ connects $[p+1,p+q]$ and $[p+q+1,p+q+l]$, otherwise $\pi$ acts disjointly on each cycle of $\gamma_{p,q,l}$ and hence $\Gamma_{\pi}^{\gamma_{p,q,l}}=\{p\}\{p+q\}\{p+q+l\}$ which leads the same contradiction as before. Moreover,
$$\#(\pi_2)+\#(\pi_2^{-1}\gamma_{q,l})=\#(\pi_2)+\#(\pi_2^{-1}\gamma_{q+l})-1=n,$$
which proves that $\pi_2\in S_{NC}(q,l)$ and therefore $(\V,\pi)\in \PS_{NC}^{(1)}(p,q,l)$. We conclude
\begin{equation}\label{kappa_1,2First}
\kappa_{1,2}(A_1,A_2,A_3)=\sum_{\pi\in S_{NC}(p,q,l)}\kappa_{\pi}(\vec{a})+\sum_{(\V,\pi)\in \PS_{NC}^{(1)}(p,q,l)}\kappa_{(\V,\pi)}(\vec{a}),
\end{equation}
where bot summations are over $\pi$ such that $\pi^{-1}\gamma_{p,q+l}$ separates the points of $N$, or equivalently $\Gamma_{\pi}^{\gamma_{p,q,l}}=\{p\}\{p+q,p+q+l\}$. To compute $\kappa_{1,2}(A_2,A_1,A_3)$ let $b_i \in \mathcal{A}$ be given by $b_i=a_i$ for $p+q+1 \leq i\leq p+q+l$, $b_i=a_{p+i}$ for $1\leq i\leq q$ and $b_i=a_{q+i}$ for $q+1\leq i\leq p+q$. Proceeding as before we get
$$\kappa_{1,2}(A_2,A_1,A_3)=\sum_{\pi\in S_{NC}(q,p,l)}\kappa_{\pi}(\vec{b})+\sum_{(\V,\pi)\in \PS_{NC}^{(1)}(q,p,l)}\kappa_{(\V,\pi)}(\vec{b}),$$
where bot summations are over $\pi$ such that $\Gamma_{\pi}^{\gamma_{q,p,l}}=\{q\}\{p+q,p+q+l\}$. To get the cumulants back in terms of $\vec{a}$ instead of $\vec{b}$ it is enough to relabel the permutations appropriately, thas is; we relabel $i$ as $i+p$ for any $1\leq i\leq q$ and $i$ by $i-q$ for any $q+1\leq i\leq p+q$. In this way each element $\pi$ in the $q,p,l$-annulus becomes an element in the $p,q,l$-annulus and the condition $\Gamma_{\pi}^{\gamma_{q,p,l}}=\{q\}\{p+q,p+q+l\}$ becomes $\Gamma_{\pi}^{\gamma_{p,q,l}}=\{p+q\}\{p,p+q+l\}$, therefore,
\begin{equation}\label{kappa_1,2Second}
\kappa_{1,2}(A_2,A_1,A_3)=\sum_{\pi\in S_{NC}(p,q,l)}\kappa_{\pi}(\vec{a})+\sum_{(\V,\pi)\in \PS_{NC}^{(1)}(p,q,l)}\kappa_{(\V,\pi)}(\vec{a}),
\end{equation}
where bot summations are over $\pi$ such that $\Gamma_{\pi}^{\gamma_{p,q,l}}=\{p+q\}\{p,p+q+l\}$. Analogously,
\begin{equation}\label{kappa_1,2Third}
\kappa_{1,2}(A_3,A_1,A_2)=\sum_{\pi\in S_{NC}(p,q,l)}\kappa_{\pi}(\vec{a})+\sum_{(\V,\pi)\in \PS_{NC}^{(1)}(p,q,l)}\kappa_{(\V,\pi)}(\vec{a}),
\end{equation}
where bot summations are over $\pi$ such that $\Gamma_{\pi}^{\gamma_{p,q,l}}=\{p+q+l\}\{p,p+q\}$.
Combining Equations (\ref{kappa_1,2First},\ref{kappa_1,2Second},\ref{kappa_1,2Third}) with Equation (\ref{Kappa111}) yields
\begin{eqnarray*}
\kappa_{1,1,1}(A_1,A_2,A_3) =
\sum_{\pi \in S_{NC}(p,q,l) }\kappa_{\pi}(\vec{a})+\sum_{(\V,\pi)\in \PS_{NC}^{(1)}(p,q,l)}\kappa_{(\V,\pi)}(\vec{a}) \\
 + \sum_{(\V,\pi)\in \PS_{NC}^{\prime}(p,q,l)}\kappa_{(\V,\pi)}(\vec{a}) -\kappa_3(A_1,A_2,A_3)-\kappa_3(A_1,A_3,A_2),  
\end{eqnarray*}
where the first two sums are over $\pi$ such that $\Gamma_{\pi}^{\gamma_{p,q,l}}$ is either $\{p,p+q,p+q+l\}$ or $\{p\}\{p+q\}\{p+q+l\}$. It is easy to observe that there is no $(\V,\pi)\in \PS_{NC}^{(1)}(p,q,l)$ such that $\Gamma_{\pi}^{\gamma_{p,q,l}}=\{p,p+q,p+q+l\}$ while in the third sum any $\pi$ satisfies $\Gamma_{\pi}^{\gamma_{p,q,l}}=\{p\}\{p+q\}\{p+q+l\}$. Hence
\begin{eqnarray*}
\kappa_{1,1,1}(A_1,A_2,A_3) =
\sum_{\pi \in S_{NC}(p,q,l) }\kappa_{\pi}(\vec{a})+\sum_{(\V,\pi)\in \PS_{NC}(p,q,l)\setminus S_{NC}(p,q,l)}\kappa_{(\V,\pi)}(\vec{a}) \\
  -\kappa_3(A_1,A_2,A_3)-\kappa_3(A_1,A_3,A_2),  
\end{eqnarray*}
where the first summation is over $\pi$ such that $\Gamma_{\pi}^{\gamma_{p,q,l}}$ is either $\{p,p+q,p+q+l\}$ or $\{p\}\{p+q\}\{p+q+l\}$ and the second sum is over $\pi$ such that $\Gamma_{\pi}^{\gamma_{p,q,l}}=\{p\}\{p+q\}\{p+q+l\}$. So, we are reduce to prove that
\begin{equation}\label{Equation_First_Level}
\kappa_3(A_1,A_2,A_3)+\kappa_3(A_1,A_3,A_2)=\sum_{\pi \in S_{NC}(p,q,l) }\kappa_{\pi}(\vec{a}),
\end{equation}
where the summation is over $\pi$ such that $\Gamma_{\pi}^{\gamma_{p,q,l}}=\{p,p+q,p+q+l\}$.
For $\pi\in S_{NC}(p,q,l)$ such that $\Gamma_{\pi}^{\gamma_{p,q,l}}=\{p,p+q,p+q+l\}$, we have $\Gamma_{\pi}^{\gamma_{p,q,l}(p,p+q)}$ is either $\{p,p+q+l\}\{p+q\}$ or $\{p\}\{p+q,p+q+l\}$, we then write the right hand side of Equation (\ref{Equation_First_Level}) as
$$\sum_{\pi \in \mathcal{D}}\kappa_{\pi}(\vec{a})+\sum_{\pi \in \mathcal{E}}\kappa_{\pi}(\vec{a}),$$
where $\mathcal{D}$ are permutations satisfying the first condition while $\mathcal{E}$ are the ones satisfying the second condition. On the other hand, by \cite[Lema 14]{MST} and \cite[Theorem 11.12]{NS} we know
$$\kappa_3(A_1,A_2,A_3)=\sum_{\pi\in \NC(p+q+l)}\kappa_{\pi}(\vec{a}),$$
where the summation is over $\pi$ such that $\pi^{-1}\gamma_{p+q+l}$ separates the points of $N$. Let $\pi \in \NC(p+q+l)$ be such that $\pi^{-1}\gamma_{p+q+l}$ separates the points of $N$, then $\gamma_{\pi}^{\gamma_{p,q,l}}=\{p,p+q,p+q+l\}$ because $\pi^{-1}\gamma_{p,q,l}=\pi^{-1}\gamma_{p+q+l}(p,p+q)(p+q,p+q+l)$. The latter means $\pi \vee \gamma_{p,q,l}=1$, moreover,
$$\#(\pi)+\#(\pi^{-1}\gamma_{p,q,l})=\#(\pi)+\#(\pi^{-1}\gamma_{p+q+l})-2=n-1,$$
which means $\pi\in S_{NC}(p,q,l)$, and $\Gamma_{\pi}^{\gamma_{p,q,l}(p,p+q)}=\{p\}\{p+q,p+q+l\}$ since $\gamma_{p,q,l}(p,p+q)=\gamma_{p+q+l}(p+q,p+q+l)$, we conclude $\pi\in \mathcal{E}$. Similarly if $\pi\in \mathcal{E}$ then
\begin{eqnarray*}
\#(\pi)+\#(\pi^{-1}\gamma_{p+q+l}) &=& \#(\pi)+\#(\pi^{-1}\gamma_{p,q,l}(p,p+q)(p+q,p+q+l)) \\
&=& \#(\pi)+\#(\pi^{-1}\gamma_{p,q,l}(p,p+q))+1 \\
&=& \#(\pi)+\#(\pi^{-1}\gamma_{p,q,l}+2=n+1, \\
\end{eqnarray*}
and clearly $\pi^{-1}\gamma_{p+q+l}$ separates the points of $N$. Therefore
$$\kappa_3(A_1,A_2,A_3)=\sum_{\pi \in \mathcal{E}}\kappa_{\pi}(\vec{a}).$$
To compute $\kappa_3(A_1,A_3,A_2)$ we let $b_i=a_i$ for $1\leq i\leq p$, $b_i=a_{i+q}$ for $p+1\leq i\leq p+l$ and $b_i=a_{i-l}$ for $p+l+1\leq i \leq p+q+l$. Then by the \cite[Lema 14]{MST} and \cite[Theorem 11.12]{NS},
$$\kappa_3(A_1,A_3,A_2)=\sum_{\pi \in \NC(p+q+l)}\kappa_{\pi}(\vec{b}),$$
where the summation is over $\pi$ such that $\pi^{-1}\gamma_{p,l,q}$ separates the points of $\{p,p+l,p+l+q\}$. Let $\pi$ be as before, then as proved before we know $\pi \in S_{NC}(p,l,q)$. As done before we may relabel the values of $\pi$ in the following way, $i$ is relabeled as $i+q$ for $p+1 \leq i \leq p+l$ and $i$ becomes $i-l$ for $p+l+1 \leq i\leq p+q+l$, in this way $\pi$ becomes a permutation in $S_{NC(p,q,l)}$ and the condition $\pi^{-1}\gamma_{p,l,q}$ separates the points of $\{p,p+l,p+l+q\}$ becomes $\pi^{-1}\hat{\gamma}$ separates the points of $\{p,p+q+l,p+q\}$ where $\hat{\gamma}=(1,\dots,p,p+q+1,\dots,p+q+l,p+1,\dots,p+q)$, moreover with this relabeling we can substitute $\kappa_{\pi}(\vec{b})$ by $\kappa_{\pi}(\vec{a})$. Observe that
$$\hat{\gamma}(p,p+q)(p+q,p+q+l)=\gamma_{p,q,l}.$$
Therefore, if  $\pi^{-1}\hat{\gamma}$ separates the points of $\{p,p+q+l,p+q\}$ then $\pi^{-1}\gamma_{p,q,l}$ join them and since
$$\pi^{-1}\gamma_{p,q,l}(p,p+q)=\pi^{-1}\hat{\gamma}(p,p+q)(p+q,p+q+l)(p,p+q)=\pi^{-1}\hat{\gamma}(p,p+q+l),$$
then $\Gamma_{\pi}^{\gamma_{p,q,l}(p,p+q)}=\{p+q\}\{p,p+q+l\}$, i.e. $\pi\in \mathcal{D}$. Conversely, let $\pi\in \mathcal{D}$, then
\begin{eqnarray*}
\#(\pi)+\#(\pi^{-1}\gamma_{p+q+l}) & = & \#(\pi)+\#(\pi^{-1}\gamma_{p,q,l}(p,p+q)(p,p+q+l)) \\
& = & \#(\pi)+\#(\pi^{-1}\gamma_{p,q,l})+2=n+1,
\end{eqnarray*}
and $\pi^{-1}\hat{\gamma}$ separates the points of $N$ because
$$\pi^{-1}\hat{\gamma}=\pi^{-1}\gamma_{p,q,l}(p,p+q)(p,p+q+l),$$
which proves,
$$\kappa_3(A_1,A_3,A_2)=\sum_{\pi \in \mathcal{D}}\kappa_{\pi}(\vec{a}),$$
as desired.
\end{proof}

Now we are ready to prove our main theorem. The main lemmas used in the proof are posed and proved in Appendix \ref{append} for a fluent reading.

\begin{proof}[Proof of the Main Theorem.]

We proved the case $r=s=t=1$ so we will suppose Equation \ref{me} is true for any $r^{\prime}\leq r$, $s^{\prime}\leq s$ and $t^{\prime}\leq t$ with $(r^{\prime},s^{\prime},t^{\prime})\neq (r,s,t)$. Our goal is to prove Equation \ref{me} for $r,s,t$. We write $$\varphi_3(A_1\cdots A_r,A_{r+1}\cdots A_{r+s},A_{r+s+1}\cdots A_{r+s+t})$$ in two distinct ways,
$$\sum_{\pi\in S_{NC}(p,q,l)}\kappa_{\pi}(\vec{a})+\sum_{(\V,\pi)\in PS_{NC}^{(1)}(p,q,l)}\kappa_{(\V,\pi)}(\vec{a})+\sum_{(\V,\pi)\in PS_{NC}^{(2)}(p,q,l)}\kappa_{(\V,\pi)}(\vec{a})+\sum_{(\V,\pi)\in PS_{NC}^{(3)}(p,q,l)}\kappa_{(\V,\pi)}(\vec{a}),$$
and
$$\sum_{\pi\in S_{NC}(r,s,t)}\kappa_{\pi}(\vec{A})+\sum_{(\V,\pi)\in PS_{NC}^{(1)}(r,s,t)}\kappa_{(\V,\pi)}(\vec{A})+\sum_{(\V,\pi)\in PS_{NC}^{(2)}(r,s,t)}\kappa_{(\V,\pi)}(\vec{A})+\sum_{(\V,\pi)\in PS_{NC}^{(3)}(r,s,t)}\kappa_{(\V,\pi)}(\vec{A}).$$
In the last summation we can take the term $(1_{r+s+t},\gamma_{r,s,t})$ out of the sum so that solving for $\kappa_{r,s,t}(\vec{A})=\kappa_{(1_{r+s+t},\gamma_{r,s,t})}(\vec{A})$ gives
\begin{multline}\label{aux4}
\kappa_{r,s,t}(\vec{A})= \\
\sum_{\pi\in S_{NC}(p,q,l)}\kappa_{\pi}(\vec{a})+\sum_{(\V,\pi)\in PS_{NC}^{(1)}(p,q,l)}\kappa_{(\V,\pi)}(\vec{a})+\sum_{(\V,\pi)\in PS_{NC}^{(2)}(p,q,l)}\kappa_{(\V,\pi)}(\vec{a})+\sum_{(\V,\pi)\in PS_{NC}^{(3)}(p,q,l)}\kappa_{(\V,\pi)}(\vec{a}) \\
-\sum_{\pi\in S_{NC}(r,s,t)}\kappa_{\pi}(\vec{A})-\sum_{(\V,\pi)\in PS_{NC}^{(1)}(r,s,t)}\kappa_{(\V,\pi)}(\vec{A})-\sum_{(\V,\pi)\in PS_{NC}^{(2)}(r,s,t)}\kappa_{(\V,\pi)}(\vec{A})-\sum_{(\V,\pi)\in PS_{NC}^{(3)\prime}(r,s,t)}\kappa_{(\V,\pi)}(\vec{A}),
\end{multline}
with $PS_{NC}^{(3)\prime}(r,s,t)=PS_{NC}^{(3)}(r,s,t)\setminus \{(1_{r+s+t},\gamma_{r,s,t})\}.$ We use Lemmas \ref{Lemma: Relation A and a, version 1}, \ref{Lemma: Relation A and a, version 2}, \ref{Lemma: Relation A and a, version 3} and \ref{Main theorem: Inductive step} combined with Equation \ref{aux4} to write $\kappa_{r,s,t}(\vec{A})$ as the sum of four terms,
$$\kappa_{r,s,t}(\vec{A})=\mathcal{A}+\mathcal{B}+\mathcal{C}+\mathcal{D},$$
where,
\begin{multline*}
\mathcal{A}=\sum_{\pi\in S_{NC}(p,q,l)}\kappa_{\pi}(\vec{a})-\sum_{\pi\in S_{NC}(r,s,t)}\sum_{\substack{\sigma\in S_{NC}(p,q,l) \\ \sigma \leq \pi_{\vec{n}} \\ \sigma^{-1}\pi_{\vec{n}}\text{ separates }N}} \kappa_{\sigma}(\vec{a}) \\
-\sum_{(\V,\pi)\in PS_{NC}^{(1)}(r,s,t)}\sum_{\substack{\sigma\in S_{NC}(p,q,l) \\ \sigma \lesssim^{(1)} \pi_{\vec{n}} \\ \sigma^{-1}\pi_{\vec{n}}\text{ separates }N \\ \sigma \vee \pi_{\vec{n}}=\V_{\vec{n}}}} \kappa_{\sigma}(\vec{a})-\sum_{(\V,\pi)\in PS_{NC}^{(2)}(r,s,t)\cup PS_{NC}^{(3)\prime}(r,s,t)}\sum_{\substack{\sigma\in S_{NC}(p,q,l) \\ \sigma \lesssim^{(2)} \pi_{\vec{n}} \\ \sigma^{-1}\pi_{\vec{n}}\text{ separates }N \\ \sigma \vee \pi_{\vec{n}}=\V_{\vec{n}}}} \kappa_{\sigma}(\vec{a}),
\end{multline*}
\begin{multline*}
\mathcal{B}=\sum_{(\V,\pi)\in PS_{NC}^{(1)}(p,q,l)}\kappa_{(\V,\pi)}(\vec{a})-\sum_{(\V,\pi)\in PS_{NC}^{(1)}(r,s,t)}\sum_{\substack{(\U,\sigma)\in \PS_{NC}^{(1)}(p,q,l) \\ \sigma \leq \pi_{\vec{n}} \\ \sigma^{-1}\pi_{\vec{n}}\text{ separates }N \\ \U\vee\pi_{\vec{n}}=\V_{\vec{n}}}} \kappa_{(\U,\sigma)}(\vec{a}) \\
-\sum_{(\V,\pi)\in PS_{NC}^{(2)}(r,s,t)\cup PS_{NC}^{(3)\prime}(r,s,t)}\sum_{\substack{(\U,\sigma)\in \PS_{NC}^{(1)}(p,q,l) \\ \sigma \lesssim^{(1)} \pi_{\vec{n}} \\ \sigma^{-1}\pi_{\vec{n}}\text{ separates }N \\ \U\vee \pi_{\vec{n}}=\V_{\vec{n}}}} \kappa_{(\U,\sigma)}(\vec{a}),
\end{multline*}
$$\mathcal{C}=\sum_{(\V,\pi)\in PS_{NC}^{(2)}(p,q,l)}\kappa_{(\V,\pi)}(\vec{a})-\sum_{(\V,\pi)\in PS_{NC}^{(2)}(r,s,t)\cup PS_{NC}^{(3)\prime}(r,s,t)}\sum_{\substack{(\U,\sigma)\in \PS_{NC}^{(2)}(p,q,l) \\ \sigma \leq \pi_{\vec{n}} \\ \sigma^{-1}\pi_{\vec{n}}\text{ separates }N \\ \U\vee \pi_{\vec{n}}=\V_{\vec{n}}}} \kappa_{(\U,\sigma)}(\vec{a}),$$
and,
$$\mathcal{D}=\sum_{(\V,\pi)\in PS_{NC}^{(3)}(p,q,l)}\kappa_{(\V,\pi)}(\vec{a})-\sum_{(\V,\pi)\in PS_{NC}^{(3)\prime}(r,s,t)}\sum_{\substack{(\U,\sigma)\in \PS_{NC}^{(3)}(p,q,l) \\ \sigma \leq \pi_{\vec{n}} \\ \sigma^{-1}\pi_{\vec{n}}\text{ separates }N \\ \U\vee \pi_{\vec{n}}=\V_{\vec{n}}}} \kappa_{(\U,\sigma)}(\vec{a}).$$
In the term $\mathcal{A}$, by Lemmas \ref{Proposition: Induction version 1}, \ref{Proposition: Induction version 2} and \ref{Proposition: Induction version 3} we get,
$$\mathcal{A}=\sum_{\substack{\pi\in S_{NC}(p,q,l) \\ \pi^{-1}\gamma_{p,q,l}\text{ separates }N}}\kappa_{\pi}(\vec{a}).$$
In the term $\mathcal{B}$, by Lemmas  \ref{Proposition: Induction version 4} and \ref{Proposition: Induction version 5} we get,
$$\mathcal{B}=\sum_{\substack{(\V,\pi)\in PS_{NC}^{(1)}(p,q,l) \\ \pi^{-1}\gamma_{p,q,l}\text{ separates }N}}\kappa_{(\V,\pi)}(\vec{a}).$$
Finally by Lemmas  \ref{Proposition: Induction version 6} and \ref{Proposition: Induction version 7} we have that,
$$\mathcal{C}=\sum_{\substack{(\V,\pi)\in PS_{NC}^{(2)}(p,q,l) \\ \pi^{-1}\gamma_{p,q,l}\text{ separates }N}}\kappa_{(\V,\pi)}(\vec{a}),$$
and,
$$\mathcal{D}=\sum_{\substack{(\V,\pi)\in PS_{NC}^{(3)}(p,q,l) \\ \pi^{-1}\gamma_{p,q,l}\text{ separates }N}}\kappa_{(\V,\pi)}(\vec{a}),$$
which finishes the proof.
\end{proof}

\section{Applications}\label{Section: Application of the theorem}

In this section we present various examples motivated from important Ensambles of Random Matrices. Our first two applications will focus on computing the fluctuation cumulants of operators related to Gaussian Unitary Ensambles and Gaussian Wishart Matrices.

One of the most important ensambles in Random Matrix Theory is a  
self-adjoint normalized matrix with standard complex Gaussian entries, say $G_N$. In the asymptotic limit, this corresponds to a semicircular operator, $s$. To explore this, subsection \ref{semicircle} focuses on the square $s^2$ of a third order semicircular variable. This leads to results on the third order fluctuation cumulants of \( G_N^2 \). The second order case of this problem was previously analyzed in \cite{AM, MST}. 

Secondly, we consider another important example  in random matrix theory concerning Wishart matrices with Gaussian entries and a given covariance matrix \cite{W,LW}.  In the asymptotic limit, these correspond to the product \(cac^*\), where $a$ is an operator which is third order free with $c$, and $c$ is a third order circular operator. In subsection \ref{cac} we prove that the third order cumulants of $cac^*$ are exaclty the  moments of $a$, as in the first and second order case. The second order case of this problem was previously studied in \cite{AM}.

Next, we consider what we call $R$-diagonal elements of third order.
$R$-diagonal elements were introduced by Nica and Speicher in \cite{NS2} as a unifying concept encompassing both Haar unitary and circular elements. They have also been studied in connection with various topics, including the spectral distribution measure of non-normal elements in finite von Neumann algebras \cite{HS}, the fusion rules of irreducible representations of compact matrix quantum groups \cite{T}, and the Feinberg-Zee “single ring theorem” \cite{GKZ}. In particular, within random matrix theory, $R$-diagonal elements characterize the limiting distribution of matrices with rationally invariant spectra.

Building on these ideas, we now explore the extension to third order $R$-diagonal elements and investigate their potential applications.
In order to extend this concept to higher orders, we begin by defining $R$-diagonal in terms of cumulants. Specifically, we say that an operator is $R$-diagonal if the following holds:

\begin{definition}\label{ad1}
    Let $(\mathcal{A},\varphi)$ be a $*$-non-commutative probability space.  A random variable $a \in \mathcal{A}$ is called \textbf{$R$-diagonal} if for all $n \in \mathbb{N}$ we have $\kappa_n(a_1, \dots, a_n) = 0$ whenever the arguments $a_1, \dots, a_n \in \{a, a^*\}$ are not alternating in $a$ and $a^*$.

\end{definition}

One key property of $R$-diagonal elements is that they are closed by multiplying free elements. For instance, since a circular element $c$ is $R$-diagonal, the product $c_1c_2 \dots c_k$ is also $R$-diagonal whenever the $c_i$ are free. This example is particularly relevant, as it corresponds to the product of independent Ginibre matrices.

 The  extension to second order $R$-diagonal elements was studied in \cite{AM}, leading to the following definition.

\begin{definition}
    Let $(\mathcal{A}, \varphi, \varphi_2)$ be a second order $*$-non-commutative probability space. An element $a \in (\mathcal{A}, \varphi, \varphi_2)$ is called \textbf{second order $R$-diagonal} if it is $R$-diagonal (i.e. as in Definition \ref{ad1}) and the only non-vanishing second order cumulants are of the form
    \[
    \kappa_{2p,2q}(a, a^*, \dots, a, a^*) = \kappa_{2p,2q}(a^*, a, \dots, a^*, a).
    \]

\end{definition}

This definition resulted in significant applications in random matrix theory, which were thoroughly developed in \cite{AM}: They extended the results of Dubach and Peled \cite{DP} on the fluctuation moments of products of Ginibre matrices to general $*$-moments and, in particular, they provided a proof of the conjectured formula of Dartois and Forrester \cite{DF} for the fluctuation moments of the product of two independent complex Wishart matrices and generalized it to an arbitrary number of factors. 

In subsections \ref{Rdiagonal1} and \ref{Rdiagonal2}, we develop the theory of third order \( R \)-diagonal operators.  We compute the third order cumulants of \( aa^* \), where \( a \) is a third order \( R \)-diagonal operator. Additionally, we prove that third order \( R \)-diagonality is preserved under multiplication by a free element.  

Finally,  in subsection \ref{productGinibres}, building on the previous examples of the section,  we consider the important example of products of  third order free circular elements. As we mentioned above, this corresponds to products of Ginibre matrices. We give explicit expressions for its cumulants and its moments up to  third order, thus generalizing the results of \cite{AM} and  \cite{DF}, to the third order level.

\subsection{$s^2$} \label{semicircle}
As a first sample of the use of the main theorem of this article. We consider the following definition of a third order semicircular variable. This definition is motivated by the limiting case of GUE matrices $G_N$, see \cite[Theorem 1.1]{MM}. We will be interested in the third order free  cumulants of $s^2$, i.e., the limiting cumulants of $G_N^2$.

\begin{definition}[Semicircular  operator]

    A self-adjoint random variable $s$ in a third order $*$-non-commutative probability space is called a third order semicircular operator if its first
order cumulants satisfy $\kappa_ n(s,\dots,s) = 0$ for all $n \neq 2$ and $\kappa _2(s,s) = 1$,
and for all $p,q$ and $r$ the second and third order cumulants $\kappa_{p,q}$ and $\kappa_{p,q,r}$ are 0.    
\end{definition}

\begin{example}\label{feo}
From  Theorem \ref{Main theorem cumulants with products as entries} we have that $$\kappa_{p,q,r}(s^2,\dots,s^2)=\sum_{(\V,\pi)\in \PS_{NC}(2p,2q,2r)}\kappa_{(\V,\pi)}(s,\dots ,s),$$
where the summation is over those $\PS_{NC}(2p,2q,2r)$ such that $\pi^{-1}\gamma_{2p,2q,2r}$ separates the points of $N:=\{2,4,\dots,2p+2q+2r\}.$ 
According to the definition of a semicircular operator, there are specific instances where we do know the cumulants vanish. Here, the problem reduces to analyzing the cumulant on $(0_{\pi},\pi)$ with $\pi$ a pairing  in $S_{NC}(2p,2q,2r)$. Thus, given the previous, it follows that

$$\kappa_{p,q,r}(s^2,\dots,s^2)=|\{\pi\in S_{NC}(2p,2q,2r) ~|~ \pi \text{ pairing and }\pi\gamma_{2p,2q,2r} \text{ separates } N\}|.$$

Since \(\pi\) is a pairing, we have \(\#\pi = p + q + r\). Given that, \(\pi \in S_{NC}(2p, 2q, 2r)\), it follows that \(\# \pi \gamma_{2p, 2q, 2r} = p + q + r - 1\). Therefore, it is impossible for a pairing \(\pi \in S_{NC}(2p, 2q, 2r)\) to satisfy the separability condition. Specifically, \(\pi \gamma_{2p, 2q, 2r}\) cannot separate \(N\), as it would require at least \(p + q + r\) cycles. Which allows us to conclude that 
\[
\kappa_{p,q,r}(s^2,\dots,s^2) = 0.
\]
\end{example}

\subsection{$cac^*$} \label{cac}
As a second sample of the use of the main theorem of this article, we will consider the operator $cac^*$ where $a$ and $\{c,c^*\}$ are third order free and $c$ is a third order circular operator; our derivation is similar to the one in \cite{AM}.
Let us give a precise definition of a third order circular operator.

\begin{definition}[Circular operator]
Consider $s_{1}$ and $s_
    {2}$ third order free semicircular operators. We call $c =\frac{s_1+is_2}{\sqrt{2}}$
a third order circular
operator.

\end{definition}

\begin{example}\label{e1}

Given $c$  a third order circular operator such that $\{c,c^*\}$  and $\{a\}$ are third order
free, we are interested in the third order cumulants of $cac^*$. Based on the previous definition, we can prove that for the operator $c$ the only non-vanishing 
 cumulants are $\kappa_{2}(c,c^*)=\kappa_{2}(c^*,c)=1.$   
 From Theorem \ref{Main theorem cumulants with products as entries} we have that

$$\kappa_{r,s,t}(cac^*,cac^*,\dots,cac^*,cac^*)=\sum_{(\V,\pi)\in \PS_{NC}(3r,3s,3t)}\kappa_{(\V,\pi)}(c,a,c^*,\dots ,c,a,c^*),$$
where the summation is over those $\PS_{NC}(3r,3s,3t)$ such that $\gamma_{3r,3s,3t}\pi^{-1}$ separates the points of $O=\{1,4,\dots,3r+3s+3t-2\}$ or equivalently $\pi^{-1}\gamma_{3r,3s,3t}$ separates the points of $N=\{3,6,\dots,3r+3s+3t\}.$ According to the hypothesis, there are specific instances where we do know the cumulants vanish. Thus, the next step is to identify these cases and exclude them from the aforementioned summation. Since \(\{c, c^*\}\) and \(\{a\}\) are free of third order, it follows that the blocks of \(V\) consist either of positions corresponding to \(c\) or \(c^*\) (referred to as \(c\)-blocks, contained in \(V_c\)) or positions corresponding to \(a\) (referred to as \(a\)-blocks, contained in \(V_a\)). As a consequence the same is true for the cycles of $\pi$, which will be called $a$-cycles ($\pi_a$) and $c$-cycles ($\pi_c$), respectively. Moreover, we know that $c$ is circular, then  each $c$-cycle have to be of the form $(3i,3j-2)$ and $V_c=0_{\pi_{c}}$ for $i,j \in \{1,2,\dots,r+s+t\}$. In fact, $\pi(3i)=\gamma_{3r,3s,3t}(3i)$ since $\pi^{-1}\gamma_{3r,3s,3t}(N)= N$ and 
 $\pi^{-1}\gamma_{3r,3s,3t}$ separates the points of $N$. Hence, $\pi=(3,\gamma_{3r,3s,3t}(3))\dots(3r+3s+3t,3r+3s+1)\times \pi_a,$
and $V=\{\{3,\gamma_{3r,3s,3t}(3)\},\dots,\{3r+3s+3t,3r+3s+1\}\} \cup V_a,$ (see Figure \ref{fe1}).  Now, all boils down to analyze the properties of $(\V_a,\pi_a) \in \PS$ inherited from the hypothesis. For that, let´s embed it into $\{1,2,\dots,r+s+t\}$ by means of the  bijection $f(x)=(x+1)/3$ from $\{2,5,\dots,3r+3s+3t-1\}$ to $\{1,2,\dots,r+s+t\},$ i.e, $\pi_{f(a)}(i)=(\pi(3i-1)+1)/3$ and $V_{f(a)}$ joins $i$ and $j$ in a block if and only if $V_{a}$ joins $3i-1$ and $3j-1$ in a block. From the construction and the previous conditions, we can directly infer that
\begin{enumerate}

   \item $\#\pi=\#\pi_{f(a)}+r+s+t.$
    \item $\#V=\#V_{f(a)}+r+s+t$.
    \item $V_{f(a)} \vee \gamma_{r,s,t}=1_{r+s+t}.$
    \item $\kappa_{(\V,\pi)}(c,a,c^*\dots, c,a,c^*)=\kappa_{(V_{f(a)}, \pi_{f(a)})}(a,\dots, a).$
\end{enumerate}
Less evident is the fact that 
$$
(5)~~~~~~~~~~~~~~~~~~\#\gamma_{3r,3s,3t}\pi^{-1}=\#\gamma_{r,s,t}\pi_{f(a)}^{-1}+r+s+t.$$
In order to verify this property, notice that $(3i-2)$ are singletons in the permutation $\gamma_{3r,3s,3t}\pi^{-1}$, since  $\gamma_{3r,3s,3t}\pi^{-1}(O)= O$ and 
 $\gamma_{3r,3s,3t}\pi^{-1}$ separates the points of $O$. Hence, we just need to study the cycles  formed by the remaining elements, i.e, $\{2,3,\dots,3r+3s+3t-1,3r+3s+3t\}$. In connection with this task, we also have that $\gamma_{3r,3s,3t}\pi^{-1}(3i)=\gamma^{2}_{3r,3s,3t}(3i),$ since $\pi(\gamma_{3r,3s,3t}(3i))=3i$  (see Figure \ref{fe1}). That means, we know where goes each $3i$ under $\gamma_{3r,3s,3t}\pi^{-1}$ thus, as we did before, we can express the permutation just codifying the behavior of the remaining positions which are $\{2,5,\dots,3r+3s+3t-1\}$. In this direction, we can check that $$\gamma_{r,s,t}\pi^{-1}_{f(a)}(i)=j ~\text{iff}~ \gamma_{3r,3s,3t}\pi^{-1}(3i-1)=\gamma^{-2}_{3r,3s,3t}(3j-1).$$ 
 Noting that, this last property tells us that the number of $\gamma_{r,s,t}\pi^{-1}_{f(a)}$-cycles matches  with the number of remaining $\gamma_{3r,3s,3t}\pi^{-1}$-cycles; with this, the fifth  property is verified. As a consequence of all the properties we have that $(V_{f(a)},\pi_{f(a)}) \in \PS_{NC}(r,s,t)$, i.e, $V_{f(a)}
\vee \gamma_{r,s,t} = 1_{r+s+t}$ and
$
|(V_{f(a)}, \pi_{f(a)})| + |(0_{\pi_{f(a)}^{-1}\gamma_{r,s,t}}, \pi_{f(a)}^{-1} \gamma_{r,s,t})| =
|(1_{r+s+t}, \gamma_{r,s,t})|$. Thus, under the hypothesis,  we have a bijective mapping from  $(\V,\pi) \in \PS_{NC}(3r,3s,3t)$ such that $\pi^{-1}\gamma_{3r,3s,3t}$ separates the points of $N$ to $(\U,\sigma) \in \PS_{NC}(r,s,t)$. Therefore, as conclusion

$$\kappa_{r,s,t}(cac^*,cac^*,\dots,cac^*,cac^*)=\sum_{(\V,\pi)\in \PS_{NC}(r,s,t)} \kappa_{(\V,\pi)}(a,\dots,a)=\varphi_{3}(a^r,a^s,a^t).$$

\end{example}
At this point, it is important highlight that this generalizes Corollary 3.7  of the foundational paper of Mingo and Speicher \cite{MS}.

\begin{remark}

One important case in the previous example is when $a=1$. In this case, one finds that the third order cumulants of $cc^*$ are zero, namely,   $\kappa_{r,s,t}(cc^*,cc^*,\dots,cc^*,cc^*)=0$. Recall that
from Example \ref{feo} we know that the third order cumulants of $s^2$ are also $0$, from this perspective, may not be so surprising that the same holds for $cc^*$, since the calculation is similar, with an extra alternating condition. However, while it is true that first order cumulants of $s^2$ and $cc^*$ are the same, in contrast, as discussed in \cite{AM}, the second-order cumulants do not coincide. This observation is consistent with the fact that higher-order cumulants in non-commutative probability spaces are more delicate.
\end{remark}

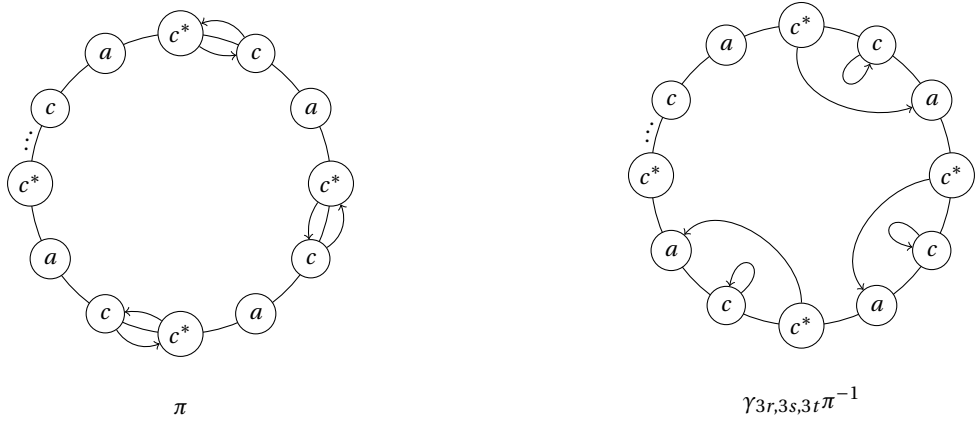
\begin{figure}
\centering
\begin{subfigure}{0.5\textwidth}
\centering
\begin{tikzpicture}
\begin{scriptsize}
  \begin{scope}[shift={(0,0)}]
    \draw[] (0,0) circle (2);

    \foreach \i in {1,4,7,10} {
      \node[fill=white, draw, circle, inner sep=3pt] (n\i) at ({360/12 * (6-\i )}:2) {$c$};
    }
  
    \foreach \i in {2,5,8,11} {
      \node[fill=white, draw, circle, inner sep=3pt] (n\i) at ({360/12 * (6-\i )}:2) {$a$};
    }

    \foreach \i in {3,6,9,12} {
      \node[fill=white, draw, circle, inner sep=2pt] (n\i) at ({360/12 * (6-\i )}:2) {$c^*$};
    }


    \node at (-2.05,0.43) {$\cdot$};
    \node at (-1.99-0.05,0.55) {$\cdot$};
    \node at (-1.95-0.04,0.65) {$\cdot$};
      \node at (0,-3) {$\pi$};

    \path[->] (n3) edge[bend left=-20] (n4);
    \path[->] (n4) edge[bend left=-40] (n3);
    \path[->] (n6) edge[bend left=-20] (n7);
    \path[->] (n7) edge[bend left=-40] (n6);
    \path[->] (n9) edge[bend left=-20] (n10);
    \path[->] (n10) edge[bend left=-40] (n9);
  \end{scope}
\end{scriptsize}
\end{tikzpicture}

\end{subfigure}%
\begin{subfigure}{0.5\textwidth}
\centering
\begin{tikzpicture}
\begin{scriptsize}
  \begin{scope}[shift={(0,0)}]
    \draw[] (0,0) circle (2);
   \foreach \i in {1,4,7,10} {
      \node[fill=white, draw, circle, inner sep=3pt] (n\i) at ({360/12 * (6-\i )}:2) {$c$};
    }
  
    \foreach \i in {2,5,8,11} {
      \node[fill=white, draw, circle, inner sep=3pt] (n\i) at ({360/12 * (6-\i )}:2) {$a$};
    }

    \foreach \i in {3,6,9,12} {
      \node[fill=white, draw, circle, inner sep=2pt] (n\i) at ({360/12 * (6-\i )}:2) {$c^*$};
    }

    \node at (-2.05,0.43) {$\cdot$};
    \node at (-1.99-0.05,0.55) {$\cdot$};
    \node at (-1.95-0.04,0.65) {$\cdot$};
    \node at (0,-3) {$\gamma_{3r,3s,3t}\pi^{-1}$};

    \path[->]  (n3) edge [out=260,in=200,looseness=1]node[above]{}(n5);
    \path[->]  (n4) edge [out=210,in=250,looseness=8] node[above]{}(n4);
    
      \path[->]  (n6) edge [out=190,in=130,looseness=1] node[above]{}(n8);
    \path[->]  (n7) edge [out=130,in=170,looseness=8] node[above]{}(n7);
    
      \path[->]  (n9) edge [out=90,in=50,looseness=1] node[above]{}(n11);
    \path[->]  (n10) edge [out=40,in=80,looseness=8] node[above]{}(n10);

  \end{scope}
\end{scriptsize}
\end{tikzpicture}

\end{subfigure}

\caption{Representation of the permutations used in Example \ref{e1}.}
\label{fe1}
\end{figure}

\subsection{Product with an R-diagonal} \label{Rdiagonal1}

The objective of this current section is to demonstrate that, similar to the situation involving first and second order cases, the preservation of $R$-diagonality (see definition below) persists when multiplying by a free element.
\begin{definition}[$R$-diagonal]
 An element $a$ in  $\left(\mathcal{A}, \varphi, \varphi_2,\varphi_3\right)$, a  $*$-non-commutative probability space, is called third order $R$-diagonal if the only first, second, and third-order cumulants that can possibly be non-zero take the form 
$$
\kappa_{2 r}\left(a, a^*, \ldots, a, a^*\right)=\kappa_{2 r}\left(a^*, a, \ldots, a^*, a\right),
$$
$$
\kappa_{2 r, 2 s}\left(a, a^*, \ldots, a, a^*\right)=\kappa_{2 r, 2 s}\left(a^*, a, \ldots, a^*, a\right),
$$
$$
\kappa_{2 r, 2 s, 2t}\left(a, a^*, \ldots, a, a^*\right)=\kappa_{2 r, 2s ,2t}\left(a^*, a, \ldots, a^*, a\right),
$$
respectively.
\end{definition}

Let us point out that a third order circular element $c$ as defined above provides an example of a third order $R$-diagonal element.

\begin{theorem}\label{t2}
    Let $\left\{a, a^*\right\}$ and $\left\{b, b^*\right\}$ be third order free and suppose that $a$ is third order $R$-diagonal. Then $ab$ is third order $R$-diagonal.
\end{theorem}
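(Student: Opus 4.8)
The plan is to apply the product formula of Theorem \ref{Main theorem cumulants with products as entries} to the tuple of arguments $c_1,\dots,c_{r+s+t}$, each $c_i\in\{ab,\ (ab)^*=b^*a^*\}$, and to show that the resulting expansion vanishes whenever the $c_i$ fail to alternate between $ab$ and $b^*a^*$, while the two alternating patterns produce equal cumulants. Since the first- and second-order statements are already available from \cite{NS} and \cite{AM}, the genuinely new content is the third-order identity, but the same scheme treats all three orders at once. Spelling out each argument as two consecutive letters (the letters $a,b$ when $c_i=ab$, and $b^*,a^*$ when $c_i=b^*a^*$) and setting $p=2r$, $q=2s$, $l=2t$, Theorem \ref{Main theorem cumulants with products as entries} writes $\kappa_{r,s,t}(c_1,\dots,c_{r+s+t})$ as $\sum_{(\V,\pi)}\kappa_{(\V,\pi)}$ over those $(\V,\pi)\in\PS_{NC}(p,q,l)$ for which $\pi^{-1}\gamma_{p,q,l}$ separates $N$.

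First I would use the vanishing of mixed higher-order cumulants — the content of third order freeness of $\{a,a^*\}$ and $\{b,b^*\}$ — to discard every $(\V,\pi)$ admitting a cycle of $\pi$ or a block of $\V$ that meets both an $a$-position and a $b$-position. Every surviving $(\V,\pi)$ therefore splits into a pure-$a$ part $(\V_a,\pi_a)$ and a pure-$b$ part $(\V_b,\pi_b)$, and by multiplicativity of the cumulants the corresponding term factors as $\kappa_{(\V_a,\pi_a)}\cdot\kappa_{(\V_b,\pi_b)}$, exactly as in Example \ref{e1}. Next, the third order $R$-diagonality of $a$ forces each $a$-cycle to be alternating in $a,a^*$; any term violating this vanishes. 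It is convenient here to embed the pure-$a$ data into $S_{r+s+t}$ by a bijection of the type used in Example \ref{e1}, in order to keep track of the induced partitioned permutation.

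The combinatorial heart, which I expect to be the main obstacle, is to show that alternation of the $a$-letters, non-crossing-ness, and separation of the marked points $N$ are jointly satisfiable only when the $c_i$ alternate around each cycle of $\gamma_{p,q,l}$. The key observation is that the $a$-letter contributed by $c_i$ is $a$ in the first slot when $c_i=ab$ and $a^*$ in the second slot when $c_i=b^*a^*$; tracing which $a$-letters a non-crossing alternating $a$-cycle may connect while $\pi^{-1}\gamma_{p,q,l}$ still separates $N$ forces consecutive arguments to be of opposite type. At third order this must be carried out over the four families $S_{NC}(p,q,l)$, $PS_{NC}^{(1)}(p,q,l)$, $PS_{NC}^{(2)}(p,q,l)$, $PS_{NC}^{(3)}(p,q,l)$ of Lemma \ref{Solutions to third order partitioned permutations}, and the through-cycles realizing the marked blocks that join distinct circles must be analyzed case by case, in the spirit of Examples \ref{feo} and \ref{e1}.

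Finally, to obtain the required equalities $\kappa_{2r,2s,2t}(ab,b^*a^*,\dots)=\kappa_{2r,2s,2t}(b^*a^*,ab,\dots)$, and the analogues at orders one and two, I would exhibit an explicit involution on the surviving partitioned permutations implementing the global swap $a\leftrightarrow a^*$, $b\leftrightarrow b^*$ — realized by the relabeling/reflection of positions already used in the earlier examples — and verify that it carries each surviving $(\V,\pi)$ to another of equal weight, using the symmetry $\kappa_{\dots}(a,a^*,\dots)=\kappa_{\dots}(a^*,a,\dots)$ of the $a$-cumulants together with the matching of the $b$-factors under the reflection. Once the surviving configurations are pinned down by the previous paragraph, both the vanishing of the non-alternating cumulants and the symmetry of the alternating ones follow, establishing that $ab$ is third order $R$-diagonal.
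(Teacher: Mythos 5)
Your plan follows the paper's proof in all essentials: expand the third order cumulant of the products via Theorem \ref{Main theorem cumulants with products as entries} over $\PS_{NC}(2r,2s,2t)$ with the separation constraint, discard mixed configurations by third order freeness of $\{a,a^*\}$ and $\{b,b^*\}$, force alternation of the $a$-cycles by $R$-diagonality of $a$, and then deduce alternation of the arguments. The only real divergence is at the step you flag as ``the main obstacle'': no case analysis over the four families of Lemma \ref{Solutions to third order partitioned permutations}, no tracking of through-cycles, and in fact no use of non-crossingness at all is needed there. Since $a$ occupies only odd slots and $a^*$ only even slots, alternation of an $a$-cycle means that for every $j$ with $c_j=ab$ the element $\pi^{-1}(2j-1)$ is an even ($a^*$-)position, hence $\gamma_{2r,2s,2t}\pi^{-1}(2j-1)$ is odd, i.e.\ lies in $O=\{1,3,\dots\}$; separation of $O$ then forces $\gamma_{2r,2s,2t}\pi^{-1}(2j-1)=2j-1$, so $\pi^{-1}(2j-1)=\gamma_{2r,2s,2t}^{-1}(2j-1)=2\gamma_{r,s,t}^{-1}(j)$, and since that slot must carry $a^*$ one concludes $\epsilon_{\gamma_{r,s,t}^{-1}(j)}=-1$. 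That short parity argument is the entire combinatorial heart, and it works uniformly for every surviving $(\V,\pi)$ regardless of which of the four families it belongs to. Your closing involution for the equality of the two alternating patterns is likewise unnecessary: the paper reduces the theorem to the three vanishing statements only, the identity $\kappa_{2r,2s,2t}(ab,(ab)^*,\dots)=\kappa_{2r,2s,2t}((ab)^*,ab,\dots)$ being a consequence of the cyclic invariance of $\kappa_{2r,2s,2t}$ within each group of arguments.
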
 Let $\epsilon_i\in \{1,-1\}, (ab)^{(1)}=ab$ and  $(ab)^{(-1)}=(ab)^*$, we have to prove that 
 \begin{enumerate}
     \item   $\kappa_{r}((ab)^{(\epsilon_1)},\dots,(ab)^{(\epsilon_r)})= 0$ unless $\epsilon_i = -\epsilon_{\gamma_{r}(i)}$.
\item $\kappa_{r,s}((ab)^{(\epsilon_1)},\dots,(ab)^{(\epsilon_{r+s})})= 0$ unless $\epsilon_i = -\epsilon_{\gamma_{r,s}(i)}$.

\item $\kappa_{r,s,t}((ab)^{(\epsilon_1)},\dots,(ab)^{(\epsilon_{r+s+t})})= 0$ unless $\epsilon_i = -\epsilon_{\gamma_{r,s,t}(i)}$.
 \end{enumerate}
 Since, (1) and (2) are proved in \cite[Lecure 15]{NS} and \cite{AM}, respectively.  We only have to demonstrate (3).

\begin{proof}
   
In accordance with Theorem \ref{Main theorem cumulants with products as entries}, the formula for cumulants involving products as parameters, it follows that
$$\kappa_{r,s,t}((ab)^{(\epsilon_1)},\dots,(ab)^{(\epsilon_{r+s+t})})=\sum_{(\V,\pi)\in \PS_{NC}(2r,2s,2t)}\kappa_{(\V,\pi)}(x_1,x_2,\dots,x_{2(r+s+t)-1},x_{2(r+s+t)}),$$
where the summation is over those $(\V,\pi)\in \PS_{NC}(2r,2s,2t)$ such that $\gamma_{2r,2s,2t}\pi^{-1}$ separates the points of $O= \{1,3,\dots,2(r+s+t)-1\}$ and $$x_{2 i-1}=\begin{cases}a & \epsilon_i=1 \\ b^* & \epsilon_i=-1\end{cases} , x_{2 i}= \begin{cases}a^* & \epsilon_i=-1 \\ b & \epsilon_i=1\end{cases}.$$
Now, since $a$ is third order $R$-diagonal  and $\left\{a, a^*\right\}$ and $\left\{b, b^*\right\}$ are third order free, we have that only those $(\V,\pi) \in \PS_{NC}(2r, 2s, 2t)$ that satisfy the following conditions (possibly) contribute to the sum: $\gamma_{2r, 2s, 2t} \pi^{-1}$ separates the points of $O$, all cycles of $\pi$ either visit only positions corresponding to $a$ or 
$a^*$  (referred to as $a$-cycles) or positions corresponding to $b$ or $b^*$ (referred to as $b$-cycles), $a$-cycles must alternate between $a$ and $a^*$ positions, and $V$ does not join $a$-cycles with $b$-cycles.

Given $j \in \{1,2,\dots,r+s+t
\}$ such that $\epsilon_{j}=1$, due to the  alternating nature of the $a$-cycles, it follows that $ \gamma_{2r,2s,2t}\pi^{-1}(2j-1) \in O$. Besides, $\gamma_{2r,2s,2t}\pi^{-1}$  separates the points of $O$, therefore $\gamma_{2r,2s,2t}\pi^{-1}(2j-1)=2j-1.$ The above relation, along with the  alternating nature of the $a$-cycles, reveals that $\pi^{-1}(2j-1)=\gamma_{2r,2s,2t}^{-1}(2j-1)$ when $\epsilon_{j}=1$,  or equivalently, that $\pi(2j)=\gamma_{2r,2s,2t}(2j)$ when $\epsilon_{j}=-1$ since
$$\pi(2j)=2l-1=\gamma_{2r,2s,2t}\pi^{-1}(2l-1)=\gamma_{2r,2s,2t}(2j).$$ 
Now, let's analyze what the two preceding equalities say in terms of the $\{\epsilon_i\}_i$. Due to the first equality we have that, if $\epsilon_{j}=1$ then $\pi^{-1}(2j-1)=\gamma_{2r,2s,2t}^{-1}(2j-1)$. Thus, $$\epsilon_{\frac{\gamma_{2r,2s,2t}^{-1}(2j-1)}{2}=\gamma_{r,s,t}^{-1}(j)}=-1.$$ Proving that, if $\epsilon_{j}=1$ then $\epsilon_{\gamma_{r,s,t}^{-1}(j)}=-1$. On the other hand,  due to the second equality we have that, if $\epsilon_{j}=-1$ then $\pi(2j)=\gamma_{2r,2s,2t}(2j)$. Thus, $$\epsilon_{\frac{\gamma_{2r,2s,2t}(2j)+1}{2}=\gamma_{r,s,t}(j)}=1.$$ Proving that, if $\epsilon_{j}=-1$ then $\epsilon_{\gamma_{r,s,t}(j)}=1$. Finally, taking into account that $\epsilon_{\gamma_{r,s,t}^{-1}(j)}=-1$ when $\epsilon_{j}=1$ is equivalent to $\epsilon_{j}=-1$ when $\epsilon_{\gamma_{r,s,t}(j)}=1$, the proof is completed.

\end{proof}
Based on the result provided above, the following example has been included to demonstrate its practical application.

\begin{example}\label{e2}

Given $c$,  a third order circular operator, if we consider $\{c,c^*\}$  and $\{a,a^*\}$ third order
free,  by  Theorem \ref{t2}, the element $ca$ is a third order $R$-diagonal operator. Thus, we are interested in the third order cumulants of $ca$. Given $r,s$ and $t$ even, by a process similar to Example \ref{e1}, we have that

$$\kappa_{r,s,t}(a^*c^*,ca,\dots,a^*c^*,ca)=\sum_{(\V,\pi)\in \PS_{NC}(2r,2s,2t)}\kappa_{(\V,\pi)}(a^*,c^*,c,a,\dots,a^*,c^*,c,a),$$
where the summation is over those $\PS_{NC}(2r,2s,2t)$ such that  $V=\{\{2,3\},\dots,\{2r+2s+2t-2,2r+2s+2t-1\}\}\cup V_a$, $\pi=(2,3)\dots (2r+2s+2t-2,2r+2s+2t-1)\pi_{a}$  and   $\pi^{-1}\gamma_{2r,2s,2t}$ separates the points of $\{2,4,\dots,2r+2s+2t\}$ (see Figure \ref{fe2}).  Note that, as in Example \ref{e1}, the separability condition at even positions of the form $4i-2$ is considered in the description of $(\V, \pi)$ when specifying the form of each $c$-cycle. However, unlike Example \ref{e1}, here we must also preserve the separability condition at even positions of the form $4i$, as this property has not yet been included in the description of $(\V, \pi)$. At this point, we set to work with  $\pi_{f(a)}$, which is $\pi_a$ considered under the  bijection $f(x)=(x+1)/2$ if $x=4i-3$ and $f(x)=x/2$ if $x=4i$ from $\{1,4,5,\dots,2r+2s+2t-4,2r+2s+2t-3,2r+2s+2t\}$ to $\{1,2,3,\dots,r+s+t-2,r+s+t-1,r+s+t\}.$ Similarly, in this example we have a version for the first four properties shown in Example \ref{e1} and, in this case, the analogous for the fifth property is the following one: $\#\gamma_{2r,2s,2t}\pi^{-1}=\#\gamma_{r,s,t}\pi^{-1}_{f(a)}+(r+s+t)/2$. In the same way as in the Example \ref{e1}, we can prove that $4i-1$ are singletons in $\gamma_{2r,2s,2t}\pi^{-1}$ and   $\gamma_{2r,2s,2t}\pi^{-1}(4i-2)=\gamma^{2}_{2r,2s,2t}(4i-2)$ (see Figure \ref{fe2}). So, as before, by codifying $\gamma_{2r,2s,2t}\pi^{-1}$  in terms of the elements in $\{1,4,5,\dots,2r+2s+2t-4,2r+2s+2t-3,2r+2s+2t\}$, we obtain that

$$\gamma_{r,s,t}\pi^{-1}_{f(a)}(i)=j ~\text{iff}~ \gamma_{2r,2s,2t}\pi^{-1}(f^{-1}(i))=x_j,$$ 
where $x_j=\gamma^{-2}_{2r,2s,2t}(2j)$ if $j$ is even and  $x_j=2j-1$ if $j$ is odd. The above relation allows us to count the remaining cycles of 
$\gamma_{2r,2s,2t}\pi^{-1}$ and also shows that 
$\gamma_{r,s,t}\pi^{-1}_{f(a)}$ separates 
$\{1, 3, \dots, r+s+t-1\}$, since
$\gamma_{2r,2s,2t}\pi^{-1}$ separates 
$\{1, 5, \dots, 2r+2s+2t-3\}$. Therefore, we have that: $(V_{f(a)},\pi_{f(a)}) \in \PS_{NC}(r,s,t)$, $\gamma_{r,s,t}\pi^{-1}_{f(a)}$ separates the points of $\{1,3,\dots,r+s+t-1\}$ and $\kappa_{(\V,\pi)}(a^*,c^*,\dots,c, a)=\kappa_{(V_{f(a)}, \pi_{f(a)})}(a^*,a,\dots,a^*, a)$.  Thus, under the hypothesis, we can set a bijective mapping from  $(\V,\pi) \in \PS_{NC}(2r,2s,2t)$ such that $\gamma_{2r,2s,2t}\pi^{-1}$ separates the points of $\{1,3,5,\dots,2r+2s+2t-1\}$ to $(\U,\sigma) \in \PS_{NC}(r,s,t)$ such that $\gamma_{r,s,t}\sigma^{-1}$ separates the points of $\{1,3,\dots,r+s+t-1\}$. As a consequence,

$$\kappa_{r,s,t}(a^*c^*,ca,\dots,a^*c^*,ca)=\sum_{(\V,\pi)\in \PS_{NC}(r,s,t)}\kappa_{(\V,\pi)}(a^*,a,\dots,a^*, a),$$
where the summation is over those $\PS_{NC}(r,s,t)$ such that $\gamma_{r,s,t}\pi^{-1}$ separates the points of $\{1,3,\dots,r+s+t-1\}$. Therefore, applying again Theorem \ref{Main theorem cumulants with products as entries}, we finally have that

$$\kappa_{r,s,t}(a^*c^*,ca,\dots,a^*c^*,ca)=\kappa_{r/2,s/2,t/2}(a^*a,\dots,a^*a).$$
\end{example}

\begin{figure}
\centering
\begin{subfigure}{0.5\textwidth}
\centering
\begin{tikzpicture}
\begin{scriptsize}
  \begin{scope}[shift={(0,0)}]
    \draw[] (0,0) circle (2);

    \foreach \i in {1,5,9} {
      \node[fill=white, draw, circle, inner sep=1pt] (n\i) at ({360/12 * (6-\i )}:2) {$a^*$};
    }
  
    \foreach \i in {2,6,10} {
      \node[fill=white, draw, circle, inner sep=1pt] (n\i) at ({360/12 * (6-\i )}:2) {$c^*$};
    }

    \foreach \i in {3,7,11} {
      \node[fill=white, draw, circle, inner sep=2.5pt] (n\i) at ({360/12 * (6-\i )}:2) {$c$};
    }
    
    \foreach \i in {4,8,12} {
      \node[fill=white, draw, circle, inner sep=2.5pt] (n\i) at ({360/12 * (6-\i )}:2) {$a$};
    }


    \node at (-2.05,0.43) {$\cdot$};
    \node at (-1.99-0.05,0.55) {$\cdot$};
    \node at (-1.95-0.04,0.65) {$\cdot$};
      \node at (0,-3) {$\pi$};

    \path[->] (n2) edge[bend left=-20] (n3);
    \path[->] (n3) edge[bend left=-40] (n2);
    \path[->] (n6) edge[bend left=-20] (n7);
    \path[->] (n7) edge[bend left=-40] (n6);
    \path[->] (n10) edge[bend left=-20] (n11);
    \path[->] (n11) edge[bend left=-40] (n10);
  \end{scope}
\end{scriptsize}
\end{tikzpicture}

\end{subfigure}%
\begin{subfigure}{0.5\textwidth}
\centering
\begin{tikzpicture}
\begin{scriptsize}
  \begin{scope}[shift={(0,0)}]
    \draw[] (0,0) circle (2);

    \foreach \i in {1,5,9} {
      \node[fill=white, draw, circle, inner sep=1pt] (n\i) at ({360/12 * (6-\i )}:2) {$a^*$};
    }
  
    \foreach \i in {2,6,10} {
      \node[fill=white, draw, circle, inner sep=1pt] (n\i) at ({360/12 * (6-\i )}:2) {$c^*$};
    }

    \foreach \i in {3,7,11} {
      \node[fill=white, draw, circle, inner sep=2.5pt] (n\i) at ({360/12 * (6-\i )}:2) {$c$};
    }
    
    \foreach \i in {4,8,12} {
      \node[fill=white, draw, circle, inner sep=2.5pt] (n\i) at ({360/12 * (6-\i )}:2) {$a$};
    }


    \node at (-2.05,0.43) {$\cdot$};
    \node at (-1.99-0.05,0.55) {$\cdot$};
    \node at (-1.95-0.04,0.65) {$\cdot$};
    \node at (0,-3) {$\gamma_{2r,2s,2t}\pi^{-1}$};

    \path[->]  (n3) edge [out=250,in=290,looseness=8] node[above]{}(n3);
    \path[->]  (n7) edge [out=120,in=160,looseness=8] node[above]{}(n7);
    \path[->]  (n11) edge [out=20,in=60,looseness=8] node[above]{}(n11);
    \path[->]  (n2) edge [out=270,in=270,looseness=1] node[above]{}(n4);
    \path[->]  (n6) edge [out=150,in=150,looseness=1] node[above]{}(n8);
    \path[->]  (n10) edge [out=30,in=30,looseness=1] node[above]{}(n12);
  \end{scope}
\end{scriptsize}
\end{tikzpicture}

\end{subfigure}

\caption{Representation of the permutations used in Example \ref{e2}.}
\label{fe2}
\end{figure}
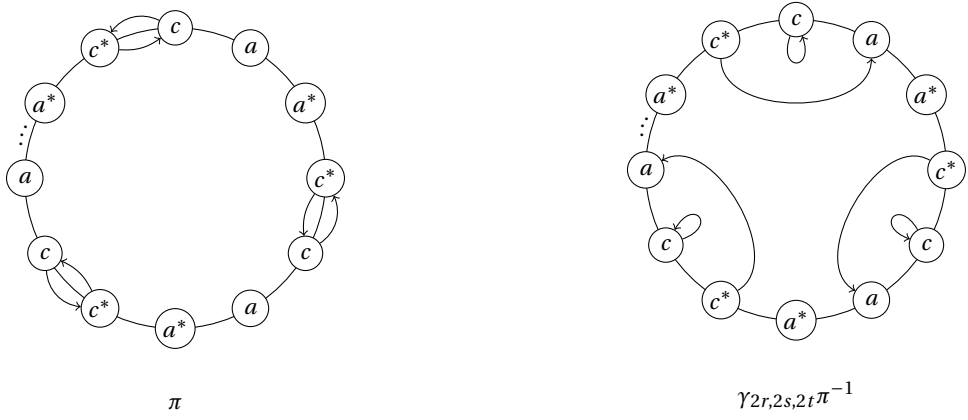

 \subsection{ Cumulants of $aa^*$
for an $R$-diagonal Operator.} \label{Rdiagonal2}
Building on the ideas used in the proof of Theorem \ref{t2} and in the previous examples, this section focuses on computing the final expression obtained in Example \ref{e2} when $a$ exhibits 
$R$-diagonal properties. In plain words, the main result on this section  extends for the third order the formula for the cumulants of $aa^*$, when $a$ is $R$-diagonal which was shown in \cite{NS} and \cite{AM} for the first and second  order, respectively.

\begin{notation}
\begin{enumerate}
    \renewcommand{\labelenumi}{(\arabic{enumi})}
   \item Let $a$ be third order $R$-diagonal. Define $$\beta_r^{(a)}:=\kappa_{2 r}\left(a, a^*, \ldots, a,a^*\right),$$
    $$\beta_{r,s}^{(a)}:=
    \kappa_{2r,2s}\left(a, a^*, \ldots, a,a^*\right),$$
    and  
    $$\beta_{r,s,t}^{(a)}:=\kappa_{2r,2s,2t}\left(a, a^*, \ldots, a,a^*\right).$$
    The sequences  $\left(\beta_r^{(a)}\right)_{r \geq 1}$, $\left(\beta_{r, s}^{(a)}\right)_{r, s \geq 1}$ and $\left(\beta_{r, s,t}^{(a)}\right)_{r, s,t \geq 1}$ are called the (first, second and third
order) determining sequences of $a$. Finally, given a partitioned permutation \((V, \pi)\), the quantity \(\beta_{(V,\pi)}^{(a)}\) is defined as the higher-order cumulants.

    \item We say that a permutation $\pi$ is
parity reversing if for all $k$, $\pi(k)$ and $k$ have the opposite parity. We
denote the elements of $S_{n}$ that are parity reversing by $S^{-}_{n}$ and similarly we denote the elements of  $\PS_{NC}(r_1,\dots,r_m)$ such that $\pi$ is parity reversing by $\PS^{-}_{NC}(r_1,\dots,r_m)$.

\end{enumerate}

\end{notation}

Let us state the main result of this section, the proof
can be seen at the end of the section:    Let $a$ be a third order $R$-diagonal operator with determining sequences $\left(\beta_r^{(a)}\right)_{r \geq 1}$, $\left(\beta_{r, s}^{(a)}\right)_{r, s \geq 1}$ and $\left(\beta_{r, s,t}^{(a)}\right)_{r, s,t \geq 1}$
then 

   \begin{equation*} 
   \kappa_{r, s,t}\left(a a^*, \ldots, a a^*\right)=\sum_{(\mathcal{V}, \pi) \in\PS_{NC}(r,s,t)}\beta^{(a)}_{(\mathcal{V}, \pi) }.   
\end{equation*}

Before proceeding with the proof, we establish some preliminary results.

 \begin{lemma} \label{l1}  Given $\pi \in S^{-}_{2r_1+\dots+2r_m}$, we have that
 $\gamma_{2r_1,\dots,2r_m}\pi^{-1}$ separates the points of
$O=\{1,3,\dots\\,\sum_{i=1}^{m}2r_i-1\}$ if and only if for all $k$, $\pi(2k)=\gamma_{2r_1,\dots,2r_m}(2k)$.

\end{lemma}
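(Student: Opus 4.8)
The plan is to prove a biconditional, so I would treat the two directions separately. The reverse direction is immediate: if $\pi(2k) = \gamma_{2r_1,\dots,2r_m}(2k)$ for every $k$, I want to show $\gamma_{2r_1,\dots,2r_m}\pi^{-1}$ fixes every odd point, which is what separation of $O$ means by the discussion following Definition \ref{dm1}. First I would note that since $\pi \in S^-_{2r_1+\dots+2r_m}$ is parity reversing, $\pi(2k)$ is odd, so every odd number in $[\sum 2r_i]$ is of the form $\pi(2k)$ for a unique even $2k$ (parity reversing permutations biject evens to odds). Then for an arbitrary odd point $2l-1 = \pi(2k)$ I compute $\gamma_{2r_1,\dots,2r_m}\pi^{-1}(2l-1) = \gamma_{2r_1,\dots,2r_m}(2k) = \pi(2k) = 2l-1$ by hypothesis, so every odd point is fixed and $\gamma_{2r_1,\dots,2r_m}\pi^{-1}$ separates $O$.

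For the forward direction, I would assume $\gamma_{2r_1,\dots,2r_m}\pi^{-1}$ separates $O$, i.e. fixes each odd point, and deduce $\pi(2k) = \gamma_{2r_1,\dots,2r_m}(2k)$ for all $k$. Fix an even point $2k$ and set $2l-1 := \pi(2k)$, which is odd by the parity-reversing property. Applying the separation hypothesis to the odd point $2l-1$ gives $\gamma_{2r_1,\dots,2r_m}\pi^{-1}(2l-1) = 2l-1$, that is $\gamma_{2r_1,\dots,2r_m}(2k) = 2l-1 = \pi(2k)$, which is exactly the claimed identity. So the two directions are really the same short computation read in opposite directions, hinging only on the fact that every odd point equals $\pi(2k)$ for some even $2k$.

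The only genuine subtlety — and what I'd expect to be the main obstacle to a fully rigorous writeup — is making sure the separation condition is correctly transcribed for $\gamma\pi^{-1}$ rather than $\pi^{-1}\gamma$. Following the Remark after Definition \ref{dm1}, the statement ``$\gamma_{2r_1,\dots,2r_m}\pi^{-1}$ separates the points of $O$'' is equivalent to $\gamma_{2r_1,\dots,2r_m}\pi^{-1}|_O = \mathrm{id}_O$, and since $\gamma_{2r_1,\dots,2r_m}\pi^{-1}$ already maps $O$ into $O$ (because each odd point is the image under $\gamma_{2r_1,\dots,2r_m}$ of the even point $\pi^{-1}$ sends it to — here I use parity reversal of $\pi^{-1}$ together with the fact that $\gamma_{2r_1,\dots,2r_m}$ preserves parity within each cycle of even length), separation simply means that $\gamma_{2r_1,\dots,2r_m}\pi^{-1}$ restricts to the identity on $O$ without merging odd points into larger cycles. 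I would make this parity bookkeeping explicit once at the start, since each $\gamma_{2r_1,\dots,2r_m}$-cycle $[2r_i]$ has even length, so $\gamma_{2r_1,\dots,2r_m}$ alternates parity and $\gamma_{2r_1,\dots,2r_m}\pi^{-1}$ preserves parity. With that observation in place the proof reduces to the two-line computation above in each direction.

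\begin{proof}
Since each cycle $[2r_i]$ of $\gamma := \gamma_{2r_1,\dots,2r_m}$ has even length, $\gamma$ sends each element to one of opposite parity, while $\pi^{-1}$ is parity reversing by assumption; hence $\gamma\pi^{-1}$ preserves parity and in particular maps $O$ into $O$. By the Remark following Definition \ref{dm1}, $\gamma\pi^{-1}$ separates the points of $O$ if and only if $\gamma\pi^{-1}|_O = \mathrm{id}_O$, that is, $\gamma\pi^{-1}$ fixes every odd point.

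Suppose first that $\gamma\pi^{-1}$ separates the points of $O$. Fix an even point $2k$ and put $2l-1 := \pi(2k)$, which is odd because $\pi$ is parity reversing. Applying the separation hypothesis to the odd point $2l-1$ yields
$$\gamma(2k) = \gamma\pi^{-1}(2l-1) = 2l-1 = \pi(2k),$$
so $\pi(2k) = \gamma(2k)$, as claimed.

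Conversely, suppose $\pi(2k) = \gamma(2k)$ for every $k$. Because $\pi$ is parity reversing, it maps the even points bijectively onto the odd points, so every odd point $2l-1$ can be written as $2l-1 = \pi(2k)$ for a unique even $2k$. Then
$$\gamma\pi^{-1}(2l-1) = \gamma(2k) = \pi(2k) = 2l-1,$$
so $\gamma\pi^{-1}$ fixes every odd point and therefore separates the points of $O$.
\end{proof}
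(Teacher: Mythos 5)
Your proof is correct and rests on the same key observation as the paper's: the composite permutation preserves the parity class (you show $\gamma\pi^{-1}$ stabilizes $O$, the paper shows $\pi^{-1}\gamma$ stabilizes $N$), so ``separates'' collapses to ``fixes pointwise'' and the lemma becomes a one-line computation. Your writeup is in fact more explicit than the paper's rather terse argument, but the route is essentially identical.
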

\begin{proof}

We know that,  $\gamma_{2r_1,\dots,2r_m}\pi^{-1}$ separates the points of
$O=\{1,3,\dots,\sum_{i=1}^{m}2r_i-1\}$ if and only if $\pi^{-1}\gamma_{2r_1,\dots,2r_m}$ separates the points of
$N=\{2,4,\dots,\sum_{i=1}^{m}2r_i\}$. Now, since $\pi \in S^{-}_{2r_1+\dots+2r_m}$, it follows that $$\pi^{-1}\gamma_{2r_1,\dots,2r_m}(N)=N.$$
This completes the proof of the first implication. The second implication follows directly.

\end{proof}

Based on Lemma \ref{l1}, the behavior of even elements is understood, our attention can be directed solely towards odd elements. The subsequent definition encapsulates this idea.

\begin{definition}\label{d1}
    Suppose $\pi \in S_{2r_1+\dots+2r_m}^-$ is such that  $\gamma_{2r_1,\dots,2r_m}\pi^{-1}$ separates the points of
$O=\{1,3,\dots,\sum_{i=1}^{m}2r_i-1\}$. Let $ \Check{\pi} \in S_{r_1+\dots+r_m}$ defined by $ \Check{\pi}(k)=\pi(\pi(2k))/2.$ We call $\Check{\pi}$ the half of $\pi$.

\end{definition}

In light of the definition provided above, the coming lemmas are formulated under the assumption of $\pi \in S_{2r_1+\dots+2r_m}^-$ is such that  $\gamma_{2r_1,\dots,2r_m}\pi^{-1}$ separates the points of
$O=\{1,3,\dots,\sum_{i=1}^{m}2r_i-1\}$, with the aim of comprehending how $\pi$ and $ \Check{\pi}$ are related. By the way, $\check{\gamma}_{2r_1,\dots,2r_m}=\gamma_{r_1,\dots,r_m}.$ 

\begin{lemma}\label{l3}
   It is validated that, 
    \begin{enumerate}
        \item $\#\pi=\#\check{\pi}.$
      
        \item $\pi(\pi(2k))=2l$ for $2k,2l$ in different $\gamma_{2r_1,\dots,2r_m}$-cycles iff~~ $ \Check{\pi}(k)=l$  for $k,l$ in different $\gamma_{r_1,\dots,r_m}$-cycles.
        \item  $\gamma_{2r_1,\dots,2r_m}\pi^{-1}(2k)=2l$ iff~~   $\gamma_{r_1,\dots,r_m} \Check{\pi}^{-1}(k)=l.$
        \item $\#\gamma_{r_1,\dots,r_m}\check{\pi}^{-1}+\sum_{i=1}^{m}r_i=\#\gamma_{2r_1,\dots,2r_m}\pi^{-1}.$
        
    \end{enumerate}
   
\end{lemma}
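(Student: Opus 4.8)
The plan is to reduce every statement to two elementary facts valid under our standing hypotheses, and then read off the four claims in turn. Throughout I write $\gamma=\gamma_{2r_1,\dots,2r_m}$, $\check\gamma=\gamma_{r_1,\dots,r_m}$, and let $E=\{2,4,\dots,\sum 2r_i\}$ be the set of even points, identified with $\{1,\dots,\sum r_i\}$ via $2k\leftrightarrow k$. The first fact, supplied directly by Lemma \ref{l1}, is that $\pi(2k)=\gamma(2k)$ for every $k$; since $\gamma$ (and hence $\pi$) carries $E$ bijectively onto the odd points $O$, this is equivalent to $\pi^{-1}(o)=\gamma^{-1}(o)$ for every odd $o$. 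The second fact is purely about $\gamma$: inspecting the cyclic blocks (with $M=2r_1+\cdots+2r_{i-1}$ even, so $M/2=r_1+\cdots+r_{i-1}$) gives $\gamma(2a-1)=2a$ on odd arguments and $\gamma(2m)=2\check\gamma(m)-1$ on even arguments, the two formulas covering both the interior and the wrap-around step of each cycle. I would record these as a short preliminary computation.

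For (1), I would use that $2\check\pi(k)=\pi^2(2k)$, so $\check\pi$ is exactly the permutation induced by $\pi^2$ on $E$ under $2k\leftrightarrow k$. Since $\pi$ is parity reversing it has no fixed points and every cycle alternates parities, hence has even length; squaring a cycle $(x_1\,x_2\cdots x_{2\ell})$ produces precisely two cycles of $\pi^2$, namely $(x_2\,x_4\cdots x_{2\ell})$ on its even members and $(x_1\,x_3\cdots x_{2\ell-1})$ on its odd members. Thus each cycle of $\pi$ contributes exactly one cycle of $\pi^2|_E$, whence $\#\check\pi=\#(\pi^2|_E)=\#\pi$. Claim (2) is then almost a tautology: by Definition \ref{d1}, $\check\pi(k)=l\iff\pi(\pi(2k))=2l$, so only the parenthetical clauses need checking. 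Because the even members of the $i$-th cycle of $\gamma$ are exactly the $2k$ with $r_1+\cdots+r_{i-1}<k\le r_1+\cdots+r_i$, the point $2k$ lies in the $i$-th $\gamma$-cycle iff $k$ lies in the $i$-th $\check\gamma$-cycle, so "$2k,2l$ in different $\gamma$-cycles" and "$k,l$ in different $\check\gamma$-cycles" coincide.

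The computational heart is (3), and this is where I expect the only genuine bookkeeping. Fix an even point $2k$ and set $o=\pi^{-1}(2k)$, which is odd. Applying the two preliminary facts in sequence, $\pi^{-2}(2k)=\pi^{-1}(o)=\gamma^{-1}(o)$, so with $m:=\check\pi^{-1}(k)=\pi^{-2}(2k)/2$ one has $\gamma(2m)=o$, and the even-argument formula forces $o=2\check\gamma(m)-1$. Feeding this odd value through the odd-argument formula gives $\gamma(o)=2\check\gamma(m)$, that is $\gamma\pi^{-1}(2k)=\gamma(o)=2\,\check\gamma\check\pi^{-1}(k)$, which is exactly the asserted equivalence $\gamma\pi^{-1}(2k)=2l\iff\check\gamma\check\pi^{-1}(k)=l$. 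The main obstacle throughout is keeping this chain of halvings and doublings consistent, i.e.\ verifying the second preliminary fact precisely and composing it in the right order; everything else is bijective bookkeeping.

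Finally, (4) follows by splitting $\gamma\pi^{-1}$ along parity. Part (3) shows $\gamma\pi^{-1}$ preserves $E$ and that its restriction to $E$ is conjugate to $\check\gamma\check\pi^{-1}$, contributing $\#\check\gamma\check\pi^{-1}$ cycles; meanwhile, for odd $o$ the first preliminary fact gives $\gamma\pi^{-1}(o)=\gamma\gamma^{-1}(o)=o$, so each of the $\sum r_i$ odd points is a singleton cycle. Adding the two counts yields $\#\gamma\pi^{-1}=\#\check\gamma\check\pi^{-1}+\sum_{i=1}^{m}r_i$, completing the proof.
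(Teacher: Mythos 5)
Your proof is correct and follows essentially the same route as the paper: claim (3) rests on the same structural identity relating $\gamma_{2r_1,\dots,2r_m}$ on even/odd points to $\gamma_{r_1,\dots,r_m}$ under halving (your two formulas $\gamma(2m)=2\check\gamma(m)-1$ and $\gamma(2a-1)=2a$ are equivalent to the paper's $\gamma_{2r_1,\dots,2r_m}(2\gamma_{r_1,\dots,r_m}^{-1}(l))=\gamma_{2r_1,\dots,2r_m}^{-1}(2l)$), and claim (4) uses the identical parity split with the odd points fixed by $\gamma\pi^{-1}$. The only difference is that you supply the small cycle-counting argument for (1) (each even-length alternating cycle of $\pi$ contributes exactly one cycle of $\pi^2$ on the even points), which the paper dismisses as a direct consequence of the definition.
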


\begin{proof}

The first two properties are a direct consequence of the definition; therefore, our current objective is to establish the last two. Let's start with (3), for all $1 \leq k,l \leq r_1+\dots+r_m$, by Lemma \ref{l1},
    $\pi(2\gamma^{-1}_{r_1,\dots,r_m}(l))=\gamma_{2r_1,\dots,2r_m}(2\gamma^{-1}_{r_1,\dots,r_m}(l))$. In addition, $\gamma_{2r_1,\dots,2r_m}(2\gamma^{-1}_{r_1,\dots,r_m}(l))=\gamma_{2r_1,\dots,2r_m}^{-1}(2l),$ since $\check{\gamma}_{2r_1,\dots,2r_m}=\gamma_{r_1,\dots,r_m}.$   Consequently,\\
    
\begin{center}
    $\gamma_{r_1,\dots,r_m} \check{\pi}^{-1}(k) =l$ iff $\check{\pi}\left(\gamma_{r_1,\dots,r_m}^{-1}(l)\right)=k$ iff $\pi^2\left(2 \gamma_{r_1,\dots,r_m}^{-1}(l)\right) =2 k$
\end{center}

\begin{center}
   iff  $\pi\left(\gamma_{2r_1,\dots,2r_m}^{-1}(2 l)\right)=2 k$ iff  $\gamma_{2r_1,\dots,2r_m} \pi^{-1}(2 k)=2 l.$ 
\end{center}
Now, let's revisit the last one. Since, $\gamma_{2r_1,\dots,2r_m}\pi^{-1}(2k+1)=2k+1$ holds for all $k$, we just need to determine the number of cycles formed by even elements in $\gamma_{2r_1,\dots,2r_m}\pi^{-1}$. Due to (3) the $\gamma_{2r_1,\dots,2r_m}\pi^{-1}$- cycles formed by even elements are the cycles of $\gamma_{r_1,\dots,r_m} \Check{\pi}^{-1}.$ Therefore, we confirm the statement.

\end{proof}
 
At this point is crucial to point out that Lemma \ref{l3} (2) indicates that: Through cycles are preserved under the half operation; recalling that conforming to Lemma \ref{l1}  a through cycle in $\pi$ contains at least four elements.

\begin{lemma}\label{l4}
The map $\pi \mapsto \check{\pi}$ is a bijection from $$
\left\{ \pi \in S_{2r_1+\dots+2r_m}^{-} \biggm| \gamma_{2r_1,\dots,2r_m} \pi^{-1}  
\text{ separates the points of } O = \{1,3,\dots,\sum_{i=1}^{m} (2r_i) - 1\} \right\}$$ to $S_{r_1+\dots+r_m}$.    
\end{lemma}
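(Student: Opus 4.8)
The plan is to prove bijectivity by exhibiting an explicit inverse, exploiting the fact that every $\pi$ in the source set is essentially determined by its values on the odd points. Write $\gamma = \gamma_{2r_1,\dots,2r_m}$ and $n = \sum_{i=1}^m 2r_i$. First I would record the structural observation underlying everything: by Lemma \ref{l1}, for any parity-reversing $\pi$ the condition that $\gamma\pi^{-1}$ separates $O$ is equivalent to $\pi(2k) = \gamma(2k)$ for all $k$. Thus for a $\pi$ in the source set the action on the even points is completely forced, and $\pi$ is free only on the odd points (where, being parity reversing, it must land in the even points). Combining this with Definition \ref{d1} gives the cleaner description $\check\pi(k) = \pi(\pi(2k))/2 = \pi(\gamma(2k))/2$, so that $\check\pi$ records precisely the ``odd part'' of $\pi$, suitably reindexed.

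Next I would establish the enumeration fact that makes the inverse construction possible: $\gamma$ restricts to a bijection from the even points onto the odd points $O$. This holds because each cycle of $\gamma$ has even length $2r_i$ and begins at the even offset $2r_1+\cdots+2r_{i-1}$, so within each cycle $\gamma$ alternates parity, sending even to odd and odd to even. Consequently $\{\gamma(2k) : 1\le k\le r_1+\cdots+r_m\}$ runs exactly once through $O$. With this in hand I would define the candidate inverse: given $\tau \in S_{r_1+\dots+r_m}$, let $\pi$ be the permutation of $[n]$ determined by
\[
\pi(2k) = \gamma(2k), \qquad \pi(\gamma(2k)) = 2\tau(k), \qquad 1\le k \le r_1+\cdots+r_m.
\]
The first rule defines $\pi$ on all even points (mapping them bijectively onto $O$), and the second defines $\pi$ on all odd points (mapping them, via $k\mapsto 2\tau(k)$ and the enumeration above, bijectively onto the even points). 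Hence $\pi$ is a genuine parity-reversing bijection; since $\pi(2k)=\gamma(2k)$, Lemma \ref{l1} gives that $\gamma\pi^{-1}$ separates $O$, so $\pi$ lies in the source set. Finally $\check\pi(k) = \pi(\gamma(2k))/2 = 2\tau(k)/2 = \tau(k)$, so $\tau\mapsto\pi$ is a right inverse.

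For injectivity I would argue that $\check\pi$ determines $\pi$: any two elements $\pi_1,\pi_2$ of the source set agree on the even points (both equal $\gamma$ there by Lemma \ref{l1}), and on odd points one has $\pi_i(\gamma(2k)) = 2\check\pi_i(k)$, so $\check\pi_1=\check\pi_2$ forces $\pi_1=\pi_2$ since the $\gamma(2k)$ exhaust $O$. This shows the constructed map is also a left inverse, completing the proof. I do not expect a serious obstacle here; the one point needing care is verifying that the constructed $\pi$ is legitimately a permutation lying in the source set, which rests entirely on the parity-alternation of $\gamma$ within each (even-length) cycle and on Lemma \ref{l1}. Much of the well-definedness and the preservation of cycle structure is already encapsulated in Lemma \ref{l3}, which I would cite to streamline the verification that $\check\pi \in S_{r_1+\dots+r_m}$.
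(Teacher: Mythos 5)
Your proposal is correct and follows essentially the same route as the paper: both use Lemma \ref{l1} to see that elements of the source set are forced on the even points and hence determined by $\check\pi$ (injectivity), and both construct the inverse by setting $\pi(2k)=\gamma(2k)$ and $\pi(\gamma(2k))=2\tau(k)$ (surjectivity). Your explicit check that $\gamma$ maps the even points bijectively onto $O$ is a detail the paper leaves implicit, but it is the same argument.
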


\begin{proof}
   
Let's consider $\pi_1$ and $\pi_2$ belonging to $\{ \pi \in S_{2r_1+\dots+2r_m}^{-} \mid$ $\gamma_{2 r_1,\dots,2r_m} \pi^{-1}$ separates the point of $O\}$ and notice that as a straightforward consequence of Lemma \ref{l1}, $\pi_j(2 k)=\gamma_{2 r_1,\dots,2r_m}(2 k)$ for $j=1,2$. Therefore, $\pi_1$ and $\pi_2$ coincide on the even numbers. Then, if you assume that $\check{\pi}_1=\check{\pi}_2$ in order to prove injectivity our task is to demonstrate that $\pi_1$ and $\pi_2$ exhibit agreement on the odd numbers. Notably, we observe that $\pi_1\left(\gamma_{2 r_1,\dots,2r_m}(2 k)\right)=\pi_1^2(2 k)=\pi_2^2(2 k)=\pi_2\left(\gamma_{2 r_1,\dots,2r_m}(2 k)\right)$. Thus, the first statement is proven. Given $\sigma \in S_{r_1+\dots+r_m}$, let $\pi$ be defined in $S_{2r_1+\dots+2r_m}$ such that $\pi(2k)=\gamma_{2r_1,\dots,2r_m}(2k)$ and $\pi(\gamma_{2r_1,\dots,2r_m}(2k))=2\sigma(k)$. It is evident that $\pi$ is a permutation designed to reverse parity.  Thus, by Lemma \ref{l1}, we  can deduce from the construction that $\gamma_{2 r_1,  \dots,2r_m} \pi^{-1}$ separates the point of $O.$ Therefore, $\pi \in \{ \alpha \in S_{2r_1+\dots+2r_m}^{-} \mid$ $\gamma_{2 r_1,\dots,2r_m} \alpha^{-1}$ separates the point of $O\}$ and $\check{\pi}=\sigma.$ Hence, the proof is finished.

\end{proof}

\begin{definition}
    Given  $\pi \in S_{r_1+\dots+r_m} $, we denote the inverse of the half mapping by 
 $\hat{\pi}$, defined as $\hat{\pi}(2k)=\gamma_{2r_1,\dots,2r_m}(2k)$ and $\hat{\pi}(\gamma_{2r_1,\dots,2r_m}(2k))=2\pi(k).$  We call $\hat{\pi}$ the double of $\pi$.
\end{definition}

Ultimately, the next lemma extends  Lemma \ref{l4} to encompass partitions permutations, which are in effect the combinatorial framework for dealing with higher-order cumulants.

\begin{lemma} \label{l5}
It is verified that $\{(V,\pi) \in \PS^{-}_{NC}(2r_1,\dots,2r_m)|\gamma_{2r_1,\dots,2r_m}\pi^{-1} \text{ separates the points of}\\~ O:=\{1,3,\dots,\sum_{i=1}^{m}2r_i-1\}\}\cong \PS_{NC}(r_1,\dots,r_m).$   
\end{lemma}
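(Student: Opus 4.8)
The plan is to establish Lemma \ref{l5} by promoting the bijection of Lemma \ref{l4} from single permutations to partitioned permutations, verifying along the way that the non-crossing partitioned-permutation structure is preserved in both directions. First I would set up the map. Given $(V,\pi) \in \PS^{-}_{NC}(2r_1,\dots,2r_m)$ with $\gamma_{2r_1,\dots,2r_m}\pi^{-1}$ separating $O$, I would send $\pi$ to its half $\check{\pi}$ as in Definition \ref{d1} and Lemma \ref{l4}, and I would send $V$ to the partition $\check{V}$ obtained by halving each block: concretely, using that the through-structure of $\pi$ is recorded by $\check{\pi}$ via Lemma \ref{l3}(2), each block of $V$ (which is a union of cycles of $\pi$) is carried to the corresponding union of cycles of $\check{\pi}$. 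The inverse map uses the double $\hat{\pi}$ and the analogous doubling of blocks.

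The key verification steps, carried out in order, are as follows. First I would confirm that the block structure is transported coherently: since by Lemma \ref{l1} every even element $2k$ satisfies $\pi(2k)=\gamma(2k)$, the cycles of $\pi$ pair up predictably with cycles of $\check\pi$, so a block of $V$ that is a union of cycles of $\pi$ maps to a genuine union of cycles of $\check\pi$, giving a well-defined partition $\check V$ with $\#(V)=\#(\check V)$ (mirroring $\#\pi=\#\check\pi$ from Lemma \ref{l3}(1)). Second, I would check that the defining numerical condition of a non-crossing partitioned permutation is preserved, namely $\check V \vee \gamma_{r_1,\dots,r_m}=1$ and the geodesic/length equality $|(\check V,\check\pi)| + |(0_{\check\pi^{-1}\gamma},\check\pi^{-1}\gamma)| = |(1,\gamma_{r_1,\dots,r_m})|$. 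Here the arithmetic from Lemma \ref{l3}(4), $\#\gamma_{r_1,\dots,r_m}\check\pi^{-1} + \sum r_i = \#\gamma_{2r_1,\dots,2r_m}\pi^{-1}$, together with $\#\pi=\#\check\pi$, lets me translate the length identity valid for $(V,\pi)$ into the corresponding one for $(\check V,\check\pi)$. I would phrase this via Lemma \ref{Solutions to third order partitioned permutations}-type classifications, noting that through-cycles are preserved (Lemma \ref{l3}(2)) so the type of marked block --- joining cycles across one, two, or three circles --- is the same for $(V,\pi)$ and $(\check V,\check\pi)$. Third, I would verify that the double construction lands back in $\PS^{-}_{NC}$ with the separation condition, which is exactly the content of Lemma \ref{l4} at the level of $\pi$, supplemented by the observation that doubling a union of cycles of $\sigma$ yields a union of cycles of $\hat\sigma$.

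Since Lemma \ref{l4} already gives a bijection on the underlying permutations, the map on the partitioned permutations is automatically injective and surjective once I check that halving and doubling are mutually inverse on the partition component; this reduces to the set-theoretic fact that the cycle-to-cycle correspondence $\pi \leftrightarrow \check\pi$ (and back) respects unions, which follows from Lemma \ref{l3}(2) and the definitions of $\check\pi$ and $\hat\pi$.

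The main obstacle I anticipate is the bookkeeping in the second step: showing that the non-crossing length condition, not just the connectivity $V\vee\gamma=1$, transfers cleanly. One must be careful that when $(V,\pi)$ is non-crossing of a given type in $2r_1,\dots,2r_m$ circles, the halved object is non-crossing of the \emph{same} type in $r_1,\dots,r_m$ circles; the singletons $\{2k+1\}$ that become fixed points of $\gamma_{2r_1,\dots,2r_m}\pi^{-1}$ contribute to the cycle counts in a way that must be tracked precisely through Lemma \ref{l3}(4), and an off-by-a-factor-of-two error in the defect $\#(\gamma)-\#(V\vee\gamma)$ would break the equivalence. I would handle this by computing $|(\check V,\check\pi)|$ explicitly in terms of $\#\check\pi$ and $\#(\check V)$ and matching it against the halved version of $|(V,\pi)|$, using that $m$ circles of sizes $2r_i$ halve to $m$ circles of sizes $r_i$ so that $\#\gamma$ is unchanged at $m$ while the ambient size drops from $\sum 2r_i$ to $\sum r_i$.
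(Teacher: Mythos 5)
Your proposal is correct and follows essentially the same route as the paper: the same half/double maps on the permutation and on the blocks, the same two verifications (connectivity $\check V\vee\gamma=1$ and the length identity, both transferred via Lemma \ref{l3}(1),(2),(4) together with $\#V=\#\check V$), and the same derivation of injectivity and surjectivity from Lemma \ref{l4}. The bookkeeping concern you flag about the length condition is exactly the point the paper resolves by combining $\#\gamma_{r_1,\dots,r_m}\check\pi^{-1}+\sum r_i=\#\gamma_{2r_1,\dots,2r_m}\pi^{-1}$ with the geodesic equality already satisfied by $(V,\pi)$.
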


\begin{proof}
 The proof involves establishing  that the map $(V,\pi) \mapsto (\check{V},\check{\pi})$  is a bijection from $(V,\pi) \in \PS^{-}_{NC}(2r_1,\dots,2r_m)$ such that $\gamma_{2r_1,\dots,2r_m}\pi^{-1}$ separates the points of $O$ to $\PS_{NC}(r_1,\dots,r_m)$,  where $\check{V}$ is formed by joining the corresponding cycles of $\pi$ under the half mapping that $V$ joins. By construction we have that $(\check{V},\check{\pi})$ is a partition-permutation, now to verify that is an element of $\PS_{NC}(r_1,\dots,r_m)$ we have to check that
 $$\mathcal{\check{V}}
\vee \gamma_{r_1,\dots,r_m} = 1_{r_1+\dots+r_m},$$ and
$$
|(\mathcal{\check{V}}, \check{\pi})| + |(0_{\check{\pi}^{-1}\gamma_{r_1,\dots,r_m}}, \check{\pi}^{-1} \gamma_{r_1,\dots,r_m})| =
|(1_{r_1+\dots+r_m}, \gamma_{r_1,\dots,r_m})|.$$
The first property holds due to the construction of \( \check{V} \), Lemma \ref{l3} (2) and the hypothesis  
\[
\mathcal{V} \vee \gamma_{2r_1,\dots,2r_m} = 1_{2r_1+\dots+2r_m},
\] since  $(V,\pi) \in \PS_{NC}(2r_1,\dots,2r_m)$. The validity of the second property is confirmed using Lemma \ref{l3} (1) and (4), along with the observation that, by construction, $\#V = \#\check{V}$, and the fact that

$$
|(\mathcal{V}, \pi)| + |(0_{\pi^{-1}\gamma_{2r_1,\dots,2r_m}}, \pi^{-1} \gamma_{2r_1,\dots,2r_m})| =
|(1_{2r_1+\dots+2r_m}, \gamma_{2r_1,\dots,2r_m})|,$$
since  $(V,\pi) \in \PS_{NC}(2r_1,\dots,2r_m)$. Next, to demonstrate injectivity, we rely on the one already established in Lemma \ref{l4}. This allows us to assert that if $\check{\pi_1}=\check{\pi_2}$, then $\pi_1=\pi_2$. Thus, if $\pi_1=\pi_2$ and $\check{V_1}=\check{V_2}$ by construction we have that $V_1=V_2$. Confirming injectivity.

Moving forward, given $(U, \sigma) \in \PS_{NC}(r_1,\dots,r_m)$, we set $\pi=\hat{\sigma}$ and $V=\hat{U}$, where $\hat{U}$ is formed by joining the corresponding cycles of $\sigma$ under the double mapping that $U$ joins. Consequently, we can deduce in an analogous manner, as was done above, that $(V,\pi) \in \PS^{-}_{NC}(2r_1,\dots,2r_m)$  and $\gamma_{2r_1,\dots,2r_m}\pi^{-1}$ separates the points of $O$. Indeed, we find that by construction  $(\check{V},\check{\pi})=(U,\sigma)$, successfully completing the proof.

\end{proof}

Now we can prove the main theorem of this section  that we state again for the convenience of the reader.

\begin{theorem}\label{aa*}
   
     Let $a$ be a third order $R$-diagonal operator with determining sequences $\left(\beta_r^{(a)}\right)_{r \geq 1}$, $\left(\beta_{r, s}^{(a)}\right)_{r, s \geq 1}$ and $\left(\beta_{r, s,t}^{(a)}\right)_{r, s,t \geq 1}$
then

   \begin{equation*} 
   \kappa_{r, s,t}\left(a a^*, \ldots, a a^*\right)=\sum_{(\mathcal{V}, \pi) \in\PS_{NC}(r,s,t)}\beta^{(a)}_{(\mathcal{V}, \pi) }.   
\end{equation*}

\end{theorem}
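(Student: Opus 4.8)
The plan is to apply the main theorem, trim the resulting sum down to parity reversing partitioned permutations using $R$-diagonality, and then transport the sum to $\PS_{NC}(r,s,t)$ through the half-map bijection of Lemma \ref{l5}.

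First I would apply Theorem \ref{Main theorem cumulants with products as entries} to the $2(r+s+t)$ factors arising from the $r+s+t$ copies of $aa^*$, obtaining
$$\kappa_{r,s,t}(aa^*,\dots,aa^*)=\sum_{(\V,\pi)\in\PS_{NC}(2r,2s,2t)}\kappa_{(\V,\pi)}(a,a^*,\dots,a,a^*),$$
where the sum runs over those $(\V,\pi)$ such that $\gamma_{2r,2s,2t}\pi^{-1}$ separates the points of $O=\{1,3,\dots,2(r+s+t)-1\}$; here the odd slots carry $a$ and the even slots carry $a^*$. Next I would invoke $R$-diagonality: since $a$ is third order $R$-diagonal, the cumulant $\kappa_{(\V,\pi)}(a,a^*,\dots,a,a^*)$ vanishes unless the arguments alternate between $a$ and $a^*$ along every cycle of $\pi$. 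Because $a$ occupies the odd positions and $a^*$ the even ones, this alternation holds on all cycles precisely when $\pi$ is parity reversing, i.e. $\pi\in S^{-}_{2r+2s+2t}$. Hence only terms with $(\V,\pi)\in\PS^{-}_{NC}(2r,2s,2t)$ survive, and the remaining index set is exactly the one appearing in Lemma \ref{l5}.

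Then I would apply the bijection $(\V,\pi)\mapsto(\check{\V},\check{\pi})$ of Lemma \ref{l5} to reindex the surviving sum by $\PS_{NC}(r,s,t)$. The final and principal step is to match weights, namely to show $\kappa_{(\V,\pi)}(a,a^*,\dots,a,a^*)=\beta^{(a)}_{(\check{\V},\check{\pi})}$. Indeed, a cycle of $\pi$ of length $2j$ alternates $a,a^*$ and, by the definition of the half map (Definition \ref{d1}) together with the through-cycle remark following Lemma \ref{l3}, visits $j$ even elements whose halves form a cycle of $\check{\pi}$ of length $j$; by $R$-diagonality the cumulant on such a cycle equals $\kappa_{2j}(a,a^*,\dots,a,a^*)=\beta^{(a)}_j$, independently of whether the cycle begins with $a$ or $a^*$. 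More generally, a block of $\V$ joining cycles of lengths $2j_1,\dots,2j_m$ (with $m\le 3$, by Lemma \ref{Solutions to third order partitioned permutations}) produces the higher cumulant $\kappa_{2j_1,\dots,2j_m}(a,a^*,\dots)=\beta^{(a)}_{j_1,\dots,j_m}$, which is exactly the contribution of the corresponding block of $\check{\V}$. Since both $\kappa_{(\V,\pi)}$ and $\beta^{(a)}_{(\check{\V},\check{\pi})}$ are multiplicative over the blocks of $\V$ and $\check{\V}$ respectively, the desired equality holds blockwise, and summing gives the claimed formula.

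The hard part will be this last weight-matching: one must carefully track how the half map redistributes cycle lengths inside each marked block and confirm that the alternation forced by $R$-diagonality makes every blockwise cumulant coincide with the corresponding determining-sequence entry $\beta^{(a)}$, with no sign or cyclic-ordering ambiguity. The parity reversing reduction of the second step, combined with the preservation of cycle structure recorded in Lemma \ref{l3}, is what guarantees that the combinatorial types of $(\V,\pi)$ and $(\check{\V},\check{\pi})$ correspond correctly under the bijection.
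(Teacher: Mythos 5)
Your proposal is correct and follows essentially the same route as the paper: apply Theorem \ref{Main theorem cumulants with products as entries}, use third order $R$-diagonality to restrict to parity reversing $\pi$ (so the index set becomes the one of Lemma \ref{l5}), transport the sum via the half-map bijection, and identify $\kappa_{(\V,\pi)}(a,a^*,\dots,a,a^*)$ with $\beta^{(a)}_{(\check{\V},\check{\pi})}$ blockwise. The only difference is that you spell out the weight-matching step in more detail than the paper, which states it as an observation.
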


\begin{proof}
 According to the formula for cumulants with products
as arguments, i.e, Theorem \ref{Main theorem cumulants with products as entries}
$$\kappa_{r, s,t}\left(a a^*, \ldots, a a^*\right)=\sum_{(\V,\pi)\in \PS_{NC}(2r,2s,2t)}\kappa_{(\V,\pi)}(a,a^*,\dots,a,a^*),$$
where the summation is over those $(\V,\pi)\in \PS_{NC}(2r,2s,2t)$ such that $\gamma_{2r,2s,2t}\pi^{-1}$ separates the points of $O \vcentcolon= \{1,3,\dots,2r+2s+2t-1\}$. Now, since $a$ is a third order $R$-diagonal operator all comes down to working with $\pi \in S^{-}_{2r+2s+2t}$. 
Hence, the sum mentioned earlier is simplified to the task of adding across those $(\V,\pi)\in \PS^{-}_{NC}(2r,2s,2t)$ such that   $\gamma_{2r,2s,2t}\pi^{-1}$ separates the points of $O$ (see Figure \ref{faa^*}). Thus, applying Lemma \ref{l5} and noting that, 
through the process of building $\beta^{(a)}_{(\check{\mathcal{V}}, \check{\pi})}=\kappa_{(\mathcal{V}, \pi)}\left(a, a^*, \ldots, a, a^*\right)$, the proof is done.

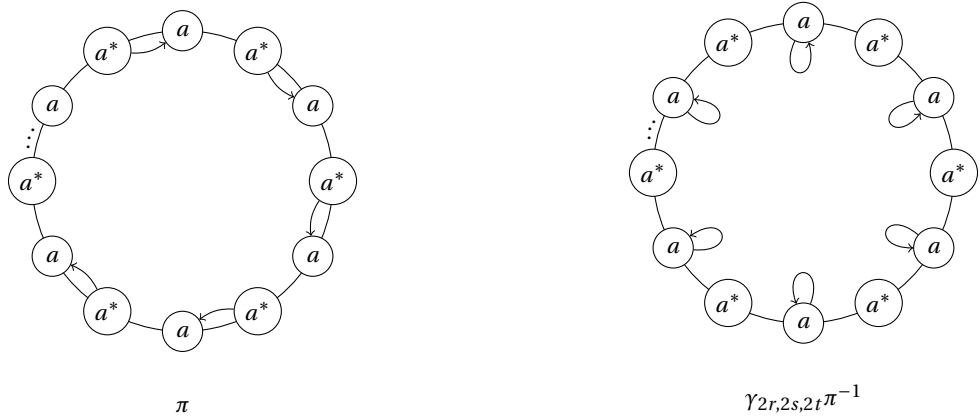
\begin{figure}
\centering
\begin{subfigure}{0.5\textwidth}
\centering
\begin{tikzpicture}
\begin{scriptsize}
  \begin{scope}[shift={(0,0)}]
    \draw[] (0,0) circle (2);

    \foreach \i in {1,3,5,7,9,11} {
      \node[fill=white, draw, circle, inner sep=3pt] (n\i) at ({360/12 * (6-\i )}:2) {$a$};
    }

    \foreach \i in {2,4,6,8,10,12} {
      \node[fill=white, draw, circle, inner sep=2pt] (n\i) at ({360/12 * (6-\i )}:2) {$a^*$};
    }


    \node at (-2.05,0.43) {$\cdot$};
    \node at (-1.99-0.05,0.55) {$\cdot$};
    \node at (-1.95-0.04,0.65) {$\cdot$};
      \node at (0,-3) {$\pi$};

    \path[->] (n2) edge[bend left=-20] (n3);

    \path[->] (n4) edge[bend left=-20] (n5);

    \path[->] (n6) edge[bend left=-20] (n7);
 
    \path[->] (n8) edge[bend left=-20] (n9);

     \path[->] (n10) edge[bend left=-20] (n11);
   
  \end{scope}
\end{scriptsize}
\end{tikzpicture}

\end{subfigure}%
\begin{subfigure}{0.5\textwidth}
\centering
\begin{tikzpicture}
\begin{scriptsize}
  \begin{scope}[shift={(0,0)}]
    \draw[] (0,0) circle (2);
   \foreach \i in {1,3,5,7,9,11} {
      \node[fill=white, draw, circle, inner sep=3pt] (n\i) at ({360/12 * (6-\i )}:2) {$a$};
    }

    \foreach \i in {2,4,6,8,10,12} {
      \node[fill=white, draw, circle, inner sep=2pt] (n\i) at ({360/12 * (6-\i )}:2) {$a^*$};
    }
 
    \node at (-2.05,0.43) {$\cdot$};
    \node at (-1.99-0.05,0.55) {$\cdot$};
    \node at (-1.95-0.04,0.65) {$\cdot$};
    \node at (0,-3) {$\gamma_{2r,2s,2t}\pi^{-1}$};

    \path[->]  (n1) edge [out=315,in=355,looseness=8]node[above]{}(n1);
    \path[->]  (n3) edge [out=245,in=285,looseness=8] node[above]{}(n3);
    
      \path[->]  (n5) edge [out=190,in=230,looseness=8] node[above]{}(n5);
    \path[->]  (n7) edge [out=140,in=180,looseness=8] node[above]{}(n7);
    
      \path[->]  (n9) edge [out=70,in=110,looseness=8] node[above]{}(n9);
    \path[->]  (n11) edge [out=355,in=35,looseness=8] node[above]{}(n11);

  \end{scope}
\end{scriptsize}
\end{tikzpicture}

\end{subfigure}

\caption{Representation of the permutations used in the proof of Theorem \ref{aa*}.}
\label{faa^*}

\end{figure}

\end{proof}

\subsection{Product of circular elements}

\label{productGinibres}
We conclude by using our results above to calculate the third order fluctuation cumulants  and fluctuation moments of $c_1c_2\cdots c_k$,  whenever the $c_i$ are third order free. Our reasoning is similar to that of the second-order case \cite{AM}, so we do not give all the details.

\begin{theorem}\label{thm:free_conjugation_circular}
Let $c_1, \dots, c_k$ be third order circular operators and suppose that
$\{c_1, c_1^*\}$, $\{c_2, c_2^* \}$, \dots, $\{c_k, c_k^*\}$
are third order free. Then
\[
\kappa^{(c_1 c_2 \cdots c_k  c_k^* \cdots\ab c_2^* c_1^*)}=  \zeta^{*k}.
\]
\end{theorem}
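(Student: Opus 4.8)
The plan is to exploit the nested structure of the product together with the $cac^{*}$ computation of Example \ref{e1}, recast at the level of multiplicative functions on partitioned permutations, where the moment--cumulant relation is precisely convolution with the zeta function. For $1\le j\le k+1$ set
$$X_j=c_jc_{j+1}\cdots c_kc_k^{*}\cdots c_{j+1}^{*}c_j^{*},$$
with the convention $X_{k+1}=1$, so that $X_1=c_1\cdots c_kc_k^{*}\cdots c_1^{*}$ is the element in question and $X_j=c_jX_{j+1}c_j^{*}$ for every $j$. By associativity of third order freeness, the joint freeness of $\{c_1,c_1^{*}\},\dots,\{c_k,c_k^{*}\}$ implies that $\{c_j,c_j^{*}\}$ is third order free from the subalgebra generated by $c_{j+1},\dots,c_k$, hence free from $X_{j+1}$. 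Thus each $X_j$ is exactly of the form $cac^{*}$ treated in Example \ref{e1}, with $c=c_j$ and $a=X_{j+1}$.

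I would then record the $cac^{*}$ identity as an equality of multiplicative functions. Write $\kappa^{(b)}$ and $\varphi^{(b)}$ for the multiplicative functions on partitioned permutations whose values on a connected piece $(1_{\vec m},\gamma_{\vec m})$ are, respectively, $\kappa_{\vec m}(b,\dots,b)$ and $\varphi_{(1,\gamma_{\vec m})}[b,\dots,b]=\varphi_{|\vec m|}(b^{m_1},\dots,b^{m_{|\vec m|}})$. The first order result of \cite{MS}, the second order result of \cite{AM}, and Example \ref{e1} give, on connected pieces of orders $1$, $2$ and $3$,
$$\kappa_{\vec m}(c_jX_{j+1}c_j^{*},\dots,c_jX_{j+1}c_j^{*})=\varphi_{|\vec m|}\big(X_{j+1}^{m_1},\dots,X_{j+1}^{m_{|\vec m|}}\big),$$
so that by multiplicativity $\kappa^{(X_j)}=\varphi^{(X_{j+1})}$. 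Feeding in the moment--cumulant relation of \cite{CMSS}, namely $\varphi^{(b)}=\kappa^{(b)}*\zeta$ with $\zeta$ the zeta function of $\PS_{NC}$, this becomes the recursion
$$\kappa^{(X_j)}=\kappa^{(X_{j+1})}*\zeta .$$

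Iterating from $j=k+1$ down to $j=1$ then finishes the argument: since $X_{k+1}=1$ has all cumulants zero except $\kappa_1(1)=1$, one has $\kappa^{(1)}=\delta$, the unit for $*$, and therefore $\kappa^{(X_1)}=\delta*\zeta^{*k}=\zeta^{*k}$, which is the claim. The step I expect to be the main obstacle is the upgrade of Example \ref{e1} to the functional identity $\kappa^{(cXc^{*})}=\varphi^{(X)}$ holding \emph{simultaneously at all three orders}: the recursion $\kappa^{(X_j)}=\kappa^{(X_{j+1})}*\zeta$ is only legitimate once the first, second and third order connected identities are all assembled and one has checked that the convolution formalism of \cite{CMSS} (in particular the unit $\delta$ and the power $\zeta^{*k}$) is compatible across orders. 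The freeness bookkeeping --- that $\{c_j,c_j^{*}\}$ is free from the \emph{product} $X_{j+1}$ and not merely from the individual later factors --- must also be confirmed, though it follows at once from associativity of free independence.
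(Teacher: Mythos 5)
Your proposal is correct and follows essentially the same route as the paper: the paper's proof is exactly an induction on $k$ using the $cac^*$ computation of Example \ref{e1} (together with its first and second order counterparts) to obtain $\kappa^{(X_j)}=\varphi^{(X_{j+1})}=\kappa^{(X_{j+1})}*\zeta$ and then iterating from $X_{k+1}=1$. Your write-up merely makes explicit the bookkeeping (the recursion at the level of multiplicative functions and the freeness of $\{c_j,c_j^*\}$ from the product $X_{j+1}$) that the paper leaves implicit.
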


\begin{proof}

The proof follows by induction on $k$, by using the Example \ref{e1} above.
\end{proof}

\begin{remark}\label{remark:bousquet_melou}
By the formula of Bousquet-M\'elou and Schaeffer  \cite{BMS} (see also  \cite[\S5.17, p. 38]{CMSS}) we have
\begin{equation*}\label{eq:bousquet_melou_schaeffer}
\zeta^{*l}(1_{p+q+r}, \gamma_{p, q,r})
= l pqr \binom{l p - 1}{p} \binom{l q - 1}{q} \binom{l r - 1}{r}.
\end{equation*}So, we can calculate the fluctuation cumulants of $hh^*$, for $h = c_1 c_2 \cdots c_k $
\begin{equation*}\label{eq:h_cumulants}
\kappa_{p,q,r}(hh^*,\dots,hh^*) 
=kpqr \binom{kp-1}{p} \binom{kq-1}{q} \binom{kr-1}{r}.
\end{equation*} To obtain the third order fluctuation moments just apply the moment cumulant formula:

\begin{eqnarray*}\varphi_3((hh^*)^p, (hh^*)^q,(hh^*)^r) &=& \ 
\sum_{(V,\pi) \in \mathcal{PS}_{NC}(p, q, r)} \ 
\kappa^{(hh^*)}_{(V, \pi)} \\&=& \kappa^{(hh^*)}* \zeta(1_{p+q+r}, \gamma_{p,q,r}) =
\zeta^{*(k+1)}(1_{p+q+r}, \gamma_{p,q,r})\\
&=& (k+1) pqr \binom{(k+1)p-1}{p} \binom{(k+1)q-1}{q} \binom{(k+1)r-1}{r}. 
\end{eqnarray*}Finally, by Theorem \ref{t2}, $h=c_1 \cdots c_k$ is $R$-diagonal and the non-vanishing cumulants  can be derived from  Theorem \ref{aa*}, and are given  by
\begin{eqnarray*}
\kappa_{2p,2q,2r}(c_1 \cdots c_k, c_k^* \cdots c_1^*,  \dots, c_1 \cdots c_k, c_k^* \cdots c_1^*) &=& \beta^{(c_1 \cdots c_k)}_{p,q,r} 
=
\kappa^{(c_1 \cdots c_kc_k^* \cdots c_1^*)} * \mu(1_{p+q+r}, \gamma_{p,q,r})  \\ &=& \zeta^{*(k-1)}(1_{p+q+r}, \gamma_{p,q,r}) \\&=&
(k-1) pqr \binom{(k-1)p-1}{p} \binom{(k-1)q-1}{q} \binom{(k-1)r-1}{r}.
\end{eqnarray*}
\end{remark} To conclude, let us consider two independent complex Ginibre matrices, \( X_1 \) and \( X_2 \), and define \( W_2 = X_1 X_1^\dagger X_2 X_2^\dagger \), where \( A^\dagger \) denotes the complex conjugate of a matrix \( A \). Dartois and Forrester \cite{DF} calculated the second-order fluctuation moments of \( W_2 \) using topological recursion techniques. In \cite{AM}, the authors show how to derive the results of \cite{DF} by computing the limiting cumulants of \( \tilde{W}_2 =X_1^\dagger X_2  X_2^\dagger X_1 \). Following exactly the same arguments as in \cite{AM}, but applied to the third-order fluctuations of \( W_2 \), we obtain that the limiting fluctuation moments are given by  

\begin{align*}\varphi_3(w_2^p, w_2^q, w_2^r)
=
3pqr \binom{3p-1}{p} \binom{3q-1}{q} \binom{3r-1}{r}.
\end{align*}

\section*{Acknowledgements}

The authors thanks Jamie Mingo for useful discussions during the preparation of the paper. D. Muñoz  
was supported  by Hong Kong GRF 16304724 and NSFC 12222121. O. Arizmendi and S. Sigarreta were supported by CONACYT Grant CB-2017-2018-A1-S-9764. This work was initiated during  the event \emph{Seminario Interinstitucional de Matrices Aleatorias,
SIMA 2023} held at Universidad Autónoma de Sinaloa, during October 2023.

\appendix

\section{Combinatorial lemmas} \label{append}

In this appendix, we prove the crucial combinatorial lemmas that are needed in the proof of the main theorem.

\begin{notation}
Let $N_1 \vcentcolon = \{n_1,\dots,n_1+\cdots+n_r\}$, $N_2 \vcentcolon = \{n_1+\cdots+n_{r+1},\dots,n_1+\cdots+n_{r+s}\}$ and $N_3 \vcentcolon = \{n_1+\cdots+n_{r+s+1},\dots,n_1+\cdots+n_{r+s+t}\}$. For $\sigma \in S_{NC}(p,q,l)$ we let $\mathbf{P}_{\sigma}$ be the partition of $[3]$ where $i$ and $j$ are in the same block if $\sigma$ has a cycle that meets $N_i$ and $N_j$.
\end{notation}

\begin{lemma}\label{Proposition: Induction version 1}
$$\sum_{\pi\in S_{NC}(r,s,t)}\sum_{\substack{\sigma\in S_{NC}(p,q,l) \\ \sigma \leq \pi_{\vec{n}} \\ \sigma^{-1}\pi_{\vec{n}}\text{ separates }N}} \kappa_{\sigma}(\vec{a})=\sum_{\substack{\sigma\in S_{NC}(p,q,l) \\ \sigma^{-1}\gamma_{p,q,l}\text{ doesn't sep }N \\ \mathbf{P}_{\sigma}=\{1,2,3\}}}\kappa_{\sigma}(\vec{a}).$$
\end{lemma}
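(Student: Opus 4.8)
The identity is a statement about two differently indexed families of non-crossing permutations $\sigma$, and since the summand $\kappa_\sigma(\vec a)$ depends only on $\sigma$, the plan is to exhibit a bijection $(\pi,\sigma)\mapsto\sigma$ from the left-hand index set onto
$\mathcal R:=\{\sigma\in S_{NC}(p,q,l):\sigma^{-1}\gamma_{p,q,l}\text{ does not separate }N,\ \mathbf P_\sigma=\{1,2,3\}\}$.
Once this is established, both sides equal $\sum_{\sigma\in\mathcal R}\kappa_\sigma(\vec a)$.

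First I would recover $\pi$ from $\sigma$ and deduce injectivity. Write $\psi(i)=n_1+\cdots+n_i$, a bijection of $[r+s+t]$ onto $N$. If $\sigma\le\pi_{\vec n}$ and $\sigma^{-1}\pi_{\vec n}$ separates $N$, then $\sigma^{-1}\pi_{\vec n}|_N=\mathrm{id}_N$, so \cite[Lemma 6]{MST} yields $\sigma^{-1}\gamma_{p,q,l}|_N=\pi_{\vec n}^{-1}\gamma_{p,q,l}|_N$, which by Lemma \ref{Lemma: Some properties of pi_n}(1) equals $\psi\,(\pi^{-1}\gamma_{r,s,t})\,\psi^{-1}$. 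Hence $\pi=\gamma_{r,s,t}\bigl(\psi^{-1}(\sigma^{-1}\gamma_{p,q,l}|_N)\psi\bigr)^{-1}$ is determined by $\sigma$ alone, so no $\sigma$ arises from two different $\pi$ and the left-hand double sum is a sum over distinct $\sigma$. For the inclusion of the left index set into $\mathcal R$: any such $\sigma$ meets the hypotheses of Proposition \ref{Proposition: Suficiente conditions version 1}, whose proof shows $\sigma\vee\gamma_{p,q,l}=1_n$, i.e. $\mathbf P_\sigma=\{1,2,3\}$; and since $\pi\in S_{NC}(r,s,t)$ forces $\pi\neq\gamma_{r,s,t}$ (because $\gamma_{r,s,t}\vee\gamma_{r,s,t}$ has three blocks), the recovery formula gives $\sigma^{-1}\gamma_{p,q,l}|_N\neq\mathrm{id}_N$, so $\sigma^{-1}\gamma_{p,q,l}$ does not separate $N$. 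Thus the left index set sits inside $\mathcal R$.

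The reverse inclusion is the substance of the proof. Given $\sigma\in\mathcal R$, define $\pi$ by the recovery formula, so that $\pi_{\vec n}^{-1}\gamma_{p,q,l}$ agrees with $\sigma^{-1}\gamma_{p,q,l}$ on $N$ and fixes $N^{c}$ pointwise (Lemma \ref{Lemma: Some properties of pi_n}, Remark \ref{Remark: Decomposition of Pin}). To get $\sigma\le\pi_{\vec n}$ I would use the order-reversal mechanism of Lemma \ref{Lemma: Less or Equal transitivity}: setting $\mu:=\pi_{\vec n}^{-1}\gamma_{p,q,l}$, note $\pi_{\vec n}=\gamma_{p,q,l}\mu^{-1}$, so it suffices to show $\mu\le\sigma^{-1}\gamma_{p,q,l}$ with $\mu,\sigma\in S_{NC}(p,q,l)$, whence Lemma \ref{Lemma: Less or Equal transitivity} returns $\sigma\le\gamma_{p,q,l}\mu^{-1}=\pi_{\vec n}$. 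The relation $\mu\le\sigma^{-1}\gamma_{p,q,l}$ is transparent: each nontrivial cycle of $\mu$ is the restriction of a cycle of $\sigma^{-1}\gamma_{p,q,l}$ to its intersection with $N$ (hence respects the cyclic order of that cycle), and the remaining $\mu$-cycles are singletons, so within every cycle of $\sigma^{-1}\gamma_{p,q,l}$ the enclosed $\mu$-cycles form a non-crossing partition. Once $\sigma\le\pi_{\vec n}$ holds, the separation of $N$ by $\sigma^{-1}\pi_{\vec n}$ is automatic (from $\sigma^{-1}\pi_{\vec n}|_N=(\sigma^{-1}\gamma_{p,q,l}|_N)(\pi_{\vec n}^{-1}\gamma_{p,q,l}|_N)^{-1}=\mathrm{id}_N$); the cycles of $\pi_{\vec n}$ are then unions of cycles of $\sigma$, so $\mathbf P_\sigma=\{1,2,3\}$ propagates to $\pi_{\vec n}\vee\gamma_{p,q,l}=1_n$ and thus $\pi\vee\gamma_{r,s,t}=1$; the non-crossing length condition for $\pi$ transfers from that of $\pi_{\vec n}$ through the cycle-count identities of Lemma \ref{Lemma: Some properties of pi_n}(2), with the non-crossingness of $\pi_{\vec n}$ (equivalently of the complement $\sigma^{-1}\gamma_{p,q,l}$) controlled by the restriction Lemma \ref{Lemma: Restriction to non-crossing is still non-crossing if connectivity is preserved generalized} and Proposition \ref{Proposition: Smaller permutation of a non-crossing one is still non-crossing if connectivy is preserved}.

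The main obstacle I anticipate is the requirement that $\mu\in S_{NC}(p,q,l)$, i.e. that the complement $\sigma^{-1}\gamma_{p,q,l}$ still links all three parts $N_1,N_2,N_3$ \emph{after} being restricted to the marked set $N$; disconnection upon restriction is exactly the phenomenon that the $\lesssim^{(r)}$-machinery (Corollary \ref{Corollary: Less or equal transivitiy general version}) is designed to handle, and it is here that the hypotheses $\mathbf P_\sigma=\{1,2,3\}$ and ``$\sigma^{-1}\gamma_{p,q,l}$ does not separate $N$'' must be converted into genuine through-cycles of $\mu$ on $N$. Delicate bookkeeping of how the cycles of $\sigma$ distribute over $N$ on the $(p,q,l)$-annulus is the crux; once it is settled, the equality of the two sums is purely formal.
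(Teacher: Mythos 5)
Your overall strategy coincides with the paper's: both arguments set up a bijection $(\pi,\sigma)\mapsto\sigma$, recover $\pi$ from $\sigma^{-1}\gamma_{p,q,l}|_N$ via Lemma \ref{Lemma: Some properties of pi_n}, and obtain $\sigma\leq\pi_{\vec{n}}$ in the converse direction by applying the order-reversal Lemma \ref{Lemma: Less or Equal transitivity} to $\pi_{\vec{n}}^{-1}\gamma_{p,q,l}\leq\sigma^{-1}\gamma_{p,q,l}$. However, there are two genuine gaps. First, in the forward inclusion you assert ``$\sigma\vee\gamma_{p,q,l}=1_n$, i.e.\ $\mathbf{P}_\sigma=\{1,2,3\}$.'' These are not equivalent: $\mathbf{P}_\sigma$ records which of the \emph{marked} sets $N_1,N_2,N_3$ are joined, and $\sigma$ can connect two circles of $\gamma_{p,q,l}$ through cycles that avoid the marked points entirely, so $\sigma\vee\gamma_{p,q,l}=1_n$ does not by itself give $\mathbf{P}_\sigma=\{1,2,3\}$. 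What actually delivers this is the paper's observation (its ``Fact 2'') that when $\sigma^{-1}\pi_{\vec{n}}$ separates $N$, the cycles of $\sigma^{-1}\gamma_{p,q,l}|_N$ are precisely the $\psi$-images of the cycles of $\pi^{-1}\gamma_{r,s,t}$; since $\pi^{-1}\gamma_{r,s,t}\in S_{NC}(r,s,t)$ satisfies $\pi^{-1}\gamma_{r,s,t}\vee\gamma_{r,s,t}=1$, the restriction to $N$ inherits the connectivity among $N_1,N_2,N_3$. You never track how the marked points sit inside the cycles, so this step is unjustified.

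Second, and more seriously, in the reverse inclusion you explicitly defer the crux: showing that the $\pi$ defined by the recovery formula lies in $S_{NC}(r,s,t)$ (equivalently that $\mu=\pi_{\vec{n}}^{-1}\gamma_{p,q,l}\in S_{NC}(p,q,l)$), which is the hypothesis needed to invoke Lemma \ref{Lemma: Less or Equal transitivity}. You call this ``delicate bookkeeping'' and leave it unresolved, but it is exactly the content of the proof rather than a formality. The paper closes it in two lines: $\mathbf{P}_\sigma=\{1,2,3\}$ gives $\sigma^{-1}\gamma_{p,q,l}|_N\vee\gamma_{p,q,l}|_N=1_N$, so Lemma \ref{Lemma: Restriction to non-crossing is still non-crossing if connectivity is preserved} applied to $\sigma^{-1}\gamma_{p,q,l}\in S_{NC}(p,q,l)$ with $M=N$ yields $\sigma^{-1}\gamma_{p,q,l}|_N\in S_{NC}(\gamma_{p,q,l}|_N)$, whence $\delta$ and $\pi=\gamma_{r,s,t}\delta^{-1}$ are annular non-crossing. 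Note also that the $\lesssim^{(r)}$ machinery of Corollary \ref{Corollary: Less or equal transivitiy general version}, which you anticipate needing, is irrelevant in this lemma precisely because $\mathbf{P}_\sigma=\{1,2,3\}$ rules out disconnection upon restriction; that machinery enters only in Lemmas \ref{Proposition: Induction version 2} and \ref{Proposition: Induction version 3}, where $\mathbf{P}_\sigma$ has two or three blocks.
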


\begin{proof}
Let us first point out some facts what will be widely used in all following proofs. \textbf{Fact 1.} $\sigma^{-1}\gamma_{p,q,l}$ separates $N$ if and only if $\pi=\gamma_{r,s,t}$ provided $\sigma^{-1}\pi_{\vec{n}}$ separates $N$. Indeed, for $i\notin N$, $\pi_{\vec{n}}^{-1}\gamma_{p,q,l}(i)=i$, hence by \cite[Lemma 9]{MST},
$$\sigma^{-1}\gamma_{p,q,l}|_N=\sigma^{-1}\pi_{\vec{n}}|_N\pi_{\vec{n}}^{-1}\gamma_{p,q,l}|_N=\pi_{\vec{n}}^{-1}\gamma_{p,q,l}|_N.$$
Lemma \ref{Lemma: Some properties of pi_n} says
$$\pi_{\vec{n}}^{-1}\gamma_{p,q,l}(n_1+\cdots+n_i)=n_1+\cdots+n_{\pi^{-1}\gamma_{r,s,t}(i)}.$$
Therefore $\sigma^{-1}\gamma_{p,q,l}$ separates $N$ if and only if,
\begin{eqnarray*}
id &=& \sigma^{-1}\gamma_{p,q,l}|_N = \pi_{\vec{n}}^{-1}\gamma_{p,q,l}|_N,
\end{eqnarray*}
which happens if and only if for any $1\leq i\leq r+s+t$,
\begin{eqnarray*}
n_1+\cdots +n_i=\pi_{\vec{n}}^{-1}\gamma_{p,q,l}(n_1+\cdots +n_i)=n_1+\cdots + n_{\pi^{-1}\gamma_{r,s,t}(i)},
\end{eqnarray*}
which holds if and only if $\pi=\gamma_{r,s,t}$. \\
\textbf{Fact 2.} If $\sigma^{-1}\pi_{\vec{n}}$ separates $N$ then,
$$(b_1,\dots,b_w),$$
is a cycle of $\pi^{-1}\gamma_{r,s,t}$ if and only if,
$$(n_1+\cdots + n_{b_1},\dots,n_1+\cdots n_{b_w}),$$
is a cycle of $\sigma^{-1}\gamma_{p,q,l}|_N$. Indeed, as pointed out in Fact 1,
$$\sigma^{-1}\gamma_{p,q,l}|_N(n_1+\cdots +n_i)=\pi_{\vec{n}}^{-1}\gamma_{p,q,l}|_N(n_1+\cdots +n_i)=\pi_{\vec{n}}^{-1}\gamma_{p,q,l}(n_1+\cdots +n_i)=n_1+\cdots +n_{\pi^{-1}\gamma_{r,s,t}(i)},$$
which proves the desired. \\
\textbf{Fact 3.} If $\delta \in S_{r+s+t}$ is given by $\delta(i)=j$ whenever $\sigma^{-1}\gamma_{p,q,l}|_N(n_1+\cdots+n_i)=n_1+\cdots+n_j$, or equivalently $\delta = \psi^{-1}\sigma^{-1}\gamma_{p,q,l}|_N\psi$, with $\psi$ being defined as in Lemma \ref{Lemma: Some properties of pi_n}, and $\pi=\gamma_{r,s,t}\delta^{-1}$ then $\sigma^{-1}\pi_{\vec{n}}$ separates $N$. Indeed,
\begin{eqnarray}\label{aux5}
\sigma^{-1}\gamma_{p,q,l}|_N\psi=\psi\pi^{-1}\gamma_{r,s,t}=\pi_{\vec{n}}^{-1}\gamma_{p,q,l}\psi,
\end{eqnarray}
where last equality follows from Lemma \ref{Lemma: Some properties of pi_n}. This proves $\sigma^{-1}\gamma_{p,q,l}|_N=\pi_{\vec{n}}^{-1}\gamma_{p,q,l}|_N$, however as pointed out in Fact 1, $\sigma^{-1}\gamma_{p,q,l}|_N=\sigma^{-1}\pi_{\vec{n}}|_N\pi_{\vec{n}}^{-1}\gamma_{p,q,l}|_N,$ which forces $\sigma^{-1}\pi_{\vec{n}}|_N$ to be the identity or equivalently $\sigma^{-1}\pi_{\vec{n}}$ separates $N$. \\
\textbf{Fact 4.} $\pi_{\vec{n}}^{-1}\gamma_{p,q,l} \leq \sigma^{-1}\gamma_{p,q,l}$ provided $\sigma^{-1}\pi_{\vec{n}}$ separates $N$. Indeed, as showed in Fact 1, $\sigma^{-1}\gamma_{p,q,l}|_N = \pi_{\vec{n}}^{-1}\gamma_{p,q,l}|_N$ and $\pi_{\vec{n}}^{-1}\gamma_{p,q,l}(i)=i$ for all $i\notin N$. It follows immediately $\pi_{\vec{n}}^{-1}\gamma_{p,q,l} \leq \sigma^{-1}\gamma_{p,q,l}$.\\
\textbf{Fact 5} There exist a unique $\pi\in S_{r+s+t}$ such that $\sigma^{-1}\pi_{\vec{n}}$ separates $N$. Indeed, if $\pi$ is such that $\sigma^{-1}\pi_{\vec{n}}$ separates $N$ then as proved in Fact 1, $\sigma^{-1}\gamma_{p,q,l}|_N=\pi_{\vec{n}}^{-1}\gamma_{p,q,l}|_N.$ Therefore,
$$\sigma^{-1}\gamma_{p,q,l}|_N\psi =\pi_{\vec{n}}^{-1}\gamma_{p,q,l}\psi = \psi\pi^{-1}\gamma_{r,s,t},$$
where the last equality follows from Lemma \ref{Lemma: Some properties of pi_n}. The latter equation determines $\pi$ uniquely.\\
Once stated and proved the previous facts let us follow with the proof. First let $\pi\in S_{NC}(r,s,t)$ and $\sigma\in S_{NC}(p,q,l)$ be such that $\sigma\leq\pi_{\vec{n}}$ and $\sigma^{-1}\pi_{\vec{n}}$ separates $N$. By Fact 1 we have $\sigma^{-1}\gamma_{p,q,l}$ doesn't separate $N$. On the other hand since $\pi\in S_{NC}(r,s,t)$ then so is $\pi^{-1}\gamma_{r,s,t} \in S_{NC}(r,s,t)$ and therefore by Fact 2 as $\pi^{-1}\gamma_{r,s,t}\vee \gamma_{r,s,t}=1$ then $\mathbf{P}_{\sigma}=\{1,2,3\}$. Conversely, let $\sigma\in S_{NC}(p,q,l)$ be such that $\sigma^{-1}\gamma_{p,q,l}$ doesn't separate $N$ and $\mathbf{P}_{\sigma}=\{1,2,3\}$. We aim to show there exist a unique $\pi\in S_{NC}(r,s,t)$ such that $\sigma\leq \pi_{\vec{n}}$ and $\sigma^{-1}\pi_{\vec{n}}$ separates $N$. We first prove existence, let $\delta\in S_{r+s+t}$ be given by $\delta(i)=j$ whenever $\sigma^{-1}\gamma_{p,q,l}|_N(n_1+\cdots+n_i)=n_1+\cdots+n_j$ and let $\pi=\gamma_{r,s,t}\delta^{-1}$. Since $\mathbf{P}_{\sigma}=\{1,2,3\}$ then $\sigma^{-1}\gamma_{p,q,l}|_N \vee \gamma_{p,q,l}|_N =1$, thus it follows by Lemma \ref{Lemma: Restriction to non-crossing is still non-crossing if connectivity is preserved} that $\sigma^{-1}\gamma_{p,q,l}|_N\in S_{NC}(N_1,N_2,N_3)$, where by $S_{NC}(N_1,N_2,N_3)$ we mean $S_{NC}(\gamma_{p,q,l}|_N)$. Thus $\delta \in S_{NC}(r,s,t)$ and then so is $\pi\in S_{NC}(r,s,t)$. By Fact 3 we know $\sigma^{-1}\pi_{\vec{n}}$ separates $N$ and hence by Fact 4, $\pi_{\vec{n}}^{-1}\gamma_{p,q,l}\leq \sigma^{-1}\gamma_{p,q,l}$. By Lemma \ref{Lemma: Less or Equal transitivity} it follows $\sigma \leq \pi_{\vec{n}}$. Finally uniqueness follows from Fact 5.

\end{proof}

\begin{lemma}\label{Proposition: Induction version 2}
$$\sum_{(\V,\pi)\in PS_{NC}^{(1)}(r,s,t)}\sum_{\substack{\sigma\in S_{NC}(p,q,l) \\ \sigma \lesssim^{(1)} \pi_{\vec{n}} \\ \sigma^{-1}\pi_{\vec{n}}\text{ separates }N \\ \sigma \vee \pi_{\vec{n}}=\V_{\vec{n}}}} \kappa_{\sigma}(\vec{a})=\sum_{\substack{\sigma\in S_{NC}(p,q,l) \\ \sigma^{-1}\gamma_{p,q,l}\text{ doesn't sep }N \\ \mathbf{P}_{\sigma}\text{ has two blocks}}}\kappa_{\sigma}(\vec{a}).$$
\end{lemma}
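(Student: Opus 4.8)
The plan is to prove a bijective correspondence between the index sets of the two sums, preserving the cumulant $\kappa_\sigma(\vec{a})$ termwise, using the five ``Facts'' already established in the proof of Lemma \ref{Proposition: Induction version 1}. The essential difference from the previous lemma is that here $\pi$ ranges over $PS_{NC}^{(1)}(r,s,t)$ rather than $S_{NC}(r,s,t)$, so $\pi = \pi_1 \times \pi_2$ connects only \emph{two} of the three circles and $\V$ carries one marked block joining a cycle of $\pi_1$ with a cycle of $\pi_2$. Correspondingly, on the right-hand side the condition $\mathbf{P}_\sigma = \{1,2,3\}$ is relaxed to ``$\mathbf{P}_\sigma$ has two blocks'', reflecting that $\sigma$ connects exactly two of the three groups $N_1, N_2, N_3$.

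First I would take $(\V,\pi) \in PS_{NC}^{(1)}(r,s,t)$ and $\sigma \in S_{NC}(p,q,l)$ satisfying the four constraints $\sigma \lesssim^{(1)} \pi_{\vec{n}}$, $\sigma^{-1}\pi_{\vec{n}}$ separates $N$, and $\sigma \vee \pi_{\vec{n}} = \V_{\vec{n}}$, and show the image lands in the right-hand set. By Fact 1, $\sigma^{-1}\gamma_{p,q,l}$ does not separate $N$ (since $\pi \neq \gamma_{r,s,t}$, as $\pi$ is not a product of three singleton-connected circles). By Lemma \ref{Lemma: Pin and Vn are non crossing}, $(\V_{\vec{n}}, \pi_{\vec{n}}) \in PS_{NC}^{(1)}(p,q,l)$, so $\pi_{\vec{n}} = \pi_{\vec{n}}^{(1)} \times \pi_{\vec{n}}^{(2)}$ connects exactly two circles, say $[p]$ and $[p+1,p+q+l]$. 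Using Fact 2, the cycle structure of $\sigma^{-1}\gamma_{p,q,l}|_N$ mirrors that of $\pi^{-1}\gamma_{r,s,t}$; since $\pi^{-1}\gamma_{r,s,t} \in PS_{NC}^{(1)}(r,s,t)$ connects exactly two circles, $\mathbf{P}_\sigma$ has exactly two blocks. The connectivity $\sigma \vee \pi_{\vec{n}} = \V_{\vec{n}}$ together with the marked block of $\V_{\vec{n}}$ is precisely what forces $\sigma$ to meet both circles joined by the marked block, giving the two-block structure of $\mathbf{P}_\sigma$.

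For the converse, given $\sigma \in S_{NC}(p,q,l)$ with $\sigma^{-1}\gamma_{p,q,l}$ not separating $N$ and $\mathbf{P}_\sigma$ having two blocks, I would reconstruct a unique $(\V,\pi)$. Define $\delta \in S_{r+s+t}$ by $\delta = \psi^{-1}\sigma^{-1}\gamma_{p,q,l}|_N \psi$ and set $\pi = \gamma_{r,s,t}\delta^{-1}$, exactly as in Fact 3. Since $\mathbf{P}_\sigma$ has two blocks, $\sigma^{-1}\gamma_{p,q,l}|_N \vee \gamma_{p,q,l}|_N$ has two blocks, so by Lemma \ref{Lemma: Restriction to non-crossing is still non-crossing if connectivity is preserved generalized} applied to the two-block case, $\sigma^{-1}\gamma_{p,q,l}|_N$ lies in a product $S_{NC}(\gamma|_A) \times S_{NC}(\gamma|_B)$; this places $\delta$, and hence $\pi$, in $PS_{NC}^{(1)}(r,s,t)$ once the marked block $\V$ is declared to join the two cycles of $\pi$ witnessing the cross-circle connection. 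Fact 3 gives that $\sigma^{-1}\pi_{\vec{n}}$ separates $N$, and I would then invoke Corollary \ref{Corollary: Less or equal transivitiy general version}(1) (with $\pi_{\vec{n}} \in \NC(p) \times S_{NC}(q,l)$ playing the role of the one-circle-split permutation) to upgrade $\pi_{\vec{n}}^{-1}\gamma_{p,q,l} \leq \sigma^{-1}\gamma_{p,q,l}$ into $\sigma \lesssim^{(1)} \pi_{\vec{n}}$, with $\sigma \vee \pi_{\vec{n}} = \V_{\vec{n}}$ read off from the block structure. Uniqueness of $\pi$ follows from Fact 5, and $\V$ is then forced.

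The main obstacle I anticipate is the bookkeeping of the marked block: proving that $\sigma \vee \pi_{\vec{n}} = \V_{\vec{n}}$ in both directions requires carefully matching the single cross-circle block of $\V_{\vec{n}}$ with the unique pair of cycles of $\pi_{\vec{n}}$ that $\sigma$ connects, and verifying that $\sigma$ induces $S_{NC}$ structure \emph{within} the marked block (so that $\sigma \lesssim^{(1)}$, not merely $\sigma \le$). This is where the relation $\lesssim^{(1)}$ genuinely differs from $\leq$, and where Corollary \ref{Corollary: Less or equal transivitiy general version} must be applied rather than the simpler Lemma \ref{Lemma: Less or Equal transitivity}. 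Everything else is a direct transcription of the five Facts, with $\{1,2,3\}$ replaced by ``two blocks'' throughout.
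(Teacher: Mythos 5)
Your proposal is correct and follows essentially the same route as the paper's proof: the forward direction via Facts 1 and 2 (reading off the two-block structure of $\mathbf{P}_{\sigma}$ from $\pi^{-1}\gamma_{r,s,t}$), and the converse by constructing $\delta=\psi^{-1}\sigma^{-1}\gamma_{p,q,l}|_N\psi$ and $\pi=\gamma_{r,s,t}\delta^{-1}$, using Lemma \ref{Lemma: Restriction to non-crossing is still non-crossing if connectivity is preserved generalized} to place $\pi$ in $\NC(r)\times S_{NC}(s,t)$, Facts 3--5 for separation and uniqueness, and Corollary \ref{Corollary: Less or equal transivitiy general version}(1) applied to $\pi_{\vec{n}}^{-1}\gamma_{p,q,l}\leq\sigma^{-1}\gamma_{p,q,l}$ to obtain $\sigma\lesssim^{(1)}\pi_{\vec{n}}$. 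You also correctly identify the one genuinely new point relative to Lemma \ref{Proposition: Induction version 1}, namely that the marked block of $\V_{\vec{n}}=\sigma\vee\pi_{\vec{n}}$ must join one cycle of $\pi_{\vec{n}}$ in $[p]$ with one in $[p+1,p+q+l]$ (forced by $\sigma\vee\gamma_{p,q,l}=1$), which is exactly how the paper closes the argument.
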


\begin{proof}

Let $(\V,\pi)\in PS_{NC}^{(1)}(r,s,t)$ and $\sigma\in S_{NC}(p,q,l)$ be such that $\sigma\lesssim ^{(1)} \pi_{\vec{n}}$, $\sigma^{-1}\pi_{\vec{n}}$ separates $N$ and $\sigma\vee \pi_{\vec{n}}=\V_{\vec{n}}$. By Fact 1 we have $\sigma^{-1}\gamma_{p,q,l}$ doesn't separate $N$. Since $(\V,\pi)\in PS_{NC}^{(1)}(r,s,t)$ then $\pi$ connects at most two cycles of $\gamma_{r,s,t}$, therefore it does $\pi^{-1}\gamma_{r,s,t}$ and then by Fact 2 it follows $\mathbf{P}_{\sigma}$ has two blocks. Conversely let $\sigma\in S_{NC}(p,q,l)$ be such that $\sigma^{-1}\gamma_{p,q,l}$ doesn't separate $N$ and $\mathbf{P}_{\sigma}$ has two blocks, assume with out loss of generality that $\mathbf{P}_{\sigma}=\{1\}\{2,3\}$. We aim to show there exist a unique $(\V,\pi)\in PS_{NC}^{(1)}(r,s,t)$ such that $\sigma\lesssim ^{(1)} \pi_{\vec{n}}$, $\sigma^{-1}\pi_{\vec{n}}$ separates $N$ and $\sigma\vee \pi_{\vec{n}}=\V_{\vec{n}}$. Let $\delta$ and $\pi$ be defined as in the proof of Proposition \ref{Proposition: Induction version 1}. Since $\mathbf{P}_{\sigma}=\{1\}\{2,3\}$ then $\sigma^{-1}\gamma_{p,q,l}|_N \vee \gamma_{p,q,l}|_N$ has two blocks, $N_1$ and $N_2\cup N_3$. By Lemma \ref{Lemma: Restriction to non-crossing is still non-crossing if connectivity is preserved generalized} it follows $\sigma^{-1}\gamma_{p,q,l}|_N\in \NC(N_1)\times S_{NC}(N_2,N_3)$ and therefore $\delta\in \NC(r)\times S_{NC}(s,t)$ and $\pi\in \NC(r)\times S_{NC}(s,t)$. Fact 3 shows $\sigma^{-1}\pi_{\vec{n}}$ separates $N$ and Fact 5 shows that $\pi$ is unique. Fact 4 shows $\pi_{\vec{n}}^{-1}\gamma_{p,q,l}\leq \sigma^{-1}\gamma_{p,q,l}$ and hence as $\pi_{\vec{n}}\in \NC(p)\times S_{NC}(q,l)$ by Corollary \ref{Corollary: Less or equal transivitiy general version} we get $\sigma\lesssim^{(1)} \pi_{\vec{n}}$. Let $\V_{\vec{n}}$ be defined by $\V_{\vec{n}}=\sigma\vee\pi_{\vec{n}}$, since $\sigma\lesssim^{(1)} \pi_{\vec{n}}$ then any block of $\V_{\vec{n}}$ is a cycle of $\pi_{\vec{n}}$ except by one block which is the union of two cycles of $\pi_{\vec{n}}$, say $\tilde{A}$ and $\tilde{B}$. One of the cycles say $\tilde{A}$ must lie in $[p]$ while the other $\tilde{B}$ must lie in $[p+1,p+q+l]$, otherwise the condition $\sigma\vee\gamma_{p,q,l}=1$ is not satisfied as $\sigma\leq \sigma\vee\pi_{\vec{n}}$. The blocks $\tilde{A}$ and $\tilde{B}$ of $\pi_{\vec{n}}$ correspond to blocks $A$ and $B$ of $\pi$ with $A\subset [r]$ and $B\subset [r+1,r+s+t]$. Let $\V$ be the partition of $[r+s+t]$ determined by $\V_{\vec{n}}$, this is: each block of $\V_{\vec{n}}$ that is also a cycle of $\pi_{\vec{n}}$ correspond to a cycle of $\pi$ which we let to be a block of $\V$ and the unique block of $\V$ which is not a cycle of $\pi$ is $A\cup B$. This proves $(\V,\pi)\in PS_{NC}^{(1)}(r,s,t)$. Finally the condition $\V_{\vec{n}}=\sigma\vee\pi_{\vec{n}}$ uniquely determines $\V_{\vec{n}}$ and then does $\V$.
\end{proof}

\begin{lemma}\label{Proposition: Induction version 3}
$$\sum_{(\V,\pi)\in PS_{NC}^{(2)}(r,s,t)\cup PS_{NC}^{(3)\prime}(r,s,t)}\sum_{\substack{\sigma\in S_{NC}(p,q,l) \\ \sigma \lesssim^{(2)} \pi_{\vec{n}} \\ \sigma^{-1}\pi_{\vec{n}}\text{ separates }N \\ \sigma \vee \pi_{\vec{n}}=\V_{\vec{n}}}} \kappa_{\sigma}(\vec{a})=\sum_{\substack{\sigma\in S_{NC}(p,q,l) \\ \sigma^{-1}\gamma_{p,q,l}\text{ doesn't sep }N \\ \mathbf{P}_{\sigma}=\{1\}\{2\}\{3\}}}\kappa_{\sigma}(\vec{a}).$$
\end{lemma}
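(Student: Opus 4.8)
The plan is to mirror, essentially verbatim, the structure of the proofs of Lemmas \ref{Proposition: Induction version 1} and \ref{Proposition: Induction version 2}, re-using Facts 1--5 established inside the proof of Lemma \ref{Proposition: Induction version 1} and only adjusting the combinatorial type of the recovered permutation $\pi$. Both sides of the claimed identity range over $\sigma\in S_{NC}(p,q,l)$ and the summand $\kappa_\sigma(\vec{a})$ depends only on $\sigma$, so it suffices to set up a bijection between pairs $((\V,\pi),\sigma)$ indexing the left-hand side and permutations $\sigma$ indexing the right-hand side, each $\sigma$ on the left being attached to a \emph{unique} admissible $(\V,\pi)$.

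For the forward direction I would fix $(\V,\pi)\in PS_{NC}^{(2)}(r,s,t)\cup PS_{NC}^{(3)\prime}(r,s,t)$ and $\sigma$ with $\sigma\lesssim^{(2)}\pi_{\vec{n}}$, $\sigma^{-1}\pi_{\vec{n}}$ separating $N$, and $\sigma\vee\pi_{\vec{n}}=\V_{\vec{n}}$. First note that $\pi\neq\gamma_{r,s,t}$: the pair with $\pi=\gamma_{r,s,t}$ would be $(1_{r+s+t},\gamma_{r,s,t})$, which $PS_{NC}^{(2)}(r,s,t)$ cannot contain (its two marked blocks would need four cycles of $\pi$, but $\gamma_{r,s,t}$ has only three) and which $PS_{NC}^{(3)\prime}(r,s,t)$ excludes by definition. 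Hence Fact 1 gives that $\sigma^{-1}\gamma_{p,q,l}$ does not separate $N$. Since $\pi=\pi_1\times\pi_2\times\pi_3\in\NC(r)\times\NC(s)\times\NC(t)$ acts inside each cycle of $\gamma_{r,s,t}$, so does $\pi^{-1}\gamma_{r,s,t}$, and Fact 2 transfers this to $\sigma^{-1}\gamma_{p,q,l}|_N$ acting inside each $N_i$; exactly as in Lemma \ref{Proposition: Induction version 1} this forces $\mathbf{P}_{\sigma}=\{1\}\{2\}\{3\}$.

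For the converse, given $\sigma\in S_{NC}(p,q,l)$ with $\sigma^{-1}\gamma_{p,q,l}$ not separating $N$ and $\mathbf{P}_{\sigma}=\{1\}\{2\}\{3\}$, I would recover $(\V,\pi)$ as before: set $\delta(i)=j$ whenever $\sigma^{-1}\gamma_{p,q,l}|_N(n_1+\cdots+n_i)=n_1+\cdots+n_j$ and $\pi=\gamma_{r,s,t}\delta^{-1}$. From $\mathbf{P}_{\sigma}=\{1\}\{2\}\{3\}$ we get $\sigma^{-1}\gamma_{p,q,l}|_N\vee\gamma_{p,q,l}|_N=\{N_1\}\{N_2\}\{N_3\}$, so Lemma \ref{Lemma: Restriction to non-crossing is still non-crossing if connectivity is preserved generalized} forces $\sigma^{-1}\gamma_{p,q,l}|_N\in\NC(N_1)\times\NC(N_2)\times\NC(N_3)$, whence $\delta$ and $\pi$ lie in $\NC(r)\times\NC(s)\times\NC(t)$. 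Facts 3 and 5 show $\sigma^{-1}\pi_{\vec{n}}$ separates $N$ and that $\pi$ is unique; Fact 4 gives $\pi_{\vec{n}}^{-1}\gamma_{p,q,l}\le\sigma^{-1}\gamma_{p,q,l}$, and since $\pi_{\vec{n}}\in\NC(p)\times\NC(q)\times\NC(l)$, Corollary \ref{Corollary: Less or equal transivitiy general version}(2) delivers $\sigma\lesssim^{(2)}\pi_{\vec{n}}$. I then take $\V_{\vec{n}}=\sigma\vee\pi_{\vec{n}}$ and read off the corresponding $\V$ on $[r+s+t]$ through the correspondence $\pi_{\vec{n}}\leftrightarrow\pi$, just as in Lemma \ref{Proposition: Induction version 2}.

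The main obstacle — and the one genuinely new feature relative to the previous two lemmas — is to argue that this recovered pair lands in $PS_{NC}^{(2)}(r,s,t)\cup PS_{NC}^{(3)\prime}(r,s,t)$ and to separate the two cases. The key point is that $\pi_{\vec{n}}$ connects none of the three circles, whereas $\V_{\vec{n}}=\sigma\vee\pi_{\vec{n}}$ must connect all three (because $\sigma\le\V_{\vec{n}}$ and $\sigma\vee\gamma_{p,q,l}=1_n$), while $\sigma\lesssim^{(2)}\pi_{\vec{n}}$ makes exactly two cycle-mergings available. Two mergings can join three circles from zero connections only in two ways: either one block fusing three cycles, one in each circle (the $PS_{NC}^{(3)}$ pattern, necessarily $\neq(1_{r+s+t},\gamma_{r,s,t})$ since $\pi\neq\gamma_{r,s,t}$), or two blocks each fusing two cycles lying in distinct circles and sharing a common circle (the $PS_{NC}^{(2)}$ pattern). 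I would rule out a merge of two cycles inside one circle, and a pair of merges covering only two circles, using $\sigma\vee\gamma_{p,q,l}=1_n$ exactly as in the proof of Lemma \ref{Proposition: Induction version 2}; this shows the two alternatives are matched precisely by the two families $PS_{NC}^{(2)}$ and $PS_{NC}^{(3)\prime}$ on the left, completing the bijection and hence the summand-by-summand identity.
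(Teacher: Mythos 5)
Your proposal is correct and follows essentially the same route as the paper's proof: the forward direction via Facts 1 and 2, and the converse by reconstructing $\delta$ and $\pi=\gamma_{r,s,t}\delta^{-1}$, placing $\pi$ in $\NC(r)\times\NC(s)\times\NC(t)$ from $\mathbf{P}_\sigma=\{1\}\{2\}\{3\}$, invoking Facts 3--5 and Corollary \ref{Corollary: Less or equal transivitiy general version}(2) for $\sigma\lesssim^{(2)}\pi_{\vec n}$, and finally the same two-case analysis (one block of three cycles versus two blocks of two cycles, distinguished using $\sigma\vee\gamma_{p,q,l}=1_n$ and $\pi\neq\gamma_{r,s,t}$) to land in $PS_{NC}^{(3)\prime}$ or $PS_{NC}^{(2)}$ respectively.
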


\begin{proof}
Let $(\V,\pi)\in PS_{NC}^{(2)}(r,s,t)\cup {PS_{NC}^{(3)}}^\prime(r,s,t)$ and $\sigma\in S_{NC}(p,q,l)$ be such that $\sigma\lesssim ^{(2)} \pi_{\vec{n}}$, $\sigma^{-1}\pi_{\vec{n}}$ separates $N$ and $\sigma\vee \pi_{\vec{n}}=\V_{\vec{n}}$. By Fact 1 we have $\sigma^{-1}\gamma_{p,q,l}$ doesn't separate $N$. Since $(\V,\pi)\in PS_{NC}^{(2)}(r,s,t)\cup {PS_{NC}^{(3)}}^\prime(r,s,t)$ then $\pi$ connects no cycles of $\gamma_{r,s,t}$, therefore $\pi^{-1}\gamma_{r,s,t}$ connects no cycles of $\gamma_{r,s,t}$ and then by Fact 2 it follows $\mathbf{P}_{\sigma}=\{1\}\{2\}\{3\}$. Conversely let $\sigma\in S_{NC}(p,q,l)$ be such that $\sigma^{-1}\gamma_{p,q,l}$ doesn't separate $N$ and $\mathbf{P}_{\sigma}$ has three blocks. We aim to show there exist a unique $(\V,\pi)\in PS_{NC}^{(2)}(r,s,t)\cup {PS_{NC}^{(3)}}^\prime(r,s,t)$ such that $\sigma\lesssim ^{(2)} \pi_{\vec{n}}$, $\sigma^{-1}\pi_{\vec{n}}$ separates $N$ and $\sigma\vee \pi_{\vec{n}}=\V_{\vec{n}}$. Let $\delta$ and $\pi$ be defined as in the proof of Proposition \ref{Proposition: Induction version 1}. Since $\mathbf{P}_{\sigma}=\{1\}\{2\}\{3\}$ then $\sigma^{-1}\gamma_{p,q,l}|_N \vee\gamma_{p,q,l}|_N$ has three blocks $N_1,N_2$ and $N_3$ and therefore $\delta\in \NC(N_1)\times \NC(N_2)\times \NC(N_3)$, thus $\delta,\pi\in \NC(r)\times \NC(s)\times \NC(t)$. Fact 3 shows $\sigma^{-1}\pi_{\vec{n}}$ separates $N$ and Fact 5 shows $\pi$ is unique. Fact 4 shows $\pi_{\vec{n}}^{-1}\gamma_{p,q,l}\leq \sigma^{-1}\gamma_{p,q,l}$ and hence as $\pi_{\vec{n}}\in \NC(p)\times \NC(q)\times \NC(l)$ by Corollary \ref{Corollary: Less or equal transivitiy general version} we get $\sigma\lesssim^{(2)} \pi_{\vec{n}}$. Let $\V_{\vec{n}}$ be defined by $\V_{\vec{n}}=\sigma\vee\pi_{\vec{n}}$ which uniquely determines $\V_{\vec{n}}$ and then it does $\V$ with $\V$ being the partition of $[r+s+t]$ determined by $\V_{\vec{n}}$. We are reduced to show $(\V,\pi)\in PS_{NC}^{(2)}(r,s,t)\cup {PS_{NC}^{(3)}}^\prime(r,s,t)$. First note that since $\sigma^{-1}\pi_{\vec{n}}$ separates $N$ and $\sigma^{-1}\gamma_{p,q,l}$ doesn't separate $N$ then by Fact 1 $\pi\neq \gamma_{r,s,t}$. Since $\sigma\lesssim^{(2)} \pi_{\vec{n}}$ then each block of $\V_{\vec{n}}$ is a cycle of $\pi_{\vec{n}}$ except by either one block which is the union of three cycles of $\pi_{\vec{n}}$ or two blocks with each one being the union of two cycles of $\pi_{\vec{n}}$. Assume we are in the former case and let $\tilde{A},\tilde{B}$ and $\tilde{C}$ be the cycles of $\pi_{\vec{n}}$ that are in the same block of $\V_{\vec{n}}$. It must be each of $\tilde{A}$, $\tilde{B}$ and $\tilde{C}$ lie in each of $[p]$, $[p+1,p+q]$ and $[p+q+1,p+q+l]$, otherwise the condition $\sigma\vee\gamma_{p,q,l}=1$ is not satisfied because $\sigma\leq \sigma\vee\pi_{\vec{n}}$. Suppose $\tilde{A}\subset [p],\tilde{B}\subset[p+1,p+q]$ and $\tilde{C}\subset [p+q+1,p+q+l]$. If $A,B$ and $C$ are the cycles of $\pi$ corresponding to $\tilde{A},\tilde{B}$ and $\tilde{C}$ respectively then $A\subset [r],B\subset [r+1,r+s]$ and $C\subset [r+s+1,r+s+t]$ which proves $(\V,\pi)\in {PS_{NC}^{(3)}}^\prime(r,s,t)$. The case where each of the two blocks of $\V_{\vec{n}}$ is the union of two cycles of $\pi_{\vec{n}}$ follows similarly, and in this case we get $(\V,\pi)\in PS_{NC}^{(2)}(r,s,t)$.
\end{proof}

\begin{lemma}\label{Proposition: Induction version 4}
$$\sum_{(\V,\pi)\in PS_{NC}^{(1)}(r,s,t)}\sum_{\substack{(\U,\sigma)\in \PS_{NC}^{(1)}(p,q,l) \\ \sigma \leq \pi_{\vec{n}} \\ \sigma^{-1}\pi_{\vec{n}}\text{ separates }N \\ \U\vee\pi_{\vec{n}}=\V_{\vec{n}}}} \kappa_{(\U,\sigma)}(\vec{a})=\sum_{\substack{(\U,\sigma)\in PS_{NC}^{(1)}(p,q,l) \\ \sigma^{-1}\gamma_{p,q,l}\text{ doesn't sep }N \\ \mathbf{P}_{\sigma}\text{ has two blocks}}}\kappa_{(\U,\sigma)}(\vec{a}).$$
\end{lemma}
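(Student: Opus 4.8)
The plan is to prove the identity by exhibiting a bijection between the index sets of the two sums that preserves the summand $\kappa_{(\U,\sigma)}(\vec{a})$; since the cumulant depends only on $(\U,\sigma)$ and not on $(\V,\pi)$, a term-by-term bijection gives the equality at once. Throughout I would reuse Facts 1--5 recorded in the proof of Lemma \ref{Proposition: Induction version 1}, which are stated for arbitrary $\pi\in S_{r+s+t}$ and $\sigma\in S_n$ and so apply unchanged; the structure mirrors the proof of Lemma \ref{Proposition: Induction version 2}, the only genuinely new feature being the presence of the inner partition layers $\U$ and $\V$ and the fact that here $\sigma\leq\pi_{\vec{n}}$ rather than $\sigma\lesssim^{(1)}\pi_{\vec{n}}$.

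For the forward inclusion, suppose $(\V,\pi)\in PS_{NC}^{(1)}(r,s,t)$ and $(\U,\sigma)\in PS_{NC}^{(1)}(p,q,l)$ satisfy $\sigma\leq\pi_{\vec{n}}$, $\sigma^{-1}\pi_{\vec{n}}$ separates $N$, and $\U\vee\pi_{\vec{n}}=\V_{\vec{n}}$. Because $(\V,\pi)\in PS_{NC}^{(1)}(r,s,t)$, the permutation $\pi\in\NC(r)\times S_{NC}(s,t)$ (in the appropriate circle ordering) has a through-cycle joining exactly two cycles of $\gamma_{r,s,t}$; in particular $\pi\neq\gamma_{r,s,t}$, so Fact 1 yields that $\sigma^{-1}\gamma_{p,q,l}$ does not separate $N$. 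Since $\pi^{-1}\gamma_{r,s,t}\vee\gamma_{r,s,t}=\pi\vee\gamma_{r,s,t}$ connects precisely those two circles, Fact 2 transfers this connectivity to $\sigma^{-1}\gamma_{p,q,l}|_N$, i.e. $\mathbf{P}_\sigma$ has two blocks. Hence $(\U,\sigma)$ is a legitimate index on the right-hand side.

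For the converse, start from $(\U,\sigma)\in PS_{NC}^{(1)}(p,q,l)$ with $\sigma^{-1}\gamma_{p,q,l}$ not separating $N$ and $\mathbf{P}_\sigma$ of two blocks, assuming without loss of generality $\mathbf{P}_\sigma=\{1\}\{2,3\}$, so that $\sigma\in\NC(p)\times S_{NC}(q,l)$, with $\U$ joining a cycle of $\sigma$ in the annulus $[p+1,p+q+l]$ to a cycle of $\sigma$ in $[p]$. Reconstruct $\pi$ exactly as in Lemma \ref{Proposition: Induction version 2}: put $\delta=\psi^{-1}\sigma^{-1}\gamma_{p,q,l}|_N\psi$ and $\pi=\gamma_{r,s,t}\delta^{-1}$. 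As $\mathbf{P}_\sigma$ has two blocks, $\sigma^{-1}\gamma_{p,q,l}|_N\vee\gamma_{p,q,l}|_N$ has two blocks, so Lemma \ref{Lemma: Restriction to non-crossing is still non-crossing if connectivity is preserved generalized} forces $\delta$, hence $\pi$, into $\NC(r)\times S_{NC}(s,t)$. Facts 3 and 5 give that $\sigma^{-1}\pi_{\vec{n}}$ separates $N$ and that $\pi$ is the unique such permutation, while Fact 4 gives $\pi_{\vec{n}}^{-1}\gamma_{p,q,l}\leq\sigma^{-1}\gamma_{p,q,l}$. The new point is that $\sigma$ and $\pi_{\vec{n}}$ connect the same two circles (both lie in $\NC(p)\times S_{NC}(q,l)$), so $\sigma\vee\gamma_{p,q,l}=\pi_{\vec{n}}\vee\gamma_{p,q,l}$; I can therefore invoke Lemma \ref{Lemma: Less or Equal transitivity general version when having the same type} (applied with $\pi_{\vec{n}}^{-1}\gamma_{p,q,l}$ in the role of $\pi$) to upgrade Fact 4 into $\sigma\leq\pi_{\vec{n}}$. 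Finally set $\V_{\vec{n}}:=\U\vee\pi_{\vec{n}}$: since $\sigma\leq\pi_{\vec{n}}$, the unique non-singleton block of $\U$ merges, under $\vee\,\pi_{\vec{n}}$, exactly one cycle of $\pi_{\vec{n}}$ in $[p]$ with one cycle of $\pi_{\vec{n}}$ in $[p+1,p+q+l]$, which is precisely the form of $\V_{\vec{n}}$ for a unique $\V$ with $(\V,\pi)\in PS_{NC}^{(1)}(r,s,t)$; uniqueness of $\V$ is immediate since $\U\vee\pi_{\vec{n}}=\V_{\vec{n}}$ determines $\V_{\vec{n}}$, and hence $\V$.

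The main obstacle I anticipate is the bookkeeping of the two partition layers: one must verify that the marked block produced by $\U$ on the $(p,q,l)$ side is carried faithfully onto the marked block of $\V$ on the $(r,s,t)$ side, and that the circle into which each joined cycle falls is consistent on both sides, so that the reconstructed $(\V,\pi)$ lands in $PS_{NC}^{(1)}(r,s,t)$ and not in a different type. The subtlety, in contrast with Lemma \ref{Proposition: Induction version 2}, is that the through-string connectivity now comes entirely from $\sigma$ while the extra join comes entirely from $\U$; keeping these two sources of connectivity separate, and matching them against the definition of $PS_{NC}^{(1)}$, is where the care is needed.
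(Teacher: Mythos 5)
Your proposal follows essentially the same route as the paper's proof: Facts 1 and 2 give the forward direction, and for the converse you reconstruct $\pi=\gamma_{r,s,t}\delta^{-1}$, use Facts 3--5, upgrade Fact 4 to $\sigma\leq\pi_{\vec{n}}$ via Lemma \ref{Lemma: Less or Equal transitivity general version when having the same type}, and finish with the block analysis of $\U\vee\pi_{\vec{n}}$, exactly as the paper does. The one spot where the paper is more careful is in placing $\delta$ in $\NC(r)\times S_{NC}(s,t)$: since here $\sigma^{-1}\gamma_{p,q,l}$ is not in $S_{NC}(p,q,l)$ but factors as an element of $\NC(p)\times S_{NC}(q,l)$, Lemma \ref{Lemma: Restriction to non-crossing is still non-crossing if connectivity is preserved generalized} must be applied to each factor separately, with the two-block hypothesis on $\mathbf{P}_{\sigma}$ used to rule out the degenerate case $(\sigma^{-1}\gamma_{p,q,l})|_{N_2\cup N_3}\in\NC(N_2)\times\NC(N_3)$ --- an ingredient you already have in hand, so this is a matter of precision rather than a gap.
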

\begin{proof} 
Let $(\V,\pi)\in PS_{NC}^{(1)}(r,s,t)$ and $(\U,\sigma)\in PS_{NC}^{(1)}(p,q,l)$ be such that $\sigma\leq \pi_{\vec{n}}$, $\sigma^{-1}\pi_{\vec{n}}$ separates $N$ and $\U\vee\pi_{\vec{n}}=\V_{\vec{n}}$. By Fact 1 we have $\sigma^{-1}\gamma_{p,q,l}$ separates $N$. On the other hand, as $(\V,\pi)\in PS_{NC}^{(1)}(r,s,t)$ then $\pi\vee\gamma_{r,s,t}$ has two blocks, suppose without loss of generality $\pi \in \NC(r)\times S_{NC}(s,t)$ and then so is $\pi^{-1}\gamma_{r,s,t}\in \NC(r)\times S_{NC}(s,t)$, thus by Fact 2 it follows $\mathbf{P}_{\sigma}=\{1\}\{2,3\}$. Conversely, let $(\U,\sigma)\in PS_{NC}^{(1)}(p,q,l)$ be such that $\sigma^{-1}\gamma_{p,q,l}$ doesn't separate $N$ and $\mathbf{P}_{\sigma}$ has two blocks. We aim to show there exist a unique $(\V,\pi)\in PS_{NC}^{(1)}(r,s,t)$ such that $\sigma\leq \pi_{\vec{n}}$, $\sigma^{-1}\pi_{\vec{n}}$ separates $N$ and $\U\vee\pi_{\vec{n}}=\V_{\vec{n}}$. Suppose with out loss of generality that $\sigma\in \NC(p)\times S_{NC}(q,l)$. Let $\delta$ and $\pi$ as in proof of Proposition \ref{Proposition: Induction version 1}. Observe that $\sigma^{-1}\gamma_{p,q,l}\in \NC(p)\times S_{NC}(q,l)$, so we can write $\sigma^{-1}\gamma_{p,q,l}$ as $(\sigma^{-1}\gamma_{p,q,l})^1\times (\sigma^{-1}\gamma_{p,q,l})^2$ with $(\sigma^{-1}\gamma_{p,q,l})^1\in \NC(p)$ and $(\sigma^{-1}\gamma_{p,q,l})^2\in S_{NC}(q,l)$. Therefore when we restrict to $N$ we get that $(\sigma^{-1}\gamma_{p,q,l})^1|_N 
 \in \NC(N_1)$ and $(\sigma^{-1}\gamma_{p,q,l})^2|_N 
 \in S_{NC}(N_2,N_3)$. Observe that the case $(\sigma^{-1}\gamma_{p,q,l})^2|_N 
 \in \NC(N_2)\times \NC(N_3)$ cannot be possible as that would mean $\mathbf{P}_{\sigma}$ has $3$ blocks. We conclude $\sigma^{-1}\gamma_{p,q,l}|_N \in \NC(N_1)\times S_{NC}(N_2,N_3)$, hence $\delta,\pi\in \NC(r)\times S_{NC}(s,t)$. By Fact 3 it follows $\sigma^{-1}\pi_{\vec{n}}$ separates $N$ while by Fact 5 we get $\pi$ is unique. Since both $\sigma,\pi_{\vec{n}}\in \NC(p)\times S_{NC}(q,l)$ it follows by Lemma \ref{Lemma: Less or Equal transitivity general version when having the same type} that $\sigma\leq \pi_{\vec{n}}$. Let $\V_{\vec{n}}=\U\vee\pi_{\vec{n}}$, this uniquely determines $\V_{\vec{n}}$ and $\V$ with $\V$ being the partition on $[r+s+t]$ corresponding to $\V_{\vec{n}}$. We are reduced to show that $(\V,\pi)\in PS_{NC}^{(1)}(r,s,t)$. Any block of $\U$ is a cycle of $\sigma$ except by one block which is the union of two cycles of $\sigma$, say $a\subset [p]$ and $b\subset [p+1,p+q+l]$. Since $\sigma\leq \pi_{\vec{n}}$ then any cycle of $\sigma$ is contained in a cycle of $\pi_{\vec{n}}$. Let $\tilde{A}$ and $\tilde{B}$ be the cycles of $\pi_{\vec{n}}$ that contain $a$ and $b$ respectively. Let us remind that $\pi_{\vec{n}}\in \NC(p)\times S_{NC}(q,l)$, therefore any cycle of $\pi_{\vec{n}}$ is completely contained in either $[p]$ or $[p+1,p+q+l]$, this forces to $\tilde{A}\subset [p]$ and $\tilde{B}\subset [p+1,p+q+l]$. On the other hand, any block of $\U$ that is a cycle of $\sigma$ is contained in a cycle of $\pi_{\vec{n}}$. So, only the block $a\cup b$ of $\U$ is not completely contained in a cycle of $\pi_{\vec{n}}$ but rather $a\cup b\subset \tilde{A}\cup \tilde{B}$, thus, $\V_{\vec{n}}=\U\vee\pi_{\vec{n}}$ is such that any block of $\V_{\vec{n}}$ is a cycle of $\pi_{\vec{n}}$ except by one block which is the union of the two cycles of $\pi_{\vec{n}}$, $\tilde{A}$ and $\tilde{B}$. Let $A$ and $B$ be the cycles of $\pi$ that correspond to the cycles $\tilde{A}$ and $\tilde{B}$ of $\pi_{\vec{n}}$ respectively. Then any block of $\V$ is a cycle of $\pi$ except by one block which is the union of $A$ and $B$. Since $\tilde{A}\subset [p]$ and $\tilde{B}\subset [p+1,p+q+l]$ then $A\subset [r]$ and $B\subset [r+1,r+s+t]$ and therefore $(\V,\pi)\in PS_{NC}^{(1)}(r,s,t)$ as desired.
 \end{proof}

\begin{lemma}\label{Proposition: Induction version 5}
$$\sum_{(\V,\pi)\in PS_{NC}^{(2)}(r,s,t)\cup PS_{NC}^{(3)\prime}(r,s,t)}\sum_{\substack{(\U,\sigma)\in \PS_{NC}^{(1)}(p,q,l) \\ \sigma \lesssim^{(1)} \pi_{\vec{n}} \\ \sigma^{-1}\pi_{\vec{n}}\text{ separates }N \\ \U\vee \pi_{\vec{n}}=\V_{\vec{n}}}} \kappa_{(\U,\sigma)}(\vec{a})=\sum_{\substack{(\U,\sigma)\in PS_{NC}^{(1)}(p,q,l) \\ \sigma^{-1}\gamma_{p,q,l}\text{ doesn't sep }N \\ \mathbf{P}_{\sigma}=\{1\}\{2\}\{3\}}}\kappa_{(\U,\sigma)}(\vec{a}).$$
\end{lemma}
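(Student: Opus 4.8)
The plan is to follow the same forward/reverse scheme used in Lemmas \ref{Proposition: Induction version 1}--\ref{Proposition: Induction version 4}, reusing verbatim the five Facts established at the start of the proof of Lemma \ref{Proposition: Induction version 1} together with the construction of $\delta$ and $\pi=\gamma_{r,s,t}\delta^{-1}$ given there. This statement is the exact analog of Lemma \ref{Proposition: Induction version 4}, one level up in the joining hierarchy: there $\pi$ ranged over $PS_{NC}^{(1)}(r,s,t)$ (two circles connected, $\mathbf{P}_\sigma$ with two blocks), whereas here $\pi$ ranges over $PS_{NC}^{(2)}(r,s,t)\cup PS_{NC}^{(3)\prime}(r,s,t)$ (no circles connected, $\mathbf{P}_\sigma=\{1\}\{2\}\{3\}$), while in both cases $(\U,\sigma)\in PS_{NC}^{(1)}(p,q,l)$. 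For the forward inclusion, take $(\V,\pi)$ in one of the two classes and $(\U,\sigma)$ satisfying the four constraints. Since $\pi\neq\gamma_{r,s,t}$ (the single element of $PS_{NC}^{(3)}(r,s,t)$ with $\pi=\gamma_{r,s,t}$ is exactly the one removed, and no $PS_{NC}^{(2)}$ element has only three cycles), Fact 1 gives that $\sigma^{-1}\gamma_{p,q,l}$ does not separate $N$. As every such $\pi$ lies in $\NC(r)\times\NC(s)\times\NC(t)$, its Kreweras complement $\pi^{-1}\gamma_{r,s,t}$ also acts inside each cycle of $\gamma_{r,s,t}$, so by Fact 2 the cycles of $\sigma^{-1}\gamma_{p,q,l}|_N$ lie inside the individual $N_i$, i.e. $\mathbf{P}_\sigma=\{1\}\{2\}\{3\}$. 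Membership $(\U,\sigma)\in PS_{NC}^{(1)}(p,q,l)$ is part of the hypothesis, so the term appears on the right.

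For the reverse inclusion, let $(\U,\sigma)\in PS_{NC}^{(1)}(p,q,l)$ satisfy $\sigma^{-1}\gamma_{p,q,l}$ not separating $N$ and $\mathbf{P}_\sigma=\{1\}\{2\}\{3\}$, and define $\delta$ and $\pi=\gamma_{r,s,t}\delta^{-1}$ as in Lemma \ref{Proposition: Induction version 1}. Because $\mathbf{P}_\sigma=\{1\}\{2\}\{3\}$, the partition $\sigma^{-1}\gamma_{p,q,l}|_N\vee\gamma_{p,q,l}|_N$ has the three blocks $N_1,N_2,N_3$, so Lemma \ref{Lemma: Restriction to non-crossing is still non-crossing if connectivity is preserved generalized} gives $\delta\in\NC(N_1)\times\NC(N_2)\times\NC(N_3)$, whence $\delta,\pi\in\NC(r)\times\NC(s)\times\NC(t)$. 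Facts 3, 5 and 4 then give, respectively, that $\sigma^{-1}\pi_{\vec{n}}$ separates $N$, that $\pi$ is unique, and that $\pi_{\vec{n}}^{-1}\gamma_{p,q,l}\le\sigma^{-1}\gamma_{p,q,l}$.

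It remains to establish $\sigma\lesssim^{(1)}\pi_{\vec{n}}$ and to identify the class of $(\V,\pi)$. Here $\sigma$ is \emph{not} in $S_{NC}(p,q,l)$ (it meets only two circles), so unlike Lemma \ref{Proposition: Induction version 4} one cannot invoke Lemma \ref{Lemma: Less or Equal transitivity general version when having the same type}; instead I would proceed circle-wise. Taking WLOG $\sigma\in\NC(p)\times S_{NC}(q,l)$, split $[n]$ into $[p]$ and $[p+1,p+q+l]$: on $[p]$ the restriction of the relation $\pi_{\vec{n}}^{-1}\gamma_{p,q,l}\le\sigma^{-1}\gamma_{p,q,l}$ together with Lemma \ref{Lemma: Less or Equal transitivity} gives $\sigma|_{[p]}\le\pi_{\vec{n}}|_{[p]}$, while on the two-circle block $[p+1,p+q+l]$, where $\pi_{\vec{n}}$ restricts to $\NC(q)\times\NC(l)$ and $\sigma$ to $S_{NC}(q,l)$, Corollary \ref{Corollary: Less or equal transivitiy general version}(1) (cf. the splitting argument of Proposition \ref{Proposition: Suficiente conditions version 2}) yields the single defect $\sigma|_{[p+1,p+q+l]}\lesssim^{(1)}\pi_{\vec{n}}|_{[p+1,p+q+l]}$. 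Assembling the pieces gives $\sigma\lesssim^{(1)}\pi_{\vec{n}}$, so $\sigma\vee\pi_{\vec{n}}$ merges exactly one cycle of $\pi_{\vec{n}}$ in circle $2$ with one in circle $3$. Now put $\V_{\vec{n}}=\U\vee\pi_{\vec{n}}$ and let $\V$ be the induced partition of $[r+s+t]$. The unique block of $\U$ that is a union of two cycles $a,b$ of $\sigma$ (with $a$ in the two-circle part and $b$ in circle $1$) sits inside two cycles $\tilde A,\tilde B$ of $\pi_{\vec{n}}$; if $\tilde A$ is one of the two cycles already merged by $\sigma\vee\pi_{\vec{n}}$, then $\V_{\vec{n}}$ has a single block joining three cycles of $\pi_{\vec{n}}$, one per circle, giving $(\V,\pi)\in PS_{NC}^{(3)\prime}(r,s,t)$, and otherwise $\V_{\vec{n}}$ has two disjoint blocks each joining two cycles, giving $(\V,\pi)\in PS_{NC}^{(2)}(r,s,t)$. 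Since $\V_{\vec{n}}=\U\vee\pi_{\vec{n}}$ determines $\V$, this pair is unique.

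I expect the main obstacle to be precisely the verification of $\sigma\lesssim^{(1)}\pi_{\vec{n}}$ in the absence of the "same type" hypothesis, which forces the circle-by-circle decomposition above, and then the careful tracking of how the extra $\U$-join interacts with the through-cycle merging in $\sigma\vee\pi_{\vec{n}}$; it is exactly this interaction (whether $a$ falls into an already-merged $\pi_{\vec n}$-cycle or not) that separates the image into the $PS_{NC}^{(2)}$ and $PS_{NC}^{(3)\prime}$ cases and must be argued without ambiguity.
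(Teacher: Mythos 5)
Your proposal is correct and follows essentially the same route as the paper's proof: the same forward use of Facts 1 and 2, the same construction of $\delta$ and $\pi=\gamma_{r,s,t}\delta^{-1}$, the same circle-wise splitting of $[n]$ into $[p]$ and $[p+1,p+q+l]$ to establish $\sigma\lesssim^{(1)}\pi_{\vec{n}}$, and the same dichotomy (whether the second cycle of the marked $\U$-block lands inside the already-merged pair of $\pi_{\vec{n}}$-cycles or in a fresh one) to sort the image into $PS_{NC}^{(2)}(r,s,t)$ versus $PS_{NC}^{(3)\prime}(r,s,t)$. Two cosmetic caveats: Corollary \ref{Corollary: Less or equal transivitiy general version} is stated for the full three-annulus, so on the two-circle block $[p+1,p+q+l]$ you must rerun its argument directly from Lemma \ref{Lemma: Less or equal transitivity general version 1} (which is exactly what the paper does by an explicit length computation), and in the three-way-merge case you should record that $\pi\neq\gamma_{r,s,t}$ (immediate from Fact 1, since $\sigma^{-1}\gamma_{p,q,l}$ does not separate $N$ while $\sigma^{-1}\pi_{\vec{n}}$ does) so that the pair indeed lands in $PS_{NC}^{(3)\prime}(r,s,t)$ rather than merely $PS_{NC}^{(3)}(r,s,t)$.
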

\begin{proof} Let $(\V,\pi)\in PS_{NC}^{(2)}(r,s,t)\cup {PS_{NC}^{(3)}}^\prime(r,s,t)$ and $(\U,\sigma)\in PS_{NC}^{(1)}(p,q,l)$ be such that $\sigma \lesssim^{(1)}\pi_{\vec{n}}$, $\sigma^{-1}\pi_{\vec{n}}$ separates $N$ and $\U\vee\pi_{\vec{n}}=\V_{\vec{n}}$. By Fact 1 we have $\sigma^{-1}\gamma_{p,q,l}$ separates $N$. On the other hand, as $\pi^{-1}\gamma_{r,s,t}\in \NC(r)\times \NC(s)\times \NC(t)$, hence by Fact 2 it follows $\mathbf{P}_{\sigma}=\{1\}\{2\}\{3\}$. Conversely, let $(\U,\sigma)\in PS_{NC}^{(1)}(p,q,l)$ be such that $\sigma^{-1}\gamma_{p,q,l}$ doesn't separate $N$ and $\mathbf{P}_{\sigma}=\{1\}\{2\}\{3\}$. We will show there exist a unique $(\V,\pi)\in PS_{NC}^{(2)}(r,s,t)\cup {PS_{NC}^{(3)}}^\prime(r,s,t)$ such that $\sigma \lesssim^{(1)}\pi_{\vec{n}}$, $\sigma^{-1}\pi_{\vec{n}}$ separates $N$ and $\U\vee\pi_{\vec{n}}=\V_{\vec{n}}$. Suppose without loss of generality $\sigma\in \NC(p)\times S_{NC}(q,l)$. We let $\delta$ and $\pi$ as before. Proceeding as in the proof of Proposition \ref{Proposition: Induction version 4} we get $\sigma^{-1}\gamma_{p,q,l}|_N \in \NC(N_1)\times \NC(N_2)\times \NC(N_3)$ as in this case the permutation $(\sigma^{-1}\gamma_{p,q,l})^2 |_N \in \NC(N_2)\times \NC(N_3)$ because $\mathbf{P}_{\sigma}$ has 3 blocks. Thus $\delta,\pi\in \NC(r)\times \NC(s)\times \NC(t)$. By Fact 3 we get $\sigma^{-1}\pi_{\vec{n}}$ separates $N$ and then by Fact 5 we know $\pi$ is unique. Now let us prove $\sigma \lesssim^{(1)}\pi_{\vec{n}}$. Let $\rho = \pi_{\vec{n}}^{-1}\gamma_{p,q,l}$. Since $\pi\in \NC(r)\times \NC(s)\times \NC(t)$ then $\pi_{\vec{n}}\in \NC(p)\times \NC(q)\times \NC(l)$ and then $\rho\in \NC(p)\times \NC(q)\times \NC(l)$. Let us remind that $\sigma\in \NC(p)\times S_{NC}(q,l)$, thus we can write $\sigma^{-1}\gamma_{p,q,l}$ as $\sigma_1^{-1}\gamma_1 \times \sigma_2^{-1}\gamma_2$ with $\sigma_1$ and $\sigma_2$ being the restriction of $\sigma$ to $[p]$ and $[p+1,p+q+l]$ respectively and $\gamma_1$ and $\gamma_2$ being the permutations $(1,\dots,p)$ and $(p+1,\dots,p+q)(p+q+1,\dots,p+q+l)$ respectively. It is clear $\sigma_1 \in \NC(p)=S_{NC}(\gamma_1)$ and $\sigma_2\in S_{NC}(q,l)=S_{NC}(\gamma_2)$. In the same way let $\rho_1$ and $\rho_2$ be the permutation $\rho$ restricted to $[p]$ and $[p+1,p+q+l]$ respectively, so that $\rho_1\in \NC(p)$ and $\rho_2\in \NC(q)\times \NC(l)$. By Fact 4 we know $\rho \leq \sigma^{-1}\gamma_{p,q,l}$ and therefore $\rho_i \leq \sigma_i^{-1}\gamma_i$ for $i=1,2$. By Lemma \ref{Lemma: Less or Equal transitivity general version when having the same type} we have $\sigma_1 \leq \gamma_1\rho_1^{-1}$, equivalently,
\begin{equation}\label{aux6}
|\sigma_1|+|\sigma_1^{-1}\gamma_1\rho_1^{-1}|=|\gamma_1\rho_1^{-1}| \\
\text{ and }\\
\#(\sigma_1 \vee \gamma_1\rho_1^{-1})=\#(\gamma_1\rho_1^{-1}).
\end{equation}
On the other hand, $\sigma_2$ must have a cycle that meets $[p+1,p+q]$ and $[p+q+1,p+q+l]$, therefore since $\gamma_2\rho_2^{-1}\in \NC(p)\times \NC(l)$ this cycle must meet more than one cycle of $\gamma_2\rho_2^{-1}$, thus $\#(\gamma_2\rho_2^{-1})-\#(\sigma_2\vee\gamma_2\rho_2^{-1})\geq 1$. By Lemma \ref{Lemma: Less or equal transitivity general version 1},
\begin{eqnarray*}
|\gamma_2\rho_2^{(1)}|+2 &\leq& |\gamma_2\rho_2^{(1)}|+2(\#(\gamma_2\rho_2^{-1})-\#(\sigma_2\vee\gamma_2\rho_2^{-1})) \\
&\leq &|\sigma_2|+|\sigma_2^{-1}\gamma_2\rho_2^{-1}| \\
&=&|\gamma_2\rho_2^{-1}|+2(\#(\rho_2\vee\gamma_2)-1)=|
\gamma_2\rho_2^{-1}|+2.
\end{eqnarray*}
The latter means that all above must be equality, this is,
\begin{equation}\label{aux7}
|\gamma_2\rho_2^{-1}|+2=|\sigma_2|+|\sigma_2^{-1}\gamma_2\rho_2^{-1}| \text{ and }\#(\gamma_2\rho_2^{-1})-\#(\sigma_2\vee\gamma_2\rho_2^{-1})= 1.
\end{equation}
Observe that since $\sigma_1,\rho_1$ and $\gamma_1$ act on the set $[p]$ while $\sigma_2,\rho_2$ and $\gamma_2$ act on the set $[p+1,p+q+l]$ then $\#(\sigma\vee\gamma_{p,q,l}\rho^{-1})=\#(\sigma_1\vee\gamma_1\rho_1^{-1})+\#(\sigma_2\vee\gamma_2\rho_2^{-1})$, $\#(\sigma)=\#(\sigma_1)+\#(\sigma_2)$, $\#(\sigma^{-1}\gamma_{p,q,l}\rho^{-1})=\#(\sigma_1^{-1}\gamma_1\rho_1^{-1})+\#(\sigma_2^{-1}\gamma_2\rho_2^{-1})$ and $\#(\gamma_{p,q,l}\rho^{-1})=\#(\gamma_1\rho_1^{-1})+\#(\gamma_2\rho_2^{-1})$. In terms of the length function the last three equations are $|\sigma|=|\sigma_1|+|\sigma_2|$, $|\sigma^{-1}\gamma_{p,q,l}\rho^{-1}|=|\sigma_1^{-1}\gamma_1\rho_1^{-1}|+|\sigma_2^{-1}\gamma_2\rho_2^{-1}|$ and $|\gamma_{p,q,l}\rho^{-1}|=|\gamma_1\rho_1^{-1}|+|\gamma_2\rho_2^{-1}|$. Combining these with Equations \ref{aux6} and \ref{aux7} yields
$$|\sigma|+|\sigma^{-1}\gamma_{p,q,l}\rho^{-1}|=|\gamma_{p,q,l}\rho^{-1}|+2,$$
and $\#(\gamma_{p,q,l}\rho^{-1})-\#(\sigma\vee\gamma_{p,q,l}\rho^{-1})=1$. Let $B_1,\dots,B_w$ be the blocks of $\sigma\vee\gamma_{p,q,l}\rho^{-1}$. \cite[Equation 2.9]{MN} says that for each block,
$$\#(\sigma|_{B_i})+\#(\sigma|_{B_i}^{-1}(\gamma_{p,q,l}\rho^{-1})|_{B_i})+\#((\gamma_{p,q,l}\rho^{-1})|_{B_i}) \leq |B_i|+2,$$
with equality if and only if $\sigma|_{B_i} \in S_{NC}((\gamma_{p,q,l}\rho^{-1})|_{B_i})$. Summing over $i$ yields
$$|\gamma_{p,q,l}\rho^{-1}|+2(\#(\gamma_{p,q,l}\rho^{-1})-\#(\sigma\vee\gamma_{p,q,l}\rho^{-1}))\leq |\sigma|+|\sigma^{-1}\gamma_{p,q,l}\rho^{-1}|,$$
with equality if and only if $\sigma \in \prod_{B\text{ block of }\sigma\vee\gamma_{p,q,l}\rho^{-1}}S_{NC}((\gamma_{p,q,l}\rho^{-1})|_B).$ But we just proved $|\sigma|+|\sigma^{-1}\gamma_{p,q,l}\rho^{-1}|=|\gamma_{p,q,l}\rho^{-1}|+2$ and $\#(\gamma_{p,q,l}\rho^{-1})-\#(\sigma\vee\gamma_{p,q,l}\rho^{-1})=1$, so the latter inequality must be equality meaning $\sigma \in \prod_{B\text{ block of }\sigma\vee\gamma_{p,q,l}\rho^{-1}}S_{NC}((\gamma_{p,q,l}\rho^{-1})|_B),$ which proves $\sigma\lesssim^{(1)}\gamma_{p,q,l}\rho^{-1}=\pi_{\vec{n}}$. Let $\V_{\vec{n}}=\U\vee\pi_{\vec{n}}$, this uniquely defines $\V_{\vec{n}}$ and so it does $\V$ with $\V$ being the partition on $[r+s+t]$ corresponding to $\V_{\vec{n}}$. We are reduced to prove that $(\V,\pi)\in PS_{NC}^{(2)}(r,s,t)\cup {PS_{NC}^{(3)}}^\prime(r,s,t)$. Let us explicitly describe $\U\vee\pi_{\vec{n}}$, we first describe $\sigma\vee\pi_{\vec{n}}$. Since $\sigma \lesssim^{(1)}\pi_{\vec{n}}$ then any block of $\sigma\vee\pi_{\vec{n}}$ is a cycle of $\pi_{\vec{n}}$ except by one block which is the union of two cycles of $\pi_{\vec{n}}$ which we may call $\tilde{A}$ and $\tilde{B}$. Note that as $\sigma\in \NC(p)\times S_{NC}(q,l)$ then it must have a cycle that meets $[p+1,p+q]$ and $[p+q+1,p+q+l]$, but $\sigma\leq \sigma\vee\pi_{\vec{n}}$, which implies that this cycle must lie inside some block of $\sigma\vee\pi_{\vec{n}}$. Any block of $\sigma\vee\pi_{\vec{n}}$ which is also a cycle of $\pi_{\vec{n}}$ is entirely contained in either $[p]$,$[p+1,p+q]$ or $[p+q+1,p+q+l]$, so we are forced to that cycle to be $\tilde{A}\cup \tilde{B}$. It follows that either $\tilde{A}\subset [p+1,p+q]$ and $\tilde{B}\subset [p+q+1,p+q+l]$ or the other way around, suppose without loss of generality we are in the former case. On the other hand, let us remind that any block of $\U$ is a cycle of $\sigma$ except by one block which is the union of two cycles of $\sigma$, say $a \subset [p]$ and $b \subset [p+1,p+q+l]$. $a$ is a cycle of $\sigma$ and hence it is contained in a block of $\sigma\vee\pi_{\vec{n}}$, this block must necessarily be a cycle of $\pi_{\vec{n}}$ as the block $\tilde{A}\cup\tilde{B}$ of $\sigma\vee\pi_{\vec{n}}$ is entirely contained in $[p+1,p+q+l]$. Let $\tilde{C}$ be the cycle of $\pi_{\vec{n}}$ that contains $a$, we clearly have $\tilde{C}\subset [p]$. Similarly, the cycle $b$ of $\sigma$ must be contained in a block of $\sigma\vee\pi_{\vec{n}}$, at this point we have two possible scenarios. In the first scenario this block is precisely $\tilde{A}\cup\tilde{B}$ or in the second scenario this block is another cycle of $\pi_{\vec{n}}$, say $\tilde{D}$ with $\tilde{D}\subset [p+1,p+q+l]$. Let us adress the second scenario first. Remind that any cycle of $\pi_{\vec{n}}$ is entirely contained in either $[p]$,$[p+1,p+q]$ or $[p+q+1,p+q+l]$, so either $\tilde{D}\subset [p+1,p+q]$ or $\tilde{D}\subset [p+q+1,p+q+l]$. Suppose $\tilde{D}\subset [p+1,p+q]$. We said before any block of $\sigma\vee\pi_{\vec{n}}$ is a cycle of $\pi_{\vec{n}}$ except by $\tilde{A}\cup\tilde{B}$ but at the same time we know any block of $\U$ is a cycle of $\sigma$ except by the block $a\cup b$ with $a\subset \tilde{C}$ and $b\subset \tilde{D}$, hence any block of $\U\vee\pi_{\vec{n}}$ is a cycle of $\pi_{\vec{n}}$ except by two blocks which are $\tilde{A}\cup\tilde{B}$ and $\tilde{C}\cup\tilde{D}$. If we let $A,B,C$ and $D$ to be the blocks of $\pi$ corresponding to $\pi_{\vec{n}}$ we know by construction that $A\subset [r+1,r+s], B\subset [r+s+1,r+s+t], C\subset [r]$ and $D\subset [r+1,r+s]$. This proves $(\V,\pi)\in PS_{NC}^{(2)}(r,s,t)$. Observe that if we instead assume $\tilde{D}\subset [p+q+1,p+q+l]$ we then get $\tilde{D}\subset [r+s+1,r+s+t]$ which doesn't change the conclusion. Finally, in the first scenario we get something even simpler. We said before any block of $\sigma\vee\pi_{\vec{n}}$ is a cycle of $\pi_{\vec{n}}$ except by $\tilde{A}\cup\tilde{B}$ but at the same time we know any block of $\U$ is a cycle of $\sigma$ except by the block $a\cup b$ with $a\subset \tilde{C}$ and $b\subset \tilde{A}\cup\tilde{B}$, hence any block of $\U\vee\pi_{\vec{n}}$ is a cycle of $\pi_{\vec{n}}$ except by the block $\tilde{A}\cup\tilde{B}\cup\tilde{C}$. If we let $A,B$ and $C$ to be as before then $A\subset [r+1,r+s],B\subset [r+s+1,r+s+t]$ and $C\subset [r]$ which proves that $(\V,\pi)\in PS_{NC}^{(3)}(r,s,t)$. Moreover we know $\sigma^{-1}\pi_{\vec{n}}$ separates $N$ and $\sigma^{-1}\gamma_{p,q,l}$ doesn't separate $N$, then by Fact 1 it follows $\pi\neq \gamma_{r,s,t}$ which proves $(\V,\pi)\in {PS_{NC}^{(3)}}^\prime(r,s,t)$.
\end{proof}

\begin{lemma}\label{Proposition: Induction version 6}
$$\sum_{(\V,\pi)\in PS_{NC}^{(2)}(r,s,t)\cup PS_{NC}^{(3)\prime}(r,s,t)}\sum_{\substack{(\U,\sigma)\in \PS_{NC}^{(2)}(p,q,l) \\ \sigma \leq \pi_{\vec{n}} \\ \sigma^{-1}\pi_{\vec{n}}\text{ separates }N \\ \U\vee \pi_{\vec{n}}=\V_{\vec{n}}}} \kappa_{(\U,\sigma)}(\vec{a})=\sum_{\substack{(\U,\sigma)\in PS_{NC}^{(2)}(p,q,l) \\ \sigma^{-1}\gamma_{p,q,l}\text{ doesn't sep }N}}\kappa_{(\U,\sigma)}(\vec{a}).$$
\end{lemma}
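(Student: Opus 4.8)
The plan is to reproduce the bijective scheme already used in Lemmas \ref{Proposition: Induction version 1}--\ref{Proposition: Induction version 5}: I will match each summand on the left to a unique summand on the right through the correspondence $\pi\leftrightarrow\sigma$ governed by Facts 1--5 from the proof of Lemma \ref{Proposition: Induction version 1}. It is worth noting at the outset why no condition on $\mathbf{P}_\sigma$ appears in this statement: since $(\U,\sigma)\in PS_{NC}^{(2)}(p,q,l)$ forces $\sigma\in\NC(p)\times\NC(q)\times\NC(l)$, every cycle of $\sigma$ lies entirely inside one of the three circles, so no cycle can meet two of $N_1,N_2,N_3$ and hence $\mathbf{P}_\sigma=\{1\}\{2\}\{3\}$ holds automatically.

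\textbf{Forward direction.} Let $(\V,\pi)\in PS_{NC}^{(2)}(r,s,t)\cup PS_{NC}^{(3)\prime}(r,s,t)$ and let $(\U,\sigma)$ be a summand on the left. First I would observe that no element of $PS_{NC}^{(2)}(r,s,t)\cup PS_{NC}^{(3)\prime}(r,s,t)$ has $\pi=\gamma_{r,s,t}$: for $PS_{NC}^{(3)\prime}$ this is built into the prime, and $\gamma_{r,s,t}$ cannot carry a $PS^{(2)}$ structure since it has only three cycles while a $PS^{(2)}$ partition needs two marked blocks, each the union of two cycles. Hence, since $\sigma^{-1}\pi_{\vec{n}}$ separates $N$ while $\pi\neq\gamma_{r,s,t}$, Fact 1 gives that $\sigma^{-1}\gamma_{p,q,l}$ does not separate $N$, so $(\U,\sigma)$ is a summand on the right. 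Moreover, for a fixed $(\U,\sigma)$ the permutation $\pi$ is forced by Fact 5 and $\V$ is forced by $\V_{\vec{n}}=\U\vee\pi_{\vec{n}}$, so each $(\U,\sigma)$ can occur at most once on the left.

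\textbf{Converse.} Given $(\U,\sigma)\in PS_{NC}^{(2)}(p,q,l)$ with $\sigma^{-1}\gamma_{p,q,l}$ not separating $N$, I would define $\delta$ and $\pi=\gamma_{r,s,t}\delta^{-1}$ exactly as in Lemma \ref{Proposition: Induction version 1}. Writing the Kreweras complement $\sigma^{-1}\gamma_{p,q,l}=\rho_1\times\rho_2\times\rho_3\in\NC(p)\times\NC(q)\times\NC(l)$ and restricting each factor to $N_i$ (the single-circle restriction keeps each $\rho_i|_{N_i}$ non-crossing) gives $\sigma^{-1}\gamma_{p,q,l}|_N\in\NC(N_1)\times\NC(N_2)\times\NC(N_3)$, whence $\delta,\pi\in\NC(r)\times\NC(s)\times\NC(t)$ and $\pi_{\vec{n}}\in\NC(p)\times\NC(q)\times\NC(l)$. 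Facts 3 and 5 then yield that $\sigma^{-1}\pi_{\vec{n}}$ separates $N$ and that $\pi$ is unique, while Fact 4 gives $\pi_{\vec{n}}^{-1}\gamma_{p,q,l}\leq\sigma^{-1}\gamma_{p,q,l}$. Since $\sigma$ and $\pi_{\vec{n}}$ are both $\leq\gamma_{p,q,l}$ and satisfy $\sigma\vee\gamma_{p,q,l}=\pi_{\vec{n}}\vee\gamma_{p,q,l}=0_{\gamma_{p,q,l}}$, I would invoke Lemma \ref{Lemma: Less or Equal transitivity general version when having the same type} with the roles $\pi\mapsto\pi_{\vec{n}}^{-1}\gamma_{p,q,l}$ and $\sigma\mapsto\sigma$ to reverse the inequality, concluding $\sigma\leq\pi_{\vec{n}}$. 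Finally I set $\V_{\vec{n}}=\U\vee\pi_{\vec{n}}$, which determines $\V_{\vec{n}}$ and hence $\V$ uniquely, and note $\pi\neq\gamma_{r,s,t}$ by Fact 1; it then only remains to identify the type of $(\V,\pi)$.

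\textbf{Main obstacle.} The crux is showing $(\V,\pi)\in PS_{NC}^{(2)}(r,s,t)\cup PS_{NC}^{(3)\prime}(r,s,t)$, which requires unpacking $\U\vee\pi_{\vec{n}}$. Here $\U$ has two marked blocks $0_{a\cup b}$ and $0_{c\cup d}$, and by the chain structure of $PS^{(2)}$ we may take (up to relabeling circles) $a\subset[p]$, $b,c\subset[p+1,p+q]$ and $d\subset[p+q+1,p+q+l]$, the shared circle being the middle one. Using $\sigma\leq\pi_{\vec{n}}$, each of $a,b,c,d$ lies in a cycle $\tilde A,\tilde B,\tilde C,\tilde D$ of $\pi_{\vec{n}}$, and since every cycle of $\pi_{\vec{n}}$ sits inside one circle, $\tilde A\subset[p]$, $\tilde B,\tilde C\subset[p+1,p+q]$, $\tilde D\subset[p+q+1,p+q+l]$. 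I expect the decisive case split to be whether the two middle cycles coincide: if $\tilde B=\tilde C$, the two merges chain into a single block $\tilde A\cup\tilde B\cup\tilde D$ of $\V_{\vec{n}}$ joining all three circles, giving $(\V,\pi)\in PS_{NC}^{(3)\prime}(r,s,t)$; if $\tilde B\neq\tilde C$, the merges remain separate as $\tilde A\cup\tilde B$ and $\tilde C\cup\tilde D$, giving $(\V,\pi)\in PS_{NC}^{(2)}(r,s,t)$. In both cases $\#\V_{\vec{n}}=\#\pi_{\vec{n}}-2$ and $\V\vee\gamma_{r,s,t}=1$, and passing from the cycles $\tilde A,\tilde B,\tilde C,\tilde D$ to the corresponding cycles of $\pi$ places them in $[r]$, $[r+1,r+s]$ and $[r+s+1,r+s+t]$ as required. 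Verifying this type identification carefully, together with the uniqueness bookkeeping that makes each side a multiplicity-one sum, is the only genuinely delicate part; everything else is a direct transcription of the earlier proofs.
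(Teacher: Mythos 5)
Your proposal is correct and follows essentially the same route as the paper's proof: the forward direction via Fact 1, the converse via the $\delta,\pi$ construction, Facts 3--5, Fact 4 combined with Lemma \ref{Lemma: Less or Equal transitivity general version when having the same type} to get $\sigma\leq\pi_{\vec{n}}$, and a final case split on whether the two marked cycles in the shared circle land in the same cycle of $\pi_{\vec{n}}$ (yielding $PS_{NC}^{(3)\prime}$) or in distinct ones (yielding $PS_{NC}^{(2)}$). The only deviations are cosmetic --- you place the shared circle of the $PS^{(2)}$ chain in the middle where the paper places it first, and you make explicit two points the paper leaves implicit (why $\pi\neq\gamma_{r,s,t}$ in the forward direction, and why no condition on $\mathbf{P}_{\sigma}$ is needed).
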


\begin{proof} Let $(\V,\pi)\in PS_{NC}^{(2)}(r,s,t)\cup {PS_{NC}^{(3)}}^\prime(r,s,t)$ and $(\U,\sigma)\in PS_{NC}^{(2)}(p,q,l)$ be such that $\sigma \leq\pi_{\vec{n}}$, $\sigma^{-1}\pi_{\vec{n}}$ separates $N$ and $\U\vee\pi_{\vec{n}}=\V_{\vec{n}}$. By Fact 1 we have $\sigma^{-1}\gamma_{p,q,l}$ doesn't separate $N$. Conversely, let $(\U,\sigma)\in PS_{NC}^{(2)}(p,q,l)$ be such that $\sigma^{-1}\gamma_{p,q,l}$ doesn't separate $N$. We will show there exist a unique $(\V,\pi)\in PS_{NC}^{(2)}(r,s,t)\cup {PS_{NC}^{(3)}}^\prime(r,s,t)$ such that $\sigma \leq\pi_{\vec{n}}$, $\sigma^{-1}\pi_{\vec{n}}$ separates $N$ and $\U\vee\pi_{\vec{n}}=\V_{\vec{n}}$. Let $\delta$ and $\pi$ as before. Since $\sigma^{-1}\gamma_{p,q,l} \in \NC(p)\times \NC(q)\times \NC(l)$ then $\sigma^{-1}\gamma_{p,q,l}|_N \in \NC(N_1)\times \NC(N_2)\times \NC(N_3)$, and therefore $\delta,\pi\in \NC(N_1)\times \NC(N_2)\times \NC(N_3)$. By Fact 3 we get $\sigma^{-1}\pi_{\vec{n}}$ separates $N$ and then by Fact 5 $\pi$ is unique. Let us prove that $\sigma \leq \pi_{\vec{n}}$. By Fact 4 we know $\pi_{\vec{n}}^{-1}\gamma_{p,q,l} \leq \sigma^{-1}\gamma_{p,q,l}$, moreover $\pi_{\vec{n}}^{-1}\gamma_{p,q,l},\sigma\in \NC(p)\times \NC(q)\times \NC(l)$ then by Lemma \ref{Lemma: Less or Equal transitivity general version when having the same type} it follows $\sigma\leq \pi_{\vec{n}}$. Let $\V_{\vec{n}}=\U\vee\pi_{\vec{n}}$ which uniquely defines both $\V_{\vec{n}}$ and $\V$ with $\V$ as in previous proofs. We are reduced to show $(\V,\pi)\in PS_{NC}^{(2)}(r,s,t)\cup {PS_{NC}^{(3)}}^\prime(r,s,t)$. Since $(\U,\sigma)\in \PS_{NC}^{(2)}(p,q,l)$ any block of $\U$ is a cycle of $\sigma$ except by two blocks which are each the union of two cycles of $\pi$. Let $a,b,c,d$ be the cycles of $\pi$ such that $a\cup b$ and $c\cup d$ are both blocks of $\U$ with $a,c\subset [p]$, $b\subset [p+1,p+q]$ and $d\subset [p+q+1,p+q+l]$. Since $\sigma\leq \pi_{\vec{n}}$ then any cycle of $\sigma$ is contained in a cycle of $\pi_{\vec{n}}$ let $\tilde{A},\tilde{B},\tilde{C},\tilde{D}$ be the cycles of $\pi_{\vec{n}}$ with $a\subset \tilde{A},b\subset \tilde{B},c\subset \tilde{C}$ and $d\subset \tilde{D}$. Remind that any cycle of $\pi_{\vec{n}}$ is entirely contained in either $[p]$,$[p+1,p+q]$ or $[p+q+,1p+q+l]$. That forces $\tilde{A}\subset [p], \tilde{B}\subset [p+1,p+q],\tilde{C}\subset [p]$ and $\tilde{D}\subset [p+q+1,p+q+l]$. The latter means that possibly only $\tilde{A}$ and $\tilde{C}$ might be the same cycle. Suppose first $\tilde{A}\neq=\tilde{C}$. Since $a\cup b$ is a block of $\U$ then it must be contained in a block of $\U\vee\pi_{\vec{n}}$, moreover we know $a\subset \tilde{A}$ and $b\subset \tilde{B}$, thus this block must contain $\tilde{A}\cup\tilde{B}$. Similarly $\tilde{C}\cup \tilde{D}$ must be contained in a block of $\U\vee\pi_{\vec{n}}$. Any other block of $\U$, is also a cycle of $\pi_{\vec{n}}$, so we can conclude that the blocks of $\U\vee\pi_{\vec{n}}$ are the cycles of $\pi_{\vec{n}}$ except by the two blocks $\tilde{A}\cup\tilde{B}$ and $\tilde{C}\cup \tilde{D}$ which is each one the union of two cycles of $\pi_{\vec{n}}$. Let $A,B,C,D$ be the cycles of $\pi$ that correspond to the cycles $\tilde{A},\tilde{B},\tilde{C},\tilde{D}$ of $\pi_{\vec{n}}$. Then $A,C\subset [r], B\subset [r+1,r+s]$ and $D\subset [r+s+1,r+s+t]$ which proves $(\V,\pi)\in PS_{NC}^{(2)}(r,s,t)$. Now let us suppose the other case in which $\tilde{A}=\tilde{C}$. By the same argument, since $a\cup b$ and $c\cup d$ are both blocks of $\U$, must be contained in a block of $\U\vee\pi_{\vec{n}}$, such a block then must contain $\tilde{A}\cup\tilde{B}\cup\tilde{D}$. Any other block of $\U$ is a cycle of $\pi_{\vec{n}}$, thus any block of $\U\vee\pi_{\vec{n}}$ is a cycle of $\pi_{\vec{n}}$ except by $\tilde{A}\cup\tilde{B}\cup\tilde{D}$. If $A,B,D$ are the cycles of $\pi$ defined as before then $(\V,\pi)\in PS_{NC}^{(3)}(r,s,t)$. Finally, we know $\sigma^{-1}\pi_{\vec{n}}$ separates $N$ and $\sigma^{-1}\gamma_{p,q,l}$ doesn't separate $N$, then by Fact 1 it follows $\pi\neq \gamma_{r,s,t}$ which proves $(\V,\pi)\in {PS_{NC}^{(3)}}^\prime(r,s,t)$.
\end{proof}

\begin{lemma}\label{Proposition: Induction version 7}
$$\sum_{(\V,\pi)\in PS_{NC}^{(3)\prime}(r,s,t)}\sum_{\substack{(\U,\sigma)\in \PS_{NC}^{(3)}(p,q,l) \\ \sigma \leq \pi_{\vec{n}} \\ \sigma^{-1}\pi_{\vec{n}}\text{ separates }N \\ \U\vee \pi_{\vec{n}}=\V_{\vec{n}}}} \kappa_{(\U,\sigma)}(\vec{a})=\sum_{\substack{(\U,\sigma)\in PS_{NC}^{(3)}(p,q,l) \\ \sigma^{-1}\gamma_{p,q,l}\text{ doesn't sep }N}}\kappa_{(\U,\sigma)}(\vec{a}).$$
\end{lemma}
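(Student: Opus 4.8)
The plan is to follow the template already used in Lemmas \ref{Proposition: Induction version 1}--\ref{Proposition: Induction version 6}: I will set up a bijection between the index pairs $((\V,\pi),(\U,\sigma))$ occurring on the left-hand side and the partitioned permutations $(\U,\sigma)$ occurring on the right-hand side. Since $\kappa_{(\U,\sigma)}(\vec a)$ depends only on $(\U,\sigma)$, establishing such a bijection makes the two sums agree term by term. Throughout I will invoke Facts 1--5 from the proof of Lemma \ref{Proposition: Induction version 1}.

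For the forward inclusion, I would take $(\V,\pi)\in PS_{NC}^{(3)\prime}(r,s,t)$ together with $(\U,\sigma)\in PS_{NC}^{(3)}(p,q,l)$ satisfying $\sigma\leq\pi_{\vec n}$, $\sigma^{-1}\pi_{\vec n}$ separates $N$, and $\U\vee\pi_{\vec n}=\V_{\vec n}$. Because $\gamma_{r,s,t}$ has exactly three cycles, the only type-$3$ partitioned permutation with underlying permutation $\gamma_{r,s,t}$ is $(1_{r+s+t},\gamma_{r,s,t})$; hence membership in $PS_{NC}^{(3)\prime}(r,s,t)$ forces $\pi\neq\gamma_{r,s,t}$. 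Fact 1, applied with $\sigma^{-1}\pi_{\vec n}$ separating $N$, then yields that $\sigma^{-1}\gamma_{p,q,l}$ does not separate $N$, so $(\U,\sigma)$ is indeed a legitimate summand on the right.

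For the converse, I would fix $(\U,\sigma)\in PS_{NC}^{(3)}(p,q,l)$ with $\sigma^{-1}\gamma_{p,q,l}$ not separating $N$, and define $\delta=\psi^{-1}\sigma^{-1}\gamma_{p,q,l}|_N\psi$ and $\pi=\gamma_{r,s,t}\delta^{-1}$ exactly as in the proof of Lemma \ref{Proposition: Induction version 1}. Since $\sigma\in\NC(p)\times\NC(q)\times\NC(l)$, also $\sigma^{-1}\gamma_{p,q,l}\in\NC(p)\times\NC(q)\times\NC(l)$, so $\sigma^{-1}\gamma_{p,q,l}|_N\in\NC(N_1)\times\NC(N_2)\times\NC(N_3)$ and therefore $\delta,\pi\in\NC(r)\times\NC(s)\times\NC(t)$. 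Fact 3 shows $\sigma^{-1}\pi_{\vec n}$ separates $N$, Fact 5 gives uniqueness of $\pi$, and Fact 4 yields $\pi_{\vec n}^{-1}\gamma_{p,q,l}\leq\sigma^{-1}\gamma_{p,q,l}$; as both $\pi_{\vec n}^{-1}\gamma_{p,q,l}$ and $\sigma$ lie in $\NC(p)\times\NC(q)\times\NC(l)$ (so they share the join $\{[p],[q],[l]\}$ with $\gamma_{p,q,l}$), Lemma \ref{Lemma: Less or Equal transitivity general version when having the same type} upgrades this to $\sigma\leq\pi_{\vec n}$. Setting $\V_{\vec n}=\U\vee\pi_{\vec n}$ then determines $\V_{\vec n}$, and hence $\V$, uniquely.

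The one genuinely computational step, and the place I expect the only bookkeeping, is verifying $(\V,\pi)\in PS_{NC}^{(3)\prime}(r,s,t)$. Here I would use that $(\U,\sigma)\in PS_{NC}^{(3)}(p,q,l)$ provides cycles $a\subset[p]$, $b\subset[p+1,p+q]$, $c\subset[p+q+1,p+q+l]$ of $\sigma$ with $0_{a\cup b\cup c}$ a block of $\U$ and every other block of $\U$ a single cycle of $\sigma$. Since $\sigma\leq\pi_{\vec n}$, each of $a,b,c$ sits inside a cycle $\tilde A,\tilde B,\tilde C$ of $\pi_{\vec n}$, and because $\pi_{\vec n}\in\NC(p)\times\NC(q)\times\NC(l)$ confines each of its cycles to one circle, these are three distinct cycles with $\tilde A\subset[p]$, $\tilde B\subset[p+1,p+q]$, $\tilde C\subset[p+q+1,p+q+l]$. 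Every other block of $\U$, being a single cycle of $\sigma$, lies inside one cycle of $\pi_{\vec n}$ and merges nothing, so every block of $\U\vee\pi_{\vec n}$ is a cycle of $\pi_{\vec n}$ except the single block $\tilde A\cup\tilde B\cup\tilde C$. Passing to the corresponding cycles $A\subset[r]$, $B\subset[r+1,r+s]$, $C\subset[r+s+1,r+s+t]$ of $\pi$, the partition $\V$ has all cycles of $\pi$ as blocks save for the one block $A\cup B\cup C$, giving $(\V,\pi)\in PS_{NC}^{(3)}(r,s,t)$; and since $\sigma^{-1}\pi_{\vec n}$ separates $N$ while $\sigma^{-1}\gamma_{p,q,l}$ does not, Fact 1 forces $\pi\neq\gamma_{r,s,t}$, so actually $(\V,\pi)\in PS_{NC}^{(3)\prime}(r,s,t)$. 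Unlike Lemma \ref{Proposition: Induction version 5}, the type-$3$ block joins all three circles simultaneously, so there is no case-split into distinct scenarios, which makes this the most direct of the seven induction lemmas.
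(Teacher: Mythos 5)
Your proof is correct and follows essentially the same route as the paper's: the same Facts 1--5, the same construction of $\delta$ and $\pi$, the same application of Lemma \ref{Lemma: Less or Equal transitivity general version when having the same type} to get $\sigma\leq\pi_{\vec{n}}$, and the same analysis of how the single type-$3$ block of $\U$ propagates to $\U\vee\pi_{\vec{n}}$. Your closing observation that the type-$3$ case needs no case-split (unlike Lemma \ref{Proposition: Induction version 5}) matches the paper's treatment as well.
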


\begin{proof} Let $(\V,\pi)\in {PS_{NC}^{(3)}}^\prime(r,s,t)$ and $(\U,\sigma)\in PS_{NC}^{(3)}(p,q,l)$ be such that $\sigma \leq\pi_{\vec{n}}$, $\sigma^{-1}\pi_{\vec{n}}$ separates $N$ and $\U\vee\pi_{\vec{n}}=\V_{\vec{n}}$. By Fact 1 we have $\sigma^{-1}\gamma_{p,q,l}$ doesn't separate $N$. Conversely, let $(\U,\sigma)\in PS_{NC}^{(3)}(p,q,l)$ be such that $\sigma^{-1}\gamma_{p,q,l}$ doesn't separate $N$. We will show there exist a unique $(\V,\pi)\in {PS_{NC}^{(3)}}^\prime(r,s,t)$ such that $\sigma \leq\pi_{\vec{n}}$, $\sigma^{-1}\pi_{\vec{n}}$ separates $N$ and $\U\vee\pi_{\vec{n}}=\V_{\vec{n}}$. Let $\delta$ and $\pi$ as before. Since $\sigma^{-1}\gamma_{p,q,l} \in \NC(p)\times \NC(q)\times \NC(l)$ then $\sigma^{-1}\gamma_{p,q,l}|_N \in \NC(N_1)\times \NC(N_2)\times \NC(N_3)$, and therefore $\delta,\pi\in \NC(N_1)\times \NC(N_2)\times \NC(N_3)$. By Fact 3 we get $\sigma^{-1}\pi_{\vec{n}}$ separates $N$ and then by Fact 5 $\pi$ is unique. Let us prove that $\sigma \leq \pi_{\vec{n}}$. By Fact 4 we know $\pi_{\vec{n}}^{-1}\gamma_{p,q,l} \leq \sigma^{-1}\gamma_{p,q,l}$, moreover $\pi_{\vec{n}}^{-1}\gamma_{p,q,l},\sigma\in \NC(p)\times \NC(q)\times \NC(l)$ then by Lemma \ref{Lemma: Less or Equal transitivity general version when having the same type} it follows $\sigma\leq \pi_{\vec{n}}$. Let $\V_{\vec{n}}=\U\vee\pi_{\vec{n}}$ which uniquely defines both $\V_{\vec{n}}$ and $\V$ with $\V$ as in previous proofs. We are reduced to show $(\V,\pi)\in {PS_{NC}^{(3)}}^\prime(r,s,t)$. Since $(\U,\sigma)\in \PS_{NC}^{(3)}(p,q,l)$ any block of $\U$ is a cycle of $\sigma$ except by one block which is the union of three cycles of $\pi$. Let $a,b,c$ be the cycles of $\pi$ such that $a\cup b\cup c$ is this block of $\U$ with $a\subset [p]$, $b\subset [p+1,p+q]$ and $c\subset [p+q+1,p+q+l]$. Since $\sigma\leq \pi_{\vec{n}}$ then any cycle of $\sigma$ is contained in a cycle of $\pi_{\vec{n}}$ let $\tilde{A},\tilde{B},\tilde{C}$ be the cycles of $\pi_{\vec{n}}$ with $a\subset \tilde{A},b\subset \tilde{B}$ and $c\subset \tilde{C}$. Remind that any cycle of $\pi_{\vec{n}}$ is entirely contained in either $[p]$,$[p+1,p+q]$ or $[p+q+,1p+q+l]$. That forces $\tilde{A}\subset [p], \tilde{B}\subset [p+1,p+q]$ and $\tilde{C}\subset [p+q+1,p+q+l]$. Since $a\cup b\cup c$ is a block of $\U$ then it must be contained in a block of $\U\vee\pi_{\vec{n}}$, moreover we know $a\subset \tilde{A},b\subset \tilde{B}$ and $c\subset \tilde{C}$, thus this block must contain $\tilde{A}\cup\tilde{B}\cup\tilde{C}$. Any other block of $\U$, is also a cycle of $\pi_{\vec{n}}$, so we can conclude that the blocks of $\U\vee\pi_{\vec{n}}$ are the cycles of $\pi_{\vec{n}}$ except by the block $\tilde{A}\cup\tilde{B}\cup\tilde{C}$ which is the union of three cycles of $\pi_{\vec{n}}$. Let $A,B,C$ be the cycles of $\pi$ that correspond to the cycles $\tilde{A},\tilde{B},\tilde{C}$ of $\pi_{\vec{n}}$. Then $A\subset [r], B\subset [r+1,r+s]$ and $C\subset [r+s+1,r+s+t]$ which proves $(\V,\pi)\in PS_{NC}^{(3)}(r,s,t)$. Finally, we know $\sigma^{-1}\pi_{\vec{n}}$ separates $N$ and $\sigma^{-1}\gamma_{p,q,l}$ doesn't separate $N$, then by Fact 1 it follows $\pi\neq \gamma_{r,s,t}$ which proves $(\V,\pi)\in {PS_{NC}^{(3)}}^\prime(r,s,t)$.
\end{proof}

\end{document}